\documentclass[a4paper,draft]{amsart}
\usepackage{amssymb}
\usepackage{mathrsfs}
\usepackage[matrix,arrow]{xy}

\numberwithin{equation}{section}

\newtheorem{thm}{Theorem}[section]
\newtheorem{prop}[thm]{Proposition}
\newtheorem{lem}[thm]{Lemma}
\newtheorem{cor}[thm]{Corollary}
\theoremstyle{definition}
\newtheorem{defn}[thm]{Definition}
\theoremstyle{remark}
\newtheorem{rem}[thm]{Remark}

\newcommand*{\Sheaf}{\mathcal{SH}}
\newcommand*{\ModCat}[1]{#1\textrm{\normalfont -mod}}
\newcommand*{\C}{\mathbb{C}}
\newcommand*{\Z}{\mathbb{Z}}
\DeclareMathOperator{\im}{Im}
\DeclareMathOperator{\Ker}{Ker}
\DeclareMathOperator{\supp}{supp}
\DeclareMathOperator{\End}{End}
\DeclareMathOperator{\Hom}{Hom}
\DeclareMathOperator{\id}{id}
\DeclareMathOperator{\Id}{Id}
\DeclareMathOperator{\Res}{Res}
\DeclareMathOperator{\rank}{rank}

\DeclareMathOperator{\Cok}{Cok}

\hyphenation{Soergel}

\author{Noriyuki Abe}
\title{The category $\mathcal{O}$ for a general Coxeter system}
\address{Graduate School of Mathematical Sciences, the University of Tokyo, 3--8--1 Komaba, Meguro-ku, Tokyo 153--8914, Japan.}
\email{abenori@ms.u-tokyo.ac.jp}
\subjclass[2000]{Primary 20F55, Secondary 17B10}

\begin{document}
\maketitle
\begin{abstract}
We study the category $\mathcal{O}$ for a general Coxeter system using a formulation of Fiebig.
The translation functors, the Zuckerman functors and the twisting functors are defined.
We prove the fundamental properties of these functors, the duality of Zuckerman functor and generalization of Verma's result about homomorphisms between Verma modules.
\end{abstract}

\section{Introduction}
The Bernstein-Gelfand-Gelfand (BGG) category $\mathcal{O}$ is introduced in~\cite{MR0407097}.
Roughly speaking, it is a full-subcategory of the category of modules of a semisimple Lie algebra which is generated by the category of highest weight modules.
Soergel~\cite{MR1029692} realized the endomorphism ring of the minimal progenerator of a block of $\mathcal{O}$ as the endomorphism ring of some module over the coinvariant ring of the Weyl group.
As a corollary, a block of the category $\mathcal{O}$ depends only on the attached Coxeter system (the integral Weyl group) and the singularity of the infinitesimal character.

Generalizing this method, Fiebig~\cite{MR2370278} and Soergel~\cite{MR2329762} construct some module over some algebra for any Coxeter system $(W,S)$.
If we consider the case of a Weyl group, the endomorphism ring of this module is equal to that of the minimal progenerator of the deformed category $\mathcal{O}$.
Specializing it, we get the category $\mathcal{O}$.

In this paper, we study the category $\mathcal{O}$ for a general Coxeter system.
Let $(W,S)$ be a Coxeter system and take a reflection faithful representation $V$ of $(W,S)$ (see \ref{subsec:Moment graph associated to a Coxeter system}).
After Braden-MacPherson~\cite{MR1871967}, we consider the associated moment graph.
Let $Z$ be the space of global sections of the structure algebra of this moment graph and $\{B(x)\}_{x\in W}$ the space of global sections of Braden-MacPherson sheaves.
Then $Z$ is an $S(V^*)$-algebra and $B(x)$ is a $Z$-module.
Consider a $\C$-algebra $A = \End_Z(\bigoplus_{x\in W}B(x))\otimes_{S(V^*)}\C$.
If $(W,S)$ is the Weyl group of a semisimple Lie algebra, then the regular integral block of the BGG category is equivalent to the category of finitely generated right $A$-modules.
However, in general case, the author dose not know whether the algebra $A$ is Noetherian.
Instead of this, we define a category $\mathcal{O}$ as the category of right $A$-modules.
By the above reason, even if $(W,S)$ is the Weyl group of a semisimple Lie algebra, $\mathcal{O}$ is not equivalent to the ordinal BGG category.

We state our results.
Put $P(x) = \Hom_Z(\bigoplus_{y\in W}B(y),B(x))\otimes_{S(V^*)}\C$.
Then $P(x)$ is a projective object of $\mathcal{O}$ and it has the unique irreducible quotient $L(x)$.
In \cite{MR2395170}, the translation functor $\theta_s^Z$ of the category of $Z$-modules are defined for a simple reflection $s$.
Then the module $A' = \Hom_Z(\bigoplus_yB(y),\bigoplus_x\theta_s^Z B(x))\otimes_{S(V^*)}\C$ is an $A$-bimodule.
Define a functor $\theta_s$ from $\mathcal{O}$ to $\mathcal{O}$ by $\theta_s(M) = \Hom_A(A',M)$.
Then we have the following theorem.
\begin{thm}[Proposition~\ref{prop:property of translation functors in widetilde O}, Theorem~\ref{thm:translation of projective modules}]
Let $s$ be a simple reflection and $x\in W$.
\begin{enumerate}
\item The functor $\theta_s$ is self-adjoint and exact.
\item If $xs < x$, then $\theta_s(P(x)) = P(x)^{\oplus 2}$.
\item The module $\theta_s L(x)$ is zero if and only if $xs > x$.
\end{enumerate}
\end{thm}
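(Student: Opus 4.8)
\emph{Setup and plan.} Write $\mathbf{B}=\bigoplus_{y\in W}B(y)$, so that $A=\End_Z(\mathbf{B})\otimes_{S(V^*)}\C$, and let $e_y\in A$ be the idempotent cutting out $P(y)=e_yA$. Let $\mathbb{V}$ be the functor $N\mapsto\Hom_Z(\mathbf{B},N)\otimes_{S(V^*)}\C$ from the additive hull $\mathrm{add}(\mathbf{B})$ of the (shifted) Braden-MacPherson sheaves to right $A$-modules; by the structure results underlying the definition of $A$ it is fully faithful, with $\mathbb{V}(\mathbf{B})=A$ and $\mathbb{V}(B(y))=P(y)$. From \cite{MR2395170} and Fiebig's theory of moment-graph sheaves I will use: (i) $\theta_s^Z$ preserves $\mathrm{add}(\mathbf{B})$ and is self-adjoint (up to grading shifts, which are invisible after $-\otimes_{S(V^*)}\C$); and (ii) $\theta_s^ZB(x)\cong B(x)^{\oplus2}$ when $xs<x$, while $\theta_s^ZB(x)\cong B(xs)\oplus\bigoplus_{z<x,\,zs<z}B(z)^{\oplus\mu(z,x)}$ (up to shifts, $\mu(z,x)\ge0$) when $xs>x$. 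The plan is to transport everything along $\mathbb{V}$, the core being the natural isomorphism $\theta_s\circ\mathbb{V}\cong\mathbb{V}\circ\theta_s^Z$ on $\mathrm{add}(\mathbf{B})$.

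\emph{The key isomorphism and part (1).} Since $\bigoplus_x\theta_s^ZB(x)=\theta_s^Z\mathbf{B}\in\mathrm{add}(\mathbf{B})$ by (i), we have $A'=\Hom_Z(\mathbf{B},\theta_s^Z\mathbf{B})\otimes_{S(V^*)}\C=\mathbb{V}(\theta_s^Z\mathbf{B})$. Reading off the decomposition of $\theta_s^Z\mathbf{B}$ into Braden-MacPherson sheaves shows $A'\cong\bigoplus_yP(y)^{\oplus m_y}$ as a right $A$-module, where $m_y$ is the multiplicity of $B(y)$ in $\theta_s^Z\mathbf{B}$; hence $A'$ is a projective right $A$-module and $\theta_s=\Hom_A(A',-)$ is exact. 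Using full faithfulness of $\mathbb{V}$, for $N\in\mathrm{add}(\mathbf{B})$ one has $\theta_s\mathbb{V}(N)=\Hom_A(\mathbb{V}(\theta_s^Z\mathbf{B}),\mathbb{V}(N))\cong\Hom_Z(\theta_s^Z\mathbf{B},N)\otimes_{S(V^*)}\C$, and the self-adjunction $\Hom_Z(\theta_s^Z\mathbf{B},N)\cong\Hom_Z(\mathbf{B},\theta_s^ZN)$ then gives $\theta_s\mathbb{V}(N)\cong\mathbb{V}(\theta_s^ZN)$; checking that this is an isomorphism of right $A$-modules comes down to the naturality of the self-adjunction of $\theta_s^Z$ in both arguments, which in the same breath identifies $A'$ with $\bigoplus_y(Ae_y)^{\oplus m_y}$ as a \emph{left} $A$-module, with the same multiplicities $m_y$. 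Consequently $A'$ is self-dual as a bimodule, $\Hom_A(A',A)\cong A'$ (also visible by taking $N=\mathbf{B}$ above), and since $A'$ is projective on both sides this yields a natural isomorphism $\Hom_A(A',M)\cong M\otimes_AA'$; as $-\otimes_AA'$ is a left adjoint of $\theta_s$, the functor $\theta_s$ is its own two-sided adjoint. This proves (1).

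\emph{Parts (2) and (3).} If $xs<x$, then by the key isomorphism and (ii), $\theta_sP(x)=\theta_s\mathbb{V}(B(x))\cong\mathbb{V}(\theta_s^ZB(x))\cong\mathbb{V}(B(x)^{\oplus2})=P(x)^{\oplus2}$, which is (2). For (3), if $xs<x$ then by (1) and (2), $\Hom_{\mathcal{O}}(P(x),\theta_sL(x))\cong\Hom_{\mathcal{O}}(\theta_sP(x),L(x))\cong\Hom_{\mathcal{O}}(P(x),L(x))^{\oplus2}$, which is nonzero because $P(x)$ is the projective cover of $L(x)$; hence $\theta_sL(x)\neq0$. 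If $xs>x$, then for every $y\in W$, self-adjointness gives $\Hom_{\mathcal{O}}(P(y),\theta_sL(x))\cong\Hom_{\mathcal{O}}(\theta_sP(y),L(x))$, and by the key isomorphism together with (ii) the module $\theta_sP(y)\cong\mathbb{V}(\theta_s^ZB(y))$ is a direct sum of modules $P(w)$ whose indices all satisfy $ws<w$ (if $ys<y$ the only index is $w=y$; if $ys>y$ the indices are $w=ys$, with $(ys)s=y<ys$, and those $z<y$ with $zs<z$). Since $xs>x$, $P(x)$ is not a summand of any $\theta_sP(y)$, so $\Hom_{\mathcal{O}}(\theta_sP(y),L(x))=0$ for all $y$; as $\{P(y)\}_{y\in W}$ is a family of generators of $\mathcal{O}$, we conclude $\theta_sL(x)=0$.

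\emph{Main obstacle.} The substantive work is the key isomorphism $\theta_s\circ\mathbb{V}\cong\mathbb{V}\circ\theta_s^Z$ as an isomorphism of functors into $\mathcal{O}$: one needs full faithfulness of $\mathbb{V}$ over $\C$ (a base change of the statement over $S(V^*)$, which rests on the $S(V^*)$-freeness of $\Hom$-spaces between Braden-MacPherson sheaves), and one must follow the self-adjunction of $\theta_s^Z$ carefully enough to see both that the isomorphism is right $A$-linear and that the left and right $A$-module structures on $A'$ have equal multiplicities $m_y$. The genuinely deep inputs---exactness and self-adjointness of $\theta_s^Z$, and the decomposition of $\theta_s^ZB(x)$---are imported from \cite{MR2395170} and Fiebig's work, so once this transfer is set up, (1)--(3) follow as above.
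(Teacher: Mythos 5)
Your proposal is correct and follows essentially the same route as the paper: it imports Fiebig's exactness, self-adjointness and decomposition results for $\theta_s^Z$, transfers them along the fully faithful $\Phi$ (the paper's Lemma~\ref{lem:relation of Z-mod and O} and Proposition~\ref{prop:property of translation functors in widetilde O}), uses projectivity of $A'$ and the identification of $\theta_s$ with both $\Hom_A(A',-)$ and $-\otimes_A A'$ for exactness and self-adjointness, and counts homomorphisms against translated projectives for part (3). The only cosmetic differences are that the paper first works over $\widetilde{A}$ and then tensors with $\C$ (Remark~\ref{rem:commutativity of tensor and translation functor}), and in (3) it computes $\theta_sL(x)\simeq\Hom_{\widetilde{A}}(\widetilde{\theta_s}\widetilde{A},L(x))$ directly rather than invoking that the $P(y)$ generate $\mathcal{O}$.
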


Next, we consider the Zuckerman functor.
Fix a simple reflection $s$ and let $\mathcal{O}_s$ be a full-subcategory of $\mathcal{O}$ consisting of a module $M$ such that $\Hom_A(P(x),M) = 0$ for all $sx < x$.
Then it is easy to see that the inclusion functor $\iota_s\colon \mathcal{O}_s\to \mathcal{O}$ has the left adjoint functor $\widetilde{\tau}_s$.
Put $\tau_s = \iota_s\circ\widetilde{\tau_s}$ and let $L\tau_s$ be its left derived functor.
Let $D^b(\mathcal{O})$ be the bounded derived category of $\mathcal{O}$.
We prove the following duality theorem.
\begin{thm}[Theorem~\ref{thm:duality of Zuckerman functor}]\label{thm:main theorem, duality of Zuckerman functor}
\begin{enumerate}
\item For $i > 2$ and $M\in\mathcal{O}$, we have $L^i\tau_s(M) = 0$.
Hence $L\tau_s$ gives a functor from $D^b(\mathcal{O})$ to $D^b(\mathcal{O})$.
\item The functor $L\tau_s[-1]$ is self-adjoint.
\end{enumerate}
\end{thm}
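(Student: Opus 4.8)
The plan is to realize $L\tau_s$, up to a shift, as a two‑term complex of exact functors built from the translation functor $\theta_s$ and the identity; one construction then yields both the vanishing and the duality. Recall from Proposition~\ref{prop:property of translation functors in widetilde O} that $\theta_s$ is exact, self‑adjoint and preserves projectives; its self‑adjunction carries a counit $\varepsilon\colon\theta_s\to\id$ and a unit $\eta\colon\id\to\theta_s$, induced by the $Z$‑module maps $\theta_s^{Z}M\to M$ and $M\to\theta_s^{Z}M$ (equivalently by $A$‑bimodule maps $A'\to A$ and $A\to A'$), and $\varepsilon$, $\eta$ are mutually adjoint with respect to it. Here $\theta_s$ is taken on the side matching the condition defining $\mathcal{O}_s$. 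The first step is the identity $\widetilde{\tau}_s M\cong\Cok(\varepsilon_M\colon\theta_s M\to M)$, natural in $M$: both functors are right exact, so it suffices to produce a natural isomorphism on the projective generators $P(x)$, which is part of the next step.

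The heart of the argument is the exact sequence, valid for every $x\in W$,
\[
0\longrightarrow P(x)\xrightarrow{\ \eta_{P(x)}\ }\theta_s P(x)\xrightarrow{\ \varepsilon_{P(x)}\ }P(x)\longrightarrow\widetilde{\tau}_s P(x)\longrightarrow 0 .
\]
I would prove this using: the explicit form of $\theta_s P(x)$ from Theorem~\ref{thm:translation of projective modules} ($\theta_s P(x)\cong P(x)^{\oplus 2}$ on the singular side and a $P(xs)$‑type formula otherwise); that in the former case $\varepsilon_{P(x)}$ is a split surjection, so $\Cok(\varepsilon_{P(x)})=0=\widetilde{\tau}_s P(x)$, while in the latter $\widetilde{\tau}_s P(x)$ is the maximal quotient of $P(x)$ lying in $\mathcal{O}_s$; and an Euler‑characteristic computation in the Grothendieck group, through the Hecke‑algebra shadow of $\theta_s$, pinning down $[\widetilde{\tau}_s P(x)]=2[P(x)]-[\theta_s P(x)]$ and thereby forcing, for dimension reasons, that $\eta_{P(x)}$ is injective and $\im\eta_{P(x)}=\Ker\varepsilon_{P(x)}$. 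Matching heads, socles and the images of the two (co)units precisely is, I expect, the main obstacle; the handedness point (that $\theta_s$ must be on the same side as $\mathcal{O}_s$, e.g.\ for $\eta_{P(x)}$ to split when $sx<x$) must be respected throughout.

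Granting this, take a projective resolution $P_\bullet\to M$ and apply the complex of exact functors $(\id\xrightarrow{\eta}\theta_s\xrightarrow{\varepsilon}\id)$, placed in cohomological degrees $-2,-1,0$, termwise. The resulting double complex has total complex $T^\bullet$ which, by the exact sequence above, is quasi‑isomorphic to $\widetilde{\tau}_s P_\bullet$, hence computes $L\tau_s M$; collapsing the resolution direction (using exactness of $\theta_s$) identifies $T^\bullet$ with $(M\xrightarrow{\eta_M}\theta_s M\xrightarrow{\varepsilon_M}M)$. Therefore
\[
L\tau_s M\ \simeq\ \bigl(M\xrightarrow{\ \eta_M\ }\theta_s M\xrightarrow{\ \varepsilon_M\ }M\bigr)\quad\text{in }D^b(\mathcal{O}),
\]
concentrated in degrees $-2,-1,0$; so $L^i\tau_s M=0$ for $i>2$ and $L\tau_s$ preserves bounded complexes, which is~(1). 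Equivalently, identifying $\mathcal{O}_s$ with the category of right $A/AeA$‑modules for the idempotent $e$ cutting out the $P(x)$ with $sx<x$, one has $L\tau_s=-\otimes^{L}_{A}(A/AeA)$, and the complex above is a length‑$2$ projective resolution of $A/AeA$ over $A$.

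For~(2): by~(1), $L\tau_s[-1]$ is the endofunctor $M\mapsto(M\xrightarrow{\eta_M}\theta_s M\xrightarrow{\varepsilon_M}M)$ with $\theta_s M$ in degree $0$, i.e.\ tensoring with the complex of functors $(\id\xrightarrow{\eta}\theta_s\xrightarrow{\varepsilon}\id)$. Each term is self‑adjoint — $\id$ trivially, and $\theta_s$ by Proposition~\ref{prop:property of translation functors in widetilde O}, the self‑adjunction giving $R\Hom(\theta_s X,Y)\cong R\Hom(X,\theta_s Y)$ on $D^b(\mathcal{O})$ since $\theta_s$ is exact and preserves projectives — while the two differentials $\eta$ and $\varepsilon$ are mutually adjoint. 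Dualizing the complex term by term therefore returns, up to isomorphism of complexes of functors, the same complex, so that for $X,Y\in D^b(\mathcal{O})$
\[
R\Hom\bigl(L\tau_s[-1]X,\ Y\bigr)\ \cong\ R\Hom\bigl(X,\ (Y\xrightarrow{\eta_Y}\theta_s Y\xrightarrow{\varepsilon_Y}Y)\bigr)\ =\ R\Hom\bigl(X,\ L\tau_s[-1]Y\bigr)
\]
naturally, which is the self‑adjointness of $L\tau_s[-1]$. The delicate point is the mutual adjointness of $\eta$ and $\varepsilon$ (and sign bookkeeping in dualizing a three‑term complex), which I would extract from the construction of the self‑adjunction of $\theta_s$ out of the biadjunctions between translation to and from the wall.
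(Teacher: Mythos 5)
Your overall architecture --- resolve $\tau_s$ by a three-term complex $\Id\to(\text{translation})\to\Id$, read off the vanishing of $L^i\tau_s$ for $i>2$, and get the duality from adjunction compatibilities of the unit and counit --- is in spirit the paper's strategy, but two of your steps have genuine gaps. The first is the handedness point you defer: $\mathcal{O}_s$ is cut out by the condition $sx<x$, so the translation functor on the matching side is $\varphi_s$, not $\theta_s$, and $\varphi_s$ has none of the properties your argument uses. It is not an endofunctor of $\mathcal{O}$ (it only maps $\mathcal{O}\to\widetilde{\mathcal{O}}$), it does not commute with $-\otimes_{S(V^*)}\C$ (Remark~\ref{rem:commutativity of tensor and translation functor}), and the single self-adjoint middle term of your complex must be replaced by two different $A$-bimodules, $\varphi_s(A)$ (flat as a left $A$-module) and $\widetilde{\varphi_s}(\widetilde{A})\otimes_{S(V^*)}\C$ (projective as a right $A$-module), related only through Lemma~\ref{lem:expression of varphi_s}. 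Hence the step ``each term is self-adjoint, so dualizing the complex termwise returns the same complex'' has no counterpart here; the paper instead proves $R\tau_s'[2]\simeq L\tau_s$ for the genuine right adjoint $\tau_s'=\Hom_A(\tau_s(A),-)$, by computing $R\Hom_A(\tau_s(A),A)$ with the projective resolution of Theorem~\ref{thm:projective resolution of tau_s(A)}(2) and comparing with the flat resolution (1), after passing to $\widetilde{\mathcal{O}}$. If, on the other hand, you literally take $\theta_s$ (right multiplication), your basic identity $\widetilde{\tau}_s M\cong\Cok(\theta_s M\to M)$ is false: for $x$ with $sx<x$ but $xs>x$ (e.g.\ $x=st$ with $m_{st}\ge 3$) one has $\theta_s L(x)=0$ by Theorem~\ref{thm:translation of projective modules}, so the cokernel is $L(x)\ne 0=\tau_s L(x)$. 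Your part (1) would survive once the correct resolution is in place, but your proof of part (2) does not transport to $\varphi_s$.

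The second gap is the proof of the key exact sequence $0\to P(x)\to\varphi_s P(x)\to P(x)\to\tau_s P(x)\to 0$. You propose an Euler-characteristic/Grothendieck-group argument ``for dimension reasons'' through a Hecke-algebra shadow, but none of this is available: $\mathcal{O}$ consists of \emph{all} right $A$-modules, $A$ is not known to be Noetherian, the modules $P(x)$ are in general infinite-dimensional, and no character or Hecke-module structure for $\mathcal{O}$ is established in this paper. Moreover, the decomposition $\widetilde{\varphi_s}\widetilde{P}(x)\cong\widetilde{P}(x)^{\oplus 2}$ for $sx<x$ does not by itself show that the composite $P(x)\to\varphi_s P(x)\to P(x)$ vanishes, nor exactness in the middle. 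The paper proves the sequence by induction on $\ell(x)$: the base case $P(e)=M(e)$ rests on the explicit computation of $\varphi_s^Z V(sx)$ and of $\tau_s M(e)$ (Proposition~\ref{prop:translation of M(e), etc...} and Lemma~\ref{lem:calculation of tau_s(M(e))}), and the inductive step applies $\theta_t$ with $xt<x$, using that $\theta_t$ commutes with $\varphi_s$ (Theorem~\ref{thm:theta and varphi commute, in O}) and with $\tau_s$ (Proposition~\ref{prop:translation and tau commute}) and that $P(x)$ is a direct summand of $\theta_t P(xt)$. Without substitutes for these two points your argument does not go through.
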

In the case of $\mathfrak{g}$-modules, this theorem is proved by Enright and Wallach~\cite{MR563362} (in more general situation).

Next result is a generalization of Verma's result about homomorphisms between Verma modules~\cite{MR0218417}.
Let $V(x)$ be a Verma $Z$-module~\cite[4.5]{MR2370278}.
Put $M(x) = \Hom_Z(\bigoplus_{y\in W}B(y),V(x))\otimes_{S(V^*)}\C$.
Then $M(x)$ gives a generalization of the Verma module.
We prove the following theorem.
\begin{thm}[Theorem~\ref{thm:Homomorphisms between Verma modules}]\label{thm:Homomorphisms between Verma modules section 1}
We have
\[
	\Hom(M(x),M(y)) = 
	\begin{cases}
	\C & (y \le x),\\
	0 & (y\not\le x).
	\end{cases}
\]
Moreover, any nonzero homomorphism $M(x)\to M(y)$ is injective.
\end{thm}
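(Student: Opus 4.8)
\emph{Strategy.}
The plan is to deduce everything by base change from the deformed picture. Set $S:=S(V^*)$, $\widetilde A:=\End_Z(\bigoplus_{y}B(y))$, let $\widetilde{\mathcal O}$ be the category of (appropriately finite) right $\widetilde A$-modules, so $\mathcal O=\widetilde{\mathcal O}\otimes_S\C$ and $A=\widetilde A\otimes_S\C$, and write $\widetilde{\mathbb V}(-):=\Hom_Z(\bigoplus_{y}B(y),-)$, $\widetilde P(x):=\widetilde{\mathbb V}(B(x))$, $\widetilde M(x):=\widetilde{\mathbb V}(V(x))$, so that $P(x)=\widetilde P(x)\otimes_S\C$ and $M(x)=\widetilde M(x)\otimes_S\C$. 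Since $B(x)$, $V(x)$ and in fact every $Z$-module with a Verma flag is $S$-free, $\widetilde P(x)$ and $\widetilde M(x)$ are $S$-free; moreover the kernel of the canonical surjection $\widetilde P(x)\twoheadrightarrow\widetilde M(x)$ carries a Verma flag by the $\widetilde M(z)$ with $z<x$, and iterating this builds a \emph{finite} resolution of $\widetilde M(x)$ by finitely generated projective $\widetilde A$-modules whose syzygies are all $S$-free (the finiteness uses only that Bruhat intervals are finite, not that $A$ is Noetherian). Tensoring such a resolution $\widetilde P_\bullet$ with $\C$ yields a projective resolution of $M(x)$, and the universal coefficient sequence applied to the complex $\Hom_{\widetilde A}(\widetilde P_\bullet,\widetilde M(y))$ of $S$-free modules gives, for all $x,y$,
\[
0\longrightarrow\Hom_{\widetilde{\mathcal O}}(\widetilde M(x),\widetilde M(y))\otimes_S\C\longrightarrow\Hom_{\mathcal O}(M(x),M(y))\longrightarrow\mathrm{Tor}^S_1\bigl(\Ext^1_{\widetilde{\mathcal O}}(\widetilde M(x),\widetilde M(y)),\C\bigr)\longrightarrow 0.
\]

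\emph{The deformed invariants.}
Over $\mathrm{Frac}(S)$ the congruences defining $Z$ become vacuous, so $\widetilde{\mathcal O}\otimes_S\mathrm{Frac}(S)$ is semisimple with the $\widetilde M(x)\otimes_S\mathrm{Frac}(S)$ its pairwise non-isomorphic simple objects (one for each vertex). By flat base change, $\Ext^i_{\widetilde{\mathcal O}}(\widetilde M(x),\widetilde M(y))$ is $S$-torsion for $i\ge 1$; and $\Hom_{\widetilde{\mathcal O}}(\widetilde M(x),\widetilde M(y))$ is an $S$-submodule of the $S$-free module $\Hom_S(\widetilde M(x),\widetilde M(y))$ which dies over $\mathrm{Frac}(S)$ for $x\ne y$, hence vanishes, while $\Hom_{\widetilde{\mathcal O}}(\widetilde M(x),\widetilde M(x))$ is a torsion-free finite $S$-subalgebra of $\mathrm{Frac}(S)$, hence $S$ itself (as $S$ is integrally closed). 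Furthermore $\widetilde{\mathcal O}$ is a highest weight category over $S$ with standard objects the $\widetilde M(x)$ and highest weight order the opposite of the Bruhat order (concretely, the composition factors of $M(x)$ are the $L(z)$ with $z\ge x$); in particular $\Ext^1_{\widetilde{\mathcal O}}(\widetilde M(x),\widetilde M(y))=0$ unless $y<x$. Substituting into the displayed sequence one reads off $\Hom_{\mathcal O}(M(x),M(x))=\C$, $\Hom_{\mathcal O}(M(x),M(y))=0$ for $y\not\le x$, and, for $y<x$, an isomorphism $\Hom_{\mathcal O}(M(x),M(y))\cong\mathrm{Tor}^S_1(\Ext^1_{\widetilde{\mathcal O}}(\widetilde M(x),\widetilde M(y)),\C)$. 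The theorem is thus reduced to the statement that, for $y<x$, the torsion $S$-module $\Ext^1_{\widetilde{\mathcal O}}(\widetilde M(x),\widetilde M(y))$ is \emph{cyclic}, i.e.\ $\cong S/(f)$ for a nonzero homogeneous $f$ of positive degree — for then $\mathrm{Tor}^S_1(S/(f),\C)=\C$ — together with the injectivity claim.

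\emph{Cyclicity of $\Ext^1$ — the main point.}
Applying $\widetilde{\mathbb V}$, which is exact and fully faithful on Verma-flagged $Z$-modules, to the conflation $0\to\mathcal F_x\to B(x)\to V(x)\to 0$ with $\mathcal F_x:=\ker(B(x)\to V(x))$, and then $\Hom_{\widetilde A}(-,\widetilde M(y))$, one identifies
\[
\Ext^1_{\widetilde{\mathcal O}}(\widetilde M(x),\widetilde M(y))\;\cong\;\Cok\bigl(\Hom_Z(B(x),V(y))\xrightarrow{\ \mathrm{res}\ }\Hom_Z(\mathcal F_x,V(y))\bigr),
\]
a cokernel of an injection of finitely generated free $S$-modules of equal rank (the map is injective since its kernel is $\Hom_Z(V(x),V(y))=0$). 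To show this cokernel is cyclic I would localize the structure algebra $Z$ at the height-one primes $(\alpha_t)$ indexed by the reflections $t$: by the localization theory for structure algebras of moment graphs, over $S_{(\alpha_t)}$ the category $\widetilde{\mathcal O}$ decomposes into ``rank one'' blocks attached to the $\{1,t\}$-cosets of vertices, each equivalent to the deformed category of a two-vertex moment graph, in which the $\Ext^1$ between the two standard objects is the cyclic module $S_{(\alpha_t)}/(\alpha_t)$ and vanishes for non-matching pairs. This pins down the support and the local structure of $\Ext^1_{\widetilde{\mathcal O}}(\widetilde M(x),\widetilde M(y))$ at every height-one prime; upgrading this to the global isomorphism $\Ext^1_{\widetilde{\mathcal O}}(\widetilde M(x),\widetilde M(y))\cong S/(f_{x,y})$, with $f_{x,y}$ a product of reflection forms, is the crux of the whole argument — it is the moment-graph incarnation of the statement controlling how a Verma module meets a given composition factor along the layers of its Jantzen filtration, and it cannot be read off from the support of $\Ext^1$ alone (a direct sum of distinct $S/(\alpha_t)$'s would give the wrong answer). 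I expect this step to be carried out by a careful analysis of the Verma-flag resolution localized at the $(\alpha_t)$, exhibiting an explicit generator of $\Ext^1$ and computing the single relation it satisfies. That this group is then nonzero exactly for $y<x$ also produces a nonzero map $M(x)\to M(y)$ whenever $y<x$.

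\emph{Injectivity.}
Given $\dim_\C\Hom_{\mathcal O}(M(x),M(y))\le 1$ from the above, it suffices to exhibit one injective nonzero map $M(x)\to M(y)$ for each $y<x$, and by composing along a saturated Bruhat chain $x=x_0\gtrdot x_1\gtrdot\cdots\gtrdot x_k=y$ it is enough to do this for a covering $y\lessdot x$, i.e.\ for $y=tx$ with $t$ a reflection of colength one. There the relevant $\Ext^1$ is the edge module $S/(\alpha_t)$, and the corresponding (up to scalar unique) nonzero map $M(x)\to M(tx)$ is seen to be an embedding either by the rank-one reduction at $(\alpha_t)$, where the two standard objects are honest submodules of one another, or by checking directly that its kernel — a submodule of $M(x)$ from which the same composition factors have been removed — must be zero. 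Composing the resulting embeddings $M(x_{i-1})\hookrightarrow M(x_i)$ gives an injection $M(x)\hookrightarrow M(y)$, necessarily nonzero, which by one-dimensionality spans $\Hom_{\mathcal O}(M(x),M(y))$; hence every nonzero homomorphism $M(x)\to M(y)$ is a scalar multiple of it and is injective.
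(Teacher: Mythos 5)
Your reduction contains genuine gaps, the most serious being the one you flag yourself: the claim that $\Ext^1_{\widetilde{\mathcal{O}}}(\widetilde{M}(x),\widetilde{M}(y))$ is a \emph{cyclic} $S(V^*)$-module $S(V^*)/(f)$ for every $y<x$ is never proved, and nothing quoted from Fiebig supplies it. As you note, localization at the height-one primes $(\alpha_t)$ cannot distinguish $S(V^*)/(f)$ from a direct sum of several $S(V^*)/(\alpha_t)$'s, which would give $\dim\Hom(M(x),M(y))>1$; since by your own reduction the cyclicity is essentially equivalent to the one-dimensionality being proved, leaving it at ``I expect this step to be carried out'' leaves the theorem unestablished. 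Moreover the reduction itself is not valid as stated: the universal-coefficient short exact sequence you display holds for complexes of free modules over a hereditary ring, whereas $S(V^*)$ has global dimension $\dim V>1$. What one actually gets is a spectral sequence with $E_2$-terms $\mathrm{Tor}^{S(V^*)}_{-p}\bigl(\Ext^q_{\widetilde{\mathcal{O}}}(\widetilde{M}(x),\widetilde{M}(y)),\C\bigr)$ converging to $\Ext^{p+q}_{\mathcal{O}}(M(x),M(y))$, so in degree zero the terms $\mathrm{Tor}_i(\Ext^i,\C)$ for all $i\ge 2$ also contribute to $\Hom_{\mathcal{O}}(M(x),M(y))$, and generic vanishing only makes these $\Ext$'s torsion, which does not kill the corresponding $\mathrm{Tor}$'s. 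The injectivity step has the same character: that the (at most one-dimensional) map $M(x)\to M(tx)$ for a Bruhat cover is an embedding is asserted via the rank-one picture or ``checking directly,'' but no argument is given, and in the classical setting this is precisely the nontrivial part of Verma's theorem. Finally, several structural inputs (that the category of \emph{all} right $\widetilde{A}$-modules is a highest weight category over $S(V^*)$, and that $\widetilde{M}(x)$ admits a finite resolution by finitely generated projectives with $S(V^*)$-free syzygies) are plausible but unproved here, where $A$ need not be Noetherian.

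For comparison, the paper's proof needs none of this deformation-theoretic machinery: the vanishing for $y\not\le x$ comes directly from $\Hom(P(x),M(y))=\Hom_{S(V^*)}(\mathscr{B}(x)_y,S(V^*))\otimes_{S(V^*)}\C=0$ by the support condition on Braden--MacPherson sheaves; the existence of an injective map $M(x)\to M(y)$ for $y\le x$ is built up along a chain of simple reflections using the twisting functor and Proposition~\ref{prop:natural transformation of T_s, Verma modules case} (the Dixmier-style lemma); and the bound $\dim\Hom(M(x),M(y))\le 1$ is an induction on $\ell(x)$ using $M(x)=T_sM(sx)$, the adjunction of Theorem~\ref{thm:adjointness of T_s and C_s}, and the computation $C_sM(y)\in\{M(y),M(sy)\}$ of Proposition~\ref{prop:C_sM(x)}. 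To salvage your route you would have to prove the cyclicity of $\Ext^1$ and control all higher $\Ext$/$\mathrm{Tor}$ contributions, which is a harder problem than the theorem itself; I recommend instead arguing through $T_s$ and $C_s$ as above.
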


Final results are about the twisting functors~\cite{MR1474841}.
For a simple reflection $s$, we will define a generalization of the twisting functor $T_s$ (Section~5).
We prove the following theorem.
\begin{thm}[Proposition~\ref{prop:LT_s RC_s and Ltau_s}, Theorem~\ref{thm:LT_s is an auto-equivalence}, Theorem~\ref{thm:T_s sonnano-kankei-nee}]\label{thm:properties of twisting functor}
Let $s$ be a simple reflection.
We denote the derived functor of $T_s$ by $LT_s$.
Let $D(\mathcal{O})$ be the derived category of $\mathcal{O}$.
\begin{enumerate}
\item $L^iT_s = 0$ for $i > 1$.
\item The functor $LT_s$ gives an auto-equivalence of $D(\mathcal{O})$.
\item For a reduced expression $w = s_1\dotsm s_l$, $T_{s_1}\dotsm T_{s_l}$ is independent of the choice of a reduced expression.
\end{enumerate}
\end{thm}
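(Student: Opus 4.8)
The plan is to establish the three assertions in order, each leaning on the previous one and on the description of $T_s$ --- in particular of $T_s M(x)$ --- obtained in Section~5. For (1): by its construction in Section~5, $T_s$ is right exact and (up to a twist) of tensor type $(-)\otimes_A A_s$, where the $A$-bimodule $A_s$ arises from a flat localization, the analogue of the semiregular bimodule; concretely $A_s$ fits into a short exact sequence $0\to A\to\widetilde{A}\to A_s\to 0$ of $A$-bimodules with $\widetilde{A}$ flat as a right $A$-module. Since $A$ too is flat on the right, this is a length-one flat resolution of the right $A$-module $A_s$, so $L^iT_s=\operatorname{Tor}^A_i(A_s,-)=0$ for $i\ge 2$, which is~(1). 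In particular $LT_s$ has cohomological amplitude in $[-1,0]$ and so descends to a functor on $D(\mathcal{O})$; because $A$ is not known to be Noetherian, one argues first on $D^{-}(\mathcal{O})$ and then observes that this amplitude bound preserves boundedness.

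For (2): the candidate quasi-inverse is the right derived functor $RC_s$ of the functor $C_s$ of Proposition~\ref{prop:LT_s RC_s and Ltau_s} --- left exact and right adjoint to $T_s$ --- for which $R^iC_s=0$ $(i>1)$ by the argument dual to~(1), so that $(T_s,C_s)$ lifts to an adjunction $(LT_s,RC_s)$ of triangulated functors on $D(\mathcal{O})$. One must then see that the unit $\operatorname{Id}\to RC_s\circ LT_s$ and the counit $LT_s\circ RC_s\to\operatorname{Id}$ are isomorphisms, and the tool is Proposition~\ref{prop:LT_s RC_s and Ltau_s}, which places $LT_s$ and $RC_s$ into distinguished triangles with $\operatorname{Id}$ and the derived Zuckerman functor $L\tau_s$: applying $RC_s$ to the triangle for $LT_s$ and comparing with the triangle for $RC_s$ reduces the claim to a computation on the family $\{M(x)\}_{x\in W}$, which generates $D(\mathcal{O})$ since each $P(x)$ carries a filtration with subquotients among the $M(y)$ (the sheaf $B(x)$ admitting a Verma filtration). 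On the $M(x)$, Section~5 identifies both $LT_s M(x)$ and $RC_s M(x)$ with $M(xs)$ up to a correction ``on the $s$-wall'' that disappears when the two functors are composed; hence the unit and counit are isomorphisms and $LT_s$ is an auto-equivalence.

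For (3): by Matsumoto's theorem it is enough, for each pair $s\ne t$ with $m_{st}<\infty$, to prove $\underbrace{T_sT_tT_s\cdots}_{m_{st}}\cong\underbrace{T_tT_sT_t\cdots}_{m_{st}}$ as endofunctors of $\mathcal{O}$. First deduce from Section~5 that $L^1T_s$ kills every module with a Verma filtration, so that $T_s$ preserves the full subcategory of Verma-filtered modules and is exact there; then for any reduced word $T_{s_1}\cdots T_{s_l}$ maps projectives to Verma-filtered modules, $L(T_{s_1}\cdots T_{s_l})\cong LT_{s_1}\circ\cdots\circ LT_{s_l}$, and this composite is concentrated in degree $0$ on projectives. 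Since a right exact functor is pinned down by its values on projectives, $T_{s_1}\cdots T_{s_l}\cong H^0\circ L(T_{s_1}\cdots T_{s_l})$, and it suffices to prove the braid relation for the derived functors $LT_s$ on $D(\mathcal{O})$. As $T_s$ and $T_t$ are built out of the structure algebra $Z$ using only the data attached to $s$ and $t$, this reduces to the corresponding statement for the finite Coxeter system $(\langle s,t\rangle,\{s,t\})$, where --- using that each $LT_s$ is an auto-equivalence by~(2) and the triangles of Proposition~\ref{prop:LT_s RC_s and Ltau_s} to control the functors on the finitely many $M(x)$ and the maps between them --- the two derived composites are identified (both are the auto-equivalence sending $M(x)$ to the Verma module attached to $x$ times the longest element of $\langle s,t\rangle$, up to wall-supported corrections).

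The step I expect to be the main obstacle is precisely this rank-two braid relation for $LT_s$. In the Weyl-group case one produces the braid-group action geometrically (perverse sheaves on the flag variety, or Rouquier complexes of Soergel bimodules); for a general Coxeter system that input is missing, so the identification of $\underbrace{LT_sLT_tLT_s\cdots}_{m_{st}}$ with a single intrinsically defined auto-equivalence of the dihedral block --- not merely an identification of its effect on objects --- must be forced from the combinatorics of $\langle s,t\rangle$ together with the structural triangles, perhaps by exhibiting a bimodule $A_w$ with $A_w\otimes_A A_{w'}\cong A_{ww'}$ whenever $\ell(w)+\ell(w')=\ell(ww')$. A second difficulty present throughout is the possible non-Noetherianity of $A$: one has to verify repeatedly that the functors occurring really do preserve the relevant bounded (or bounded-above) derived categories and that the generating-set arguments stay valid there.
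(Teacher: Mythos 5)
Your treatments of (1) and (2) are essentially sound and close to the paper's: both rest on the exact sequence $0\to A\to\varphi_s A\to A\to\tau_s A\to 0$ of Theorem~\ref{thm:projective resolution of tau_s(A)} (your ``$0\to A\to\widetilde{A}\to A_s\to 0$'' is, correctly read, $0\to A\to\varphi_s(A)\to T_s(A)\to 0$ with $\varphi_s(A)$ flat by Lemma~\ref{lem:expression of varphi_s}), and the paper likewise proves (2) by checking the unit $M\to RC_sLT_sM$ on Verma modules, using Verma flags of projectives, the computations of $T_sM(x)$, $C_sM(x)$ and $\End(M(x))=\C$.

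For (3), however, there is a genuine gap, and it lies exactly where you flag it. Your plan reduces, via Matsumoto, to the rank-two braid relation for $LT_s$, and then asserts an identification of the two composites ``on the dihedral block.'' Three problems: (a) there is no dihedral block here --- the functors act on $\mathcal{O}$ for all of $W$, and no mechanism is offered for restricting the problem to the moment graph of $\langle s,t\rangle$; (b) even granting a computation of both composites on every $M(x)$ (and on projectives), agreement of two right exact functors on objects does not yield an isomorphism of functors --- one needs compatibility with morphisms, which is precisely what your sketch does not supply; (c) the multiplicative bimodule $A_w$ you invoke as a possible fix is exactly the statement to be proved, not an available tool. The paper avoids the rank-two reduction altogether: it applies the (dual) comparison lemma of Khomenko--Mazorchuk, which upgrades object-level data to a functor isomorphism provided that, for every projective $P$, the canonical maps $FP\to P$ and $GP\to P$ are injective, $FP\simeq GP$, and the two images in $P$ coincide. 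These conditions are verified by induction on $\ell(x)$ for $P=P(x)$: the base case $P(e)=M(e)$ uses $T_{s_1}\dotsm T_{s_l}M(e)=M(w)$ (Proposition~\ref{prop:natural transformation of T_s, Verma modules case}) together with $\dim\Hom(M(w),M(e))=1$ (Theorem~\ref{thm:Homomorphisms between Verma modules}) to force equality of images; the inductive step uses the commutation of $T_s$ with the translation functors $\theta_t$ (Theorem~\ref{thm:theta and varphi commute, in O}) and the fact that $P(x)$ is a direct summand of $\theta_tP(xt)$. The comparison lemma and the commutation $T_s\theta_t\simeq\theta_tT_s$ are the two ingredients missing from your proposal, and without them (or a genuinely new construction such as your hypothetical $A_w$) the argument for (3) does not close. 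Incidentally, the non-Noetherianity worry is harmless: $\mathcal{O}$ is the category of all right $A$-modules, which has enough projectives and poses no obstruction to the derived-functor formalism used here.
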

In the case of the original BGG category, this is proved in \cite{MR1474841,MR2032059}.

We summarize the contents of this paper.
We recall results of Fiebig~\cite{MR2395170,MR2370278} in Section \ref{sec:Preliminaries}.
The category $\mathcal{O}$ and the translation functors are defined in Section \ref{sec:The category O}, and the fundamental properties are proved.
We also define an another functor $\varphi_s$.
In Section \ref{sec:Zuckerman functor}, we prove Theorem~\ref{thm:main theorem, duality of Zuckerman functor}.
The definition of the twisting functors appears in Section~\ref{sec:THe functors T_s anc C_s}, and fundamental properties are proved.
Theorem~\ref{thm:Homomorphisms between Verma modules section 1} is proved in Section~\ref{sec:Homomorphisms between Verma modules}.
We prove Theorem~\ref{thm:properties of twisting functor} in Section~\ref{sec:More about the functors T_s C_s}.

\section{Preliminaries}\label{sec:Preliminaries}
In this section, we recall results of Fiebig~\cite{MR2395170,MR2370278}.

\subsection{Moment graphs and Sheaves}\label{subsec:Moment graphs and Sheaves}
Throughout this paper, we consider $S(V^*)$ as a graded algebra for a vector space $V$ with grading $\deg V^* = 2$.
We define the grading shifts $\langle k\rangle$ by $(M\langle k\rangle)_n = M_{n - k}$ where $M = \bigoplus_{n\in\Z}M_n$ is a graded module.

\begin{defn}
Let $V$ be a vector space.
A \emph{$V^*$-moment graph} $\mathcal{G} = (\mathcal{V},\mathcal{E},h_\mathcal{G},t_\mathcal{G},l_\mathcal{G})$ is given by
\begin{itemize}
\item an ordered set $\mathcal{V}$, called the set of vertices.
\item a set $\mathcal{E}$, called the set of edges.
\item a map $t_\mathcal{G},h_\mathcal{G}\colon \mathcal{E}\to \mathcal{V}$ such that $t_\mathcal{G}(E) > h_\mathcal{G}(E)$ for all $E\in \mathcal{E}$.
\item a map $l_\mathcal{G}\colon \mathcal{E}\to \mathbb{P}^1(V^*)$.
\end{itemize}
\end{defn}
For $E\in \mathcal{E}_\mathcal{G}$, we denote $l_\mathcal{G}(E)$ by $V^*_E$.

\begin{defn}
Let $V$ be a vector space and $\mathcal{G} = (\mathcal{V},\mathcal{E},h_\mathcal{G},t_\mathcal{G},l_\mathcal{G})$ a $V^*$-moment graph.
\begin{enumerate}
\item A \emph{sheaf} $\mathscr{M} = ((\mathscr{M}_x)_{x\in \mathcal{V}},(\mathscr{M}_E)_{E\in \mathcal{E}},(\rho^\mathscr{M}_{x,E}))$ on $\mathcal{G}$ is given by
\begin{itemize}
\item a graded $S(V^*)$-module $\mathscr{M}_x$.
\item a graded $S(V^*)/V^*_ES(V^*)$-module $\mathscr{M}_E$.
\item an $S(V^*)$-module homomorphism $\rho^\mathscr{M}_{x,E}\colon \mathscr{M}_x\to \mathscr{M}_E$ for $x\in \mathcal{V}$ and $E\in \mathcal{E}$ such that $x\in\{t_\mathcal{G}(E),h_\mathcal{G}(E)\}$.
\end{itemize}
\item Let $\mathscr{M},\mathscr{N}$ be sheaves on $\mathcal{G}$.
A \emph{morphism} $f = ((f_x)_{x\in \mathcal{V}},(f_E)_{E\in \mathcal{E}})\colon \mathscr{M}\to \mathscr{N}$ is given by
\begin{itemize}
\item an $S(V^*)$-homomorphism $f_x\colon \mathscr{M}_x\to \mathscr{N}_x$.
\item an $S(V^*)$-homomorphism $f_E\colon \mathscr{M}_E\to \mathscr{N}_E$.
\item $\rho^\mathscr{N}_{x,E}\circ f_x = f_E\circ \rho^\mathscr{M}_{x,E}$.
\end{itemize}
\end{enumerate}
\end{defn}

Define a sheaf $\mathscr{A}_\mathcal{G}$ on $\mathcal{G}$ by $\mathscr{A}_\mathcal{G} = ((S(V^*))_{x\in \mathcal{V}},(S(V^*)/V^*_ES(V^*))_{E\in\mathcal{E}},(\rho_{x,E}))$ where $\rho_{x,E}$ is the canonical projection.
This sheaf is called the \emph{structure sheaf}.

For a sheaf $\mathscr{M} = ((\mathscr{M}_x)_{x\in \mathcal{V}},(\mathscr{M}_E)_{E\in\mathcal{E}},(\rho^\mathscr{M}_{x,E}))$ on $\mathcal{G}$, we can attach the space of its \emph{global sections} by
\[
	\Gamma(\mathscr{M}) = 
	\left\{
	((m_x),(m_E))\in \prod_{x\in \mathcal{V}}\mathscr{M}_x\oplus\prod_{E\in \mathcal{E}}\mathscr{M}_\mathcal{E}\mid \rho^\mathscr{M}_{x,E}(m_x) = m_E
	\right\}
\]
Put $Z_\mathcal{G} = \Gamma(\mathscr{A}_\mathcal{G})$.
Then $Z_\mathcal{G}$ has the structure of a graded $S(V^*)$-algebra and $\Gamma$ defines a functor from the category of sheaves on $\mathcal{G}$ to $\ModCat{Z_\mathcal{G}}$, here $\ModCat{Z_\mathcal{G}}$ is the category of graded $Z_\mathcal{G}$-modules.
We also define the support of $\mathscr{M}$ by $\supp\mathscr{M} = \{x\in \mathcal{V}\mid \mathscr{M}_x\ne 0\}$.
The grading shifts for a sheaf is defined by $\mathscr{M}\langle k\rangle = ((\mathscr{M}_x\langle k\rangle)_{x\in \mathcal{V}},(\mathscr{M}_E\langle k\rangle)_{E\in\mathcal{E}},(\rho_{x,E}^{\mathscr{M}}))$.
Then we have $\Gamma(\mathscr{M}\langle k\rangle) = \Gamma(\mathscr{M})\langle k\rangle$.

Let $\mathcal{V}'$ be a subset of $\mathcal{V}$.
Put $\mathcal{E}' = \{E\in \mathcal{E}\mid h_\mathcal{G}(E)\in\mathcal{V}',\ t_\mathcal{G}(E)\in\mathcal{V}'\}$.
Then $\mathcal{G}' = (\mathcal{V}',\mathcal{E}',h_\mathcal{G}|_{\mathcal{E}'},t_\mathcal{G}|_{\mathcal{E}'},l_\mathcal{G}|_{\mathcal{E}'})$ is also a $V^*$-moment graph.
For a sheaf $\mathscr{M} = ((\mathscr{M}_x)_{x\in \mathcal{V}},(\mathscr{M}_E)_{E\in\mathcal{E}},(\rho^\mathscr{M}_{x,E}))$ on $\mathcal{G}$, $((\mathscr{M}_x)_{x\in \mathcal{V}'},(\mathscr{M}_E)_{E\in\mathcal{E}'},(\rho^\mathscr{M}_{x,E}))$ is a sheaf on $\mathcal{G}'$.
We denote this sheaf by $\mathscr{M}|_{\mathcal{V}'}$.

\subsection{$Z$-module with Verma flags}\label{subgsec:Z-module with Verma flags}
By the definition, we have $Z_\mathcal{G}\subset \prod_{x\in \mathcal{V}}S(V^*)$.
For $\Omega\subset \mathcal{V}$, let $Z_\mathcal{G}^\Omega$ be the image of $Z_\mathcal{G}$ under the map $\prod_{x\in \mathcal{V}}S(V^*)\to \prod_{x\in \Omega}S(V^*)$.
Let $\ModCat{Z_\mathcal{G}}^f$ be the category of graded $Z_\mathcal{G}$-modules that are finitely generated over $S(V^*)$, torsion free over $S(V^*)$ and the action of $Z_\mathcal{G}$ factors over $Z_\mathcal{G}^\Omega$ for a finite subset $\Omega\subset \mathcal{V}$.

Let $Q$ be the quotient field of $S(V^*)$.
Since $Z_\mathcal{G} \subset \prod_{x\in \mathcal{V}}S(V^*)$, we have $Z_\mathcal{G}\otimes_{S(V^*)}Q \subset \prod_{x\in \mathcal{V}}Q$.
We also have $Z_\mathcal{G}^\Omega\otimes_{S(V^*)}Q\subset \prod_{x\in\Omega}Q$.
\begin{lem}[{\cite[Lemma~3.1]{MR2370278}}]
If $\Omega$ is finite, then $Z_\mathcal{G}^\Omega\otimes_{S(V^*)}Q = \prod_{x\in\Omega}Q$.
\end{lem}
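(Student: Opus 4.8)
The claim is that for a finite subset $\Omega\subset\mathcal{V}$, the inclusion $Z_\mathcal{G}^\Omega\otimes_{S(V^*)}Q\subset\prod_{x\in\Omega}Q$ is actually an equality. The key observation is that the support condition defining a $V^*$-moment graph only records, through the labels $V^*_E$, a finite-codimension "linkage" among the vertices, and after inverting $S(V^*)$ these labels become invertible, so all the constraints defining the fiber product disappear generically. Concretely, the plan is to exhibit, for each $y\in\Omega$, an element of $Z_\mathcal{G}$ whose image in $\prod_{x\in\Omega}Q$ is (a nonzero $Q$-multiple of) the standard idempotent $e_y$ supported at $y$; since the $e_y$ span $\prod_{x\in\Omega}Q$ over $Q$, this forces surjectivity, and the reverse inclusion is automatic.

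**Key steps.** First I would recall the explicit description of $Z_\mathcal{G}$ as $\Gamma(\mathscr{A}_\mathcal{G})$: a tuple $(z_x)_{x\in\mathcal{V}}\in\prod_x S(V^*)$ lies in $Z_\mathcal{G}$ precisely when $z_{t_\mathcal{G}(E)}\equiv z_{h_\mathcal{G}(E)}\pmod{V^*_E S(V^*)}$ for every edge $E$. Second, fix $y\in\Omega$. Choose for each edge $E$ a linear form $\alpha_E\in V^*$ spanning the line $V^*_E$, and consider the element $z$ of $\prod_{x\in\mathcal{V}}S(V^*)$ defined by $z_x=\prod_{E}\alpha_E^{n_E(x)}$ where the product runs over a suitable finite set of edges and the exponents $n_E(x)$ are chosen so that: (a) the congruences along all edges are satisfied — if an edge $E$ has $\alpha_E$ dividing $z_x$ for at least one of its two endpoints, one arranges $\alpha_E\mid z_{t(E)}-z_{h(E)}$ — and (b) $z_x=0$ for $x\in\Omega\setminus\{y\}$ (e.g. by forcing enough factors) while $z_y\ne 0$. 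The cleanest way to realize this is the construction used by Fiebig: since $\Omega$ is finite, only finitely many edges among vertices in $\Omega$ are relevant, so set $z_x = \prod\{\alpha_E : E \text{ an edge with } h_\mathcal{G}(E), t_\mathcal{G}(E)\in\Omega,\ x\in\{h_\mathcal{G}(E),t_\mathcal{G}(E)\},\ \text{the other endpoint} \ne y\}$ for $x\ne y$ and $z_y=\prod\{\alpha_E: h_\mathcal{G}(E),t_\mathcal{G}(E)\in\Omega,\ y\in\{h(E),t(E)\}\}$; one checks the edge congruences edge by edge (for an edge touching $y$, $\alpha_E$ divides both $z_y$ and $z_{\text{other endpoint}}$; for an edge not touching $y$, $\alpha_E$ divides $z_x$ for both endpoints). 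One must also extend $z$ to all of $\mathcal{V}$, not just $\Omega$; here one uses that the action of $Z_\mathcal{G}$ factors through $Z_\mathcal{G}^\Omega$-type data and that $Z_\mathcal{G}^\Omega$ is by definition the image of the global projection, so it suffices to produce the germ on $\Omega$ consistent with the edges internal to $\Omega$ — but to be safe one can take $\Omega$ replaced by a larger finite subgraph and note the argument is insensitive to this. Third, the image of $z$ in $\prod_{x\in\Omega}Q$ is $z_y\,e_y$ after dividing by the (nonzero) scalar $z_y\in Q^\times$; hence $e_y\in Z_\mathcal{G}^\Omega\otimes Q$ for every $y\in\Omega$, giving the claim.

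**Main obstacle.** The genuinely delicate point is step two: producing a global section of the structure sheaf on the whole graph $\mathcal{G}$ (not merely a tuple satisfying the edge relations within $\Omega$) whose restriction to $\Omega$ is supported at a single vertex. Edges leaving $\Omega$ impose further congruences, and one needs the definition of $Z_\mathcal{G}^\Omega$ as the image of $Z_\mathcal{G}\to\prod_{x\in\Omega}S(V^*)$ to be exploited correctly: the right framing is that one only needs an element of $Z_\mathcal{G}$ whose $\Omega$-component is $z_y e_y$, and such an element exists because one can always pad the components outside $\Omega$ (e.g. set them all equal to $z_y$ itself times an appropriate product of $\alpha_E$'s, or invoke that $\mathscr{A}_\mathcal{G}$ admits enough sections — this is exactly the content needed from Fiebig's setup). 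I would isolate this as the one lemma to prove carefully and let the rest follow formally. Everything else — that the $e_y$ span $\prod_{x\in\Omega}Q$ over $Q$, that tensoring with $Q$ is exact so the image of $Z_\mathcal{G}\otimes Q$ is $Z_\mathcal{G}^\Omega\otimes Q$, and that the reverse inclusion holds — is routine.
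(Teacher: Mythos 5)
Your formal reductions are fine (flatness of $Q$ gives injectivity, and it suffices to produce, for each $y\in\Omega$, an element of $Z_\mathcal{G}$ whose restriction to $\Omega$ is a nonzero multiple of $e_y$); this is indeed the standard strategy for this result, which the paper itself does not prove but quotes from Fiebig. The problem is that your concrete construction does not do what you need, and not only because of the extension outside $\Omega$. With your definition, the components $z_x$ for $x\in\Omega\setminus\{y\}$ are (possibly empty) products of linear forms, hence nonzero, so the restriction of $z$ to $\Omega$ is not a multiple of $e_y$ and your last step ``the image of $z$ in $\prod_{x\in\Omega}Q$ is $z_y e_y$'' breaks down. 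Moreover the congruences you claim to verify fail: for an edge $E$ inside $\Omega$ joining $y$ to $v$, the form $\alpha_E$ divides $z_y$ but was deliberately excluded from the product defining $z_v$ (``other endpoint $\ne y$''), so in general $z_y\not\equiv z_v\pmod{\alpha_E}$; already for a graph with two vertices and one edge your element is $(1,\alpha_E)$, which is not a section. The recipe that does work at a vertex of finite degree is $z_y=\prod_{E\ni y}\alpha_E$ over \emph{all} edges at $y$ and $z_x=0$ at every other vertex of $\mathcal{V}$; but in the graphs relevant here (the Coxeter graph of an infinite $W$) each vertex lies on infinitely many edges with pairwise non-proportional labels, so no nonzero polynomial is divisible by all of them and no section supported at a single vertex exists. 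Your suggested paddings do not repair this: setting the components outside $\Omega$ equal to $z_y$, or to anything manufactured from the finitely many labels inside $\Omega$, violates the congruence along any edge joining a vertex of $\Omega\setminus\{y\}$ (where $z$ must vanish) to the complement, and ``invoke that $\mathscr{A}_\mathcal{G}$ admits enough sections'' is circular, since that is exactly what the lemma asserts.

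In fact no argument using only the finitely many edges inside $\Omega$ plus generic padding can be complete, because for an arbitrary $V^*$-moment graph the statement is false: if two vertices of $\Omega$ are each joined to a common third vertex by infinitely many edges with pairwise non-proportional labels, then $Z_\mathcal{G}$ reduces to the diagonal and $Z_\mathcal{G}^\Omega\otimes_{S(V^*)}Q$ is one-dimensional over $Q$. What saves the lemma in Fiebig's setting is the existence of sections that are global from the outset, for instance the twisted diagonals $z(\lambda)=(w(\lambda))_{w\in W}$ for $\lambda\in V^*$ (the element $c_s=(w(\alpha_s))_w$ used in the paper is such a section): for an edge $\{x,tx\}$ one has $x(\lambda)-tx(\lambda)\in\C\alpha_t$, so $z(\lambda)\in Z_\mathcal{G}$. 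Choosing $\lambda$ generic so that the finitely many values $x(\lambda)$, $x\in\Omega$, are pairwise distinct (faithfulness of $V$), the Lagrange-type product $\prod_{x\in\Omega\setminus\{y\}}\bigl(z(\lambda)-x(\lambda)\bigr)\in Z_\mathcal{G}$ restricts on $\Omega$ to $\prod_{x\in\Omega\setminus\{y\}}\bigl(y(\lambda)-x(\lambda)\bigr)\,e_y$, a nonzero multiple of $e_y$; equivalently one checks that the image of $Z_\mathcal{G}$ generates a $Q$-subalgebra of $\prod_{x\in\Omega}Q$ separating the coordinates, hence equal to all of it. Because these sections are defined on the whole graph, the edges leaving $\Omega$ --- your ``main obstacle'' --- impose nothing further; this global input is the idea missing from your proposal.
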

For $x\in \mathcal{V}$, put $e_x = (\delta_{xy})_y\in\prod_{y\in \mathcal{V}}Q$ where $\delta$ is Kronecker's delta.
Let $M$ be an object of $\ModCat{Z_\mathcal{G}}^f$ and take a finite subset $\Omega\subset \mathcal{V}$ such that the action of $Z_\mathcal{G}$ on $M$ factors over $Z_\mathcal{G}^\Omega$.
For $x\in\Omega$, put $M_Q^x = e_x(Q\otimes_{S(V^*)}M)$.
Set $M_Q^x = 0$ for $x\in\mathcal{V}\setminus \Omega$.
Then we have $M_Q = \bigoplus_{x\in \mathcal{V}}M_Q^x$ where $M_Q = Q\otimes_{S(V^*)}M$.
These are independent of a choice of $\Omega$.
Since $M$ is torsion-free, $M\subset M_Q$.

\begin{defn}
For $M\in\ModCat{Z_\mathcal{G}}^f$, $\Omega\subset\mathcal{V}$, put
\[
	M_\Omega = M\cap \bigoplus_{x\in \Omega}M_Q^x,
\]
and set
\[
	M^\Omega = \im\left(M\to M_Q\to \bigoplus_{x\in \Omega}M_Q^x\right).
\]
\end{defn}

A subset $\Omega\subset\mathcal{V}$ is called \emph{upwardly closed} if $x\in\Omega, y\ge x$ implies $y\in\Omega$.

\begin{defn}
We say that $M\in\ModCat{Z_\mathcal{G}}^f$ \emph{admits a Verma flag} if the module $M^\Omega$ is a graded free $S(V^*)$-module for each upwardly closed $\Omega$.
\end{defn}
Let $\mathcal{M}_\mathcal{G}$ be a full-subcategory of $\ModCat{Z_\mathcal{G}}^f$ consisting of the object which admits a Verma flag.

\begin{rem}
Fiebig~\cite{MR2395170,MR2370278} uses a notation $\mathcal{V}$ for the category of modules which admits a Verma flag.
Because we denote the set of vertices by $\mathcal{V}$, we use a different notation.
\end{rem}

The category $\mathcal{M}_\mathcal{G}$ is not an abelian category.
However, $\mathcal{M}_\mathcal{G}$ has a structure of an exact category~\cite[4.1]{MR2370278}.
\begin{defn}
Let $M_1\to M_2\to M_3$ be a sequence in $\mathcal{M}_\mathcal{G}$.
We say that it is short exact if and only if for each upwardly closed subset $\Omega$ the sequence $0\to M_1^\Omega\to M_2^\Omega\to M_3^\Omega\to 0$ is an exact sequence of $S(V^*)$-modules.
\end{defn}

\subsection{Localization functor}
Let $\Sheaf(\mathcal{G})$ be the category of sheaves $\mathscr{M}$ on $\mathcal{G}$ such that $\supp\mathscr{M}$ is finite and $\mathscr{M}_x$ is finitely generated and torsion free $S(V^*)$-module for each $x\in \mathcal{V}$.
Then we have $\Gamma(\Sheaf(\mathcal{G}))\subset \ModCat{Z}^f$.
\begin{prop}[Fiebig~\cite{MR2370278}]
The functor $\Gamma\colon \Sheaf(\mathcal{G})\to \ModCat{Z}^f$ has the left adjoint functor $\mathscr{L}$.
\end{prop}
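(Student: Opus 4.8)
The plan is to construct the left adjoint $\mathscr{L}$ explicitly, following the standard pattern for left adjoints of section functors on (moment) graphs, and to verify the adjunction isomorphism directly. First I would note that $\ModCat{Z}^f$-modules are built from the ambient product $\prod_{x\in\mathcal{V}}S(V^*)$ via the decomposition $M_Q=\bigoplus_x M_Q^x$, so a module $M$ comes equipped with canonical "stalk" pieces $M^{\{x\}}$ and, for each edge $E$ with vertices $h_\mathcal{G}(E),t_\mathcal{G}(E)$, a canonical two-vertex piece $M^{\{h_\mathcal{G}(E),t_\mathcal{G}(E)\}}$. The natural candidate is therefore
\[
	(\mathscr{L}M)_x = M^{\{x\}},\qquad (\mathscr{L}M)_E = M^{\{h_\mathcal{G}(E),t_\mathcal{G}(E)\}}\big/V^*_E\,M^{\{h_\mathcal{G}(E),t_\mathcal{G}(E)\}},
\]
with the maps $\rho^{\mathscr{L}M}_{x,E}$ induced by the projections $M^{\{h_\mathcal{G}(E),t_\mathcal{G}(E)\}}\to M^{\{x\}}$ composed with the quotient. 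One must check this is a well-defined object of $\Sheaf(\mathcal{G})$: the support is contained in $\Omega$ (hence finite), each $(\mathscr{L}M)_x=M^{\{x\}}$ is a quotient of $M$ inside $M_Q^x$, hence finitely generated, and it is torsion free because it sits inside the $Q$-vector space $M_Q^x$; the edge modules are annihilated by $V^*_E$ by construction.

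Next I would construct the unit and counit of the adjunction. For $M\in\ModCat{Z}^f$ there is a tautological map $M\to \Gamma(\mathscr{L}M)$ sending $m$ to the tuple of its images in the various $M^{\{x\}}$ and $M^{\{h_\mathcal{G}(E),t_\mathcal{G}(E)\}}$; compatibility with the $\rho$'s is immediate since all the pieces are obtained from the single element $m\in M_Q$. For a sheaf $\mathscr{N}\in\Sheaf(\mathcal{G})$ I would build a counit $\mathscr{L}\Gamma(\mathscr{N})\to\mathscr{N}$: an element of $\Gamma(\mathscr{N})$ is a compatible tuple $(n_x)$, and evaluation at a vertex $x$ gives a map $\Gamma(\mathscr{N})\to\mathscr{N}_x$ which factors through $\Gamma(\mathscr{N})^{\{x\}}=(\mathscr{L}\Gamma(\mathscr{N}))_x$ because the kernel of evaluation at $x$ is exactly the part of $\Gamma(\mathscr{N})$ supported away from $x$ after tensoring with $Q$. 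Then I would verify the two triangle identities, which amounts to a diagram chase through these evaluation maps.

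Finally, to conclude I would establish the adjunction bijection $\Hom_{\Sheaf(\mathcal{G})}(\mathscr{L}M,\mathscr{N})\cong\Hom_{\ModCat{Z}^f}(M,\Gamma(\mathscr{N}))$ directly: given $f\colon\mathscr{L}M\to\mathscr{N}$, compose $M\to\Gamma(\mathscr{L}M)\xrightarrow{\Gamma(f)}\Gamma(\mathscr{N})$; conversely, given $g\colon M\to\Gamma(\mathscr{N})$, use that $g$ respects the $Q$-decompositions to see that $g$ induces maps $M^{\{x\}}\to\mathscr{N}_x$ and $M^{\{h_\mathcal{G}(E),t_\mathcal{G}(E)\}}\to\mathscr{N}_E$ (the latter after quotienting by $V^*_E$, which is legitimate since $\mathscr{N}_E$ is already a module over $S(V^*)/V^*_ES(V^*)$), and check these assemble into a sheaf morphism $\mathscr{L}M\to\mathscr{N}$. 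The main obstacle I anticipate is the well-definedness of the edge maps $\rho^{\mathscr{L}M}_{x,E}$ and of the induced morphism on edges in the last step: one needs that the projection $M^{\{h_\mathcal{G}(E),t_\mathcal{G}(E)\}}\to M^{\{x\}}$ is compatible with reduction mod $V^*_E$ and that a homomorphism into a $V^*_E$-torsion module automatically kills $V^*_E\,M^{\{h_\mathcal{G}(E),t_\mathcal{G}(E)\}}$ — routine once set up, but it is where the grading and the torsion-freeness hypotheses on $\Sheaf(\mathcal{G})$ are genuinely used. Naturality in both variables is then a formal consequence of the explicit formulas.
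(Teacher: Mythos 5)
The vertex part of your construction is right ($(\mathscr{L}M)_x=M^{\{x\}}$ is exactly what the localization does; note the paper itself offers no proof and simply cites Fiebig), but the edge part is where the content lies, and there your construction breaks down. The map $\rho^{\mathscr{L}M}_{x,E}$ has to go \emph{from} $M^{\{x\}}$ \emph{to} $(\mathscr{L}M)_E$, while the projection $M^{\{h_\mathcal{G}(E),t_\mathcal{G}(E)\}}\to M^{\{x\}}$ goes the other way, and it does not ``induce'' anything: its kernel, namely $M^{\{x,y\}}\cap M_Q^{y}$ (with $y$ the other endpoint), is not contained in $V^*_E\,M^{\{x,y\}}$. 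Concretely, take the moment graph with two vertices $x,y$ joined by one edge labelled $\alpha$ and $M=Z=\{(f,g)\in S(V^*)^{\oplus 2}\mid f\equiv g \bmod \alpha\}$: then $M^{\{x\}}=M^{\{y\}}=S(V^*)$, $M^{\{x,y\}}=Z$, the element $(0,\alpha)$ lies in $\Ker\bigl(Z\to M^{\{x\}}\bigr)$ but not in $\alpha Z$, so there is no induced map $M^{\{x\}}\to Z/\alpha Z$. Moreover, even as a module your candidate $(\mathscr{L}M)_E=M^{\{x,y\}}/V^*_EM^{\{x,y\}}$ is wrong: in the same example $Z/\alpha Z$ is free of rank $2$ over $S(V^*)/\alpha$, whereas the left adjoint must send $Z$ to the structure sheaf $\mathscr{A}_\mathcal{G}$ (one checks directly that $\Hom_{\Sheaf(\mathcal{G})}(\mathscr{A}_\mathcal{G},\mathscr{N})\simeq\Gamma(\mathscr{N})=\Hom_Z(Z,\Gamma(\mathscr{N}))$ naturally), whose edge component is $S(V^*)/\alpha$ of rank $1$. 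The underlying reason is that for the adjunction bijection a morphism out of $\mathscr{L}M$ must be determined by its vertex components, so $(\mathscr{L}M)_E$ must be \emph{generated} by the images of $\rho_{x,E}$ and $\rho_{y,E}$, with exactly the relations forced by the section condition $\rho_{x,E}(m_x)=\rho_{y,E}(m_y)$ and by $V^*_E$ acting by zero; your quotient imposes neither the generation nor the identification of the two vertex images.

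The repair is to take, for an edge $E$ with endpoints $x,y$,
\[
(\mathscr{L}M)_E=\bigl(M^{\{x\}}\oplus M^{\{y\}}\bigr)\big/\Bigl(\{(m^x,-m^y)\mid m\in M\}+V^*_E\bigl(M^{\{x\}}\oplus M^{\{y\}}\bigr)\Bigr),
\]
with $\rho_{x,E}$ and $\rho_{y,E}$ induced by the two inclusions; this is (up to obvious isomorphism) Fiebig's definition. With this edge object the rest of your outline does work and is the standard argument: the unit $M\to\Gamma(\mathscr{L}M)$ is the tautological one; the counit uses that evaluation $\Gamma(\mathscr{N})\to\mathscr{N}_x$ factors through $\Gamma(\mathscr{N})^{\{x\}}$, which is proved exactly as you anticipate by choosing $z\in Z$ whose image in $Z^\Omega\otimes_{S(V^*)}Q=\prod_{w\in\Omega}Q$ is $p\,e_x$ for some nonzero $p\in S(V^*)$ and using that $\mathscr{N}_x$ is torsion free; and the edge map attached to $g\colon M\to\Gamma(\mathscr{N})$ is defined on the generators $\rho_{x,E}(m^x)$ by $\rho^{\mathscr{N}}_{x,E}(g(m)_x)$, the imposed relations being precisely what makes this well defined. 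So the gap is not in the overall strategy but in the definition of the edge components and their structure maps, which is the actual substance of the localization functor.
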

The functor $\mathscr{L}$ is called the \emph{localization functor}.
%

For an image of $\mathcal{M}_\mathcal{G}$ under $\mathscr{L}$, we have the following proposition.
For a sheaf $\mathscr{M}$ on $\mathcal{G}$ and $x\in \mathcal{V}$, put
\[
	\mathscr{M}^{[x]} = \Ker\left(\mathscr{M}_x\to \bigoplus_{h_\mathcal{G}(E) = x}\mathscr{M}_E\right).
\]
A sheaf $\mathscr{M}$ is called \emph{flabby} if $\Gamma(\mathscr{M})\to \Gamma(\mathscr{M}|_\Omega)$ is surjective for all upwardly closed set $\Omega$.
\begin{prop}[\cite{MR2370278}]
\begin{enumerate}
\item The functor $\mathscr{L}$ is fully-faithful on $\mathcal{M}_\mathcal{G}$.
\item For $M\in \ModCat{Z_\mathcal{G}}^f$, put $\mathscr{M} = \mathscr{L}(M)$.
Then $M$ admits a Verma flag if and only if $\mathscr{M}$ is flabby and $\mathscr{M}^{[x]}$ is graded free for all $x\in\mathcal{V}$.
\end{enumerate}
\end{prop}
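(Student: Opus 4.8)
Since $\mathscr{L}$ is left adjoint to $\Gamma$, the first assertion amounts to showing that the adjunction unit $\eta_M\colon M\to\Gamma(\mathscr{L}(M))$ is an isomorphism for every $M\in\mathcal{M}_\mathcal{G}$ (a left adjoint is fully faithful precisely when its unit is invertible). The plan is to prove this by induction on the cardinality of the finite set $\supp M$. The case $\supp M=\emptyset$ is trivial, and if $\supp M=\{x\}$ then $M=M^{\{x\}}$ is a graded free $S(V^*)$-module on which the $Z_\mathcal{G}$-action factors through $Z_\mathcal{G}^{\{x\}}=S(V^*)$; here $\mathscr{L}(M)$ is the skyscraper sheaf concentrated at $x$ with stalk $M$, $\Gamma$ returns $M$, and $\eta_M$ is the identity.

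For the inductive step I would choose $x$ minimal in $\supp M$ and put $\Omega=\{y\in\mathcal{V}\mid y\not\le x\}$, an upwardly closed set with $\supp M\cap(\mathcal{V}\setminus\Omega)=\{x\}$. Since $M$ admits a Verma flag, $0\to M_{\mathcal{V}\setminus\Omega}\to M\to M^\Omega\to 0$ is a short exact sequence in the exact category $\mathcal{M}_\mathcal{G}$, with outer terms supported on $\{x\}$ and on $\supp M\setminus\{x\}$ respectively, both of strictly smaller support and both admitting Verma flags. The structural input needed is that the localization functor and $\Gamma$ are compatible with restriction to an upwardly closed subset (so that $\mathscr{L}(M)|_\Omega$ is computed by the corresponding construction on the restricted moment graph, and $\Gamma$ of it computes global sections there), together with the fact that $\mathscr{L}$ carries short exact sequences of $\mathcal{M}_\mathcal{G}$ to sequences of sheaves that remain exact after applying $\Gamma$. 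Granting this, one obtains a commutative ladder whose rows are the displayed sequence and its image under $\Gamma\circ\mathscr{L}$ and whose vertical maps are the units; the five lemma reduces the isomorphy of $\eta_M$ to that of $\eta_{M_{\mathcal{V}\setminus\Omega}}$ and $\eta_{M^\Omega}$, which hold by induction (using the one-point base case for the former). This gives (1).

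For (2), the direction ($\Rightarrow$) uses (1): if $M$ admits a Verma flag then $M\cong\Gamma(\mathscr{M})$, and for every upwardly closed $\Omega$ the canonical surjection $M\twoheadrightarrow M^\Omega$ is, via the above compatibility, identified with $\Gamma(\mathscr{M})\to\Gamma(\mathscr{M}|_\Omega)$, which is therefore surjective; this is flabbiness. For the costalks, filter an upwardly closed $\Omega$ by deleting minimal elements one at a time; at the stage deleting $x$ one gets a short exact sequence of graded free $S(V^*)$-modules $0\to K_x\to M^\Omega\to M^{\Omega\setminus\{x\}}\to 0$ whose quotient is free, hence projective, so the sequence splits and $K_x$ is graded projective, hence graded free by graded Nakayama; and the explicit form of $\mathscr{L}$ identifies $K_x$ with $\mathscr{M}^{[x]}$ up to a grading shift, so $\mathscr{M}^{[x]}$ is graded free. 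For ($\Leftarrow$), assume $\mathscr{M}=\mathscr{L}(M)$ is flabby with all $\mathscr{M}^{[x]}$ graded free. Running the same support filtration in the other direction — flabbiness gives $\Gamma(\mathscr{M})^\Omega=\Gamma(\mathscr{M}|_\Omega)$, whose subquotients along the filtration are shifted copies of the $\mathscr{M}^{[x]}$, hence graded free, and an iterated extension of graded free $S(V^*)$-modules is graded free — shows that $\Gamma(\mathscr{M})$ admits a Verma flag. Finally the unit $M\to\Gamma(\mathscr{M})$ is injective because $M$ is torsion free and $\mathscr{L}(M)$ has the same generic fibre $M_Q=\bigoplus_xM_Q^x$ as $M$, and flabbiness of $\mathscr{L}(M)$ upgrades this inclusion to an equality, so $M\cong\Gamma(\mathscr{M})$ admits a Verma flag.

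The main obstacle is not the inductions, which are routine once the framework is set up, but the bookkeeping around the localization functor itself: one needs the precise compatibility of $\mathscr{L}$ with restriction to upwardly closed subsets, the description of the generic stalks and of the costalks $\mathscr{L}(M)^{[x]}$ as genuine subquotients of $M$, and — for the ($\Leftarrow$) direction of (2) — the statement that flabbiness of $\mathscr{L}(M)$ forces the adjunction unit to be an isomorphism rather than a proper inclusion. These are exactly the properties of $\mathscr{L}$ established by Fiebig, on which the whole argument rests.
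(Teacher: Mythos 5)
First, note that the paper does not prove this proposition at all: it is imported verbatim from Fiebig \cite{MR2370278}, so the only thing to compare your argument with is Fiebig's own development, whose general shape (adjunction unit, induction over upwardly closed subsets, costalks as subquotients of a Verma flag) your outline does resemble.

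As a proof, however, your proposal has genuine gaps, because at each decisive point you defer to ``properties of $\mathscr{L}$ established by Fiebig'' which are essentially the content of the proposition itself. Concretely: (i) in the induction for (1) you need the sequence $0\to \Gamma\mathscr{L}(M_{\mathcal{V}\setminus\Omega})\to\Gamma\mathscr{L}(M)\to\Gamma\mathscr{L}(M^\Omega)\to 0$ to be exact; this is not formal, since $\mathscr{L}$ is only a left adjoint (hence not left exact) and $\Gamma$ is not right exact, and the needed surjectivity on global sections is exactly the flabbiness statement of (2), so (1) and (2) cannot be cleanly separated the way you propose. (ii) In (2)($\Rightarrow$) you use the identifications $M^\Omega\cong\Gamma(\mathscr{L}(M)|_\Omega)$ and $K_x\cong\mathscr{L}(M)^{[x]}$; only the inclusions $M^\Omega\hookrightarrow\Gamma(\mathscr{L}(M)|_\Omega)$ and $K_x\subset\mathscr{L}(M)^{[x]}$ come for free from the construction of $\mathscr{L}$, and the reverse containments are again surjectivity statements of the same nature as flabbiness. (iii) Most seriously, in (2)($\Leftarrow$) your argument only shows (granting the filtration step) that $\Gamma(\mathscr{L}(M))$ admits a Verma flag; to transfer this to $M$ you assert that ``flabbiness of $\mathscr{L}(M)$ upgrades the inclusion $M\subset\Gamma(\mathscr{L}(M))$ to an equality.'' No justification is given, and none is available at this level of generality: a priori $M$ is merely a torsion-free submodule of $\Gamma(\mathscr{L}(M))$ of full generic rank, and a submodule of a graded free $S(V^*)$-module need not be graded free, so without surjectivity of the unit the conclusion about $M^\Omega$ does not follow. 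Establishing that surjectivity (and the surjectivities in (i)--(ii)) is precisely the nontrivial inductive analysis in Fiebig's paper; with them assumed, your five-lemma and filtration arguments are routine, so the proposal should be regarded as a plausible reduction to Fiebig's results rather than an independent proof.
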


For $x\in\mathcal{V}$, define a sheaf $\mathscr{V}(x)$ by 
\begin{align*}
\mathscr{V}(x)_y & = 
\begin{cases}
S(V^*) & (y = x),\\
0 & (y\ne x),
\end{cases}\\
\mathscr{V}(x)_E & = 0.
\end{align*}
The sheaf $\mathscr{V}(x)$ is called a \emph{Verma sheaf} and its global section $V(x) = \Gamma(\mathscr{V}(x))$ is called a \emph{Verma module}.
The module $V(x)$ admits a Verma flag for all $x\in\mathcal{V}$.

\subsection{Projective object in $\mathcal{M}_\mathcal{G}$}\label{seubsec:Projective object in M}
Let $\mathcal{G} = (\mathcal{V},\mathcal{E},h_\mathcal{G},t_\mathcal{G},l_\mathcal{G})$ be a $V^*$-moment graph.
Since $\mathcal{M}_\mathcal{G}$ is an exact category, we can define the notion of a projective object in $\mathcal{M}_\mathcal{G}$.
We can also define the notion of a projective object in $\mathscr{L}(\mathcal{M}_\mathcal{G})$ since $\mathscr{L}$ is fully-faithful on $\mathcal{M}_\mathcal{G}$.

\begin{thm}[{\cite[Theorem~5.2]{MR2370278}}]\label{thm:classification of projective object}
For each $x\in \mathcal{V}$ there exists an indecomposable projective object $\widetilde{\mathscr{B}}(X)\in \mathscr{L}(\mathcal{M}_\mathcal{G})$ such that $\widetilde{\mathscr{B}}(x)_x\simeq S(V^*)$ and $\supp\widetilde{\mathscr{B}}(x)\subset\{y\mid y\le x\}$.

Moreover, a projective object in $\mathscr{L}(\mathcal{M}_\mathcal{G})$ is a direct sum of $\{\widetilde{\mathscr{B}}(x)\langle k\rangle\mid x\in\mathcal{V},\ k\in\Z\}$.
\end{thm}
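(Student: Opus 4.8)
The plan is to construct the objects $\widetilde{\mathscr{B}}(x)$ by the Braden--MacPherson recursion and then verify, in order, that each lies in $\mathscr{L}(\mathcal{M}_\mathcal{G})$, that it is projective, and that it is indecomposable; the classification of \emph{all} projectives is then a separate induction on the size of the support. For the construction, fix $x$ and build a sheaf by downward induction along the order on $\{y\mid y\le x\}$: put $\widetilde{\mathscr{B}}(x)_x=S(V^*)$, set $\widetilde{\mathscr{B}}(x)_E=0$ whenever $t_\mathcal{G}(E)\not\le x$, and, assuming the sheaf has been defined on the full subgraph with vertices $\{z\mid y<z\le x\}$, for each edge $E$ with $h_\mathcal{G}(E)=y$ put $\widetilde{\mathscr{B}}(x)_E=\widetilde{\mathscr{B}}(x)_{t_\mathcal{G}(E)}/V^*_E\widetilde{\mathscr{B}}(x)_{t_\mathcal{G}(E)}$ with $\rho_{t_\mathcal{G}(E),E}$ the projection, let $N_y$ be the image of the composite $\Gamma(\widetilde{\mathscr{B}}(x)|_{\{z\mid y<z\le x\}})\to\bigoplus_{h_\mathcal{G}(E)=y}\widetilde{\mathscr{B}}(x)_{t_\mathcal{G}(E)}\to\bigoplus_{h_\mathcal{G}(E)=y}\widetilde{\mathscr{B}}(x)_E$, let $\widetilde{\mathscr{B}}(x)_y\to N_y$ be a minimal graded free cover (which exists because $S(V^*)$ is nonnegatively graded with $S(V^*)_0=\C$, so graded Nakayama applies), and let $\rho_{y,E}$ be the induced maps. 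By construction $\supp\widetilde{\mathscr{B}}(x)\subseteq\{y\mid y\le x\}$ and $\widetilde{\mathscr{B}}(x)_x\simeq S(V^*)$.

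To see $\widetilde{\mathscr{B}}(x)\in\mathscr{L}(\mathcal{M}_\mathcal{G})$, by the characterization of objects admitting a Verma flag recalled above it suffices to check that $\widetilde{\mathscr{B}}(x)$ is flabby, that each $\widetilde{\mathscr{B}}(x)^{[y]}$ is graded free, and that the counit $\mathscr{L}\Gamma(\widetilde{\mathscr{B}}(x))\to\widetilde{\mathscr{B}}(x)$ is an isomorphism. Flabbiness is exactly what the recursion is designed to produce: given an upwardly closed $\Omega$ and a section of $\widetilde{\mathscr{B}}(x)|_\Omega$, extend it one vertex at a time down the order, noting that at a vertex $y\notin\Omega$ the compatibility conditions prescribe an element of $N_y$, which lifts to $\widetilde{\mathscr{B}}(x)_y$ by surjectivity of the cover. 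Freeness of $\widetilde{\mathscr{B}}(x)_y$ holds by construction, and freeness of $\widetilde{\mathscr{B}}(x)^{[y]}=\Ker(\widetilde{\mathscr{B}}(x)_y\to\bigoplus_{h_\mathcal{G}(E)=y}\widetilde{\mathscr{B}}(x)_E)$ is a local statement about this boundary map that can be checked directly. Finally, $\widetilde{\mathscr{B}}(x)$ is generated by its global sections in the sense making the counit an isomorphism, which again follows from flabbiness.

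For projectivity, since $\mathscr{L}$ is fully faithful on $\mathcal{M}_\mathcal{G}$ and left adjoint to $\Gamma$, it is enough to lift morphisms $\Gamma(\widetilde{\mathscr{B}}(x))\to M_3$ along an admissible epimorphism $M_2\to M_3$ in $\mathcal{M}_\mathcal{G}$; equivalently, to lift sheaf maps $\widetilde{\mathscr{B}}(x)\to\mathscr{L}(M_3)$ along $\mathscr{L}(M_2)\to\mathscr{L}(M_3)$. One does this vertex by vertex down the order: the value at $x$ lifts because $\widetilde{\mathscr{B}}(x)_x=S(V^*)$ is free and $M_2^\Omega\to M_3^\Omega$ is surjective for every upwardly closed $\Omega$ (the defining property of an admissible epimorphism); at each later vertex $y$ one must lift compatibly with the edge maps already fixed, and here freeness of $\widetilde{\mathscr{B}}(x)_y$ kills the obstruction while flabbiness of $\widetilde{\mathscr{B}}(x)$ glues the local lifts into an actual morphism. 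Indecomposability then follows because $\End(\widetilde{\mathscr{B}}(x))$ is local: it is nonnegatively graded with degree-zero part $\C$ (an endomorphism vanishing on the stalk $\widetilde{\mathscr{B}}(x)_x$ vanishes identically, by flabbiness and downward induction), so its unique maximal ideal is the part of positive degree, and every non-isomorphism lies in it and is topologically nilpotent.

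For the decomposition, let $\mathscr{P}\in\mathscr{L}(\mathcal{M}_\mathcal{G})$ be projective and argue by induction on $|\supp\mathscr{P}|$. Choose $x$ maximal in $\supp\mathscr{P}$; since $\mathscr{P}$ is the localization of a module with a Verma flag, $\mathscr{P}_E=0$ whenever $t_\mathcal{G}(E)\notin\supp\mathscr{P}$, so $\mathscr{P}^{[x]}=\mathscr{P}_x$, which is graded free. A homogeneous basis of $\mathscr{P}_x$ gives a morphism $\bigoplus_i\widetilde{\mathscr{B}}(x)\langle k_i\rangle\to\mathscr{P}$ that is an isomorphism on the stalk at $x$; by projectivity of the source it splits off, and the complementary summand is projective with strictly smaller support, so the induction closes. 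The technical heart of the argument is the projectivity step: making the vertex-by-vertex lifting genuinely work forces one to choose the local lifts compatibly with the edge maps and then assemble them, so the inductions down the order and on the support must be interlocked carefully. Equivalently, the crux is the vanishing of $\Ext^1_{\mathcal{M}_\mathcal{G}}(\widetilde{\mathscr{B}}(x),\mathscr{V}(y)\langle k\rangle)$ for all $y$ and $k$, after which one bootstraps to arbitrary targets along their Verma flags; I expect this $\Ext$ computation — precisely the place where the minimality of the graded free covers used in the construction matters — to be the main obstacle.
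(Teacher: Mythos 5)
First, a point of orientation: the paper does not prove this statement at all --- it is quoted verbatim as \cite[Theorem~5.2]{MR2370278}, so what you are really attempting is a reconstruction of Fiebig's proof (which indeed runs along the Braden--MacPherson recursion you describe). Judged on its own terms, your outline identifies the right construction, but the three substantive steps are left with gaps, one of them an actual error. The projectivity step, which you yourself defer (``I expect this $\Ext$ computation \dots to be the main obstacle''), is precisely the content of the theorem and cannot be waved through with ``freeness of $\widetilde{\mathscr{B}}(x)_y$ kills the obstruction while flabbiness of $\widetilde{\mathscr{B}}(x)$ glues the local lifts.'' In the vertex-by-vertex lifting along an admissible epimorphism $M_2\to M_3$, the obstruction lives in the \emph{target}: one needs that the map from $\mathscr{L}(M_2)_y$ to the fibre product of $\mathscr{L}(M_3)_y$ with the boundary sections of $\mathscr{L}(M_2)$ over those of $\mathscr{L}(M_3)$ is surjective, and this comes from applying the exactness axiom to the upwardly closed sets $\{z\mid z\ge y\}$ and $\{z\mid z> y\}$ together with flabbiness and freeness of the \emph{targets} (they admit Verma flags); flabbiness of the source plays no role there. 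Equivalently, the vanishing of $\Ext^1$ against shifted Verma objects is the nontrivial lemma, and your proposal does not contain it.

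Second, the indecomposability argument is wrong as stated: a degree-zero endomorphism $\varphi$ with $\varphi_x=0$ need \emph{not} vanish identically. The compatibility $\rho_{y,E}\circ\varphi_y=\varphi_E\circ\rho_{y,E}$ only forces, at a vertex $y$ immediately below $x$, that $\varphi_y$ maps $\widetilde{\mathscr{B}}(x)_y$ into $\Ker\bigl(\widetilde{\mathscr{B}}(x)_y\to\bigoplus_{h_\mathcal{G}(E)=y}\widetilde{\mathscr{B}}(x)_E\bigr)=\widetilde{\mathscr{B}}(x)^{[y]}$, which is in general nonzero; so ``vanishes identically by flabbiness and downward induction'' fails. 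What is true, and what one must use, is the \emph{minimality} of the graded free covers: if $e$ is an idempotent with $e_x=0$, then downward induction shows $e_y(\widetilde{\mathscr{B}}(x)_y)\subset\Ker\rho_y$, hence the complementary summand $\Ker e_y$ still surjects onto the boundary sections, and graded Nakayama (minimality of the cover) forces $e_y=0$; this is where the projective-cover condition enters, not (as you suggest) only in the $\Ext$ computation. The same issue resurfaces in your classification step: an injection $\bigoplus_i\widetilde{\mathscr{B}}(x)\langle k_i\rangle\to\mathscr{P}$ does not ``split off by projectivity of the source'' --- projectivity of the source never splits a monomorphism. The standard repair is to use projectivity of $\mathscr{P}$ as well: lift $\mathscr{P}\to\mathscr{P}^{\{x\}}\simeq\bigoplus_i\mathscr{V}(x)\langle k_i\rangle$ through the admissible epimorphism $\bigoplus_i\widetilde{\mathscr{B}}(x)\langle k_i\rangle\to\bigoplus_i\mathscr{V}(x)\langle k_i\rangle$ to get a map backwards, and then invoke locality of the degree-zero endomorphism ring of $\bigoplus_i\widetilde{\mathscr{B}}(x)\langle k_i\rangle$ (minimality again) to see the composite is an automorphism; only then does the induction on the (finite) support close. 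So the skeleton is the right one, but the projectivity lemma, the correct use of minimality, and the splitting argument all still need to be supplied.
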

The sheaf $\widetilde{\mathscr{B}}(x)$ is called a \emph{Braden-MacPherson sheaf}~\cite{MR1871967}.

\subsection{Moment graph associated to a Coxeter system}\label{subsec:Moment graph associated to a Coxeter system}
Let $(W,S)$ be a Coxeter system such that $S$ is finite.
We denote the set of reflections by $T$.
A finite dimensional representation $V$ of $W$ is called a \emph{reflection faithful representation} if for each $w\in W$, $V^w$ is a hyperplane in $V$ if and only if $w\in T$.
By Soergel~\cite{MR2329762}, there exists a reflection faithful representation.
Let $V$ be a reflection faithful representation.
For each $t\in T$, let $\alpha_t\in V^*$ be a non-trivial linear form vanishing on the hyperplane $V^t$.
If $s\ne t$, then $\alpha_s\ne \alpha_t$~\cite[Lemma~2.2]{MR2370278}.

Let $S'$ be a subset of $S$ and $W'$ the subgroup of $W$ generated by $S'$.
We attach a $V^*$-moment graph $\mathcal{G} = (\mathcal{V},\mathcal{E},h_\mathcal{G},t_\mathcal{G},l_\mathcal{G})$ to $((W,S),(W',S'))$ by 
\begin{itemize}
\item $\mathcal{V} = W/W'$, an order is induced by the Bruhat order.
\item $\mathcal{E} = \{\{xW',yW'\}\mid x\in TyW'\}$.
\item If $x\in Ty,x < y$, then $h_\mathcal{G}(\{xW',yW'\}) = xW',\ t_\mathcal{G}(\{xW',yW'\}) = yW'$.
\item $V^*_{\{xW',txW'\}} = \C\alpha_t$ for $xW'\in W/W'$, $t\in T$.
\end{itemize}

In the rest of this paper, we fix a Coxeter system $(W,S)$ and a reflection faithful representation $V$.
Let $\mathcal{G}$ be the $V^*$-moment graph associated to $((W,S),(\{e\},\emptyset))$.
Put $\mathscr{A} = \mathscr{A}_\mathcal{G}$, $Z = Z_\mathcal{G}$ and $\mathcal{M} = \mathcal{M}_\mathcal{G}$.

\subsection{Translation functor}\label{subsec:Translation functor}
We define an action of a simple reflection $s\in S$ on $\prod_{w\in W}S(V^*)$ by $s((z_w)_w) = (z_{ws})_w$.
This action preserves $Z$.
Put $Z^s = \{z\in Z\mid  s(z) = z\}$.
Then $Z^s$ is an $S(V^*)$-subalgebra.
For $M\in\ModCat{Z}^f$, put $\theta_s^ZM = Z\otimes_{Z^s}M\langle -1\rangle$.
Let $\widetilde{\mathscr{B}}(x)$ be the Braden-MacPherson sheaf and put $\mathscr{B}(x) = \widetilde{\mathscr{B}}(x)\langle -\ell(x)\rangle$
Set $B(x) = \Gamma(\mathscr{B}(x))$.

\begin{prop}[{\cite[Proposition~5.5, Corollary~5.7]{MR2395170}}]\label{prop:Fiebig's result about translation functor}
\begin{enumerate}
\item The functor $\theta_s^Z$ preserves $\mathcal{M}$.
\item The functor $\theta_s^Z$ is exact and self-adjoint.
\item For $M\in \ModCat{Z}^f$, $\supp(\mathscr{L}(\theta_s^Z(M)))\subset \supp(\mathscr{L}(M))\cup \supp(\mathscr{L}(M))s$.
\item Assume that $xs > x$.
There exists a projective object $P\in\mathcal{M}$ such that $\theta_s^Z(B(x)) = B(xs)\oplus P$ and $\supp\mathscr{L}(P)\subset \{y\in W\mid y\le x\}$.
\item There exist degree zero canonical homomorphism $\Id\langle 1\rangle\to \theta_s^Z$ and $\theta_s^Z\to \Id\langle -1\rangle$.
\end{enumerate}
\end{prop}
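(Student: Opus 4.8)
Since this proposition collects two results of Fiebig, the plan is to recover his arguments; below I indicate the shape of each. The common engine is that, on the truncations $Z^\Omega$ for finite $\Omega\subset W$, the structure algebra is graded free of rank $2$ over the subalgebra $Z^s$ of $s$-invariants, so that $Z^s\subseteq Z$ is a graded Frobenius extension: there is a split short exact sequence of graded $Z^s$-modules \[0\to Z^s\to Z\to Z^s\langle 2\rangle\to 0,\] a Demazure-type trace $\partial_s\colon Z\to Z^s$, and (since $Z$ is free of finite rank over $Z^s$) a natural isomorphism $\Hom_{Z^s}(Z,-)\cong Z\otimes_{Z^s}(-)$ up to a grading shift. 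I would establish this freeness first.

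Granting the engine, parts (5), (2), (3) are almost formal. Tensoring the displayed sequence with $M\langle -1\rangle$ over $Z^s$ produces, for every $M\in\ModCat{Z}^f$, a split short exact sequence \[0\to M\langle -1\rangle\to\theta_s^Z M\to M\langle 1\rangle\to 0,\] and its four structure maps are the canonical maps of (5) once one checks naturality and the degrees of $Z^s\hookrightarrow Z$ and of $\partial_s$. Exactness of $\theta_s^Z$ on $\ModCat{Z}^f$ is flatness of $Z$ over $Z^s$; upgrading it to exactness for the exact structure of $\mathcal{M}$ amounts to identifying $(\theta_s^Z M)^\Omega$, for upwardly closed $\Omega$, with the restriction of the above sequence applied to suitable subquotients of $M$ --- first for $\langle s\rangle$-stable $\Omega$, then in general by filtering along the $\langle s\rangle$-orbits. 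Self-adjointness combines $Z\otimes_{Z^s}(-)\cong\Hom_{Z^s}(Z,-)$ with tensor--hom adjunction and shift bookkeeping. For (3) one passes to the fraction field $Q$, uses $Z_Q\cong\prod_x Qe_x$ and $Z^s_Q\cong\prod_{\{x,xs\}}Q(e_x+e_{xs})$, computes $(\theta_s^Z M)^x_Q\cong M^x_Q\oplus M^{xs}_Q$ up to shift, and reads off that $x\in\supp\mathscr{L}(\theta_s^Z M)$ forces $x\in\supp\mathscr{L}(M)$ or $x\in\supp\mathscr{L}(M)s$.

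For (1): by (2) the functor $\theta_s^Z$ sends short exact sequences in $\mathcal{M}$ to short exact sequences, and $\mathcal{M}$ is closed under extensions, so it suffices to see $\theta_s^Z V(x)\in\mathcal{M}$; for this I would refine the split sequence $0\to V(x)\langle -1\rangle\to\theta_s^Z V(x)\to V(x)\langle 1\rangle\to 0$ to a genuine short exact sequence $0\to V(x_{\max})\to\theta_s^Z V(x)\to V(x_{\min})\to 0$ in $\mathcal{M}$, where $x_{\min}<x_{\max}$ are $x$ and $xs$. For (4): since $\theta_s^Z$ is exact and self-adjoint it is the left adjoint of an exact functor, hence preserves projectives, so (by Theorem~\ref{thm:classification of projective object} and full faithfulness of $\mathscr{L}$ on $\mathcal{M}$) $\theta_s^Z B(x)$ is a direct sum of modules $B(y)\langle k\rangle$. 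When $xs>x$, the unique maximal element of $\supp\mathscr{L}(\theta_s^Z B(x))\subseteq\{y\le x\}\cup\{y\le x\}s$ is $xs$, and the localization formula together with $\widetilde{\mathscr{B}}(x)_x\simeq S(V^*)$ pins the $xs$-part to a free module of rank $1$ generated in degree $\ell(xs)$ --- the shift $\langle -1\rangle$ in $\theta_s^Z$ being absorbed by $\ell(xs)=\ell(x)+1$ --- so exactly one summand $B(xs)$ with trivial shift occurs; writing $\theta_s^Z B(x)=B(xs)\oplus P$, one shows the remaining summands $B(z)$ all have $z\le x$, and hence $\supp\mathscr{L}(P)\subseteq\{y\le x\}$, by induction on $\ell(x)$ using the relation $\theta_s^Z\theta_s^Z\cong\theta_s^Z\langle 1\rangle\oplus\theta_s^Z\langle -1\rangle$ (from $Z\otimes_{Z^s}Z\cong Z\oplus Z\langle 2\rangle$) and the known shape of $\theta_s^Z B(z)$ for $zs<z$.

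The main obstacle lies in parts (1) and (2): passing from the formal exactness of $\theta_s^Z$ on $\ModCat{Z}^f$ to exactness for the exact structure on $\mathcal{M}$ --- equivalently, to preservation of Verma flags --- requires genuine control of $(\theta_s^Z M)^\Omega$ for upwardly closed $\Omega$, hence a careful analysis of how upwardly closed subsets of $W$ behave under right multiplication by $s$, combined with the local rank-$2$ freeness of $Z$ over $Z^s$. Once this bookkeeping is available, the rest is manipulation of the split sequence and the Frobenius structure.
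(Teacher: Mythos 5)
First, note that the paper itself does not prove this proposition: it is imported verbatim from Fiebig (Proposition~5.5 and Corollary~5.7 of the cited paper), so your sketch has to be measured against Fiebig's argument rather than anything in this text. Your engine --- the rank-two freeness $Z=Z^s\oplus c_sZ^s$, the resulting Frobenius-type adjunctions giving (2) and (5), and the decomposition over the quotient field $Q$ giving (3) --- is indeed the engine of Fiebig's proof. However, the two places you label as ``bookkeeping'' are exactly the mathematical content of the cited results, and your sketch leaves them open. For (1) and the exactness in (2) with respect to the exact structure of $\mathcal{M}$, Fiebig does not argue directly with upwardly closed subsets and their behaviour under right multiplication by $s$; he identifies $Z^s$ with the structure algebra of the quotient moment graph attached to $((W,S),(\{e,s\},\{s\}))$ and proves that restriction and induction along $Z^s\subset Z$ interact well with Verma flags and with localization. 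That analysis is where the real work lies, and your plan defers it. There is also a mild circularity to repair: you invoke (2)-for-$\mathcal{M}$ to reduce (1) to Verma modules, but exactness for the exact structure is precisely the part you have not established, so the reduction and the flag-preservation statement must be proved together (as Fiebig does).

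For (4) there is a concrete gap. The bound $\supp\mathscr{L}(P)\subset\{y\le x\}$ does \emph{not} follow from (3) together with the classification of projectives, because $\{y\le x\}\cup\{y\le x\}s$ in general contains elements that are neither $\le x$ nor equal to $xs$ (already for $x=t$ and $s\ne t$ one has $s\in\{y\le t\}s$ but $s\not\le t$ and $s\ne ts$). So after splitting off the single summand $B(xs)$ you must still rule out indecomposable summands $B(z)\langle k\rangle$ with $z\not\le x$, and the induction you propose via $\theta_s^Z\theta_s^Z\cong\theta_s^Z\langle 1\rangle\oplus\theta_s^Z\langle -1\rangle$ is not developed enough to do this. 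The argument that works (Fiebig's, and reproduced in this paper as Lemma~\ref{lem:translation out of projectives in Z-mod} and Proposition~\ref{prop:translation of projective Z-module}) again passes through $Z^s$: one shows $\Res_{Z^s}B(x)$ is a projective module over the quotient structure algebra supported on cosets $\le xW'$, that each indecomposable summand induces back to a single $B(\cdot)$ labelled by the longer coset representative, and this simultaneously pins the multiplicity-one occurrence of $B(xs)$ and the support of the complementary summand $P$. So the route you outline is the right one in spirit, but both of its load-bearing steps still need the quotient-graph analysis to be carried out.
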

\begin{rem}\label{rem:explicit discription of the natural transformation of theta_s}
Set $c_s = (w(\alpha))_w$.
The natural transformation $\Id\langle 1\rangle\to \theta_s^Z$ is given by $m\mapsto c_s\otimes m + 1\otimes c_sm$ and $\theta_s^Z\to \Id\langle -1\rangle$ is given by $z\otimes m\mapsto zm$.
\end{rem}

\section{The category $\mathcal{O}$}\label{sec:The category O}

\subsection{The functor $\varphi_s^Z$}
For a graded $S(V^*)$-module $M$ and $w\in W$, let $b_w(M)$ be an $S(V^*)$-module whose structure map is given by $S(V^*)\xrightarrow{w} S(V^*)\to \End(M)$.
We remark that if $M$ is annihilated by $\alpha_t$ for $t\in T$, then we have $b_t(M) \simeq M$ as a graded $S(V^*)$-module.

First we define a functor $a_S\colon \Sheaf(\mathcal{G})\to \Sheaf(\mathcal{G})$ by the following.
Let $\mathscr{M}\in \Sheaf(\mathcal{G})$.
Then the sheaf $a_S(\mathscr{M})$ is defined by
\begin{itemize}
\item $(a_S(\mathscr{M}))_x = b_{x^{-1}}M_{x^{-1}}$ for $x\in W$,
\item $(a_S(\mathscr{M}))_E = b_{x^{-1}}(M_{E'})$ where $x = h_\mathcal{G}(E)$, $h_\mathcal{G}(E') = h_\mathcal{G}(E)^{-1} = x^{-1}$ and $t_\mathcal{G}(E') = t_\mathcal{G}(E)^{-1} = (tx)^{-1}$,
\item $\rho_{x,E}^{a_S(\mathscr{M})} = \rho_{x^{-1},E'}^{\mathscr{M}}$.
\end{itemize}
It is easy to see that these data define a sheaf $a_S(\mathscr{M})$ and functor $a_S\colon \Sheaf(\mathcal{G})\to \Sheaf(\mathcal{G})$.

Let $a_Z\colon \prod_{x\in W}S(V^*)\to \prod_{x\in W}S(V^*)$ be an algebra homomorphism defined by $a((z_w)_w) = (wz_{w^{-1}})_w$.
Then $a_Z$ preserves a subalgebra $Z$ and gives a $\C$-algebra homomorphism.
We remark that $a_Z$ is not an $S(V^*)$-algebra homomorphism.
For a $Z$-module $M$, let $a_M(M)$ be a $Z$-module whose structure map is given by $Z \xrightarrow{a} Z\to \End(M)$.
This defines a functor $a_M\colon \ModCat{Z}\to \ModCat{Z}$.
\begin{lem}\label{lem:relation between L,Gamma,a}
\begin{enumerate}
\item We have $\supp(a_S(\mathscr{M})) = \{x^{-1}\mid x\in \supp\mathscr{M}\}$.
\item We have $a_S(\Sheaf(\mathcal{G})^f)\subset \Sheaf(\mathcal{G})^f$.
\item We have $a_M(\ModCat{Z}^f)\subset \ModCat{Z}^f$.
\item We have $\Gamma\circ a_S \simeq a_M\circ \Gamma$.
\item We have $\mathscr{L}\circ a_M \simeq a_S\circ \mathscr{L}$.
\end{enumerate}
\end{lem}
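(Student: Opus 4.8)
The plan is to treat $a_S$ and $a_M$ as honest endofunctors: first I would verify the ``finiteness'' statements (1)--(3) and the explicit reindexing (4) directly, and then deduce (5) from (4) by uniqueness of adjoints. For (1): since $b_w$ does not change underlying abelian groups, $(a_S(\mathscr{M}))_x = b_{x^{-1}}(\mathscr{M}_{x^{-1}})$ is nonzero exactly when $\mathscr{M}_{x^{-1}}\ne 0$; and since $x\mapsto x^{-1}$ is a bijection of $W$, finiteness of the support follows, which together with the fact that $b_{x^{-1}}(\mathscr{M}_{x^{-1}})$ is just $\mathscr{M}_{x^{-1}}$ with its $S(V^*)$-action precomposed by the graded ring automorphism $x^{-1}$ of $S(V^*)$ gives (2): it is graded, finitely generated and torsion free precisely when $\mathscr{M}_{x^{-1}}$ is. I would also record here, for later use, that $b_{x^{-1}}\circ b_x = b_e = \id$ gives $a_S\circ a_S\simeq\id$ and that a one-line computation gives $a_Z\circ a_Z=\id$; thus $a_Z$ is a graded automorphism of $Z$, $a_S$ restricts to an involutive auto-equivalence of $\Sheaf(\mathcal{G})$, and $a_M$ is an involutive auto-equivalence of $\ModCat{Z}$.

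\emph{Part (3).} Fix $M\in\ModCat{Z}^f$ and a finite $\Omega\subset W$ over which the action factors. Since $a_Z(\Ker(Z\to Z^{\Omega})) = \Ker(Z\to Z^{\Omega^{-1}})$, the action on $a_M(M)$ factors over $Z^{\Omega^{-1}}$ with $\Omega^{-1}$ finite (and $a_M(M)$ is graded because $a_Z$ is). The new $S(V^*)$-structure sends $0\ne f$ to the element $(w(f))_w\in Z$, whose image in $Z^{\Omega}\otimes_{S(V^*)}Q = \prod_{x\in\Omega}Q$ (the quoted lemma of Fiebig) is the unit $(x(f))_{x\in\Omega}$; as $M\subset M_Q = \bigoplus_x M_Q^x$ by torsion-freeness, this element acts injectively, so $a_M(M)$ is torsion free over $S(V^*)$. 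Finite generation over the new structure is the point that needs an argument: $a_M(M)$ is finitely generated over $Z^{\Omega^{-1}}$ (being finitely generated over $Z$), and $Z^{\Omega^{-1}}$ is a submodule of the free module $\prod_{x\in\Omega^{-1}}S(V^*)$ of finite rank over the copy of $S(V^*)$ embedded diagonally through the automorphisms $x$; since $S(V^*)$ is Noetherian, $Z^{\Omega^{-1}}$, and hence $a_M(M)$, is finitely generated over that copy, i.e.\ over the new $S(V^*)$.

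\emph{Part (4).} I would define $\Gamma(\mathscr{M})\to\Gamma(a_S(\mathscr{M}))$ by $((m_x)_x,(m_E)_E)\mapsto((m_{x^{-1}})_x,(m_{E'})_E)$, where $E\mapsto E'$ is the edge bijection $\{x,tx\}\mapsto\{x^{-1},(tx)^{-1}\}$ used in defining $a_S$; the relations $\rho^{a_S(\mathscr{M})}_{x,E} = \rho^{\mathscr{M}}_{x^{-1},E'}$ turn the defining equations of $\Gamma(a_S(\mathscr{M}))$ into those of $\Gamma(\mathscr{M})$ after relabeling, so this is a bijection, natural in $\mathscr{M}$. The content is $Z$-equivariance: $(z_w)_w$ acts on the $x$-component of $\Gamma(a_S(\mathscr{M}))$ through the action of $z_x$ on $b_{x^{-1}}(\mathscr{M}_{x^{-1}})$, i.e.\ through $x^{-1}(z_x)$ on $\mathscr{M}_{x^{-1}}$; transporting along the bijection and writing $y=x^{-1}$, this is the action of $(y(z_{y^{-1}}))_y = a_Z((z_w)_w)$ on $\Gamma(\mathscr{M})$. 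Hence the bijection is an isomorphism $\Gamma(a_S(\mathscr{M}))\xrightarrow{\ \sim\ }a_M(\Gamma(\mathscr{M}))$ of $Z$-modules.

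\emph{Part (5).} By (2), (3) and the involutivity above, $a_S$ and $a_M$ are auto-equivalences that are their own quasi-inverses. For $F:=a_S\circ\mathscr{L}\circ a_M$ one obtains natural isomorphisms
\[
\Hom(F(M),\mathscr{N})\cong\Hom(\mathscr{L}(a_M M),a_S\mathscr{N})\cong\Hom(a_M M,\Gamma(a_S\mathscr{N}))\cong\Hom(a_M M,a_M\Gamma(\mathscr{N}))\cong\Hom(M,\Gamma(\mathscr{N})),
\]
using $a_S^{-1}\simeq a_S$, the adjunction $\mathscr{L}\dashv\Gamma$, part (4), and $a_M^{-1}\simeq a_M$; so $F$ is a left adjoint of $\Gamma$, whence $F\simeq\mathscr{L}$ by uniqueness of adjoints, and precomposing $a_S\circ\mathscr{L}\circ a_M\simeq\mathscr{L}$ with $a_M$ gives $a_S\circ\mathscr{L}\simeq\mathscr{L}\circ a_M$. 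The main obstacle I anticipate is the bookkeeping in (4) — recognizing Fiebig's map $a_Z$, $((z_w)_w\mapsto(wz_{w^{-1}})_w)$, as exactly the shadow on global sections of vertex-inversion composed with the twists $b_{x^{-1}}$ — together with the finite-generation point in (3); once $a_S$ and $a_M$ are known to be involutive auto-equivalences, (5) is purely formal.
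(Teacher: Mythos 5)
Your proposal is correct and follows essentially the same route as the paper: (1)--(2) by direct inspection, (3) via $a_Z(Z^\Omega)=Z^{\Omega^{-1}}$, (4) by the explicit reindexing $x\mapsto x^{-1}$ of global sections and matching the $Z$-action with the $a_Z$-twisted one, and (5) by exploiting that $a_S$ and $a_M$ are involutions and passing to left adjoints in (4). The only difference is that you spell out two points the paper leaves implicit --- torsion-freeness and finite generation of $a_M(M)$ over the \emph{twisted} $S(V^*)$-structure in (3), and the Hom-chain showing $a_S\circ\mathscr{L}\circ a_M$ is left adjoint to $\Gamma$ in (5) --- both of which are correct and welcome additions.
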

\begin{proof}
(1) and (2) is obvious from the definition.

(3)
By the definition, we have $a_Z(Z^\Omega) = Z^{\Omega'}$ where $\Omega' = \{x^{-1}\mid x\in \Omega\}$.
Hence if the action of $Z$ on $M$ factors over $Z^\Omega$, the action on $a_M(M)$ factors over $Z^{\Omega'}$.

(4)
Let $\mathscr{M}\in \Sheaf(\mathcal{G})$.
By the definition, we have
\begin{multline*}
\Gamma(a_S(\mathscr{M})) \\= \left\{((m_x),(m_E))\in\prod_{x\in W}b_{x^{-1}}\mathscr{M}_{x^{-1}}\oplus\prod_{E\in\mathcal{E}}b_{x^{-1}}\mathscr{M}_{E'}\mid \rho_{x^{-1},E'}^\mathscr{M}(m_x) = m_E\right\},
\end{multline*}
where $E'$ is the same as in the definition of $a_S$.
Replace $x\mapsto x^{-1}$.
Then $E'$ becomes $E$.
Hence we get
\[
	\Gamma(a_S(\mathscr{M})) = \left\{((m_{x^{-1}}),(m_{E'}))\in\prod_{x\in W}b_{x}\mathscr{M}_x\oplus\prod_{E\in\mathcal{E}}b_{x}\mathscr{M}_{E}\mid \rho_{x,E}^\mathscr{M}(m_x) = m_E\right\}.
\]
From this formula, as a space, $\Gamma(a_S(\mathscr{M})) = \Gamma(\mathscr{M})$.
The action of $z = (z_w)\in Z$ on $((m_x),(m_E))\in \Gamma(a_S(\mathscr{M}))$ is given by $((x(z_{x^{-1}})m_x),(x(z_{x^{-1}})m_E))$ where $t_\mathcal{G}(E) = x$.
This action coincide with the action of $z$ on $a_M(\Gamma(\mathscr{M}))$.

(5)
Obviously, $a_S^2 = \Id$ and $a_M^2 = \Id$.
In particular, $a_S\colon \Sheaf(\mathcal{G})^f\to \Sheaf(\mathcal{G})^f$ and $a_M\colon \ModCat{Z}^f\to \ModCat{Z}^f$ are self-adjoint.
Hence, taking the left adjoint functor of the both sides in (4), we get (5).
\end{proof}

\begin{prop}\label{prop:a preserves M}
We have $a_M(\mathcal{M}) = \mathcal{M}$.
\end{prop}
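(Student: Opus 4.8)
The plan is to reduce to Fiebig's sheaf-theoretic criterion for admitting a Verma flag, via the compatibility $\mathscr{L}\circ a_M\simeq a_S\circ\mathscr{L}$ of Lemma~\ref{lem:relation between L,Gamma,a}(5). Since $a_M^2=\Id$, it is enough to prove the inclusion $a_M(M)\in\mathcal{M}$ for every $M\in\mathcal{M}$; applying this to $a_M(M)$ in place of $M$ then yields the reverse inclusion and hence equality. So fix $M\in\mathcal{M}$ and put $\mathscr{M}=\mathscr{L}(M)$. By Lemma~\ref{lem:relation between L,Gamma,a}(2),(3) we have $a_M(M)\in\ModCat{Z}^f$ and $a_S(\mathscr{M})\in\Sheaf(\mathcal{G})$, and $\mathscr{L}(a_M(M))=a_S(\mathscr{M})$; so by Fiebig's criterion (\cite{MR2370278}, recalled above) it remains to check that $a_S(\mathscr{M})$ is flabby and that $(a_S(\mathscr{M}))^{[x]}$ is graded free for every $x\in W$.

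Two elementary facts drive the argument. First, $x\mapsto x^{-1}$ is an automorphism of the Bruhat order on $W$; hence it sends upwardly closed subsets to upwardly closed subsets, and it induces a bijection $E\mapsto E'$ on the edges of $\mathcal{G}$ — the edge appearing in the definition of $a_S$ — which restricts, for each $x$, to a bijection between $\{E\mid h_\mathcal{G}(E)=x\}$ and $\{E'\mid h_\mathcal{G}(E')=x^{-1}\}$. Second, for $w\in W$ the functor $b_w$ is induced by a degree-preserving ring automorphism of $S(V^*)$, so it is exact and carries graded free modules to graded free modules of the same graded rank.

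Freeness of $(a_S(\mathscr{M}))^{[x]}$ is then immediate: by the defining formulas for $a_S$, the map $(a_S(\mathscr{M}))_x\to\bigoplus_{h_\mathcal{G}(E)=x}(a_S(\mathscr{M}))_E$ coincides, up to the relabelling $E\mapsto E'$, with $b_{x^{-1}}$ applied to the map $\mathscr{M}_{x^{-1}}\to\bigoplus_{h_\mathcal{G}(E')=x^{-1}}\mathscr{M}_{E'}$; since $b_{x^{-1}}$ is exact this gives $(a_S(\mathscr{M}))^{[x]}=b_{x^{-1}}(\mathscr{M}^{[x^{-1}]})$. As $M\in\mathcal{M}$, the module $\mathscr{M}^{[x^{-1}]}$ is graded free, hence so is its $b_{x^{-1}}$-twist.

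For flabbiness, recall from the proof of Lemma~\ref{lem:relation between L,Gamma,a}(4) the identification of $\Gamma(a_S(\mathscr{M}))$ with $\Gamma(\mathscr{M})$ obtained by moving the $x$-slot to the $x^{-1}$-slot (and twisting by $b_{x^{-1}}$). The same relabelling identifies, for any upwardly closed $\Omega\subseteq W$, the restricted sheaf $(a_S(\mathscr{M}))|_\Omega$ on the full sub-moment-graph over $\Omega$ with the corresponding construction applied to $\mathscr{M}|_{\Omega^{-1}}$, and therefore identifies $\Gamma((a_S(\mathscr{M}))|_\Omega)$ with $\Gamma(\mathscr{M}|_{\Omega^{-1}})$ compatibly with the restriction maps. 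Since $\Omega^{-1}$ is upwardly closed, flabbiness of $\mathscr{M}$ makes $\Gamma(\mathscr{M})\to\Gamma(\mathscr{M}|_{\Omega^{-1}})$ surjective, whence $\Gamma(a_S(\mathscr{M}))\to\Gamma((a_S(\mathscr{M}))|_\Omega)$ is surjective; thus $a_S(\mathscr{M})$ is flabby. The main point requiring care is precisely this last bookkeeping step — checking that restriction to $\Omega$ on the $a_S$-side matches restriction to $\Omega^{-1}$ on the $\mathscr{M}$-side, including that the structure maps $\rho$ correspond under the relabelling; the rest is formal.
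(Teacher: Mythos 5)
Your proof is correct and follows essentially the same route as the paper: reduce via $\mathscr{L}\circ a_M\simeq a_S\circ\mathscr{L}$ to Fiebig's criterion, use that inversion preserves the Bruhat order (hence upwardly closed sets) to get flabbiness of $a_S(\mathscr{M})$, and observe $(a_S(\mathscr{M}))^{[x]}=b_{x^{-1}}(\mathscr{M}^{[x^{-1}]})$ for graded freeness. The only difference is that you spell out the reduction via $a_M^2=\Id$ and the bookkeeping identifying $\Gamma((a_S(\mathscr{M}))|_\Omega)$ with $\Gamma(\mathscr{M}|_{\Omega^{-1}})$, which the paper leaves implicit.
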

\begin{proof}
Take $M\in\mathcal{M}$ and put $\mathscr{M} = \mathscr{L}(M)$, $\mathscr{N} = \mathscr{L}(a_M(M)) = a_S(\mathscr{M})$.
We prove that $\mathscr{N}$ is flabby and $\mathscr{N}^{[x]}$ is graded free for all $x\in W$.

Let $\Omega$ be a upwardly closed subset and put $\Omega' = \{x^{-1}\mid x\in\Omega\}$.
Then $\Omega'$ is also upwardly closed.
Since $\mathscr{M}$ is flabby, $\Gamma(\mathscr{M})\to \Gamma(\mathscr{M}|_{\Omega'})$ is surjective.
Hence $\Gamma(\mathscr{N}) = a_M(\Gamma(\mathscr{M}))\to a_M(\Gamma(\mathscr{M}|_{\Omega'})) = \Gamma(\mathscr{N}|_\Omega)$ is surjective.

By the definition of $\mathscr{N}^{[x]}$, we have $\mathscr{N}^{[x]} = b_{x^{-1}}(\mathscr{M}^{[x^{-1}]})$.
Since $\mathscr{M}^{[x^{-1}]}$ is graded free, $\mathscr{N}^{[x]}$ is graded free.
\end{proof}

\begin{lem}\label{lem:image of projective object by a}
We have $a_M(B(x)) = B(x^{-1})$.
\end{lem}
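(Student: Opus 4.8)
The plan is to identify $a_M(B(x))$ by combining the compatibility of $a_M$ with the localization functor (Lemma~\ref{lem:relation between L,Gamma,a}(5)) with the classification of projective objects in $\mathscr{L}(\mathcal{M})$ (Theorem~\ref{thm:classification of projective object}). First I would observe that, by Lemma~\ref{lem:relation between L,Gamma,a}(5) and the definition $\mathscr{B}(x) = \widetilde{\mathscr{B}}(x)\langle -\ell(x)\rangle$, we have $\mathscr{L}(a_M(B(x))) = a_S(\mathscr{L}(B(x))) = a_S(\mathscr{B}(x)) = a_S(\widetilde{\mathscr{B}}(x))\langle -\ell(x)\rangle$, using that $a_S$ commutes with grading shifts (which is clear from its definition). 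By Proposition~\ref{prop:a preserves M}, $a_M(B(x)) \in \mathcal{M}$, so $a_S(\widetilde{\mathscr{B}}(x))$ is an object of $\mathscr{L}(\mathcal{M})$.

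The key step is to show that $a_S(\widetilde{\mathscr{B}}(x))$ is projective in $\mathscr{L}(\mathcal{M})$, indecomposable, supported on $\{y \mid y \le x^{-1}\}$, and has stalk $S(V^*)$ at $x^{-1}$; Theorem~\ref{thm:classification of projective object} then forces $a_S(\widetilde{\mathscr{B}}(x)) = \widetilde{\mathscr{B}}(x^{-1})$. Projectivity follows because $a_S$ is an equivalence (indeed an involution, $a_S^2 = \Id$, as noted in the proof of Lemma~\ref{lem:relation between L,Gamma,a}(5)) of the exact category $\mathscr{L}(\mathcal{M})$: an equivalence sends projectives to projectives and preserves indecomposability. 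For the support statement, Lemma~\ref{lem:relation between L,Gamma,a}(1) gives $\supp a_S(\widetilde{\mathscr{B}}(x)) = \{y^{-1} \mid y \in \supp\widetilde{\mathscr{B}}(x)\} \subset \{y^{-1} \mid y \le x\} = \{z \mid z \le x^{-1}\}$, where the last equality uses that Bruhat order is preserved under taking inverses ($y \le x \iff y^{-1} \le x^{-1}$). For the stalk, the definition of $a_S$ gives $(a_S(\widetilde{\mathscr{B}}(x)))_{x^{-1}} = b_{x}(\widetilde{\mathscr{B}}(x)_x) \simeq b_x(S(V^*))$, which is isomorphic to $S(V^*)$ as a graded $S(V^*)$-module since $w\colon S(V^*) \to S(V^*)$ is a graded algebra automorphism for any $w \in W$.

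Assembling these facts, $a_S(\widetilde{\mathscr{B}}(x))$ satisfies the characterizing properties of $\widetilde{\mathscr{B}}(x^{-1})$ in Theorem~\ref{thm:classification of projective object}, hence $a_S(\widetilde{\mathscr{B}}(x)) = \widetilde{\mathscr{B}}(x^{-1})$. Shifting by $\langle -\ell(x)\rangle = \langle -\ell(x^{-1})\rangle$ (lengths of $x$ and $x^{-1}$ agree) yields $a_S(\mathscr{B}(x)) = \mathscr{B}(x^{-1})$, and applying $\Gamma$ together with Lemma~\ref{lem:relation between L,Gamma,a}(4) gives $a_M(B(x)) = a_M(\Gamma(\mathscr{B}(x))) = \Gamma(a_S(\mathscr{B}(x))) = \Gamma(\mathscr{B}(x^{-1})) = B(x^{-1})$. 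The main obstacle is the bookkeeping in verifying that $a_S$ genuinely takes $\mathscr{L}(\mathcal{M})$ to itself as an exact equivalence and that the edge-data in the definition of $a_S$ is consistent enough that no subtlety is hidden in "$a_S$ is an equivalence"; but since $a_S^2 = \Id$ is already recorded and Proposition~\ref{prop:a preserves M} guarantees $a_S$ maps $\mathscr{L}(\mathcal{M})$ into itself, this reduces to routine checking. A minor point to be careful about is the choice of isomorphism $b_x(S(V^*)) \simeq S(V^*)$, but any graded indecomposable projective with one-dimensional stalk at its top vertex is determined up to isomorphism, so this does not affect the conclusion.
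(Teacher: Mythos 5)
Your proposal is correct and follows essentially the same route as the paper: use that $a$ is an (involutive) auto-equivalence to see $a_M(B(x))$ is indecomposable projective, compute the support and the stalk at $x^{-1}$ via $\mathscr{L}\circ a_M \simeq a_S\circ\mathscr{L}$, and invoke the classification of projectives (Theorem~\ref{thm:classification of projective object}). The extra details you supply (Bruhat order preserved by inversion, $\ell(x)=\ell(x^{-1})$, $b_x S(V^*)\simeq S(V^*)$) are exactly the points the paper leaves implicit.
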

\begin{proof}
Since $a$ gives an auto-equivalence of the category $\mathcal{M}$, $a_M(B(x))$ is an indecomposable projective object.
By Lemma~\ref{lem:relation between L,Gamma,a} and the definition of $a_S$, we have $\supp\mathscr{L}(a_M(B(x))) = \supp a_S(\mathscr{L}(B(x))) = \{y^{-1}\mid y\in\supp\mathscr{L}(B(x))\}$ and $\mathscr{L}(a_M(B(x)))_{x^{-1}} = (a_S(\mathscr{L}(B(x))))_{x^{-1}} = b_{x^{-1}}\mathscr{L}(B(x))_x = b_{x^{-1}}S(V^*)\langle -\ell(x)\rangle = b_{x^{-1}}S(V^*)\langle -\ell(x^{-1})\rangle \simeq S(V^*)\langle -\ell(x^{-1})\rangle$.
Hence we get the lemma.
\end{proof}

From Proposition~\ref{prop:a preserves M}, we can define the functor $\varphi_s^Z\colon \mathcal{M}\to \mathcal{M}$ by $\varphi_s^Z = a_M\circ \theta_s^Z\circ a_M$.
Since $a_M$ gives an equivalence of categories, the fundamental properties of $\varphi_s^Z$ follows from that of $\theta_s^Z$.
\begin{prop}
\begin{enumerate}
\item The functor $\varphi_s^Z$ preserves $\mathcal{M}$.
\item The functor $\varphi_s^Z$ is exact and self-adjoint.
\item For $M\in\ModCat{Z}^f$, $\supp\mathscr{L}(\varphi_s^Z(M))\subset \supp\mathscr{L}(M)\cup s(\supp\mathscr{L}(M))$.
\item Assume that $sx > x$.
There exists a projective object $P\in\mathcal{M}$ such that $\varphi_s^Z(B(x)) = B(sx)\oplus P$ and $\supp\mathscr{L}(P)\subset\{y\in W\mid y\le x\}$.
\item There exist degree zero canonical homomorphisms $\Id\langle 1\rangle\to \varphi_s^Z$ and $\varphi_s^Z\to \Id\langle -1\rangle$.
\end{enumerate}
\end{prop}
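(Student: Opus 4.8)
The plan is to derive every assertion by transporting the corresponding part of Proposition~\ref{prop:Fiebig's result about translation functor} through the functor $a_M$. All the needed properties of $a_M$ are already in hand: $a_M$ is an exact auto-equivalence of $\ModCat{Z}^f$ and of $\mathcal{M}$ with $a_M\circ a_M = \Id$ (so $a_M$ is its own quasi-inverse and its own adjoint); it commutes with the grading shifts $\langle k\rangle$, since $a_Z$ is degree-preserving; it intertwines $\mathscr{L}$ with $a_S$ by Lemma~\ref{lem:relation between L,Gamma,a}(5); it preserves $\mathcal{M}$ by Proposition~\ref{prop:a preserves M}; and $a_M(B(x)) = B(x^{-1})$ by Lemma~\ref{lem:image of projective object by a}. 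The combinatorial facts used throughout are that inversion $w\mapsto w^{-1}$ is a length-preserving automorphism of the Bruhat order, so $y\le x \iff y^{-1}\le x^{-1}$ and $sx>x \iff x^{-1}s>x^{-1}$, together with $\supp a_S(\mathscr{M}) = (\supp\mathscr{M})^{-1}$ from Lemma~\ref{lem:relation between L,Gamma,a}(1).

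Granting this, (1) and (2) are formal. Since $\varphi_s^Z = a_M\circ\theta_s^Z\circ a_M$ is a composite of functors preserving $\mathcal{M}$ it preserves $\mathcal{M}$; since it is a composite of exact functors it is exact; and for $M,N\in\mathcal{M}$ one has
\[
\Hom(\varphi_s^Z M, N) \cong \Hom(\theta_s^Z a_M M, a_M N) \cong \Hom(a_M M, \theta_s^Z a_M N) \cong \Hom(M, \varphi_s^Z N)
\]
using the self-adjointness of $\theta_s^Z$ and $a_M\circ a_M=\Id$. For (5), apply $a_M$ to the canonical natural transformations $\Id\langle 1\rangle\to\theta_s^Z$ and $\theta_s^Z\to\Id\langle -1\rangle$ evaluated at $a_M M$; because $a_M$ commutes with $\langle k\rangle$ and squares to the identity, $a_M\bigl((a_M M)\langle 1\rangle\bigr) = M\langle 1\rangle$, and one gets canonical natural transformations $\Id\langle 1\rangle\to\varphi_s^Z$ and $\varphi_s^Z\to\Id\langle -1\rangle$.

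For (3), Lemma~\ref{lem:relation between L,Gamma,a}(5) gives $\mathscr{L}(\varphi_s^Z M) = a_S\bigl(\mathscr{L}(\theta_s^Z(a_M M))\bigr)$, so by Lemma~\ref{lem:relation between L,Gamma,a}(1) and Proposition~\ref{prop:Fiebig's result about translation functor}(3) applied to $a_M M$, writing $\Sigma = \supp\mathscr{L}(M)$ so that $\supp\mathscr{L}(a_M M) = \Sigma^{-1}$,
\[
\supp\mathscr{L}(\varphi_s^Z M) \subset (\Sigma^{-1}\cup\Sigma^{-1}s)^{-1} = \Sigma\cup s\Sigma .
\]
For (4), assume $sx>x$, hence $x^{-1}s>x^{-1}$. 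By Proposition~\ref{prop:Fiebig's result about translation functor}(4), $\theta_s^Z(B(x^{-1})) = B(x^{-1}s)\oplus P'$ with $\supp\mathscr{L}(P')\subset\{y\mid y\le x^{-1}\}$. Applying $a_M$ and using $a_M(B(x)) = B(x^{-1})$ and $a_M(B(x^{-1}s)) = B(sx)$ yields $\varphi_s^Z(B(x)) = B(sx)\oplus P$ with $P = a_M(P')$; this $P$ is projective because $a_M$ is an equivalence, and $\supp\mathscr{L}(P) = (\supp\mathscr{L}(P'))^{-1}\subset\{y\mid y\le x\}$.

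The only step requiring any care is the bookkeeping of the inversion $w\mapsto w^{-1}$ in (3) and (4): tracking on which side the simple reflection $s$ lands and whether the support bound is $\le x$ or $\le x^{-1}$. There is no conceptual obstacle here; the proposition is a purely formal consequence of the properties of $\theta_s^Z$ and of the fact that $a_M$ is a grading-preserving exact involution that intertwines $\mathscr{L}$ with $a_S$ and permutes the Braden--MacPherson sheaves by $x\mapsto x^{-1}$.
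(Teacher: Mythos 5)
Your proposal is correct and takes essentially the same route as the paper: the paper states that the proposition ``follows from'' Proposition~\ref{prop:Fiebig's result about translation functor} because $a_M$ is an equivalence, and your argument simply writes out that transport in detail using Lemma~\ref{lem:relation between L,Gamma,a}, Proposition~\ref{prop:a preserves M} and Lemma~\ref{lem:image of projective object by a}. The bookkeeping with inversion (Bruhat order and the side on which $s$ appears) is handled correctly, so nothing is missing.
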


We describe the functor $\varphi_s^Z$ more explicitly.
We define an algebra homomorphism $r_s\colon \prod_{w\in W}S(V^*)\to \prod_{w\in W}S(V^*)$ by $r_s((z_w)_w) = (s(z_{sw}))_w$.
Note that this is not an $S(V^*)$-module homomorphism.
The subalgebra $Z$ satisfies $r_s(Z) = Z$.
Recall that the map $s\colon Z\to Z$ is defined by $s((z_w)_w) = (z_{ws})_w$.
Then it is easy to see that $r_s\circ a_Z = a_Z \circ s$.
Set $Z^{r_s} = \{z\in Z\mid r_s(z) = z\}$.
Then we have $\varphi_s^Z M = Z\otimes_{Z^{r_s}}M$.
From this description, we get the following proposition.
\begin{prop}\label{prop:theta and varphi commute}
For simple reflections $s,t$, the functors $\theta_t^Z$ and $\varphi_s^Z$ commute with each other.
Moreover, the natural transformation $\theta_t^Z\langle 1\rangle \to \varphi_s^Z\theta_t^Z$ (resp.~$\varphi_s^Z\langle 1\rangle \to \theta_t^Z\varphi_s^Z$, $\varphi_s^Z\theta_t^Z\to \theta_t^Z\langle -1\rangle$, $\theta_t^Z\varphi_s^Z\to \varphi_s^Z\langle -1\rangle$) can be identified with $\theta_t^Z(\Id\langle 1\rangle\to \varphi_s^Z)$ (resp.~$\varphi_s^Z (\Id\langle 1\rangle\to\theta_t^Z)$, $\theta_t^Z(\varphi_s^Z\to \Id\langle -1\rangle)$, $\varphi_s^Z(\theta_t^Z\to\Id\langle -1\rangle)$).
\end{prop}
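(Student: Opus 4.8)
The plan is to argue from the explicit descriptions $\theta_t^Z(-)=Z\otimes_{Z^t}(-)\langle-1\rangle$ and $\varphi_s^Z(-)=Z\otimes_{Z^{r_s}}(-)\langle-1\rangle$. The first step is the observation that $t$ and $r_s$, as graded algebra automorphisms of $Z$, commute: for $z=(z_w)_w$ one checks on components that $(t\circ r_s)(z)_w=s(z_{swt})=(r_s\circ t)(z)_w$, using $t\circ r_s=r_s\circ t$ on $\prod_{w\in W}S(V^*)$. In particular $Z^t$ is stable under $r_s$ and $Z^{r_s}$ under $t$. Since $Z$ is commutative, $\theta_t^Z$ and $\varphi_s^Z$ are each given by tensoring (on either side) with the $(Z,Z)$-bimodules $Z\otimes_{Z^t}Z\langle-1\rangle$ and $Z\otimes_{Z^{r_s}}Z\langle-1\rangle$, so the first assertion of the proposition amounts to a bimodule isomorphism
\[
Z\otimes_{Z^t}Z\otimes_{Z^{r_s}}Z\ \simeq\ Z\otimes_{Z^{r_s}}Z\otimes_{Z^t}Z .
\]

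To construct it I would use the structure of $Z$ over the two subalgebras. Recall that $Z$ is free of rank $2$ over $Z^t$, with basis $1,c_t$ where $c_t=(w(\alpha_t))_w$; applying $a_Z$ to the corresponding statement for $Z^s$ shows that $Z$ is free of rank $2$ over $Z^{r_s}$, with basis given by $1$ and the image $\bar\alpha_s$ of $\alpha_s$ under $S(V^*)\to Z$. Hence there are short exact sequences of graded $(Z,Z)$-bimodules, up to grading shift,
\[
0\to Z_t\to Z\otimes_{Z^t}Z\to Z\to 0,\qquad 0\to Z_{r_s}\to Z\otimes_{Z^{r_s}}Z\to Z\to 0,
\]
where $Z_\sigma$ denotes $Z$ with the right action twisted by $\sigma$, the sub being generated by $c_t\otimes1-1\otimes c_t$ (resp.\ $\bar\alpha_s\otimes1-1\otimes\bar\alpha_s$) and the twist being read off from $c_t^2\in Z^t$ (resp.\ $\bar\alpha_s^2\in Z^{r_s}$). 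As $Z\otimes_{Z^t}Z$ and $Z\otimes_{Z^{r_s}}Z$ are free on each side, tensoring these two sequences together equips each side of the displayed isomorphism with a four step filtration; using $Z_\sigma\otimes_Z Z_\tau\simeq Z_{\sigma\tau}$ together with $tr_s=r_st$ from the first step, both associated gradeds consist of (shifts of) $Z$, $Z_t$, $Z_{r_s}$, $Z_{tr_s}$, and differ only in the order of the two middle pieces.

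The main obstacle is to upgrade this matching of associated gradeds to an actual isomorphism. I would do this by writing down an explicit $(Z,Z)$-bilinear map $Z\otimes_{Z^t}Z\otimes_{Z^{r_s}}Z\to Z\otimes_{Z^{r_s}}Z\otimes_{Z^t}Z$, obtained by expanding the middle tensor factor simultaneously in the bases $\{1,c_t\}$ and $\{1,\bar\alpha_s\}$, and checking that it is well defined, preserves the filtrations and induces isomorphisms on all four subquotients; here one uses the transversality $Z^t\cdot Z^{r_s}=Z$ (which holds since $\bar\alpha_s\in Z^t$, so $Z=Z^{r_s}\cdot1+Z^{r_s}\cdot\bar\alpha_s\subset Z^t\cdot Z^{r_s}$) together again with $tr_s=r_st$. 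As all modules in sight are torsion free over $S(V^*)$, an alternative is to check that such a map becomes an isomorphism after $-\otimes_{S(V^*)}Q$: there $\prod_{w\in W}Q$ decomposes under the group generated by $t$ and $r_s$ into orbits of size $4$, where one is computing with the regular bimodule, together with orbits of size $2$ when $s$ and $t$ are conjugate in $W$, and both are finite checks. Finally, the identification of the natural transformations is formal: the isomorphism can be chosen to be transparent on the outer $Z$-factors, so that the maps induced by $\Id\langle1\rangle\to\varphi_s^Z$ and $\varphi_s^Z\to\Id\langle-1\rangle$ (the $a_M$-conjugates of the maps in Remark~\ref{rem:explicit discription of the natural transformation of theta_s}) acting on the outer factor of $\varphi_s^Z\theta_t^Z$ are carried to the corresponding maps on $\theta_t^Z\varphi_s^Z$; tracking them through the tensor factors shows that left whiskering by $\theta_t^Z$ is interchanged with right whiskering by $\theta_t^Z$, which is exactly the asserted identification of $\theta_t^Z\langle1\rangle\to\varphi_s^Z\theta_t^Z$ with $\theta_t^Z(\Id\langle1\rangle\to\varphi_s^Z)$, and the three remaining cases are symmetric.
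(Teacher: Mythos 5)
Your proposal runs on the same basic fuel as the paper's proof: the explicit descriptions $\theta_t^Z=Z\otimes_{Z^t}(-)\langle -1\rangle$ and $\varphi_s^Z=Z\otimes_{Z^{r_s}}(-)\langle -1\rangle$, the commutation $t\circ r_s=r_s\circ t$, the rank-two decompositions $Z=Z^t\oplus c_tZ^t$ and $Z=Z^{r_s}\oplus\alpha_s Z^{r_s}$ together with $\alpha_s\in Z^t$ (and $c_t\in Z^{r_s}$, which you should state explicitly since it is needed), and finally a direct check of the unit/counit maps obtained by transporting Remark~\ref{rem:explicit discription of the natural transformation of theta_s} through $a_M$. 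Where you genuinely diverge is in the device used to produce the isomorphism of functors: the paper pivots through the intersection $Z^{r_s,t}=Z^{r_s}\cap Z^t$ and shows that the obvious map $Z\otimes_{Z^{r_s,t}}M\to Z\otimes_{Z^{r_s}}Z\otimes_{Z^t}M$, $z\otimes m\mapsto z\otimes 1\otimes m$, is bijective by exhibiting compatible four-term direct sum decompositions indexed by $1,\alpha_s,c_t,\alpha_s c_t$; the symmetric statement for the other order then gives commutativity at once, and well-definedness costs nothing because the map goes \emph{out of} $Z\otimes_{Z^{r_s,t}}M$. Your bimodule exact sequences and the filtration only match associated gradeds, as you yourself note, so all the content of your argument sits in the ``upgrade'' step; that step is announced (an explicit map defined by expanding the middle tensor factor, checked on the four subquotients) but not carried out, and the well-definedness of a map defined via a chosen expansion of the middle factor is exactly the nontrivial point that the paper's pivot avoids. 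If you execute your plan you will in effect reproduce the paper's computation, so the filtration layer buys you little beyond bookkeeping. Your handling of the second half (pushing $m\mapsto\alpha_s\otimes m+1\otimes\alpha_s m$ and $z\otimes m\mapsto zm$ through the isomorphism, using that it is the identity-like map on the outer factors) is the same as the paper's.

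One step would fail as stated: the proposed alternative of checking the map after $-\otimes_{S(V^*)}Q$. For $S(V^*)$-torsion-free modules, becoming an isomorphism over $Q$ yields injectivity only, not surjectivity (multiplication by $\alpha_s$ on $S(V^*)$ is the standard counterexample), so the generic orbit analysis on $\prod_w Q$ cannot by itself replace the integral check on subquotients; it could at best supplement it with a degreewise rank count after restricting to $Z^\Omega$ for finite $\Omega$, which you do not set up. Since this is offered only as an alternative, the main route stands, but as written the shortcut is not valid.
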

\begin{proof}
First we remark that $t$ and $r_s$ commute with each other.
Put $Z^{r_s,t} = Z^{r_s}\cap Z^t$.
We prove that $Z\otimes_{Z^{r_s,t}}M\simeq Z\otimes_{Z^{r_s}}Z\otimes_{Z^t}M$ for a $Z$-module $M$.
The same argument implies $Z\otimes_{Z^{r_s,t}}M\simeq Z\otimes_{Z^t}Z\otimes_{Z^{r_s}}M$.

Consider the map $\Xi\colon Z\otimes_{Z^{r_s,t}}M\to Z\otimes_{Z^{r_s}}Z\otimes_{Z^t}M$ defined by $\Xi(z\otimes m) = z\otimes 1\otimes m$.
This map is a $Z$-module homomorphism.
Set $\alpha = \alpha_s$.
We regard $\alpha$ as an element of $Z$ by the structure map $S(V^*)\to Z$.
Put $c_t = (w(\alpha_t))_w$.
Then we have $Z = Z^t\oplus c_tZ^t$~\cite[Lemma~5.1]{MR2395170}.
Since $a_Z(c_s) = \alpha_s$, we have $Z = Z^{r_s}\oplus \alpha Z^{r_s}$.
Hence we get
\[
	Z\otimes_{Z^{r_s}}Z\otimes_{Z^t}M = (1\otimes1\otimes M)\oplus(\alpha\otimes 1\otimes M)\oplus(1\otimes c_t\otimes M)\oplus(\alpha\otimes c_t\otimes M).
\]
Similarly, we get
\[
	Z\otimes_{Z^{r_s,t}}M = (1\otimes M)\oplus(\alpha\otimes M)\oplus(c_t\otimes M)\oplus(\alpha c_t\otimes M).
\]
Since $c_t\in Z^{r_s}$, $1\otimes c_t\otimes M = c_t\otimes 1\otimes M$ and $\alpha\otimes c_t\otimes M = \alpha c_t\otimes 1\otimes M$.
Hence $\Xi$ is an isomorphism.

We prove the second claim.
We omit a grading.
The map $Z\otimes_{Z^t}M\to Z\otimes_{Z^{r_s}}Z\otimes_{Z^t}M$ is given by $1\otimes m\mapsto 1\otimes \alpha \otimes m + \alpha\otimes1 \otimes m$ (Remark~\ref{rem:explicit discription of the natural transformation of theta_s}).
Since $\alpha\in Z^t$, we have $1\otimes \alpha\otimes m = 1\otimes 1\otimes \alpha m$.
Under the isomorphism $Z\otimes_{Z^t}Z\otimes_{Z^{r_s}}M\simeq Z\otimes_{Z^{r_s,t}}M\simeq Z\otimes_{Z^{r_s}}Z\otimes_{Z^t}M$, $z\otimes 1\otimes m\in Z\otimes_{Z^t}Z\otimes_{Z^{r_s}}M$ corresponds to $z\otimes 1\otimes m\in Z\otimes_{Z^{r_s}}Z\otimes_{Z^t}M$.
Hence the map $Z\otimes_{Z^t}M\to Z\otimes_{Z^{r_s}}Z\otimes_{Z^t}M \simeq Z\otimes_{Z^t}Z\otimes_{Z^{r_s}}M$ is given by $1\otimes m\mapsto 1\otimes 1\otimes \alpha m + \alpha\otimes 1\otimes m = 1\otimes 1\otimes \alpha m + 1\otimes \alpha\otimes m$.
This is equal to $\theta_t^Z(\Id\to \varphi_s^Z)$.
We can prove the other formulae by the same argument.
\end{proof}

\begin{lem}\label{lem:translation out of projectives in Z-mod}
Fix $s\in S$ and put $S' = \{s\}$, $W' = \{1,s\}$.
Let $\mathcal{G}'$ be the moment graph associated to $((W,S),(W',S'))$, $\widetilde{\mathscr{B}}'(xW')$ the Braden-MacPherson sheaf and $B'(xW') = \Gamma(\widetilde{\mathscr{B}}'(xW'))\langle -\ell(x)\rangle$ for $x\in W$ such that $xs < x$.
Using $Z_{\mathcal{G}'}\simeq Z^s$~\cite[5.1]{MR2395170}, we regard $B'(xW')$ as a $Z^s$-module.
If $xs < x$, $Z\otimes_{Z^s}B'(xW') \simeq B(x)$.
\end{lem}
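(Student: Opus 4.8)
The plan is to realise $Z\otimes_{Z^s}(-)$ as the ``translation out of the wall'' functor $\pi^s$, to show that it sends the indecomposable projective $B'(xW')\in\mathcal M_{\mathcal G'}$ to an indecomposable projective object of $\mathcal M$, and then to pin that object down by its support and its top Verma layer. Write $\pi^s=Z\otimes_{Z^s}(-)$ and $\pi_s=\Res^{Z}_{Z^s}$, so that $\pi^s$ is left adjoint to $\pi_s$. First I would check that $Z\otimes_{Z^s}B'(xW')$ is projective in $\mathcal M$. An upwardly closed subset $\Omega'\subseteq W/W'$ pulls back to an upwardly closed subset of $W$, and the idempotent decomposition over the quotient field $Q$ is compatible with the partition of $W$ into $W'$-cosets, so $(\pi_sM)^{\Omega'}=M^{\pi^{-1}(\Omega')}$ for all $M$; hence $\pi_s$ is an exact functor $\mathcal M\to\mathcal M_{\mathcal G'}$ for the respective exact structures. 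On the other hand $\pi^s$ carries $\mathcal M_{\mathcal G'}$ into $\mathcal M$ (this is Fiebig's translation out of the wall; it can also be checked directly — using $Z=Z^s\oplus c_sZ^s$ one sees $\pi^sN\in\ModCat{Z}^f$, and the Verma flag is produced as in Proposition~\ref{prop:Fiebig's result about translation functor}). Being a left adjoint of the exact functor $\pi_s$, the functor $\pi^s$ preserves projectivity, so $Z\otimes_{Z^s}B'(xW')$ is projective in $\mathcal M$; by Theorem~\ref{thm:classification of projective object} it is a finite direct sum of modules $B(y)\langle k\rangle$.

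Next I would prove indecomposability by computing the endomorphism ring. Since $Z$ is commutative and $Z=Z^s\oplus c_sZ^s$ with $\deg c_s=2$, we have $c_sZ^s\simeq Z^s\langle2\rangle$ as graded $Z^s$-bimodules, hence $\pi_s\pi^sN\simeq N\oplus N\langle2\rangle$, with the adjunction unit $N\to\pi_s\pi^sN$ the inclusion of the first summand. By adjunction $\End_{\mathcal M}(\pi^sB'(xW'))$ is therefore isomorphic, as a $\C$-algebra, to
\[
	\Hom_{\mathcal M_{\mathcal G'}}\bigl(B'(xW'),\,B'(xW')\oplus B'(xW')\langle2\rangle\bigr)=\End_{\mathcal M_{\mathcal G'}}\bigl(B'(xW')\bigr)\oplus\Hom_{\mathcal M_{\mathcal G'}}\bigl(B'(xW'),B'(xW')\langle2\rangle\bigr).
\]
The second summand is the space of morphisms of the Braden--MacPherson sheaf $\widetilde{\mathscr B}'(xW')$ into itself that lower degrees by $2$, and it vanishes because morphisms between Braden--MacPherson sheaves do not decrease degrees \cite{MR2370278}. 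Thus $\End_{\mathcal M}(\pi^sB'(xW'))\simeq\End_{\mathcal M_{\mathcal G'}}(B'(xW'))$, which is local because $B'(xW')$ is indecomposable; consequently $Z\otimes_{Z^s}B'(xW')\simeq B(y)\langle k\rangle$ for a unique pair $(y,k)$.

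It remains to identify $(y,k)$. Via $Z^s\hookrightarrow Z$ the algebra $Z\otimes_{S(V^*)}Q=\prod_{w\in W}Q$ has $Z^s\otimes_{S(V^*)}Q$ as the subalgebra of tuples constant on $W'$-cosets, so base change to $Q$ gives $(\pi^sN)^w_Q\simeq N^{wW'}_Q$ for all $w\in W$. Applied to $N=B'(xW')$, whose $Q$-support is $\{wW'\le xW'\}$, and using that $xs<x$ makes $x$ the longest element of $xW'$ (so that $\pi^{-1}(\{wW'\le xW'\})\subseteq\{w\le x\}$, by the subword property of the Bruhat order), this yields $\supp\mathscr L(Z\otimes_{Z^s}B'(xW'))\subseteq\{w\le x\}$ while $(Z\otimes_{Z^s}B'(xW'))^x_Q\simeq Q\ne0$; since $\supp\mathscr L(B(y))\subseteq\{w\le y\}$, we get $y=x$. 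For $k$: let $I\subseteq B'(xW')^{xW'}_Q\simeq Q$ be the image of $B'(xW')$; being the top Verma layer of $B'(xW')=\Gamma(\widetilde{\mathscr B}'(xW'))\langle-\ell(x)\rangle$ (whose sheaf is flabby with top stalk $S(V^*)$), it satisfies $I\simeq S(V^*)\langle-\ell(x)\rangle$. Using $c_s=(w(\alpha_s))_w$ and $Z=Z^s+c_sZ^s$, the image of $Z\otimes_{Z^s}B'(xW')$ in $(Z\otimes_{Z^s}B'(xW'))^x_Q$ equals $I+x(\alpha_s)I=I$; as this is the top Verma layer of $B(x)\langle k\rangle$, namely $S(V^*)\langle-\ell(x)+k\rangle$, we conclude $k=0$, so $Z\otimes_{Z^s}B'(xW')\simeq B(x)$.

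The step I expect to be the real obstacle is the indecomposability argument — specifically the vanishing of the degree-lowering endomorphisms of the Braden--MacPherson sheaf, which together with the identification $\pi_s\pi^s\simeq\id\oplus\langle2\rangle$ is the point at which essentially all the input from the structure theory of Braden--MacPherson sheaves (their indecomposability and the non-negativity of their morphism spaces, after Soergel and Fiebig) is used. By contrast, the exactness of $\pi_s$, the preservation of $\mathcal M$ by $\pi^s$, and the bookkeeping with $Q$-supports are routine once Fiebig's framework is in place.
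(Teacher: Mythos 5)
Your overall architecture (induce with $\pi^s=Z\otimes_{Z^s}(-)$, use that it preserves projectivity since $\pi_s=\Res_{Z^s}$ is exact, then identify the resulting projective by its support and top stalk) is sound, and the identification of $(y,k)$ at the end is essentially the content of \cite[Lemma~5.4]{MR2395170} that the paper also invokes. The gap is exactly at the step you flag: the vanishing of $\Hom_{\mathcal M_{\mathcal G'}}\bigl(B'(xW'),B'(xW')\langle 2\rangle\bigr)$, i.e.\ of degree-lowering endomorphisms of the Braden--MacPherson sheaf. No such statement appears in \cite{MR2370278} (or \cite{MR2395170}), and it is not elementary: what one can extract from the Braden--MacPherson construction (projective covers at each step) is only that an endomorphism of negative degree vanishes on the stalk at the top vertex and hence takes values in the graded radical at every vertex, so it is nilpotent --- not that it is zero. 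For a general Coxeter system and reflection faithful representation, the absence of negative-degree endomorphisms of $\widetilde{\mathscr B}(x)$ is of the same nature as the Soergel/Fiebig positivity conjectures (graded ranks of Hom-spaces between indecomposables), which were open in this generality at the time; it cannot be quoted as known. Since your indecomposability of $\pi^sB'(xW')$, and with it the conclusion $\pi^sB'(xW')\simeq B(y)\langle k\rangle$, rests entirely on this claim, the proof as written is incomplete. (Nilpotency alone does not rescue the argument as stated, because under the adjunction identification the degree-$(-2)$ summand corresponds to degree-zero endomorphisms of $\pi^sB'(xW')$, for which there is no degree obstruction to being idempotent-producing.)

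The paper avoids any indecomposability statement. It first splits off $B(x)$: by \cite[Lemma~5.4]{MR2395170} the localization of $Z\otimes_{Z^s}B'(xW')$ has stalk $S(V^*)\langle-\ell(x)\rangle$ at $x$ and support in $\{y\le x\}$, so $Z\otimes_{Z^s}B'(xW')=B(x)\oplus P$ with $P$ projective. To kill $P$ it restricts back: $Z\simeq (Z^s)^{\oplus2}$ gives $\Res_{Z^s}(Z\otimes_{Z^s}B'(xW'))\simeq B'(xW')^{\oplus2}$, so if $P\ne0$ then $\Res_{Z^s}P\simeq B'(xW')$; but the stalks of $\mathscr L(Z\otimes_{Z^s}B'(xW'))$ at both $x$ and $xs$ are of rank one and already exhausted by $B(x)$, so $\mathscr L(P)_x=\mathscr L(P)_{xs}=0$, whence $\mathscr L(\Res_{Z^s}P)_{xW'}=0$ by \cite[Proposition~5.3]{MR2395170} --- a contradiction. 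If you want to keep your framework, replacing your endomorphism-ring step by this restriction/Krull--Schmidt argument closes the gap using only the inputs you already set up ($\pi_s\pi^s\simeq\id\oplus\langle2\rangle$ and the stalk computations).
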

\begin{proof}
Notice that $Z\otimes_{Z^s}\cdot $ and $\Res_{Z^s}$ have the exact right adjoint functors.
Hence they preserve a projective object.
By~\cite[Lemma~5.4]{MR2395170}, $\mathscr{L}(Z\otimes_{Z^s}B'(xW'))_x = S(V^*)\langle -\ell(x)\rangle$ and its support is contained in $\{y\in W\mid y \le x\}$.
Hence $B(x)$ is a direct summand of $Z\otimes_{Z^s}B'(xW')$.
Take a projective object $P$ such that $Z\otimes_{Z^s}B'(xW') = B(x)\oplus P$.
We prove $P = 0$.
In the rest of this proof, we omit a grading.
By the construction of the Braden-MacPherson sheaf~\cite[1.4]{MR1871967}, $\mathscr{L}(B(x))_{x} = \mathscr{L}(B(x))_{xs} = S(V^*)$.
By~\cite[Lemma~5.4]{MR2395170}, $\mathscr{L}(Z\otimes_{Z^s}B'(xW'))_x = \mathscr{L}(Z\otimes_{Z^s}B'(xW'))_{xs} = S(V^*)$.
Hence $\mathscr{L}(P)_{x} = \mathscr{L}(P)_{xs} = 0$.
Since $Z \simeq (Z^s)^{\oplus 2}$ as a $Z^s$-module~\cite[Lemma~5.1]{MR2395170}, we have $\Res_{Z^s}(Z\otimes_{Z^s}B'(xW')) = B'(xW')^{\oplus 2}$.
Therefore, if $P\ne 0$, then $\Res_{Z^s}(B(x)) = B'(xW')$ and $\Res_{Z^s}(P) = B'(xW')$.
Since $\mathscr{L}(P)_{x} = \mathscr{L}(P)_{xs} = 0$, we have $\mathscr{L}(\Res_{Z^s}(P))_{xW'} = 0$~\cite[Proposition~5.3]{MR2395170}.
This is a contradiction.
Hence $P = 0$.
\end{proof}

\begin{prop}\label{prop:translation of projective Z-module}
Let $s$ be a simple reflection and $x\in W$.
\begin{enumerate}
\item If $xs > x$, then $\theta_s^Z B(x) = B(xs)\oplus \bigoplus_{y< x,\ ys > y,\ k\in\Z}B(y)\langle k\rangle^{m_{y,k}}$ for some $m_{y,k}\in\Z_{\ge 0}$.
\item If $xs < x$, then $\theta_s^Z B(x) = B(x)\langle 1\rangle\oplus B(x)\langle -1\rangle$.
\item If $sx > x$, then $\varphi_s^Z B(x) = B(xs)\oplus \bigoplus_{y< x,\ sy > y,\ k\in\Z}B(y)\langle k\rangle^{m_{y,k}}$ for some $m_{y,k}\in\Z_{\ge 0}$.
\item If $sx < x$, then $\varphi_s^Z B(x) = B(x)\langle 1\rangle\oplus B(x)\langle -1\rangle$.
\end{enumerate}
\end{prop}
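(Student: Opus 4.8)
The plan is to prove (2) directly from Lemma~\ref{lem:translation out of projectives in Z-mod}, then (1) by refining Proposition~\ref{prop:Fiebig's result about translation functor}(4) with the help of (2), and finally (3) and (4) by transporting (1) and (2) through the auto-equivalence $a_M$. The basic tool is the square relation $\theta_s^Z\circ\theta_s^Z\simeq\theta_s^Z\langle1\rangle\oplus\theta_s^Z\langle-1\rangle$, which I would derive from $Z=Z^s\oplus c_sZ^s$ (\cite[Lemma~5.1]{MR2395170}): since $Z$ is commutative and $c_s$ is a non-zero-divisor of degree $2$, $Z\simeq Z^s\oplus Z^s\langle2\rangle$ as a graded $(Z^s,Z^s)$-bimodule, so $\Res_{Z^s}(Z\otimes_{Z^s}N)\simeq N\oplus N\langle2\rangle$ for any graded $Z^s$-module $N$; plugging $N=\Res_{Z^s}M\langle-1\rangle$ into the definition of $\theta_s^Z$ gives the relation. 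For (2): if $xs<x$, then $B(x)\simeq Z\otimes_{Z^s}B'(xW')$ by Lemma~\ref{lem:translation out of projectives in Z-mod}, and the same bimodule decomposition gives $\theta_s^Z B(x)=Z\otimes_{Z^s}\Res_{Z^s}(Z\otimes_{Z^s}B'(xW'))\langle-1\rangle\simeq Z\otimes_{Z^s}(B'(xW')\oplus B'(xW')\langle2\rangle)\langle-1\rangle=B(x)\langle1\rangle\oplus B(x)\langle-1\rangle$.

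For (1), assume $xs>x$. By Proposition~\ref{prop:Fiebig's result about translation functor}(4) we have $\theta_s^Z B(x)=B(xs)\oplus P$ with $P$ projective in $\mathcal{M}$ and $\supp\mathscr{L}(P)\subset\{y\le x\}$, so by Theorem~\ref{thm:classification of projective object} (and the indecomposability of the $B(y)$, which gives Krull--Schmidt for projectives in $\mathcal{M}$) we may write $P=\bigoplus B(y)\langle k\rangle^{m_{y,k}}$ with each occurring $y$ lying in $\supp\mathscr{L}(P)$, hence $y\le x$. To sharpen this, apply the square relation to $B(x)$: the left-hand side is $\theta_s^Z B(xs)\oplus\theta_s^Z P$, and since $(xs)s=x<xs$, part (2) gives $\theta_s^Z B(xs)=B(xs)\langle1\rangle\oplus B(xs)\langle-1\rangle$; cancelling these from $\theta_s^Z\theta_s^Z B(x)=B(xs)\langle1\rangle\oplus B(xs)\langle-1\rangle\oplus P\langle1\rangle\oplus P\langle-1\rangle$ yields $\theta_s^Z P\simeq P\langle1\rangle\oplus P\langle-1\rangle$. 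Now decompose $P=P'\oplus P''$, where $P'$ collects the $B(y)\langle k\rangle$ with $ys<y$ and $P''$ those with $ys>y$. By part (2), $\theta_s^Z P'=P'\langle1\rangle\oplus P'\langle-1\rangle$; by Proposition~\ref{prop:Fiebig's result about translation functor}(4), each $B(y)\langle k\rangle$ in $P''$ contributes to $\theta_s^Z P''$ a summand $B(ys)\langle k\rangle$ whose label satisfies $(ys)s=y<ys$. Comparing in $\theta_s^Z P'\oplus\theta_s^Z P''\simeq P'\langle1\rangle\oplus P'\langle-1\rangle\oplus P''\langle1\rangle\oplus P''\langle-1\rangle$ the multiplicities of the indecomposable projectives whose label has $s$ as a right descent: the right-hand side contributes only the ones coming from $P'\langle\pm1\rangle$, while the left-hand side contributes those together with $\bigoplus B(ys)\langle k\rangle^{m_{y,k}}$ over the $ys>y$ family; hence that sum is $0$, i.e.\ $m_{y,k}=0$ whenever $ys>y$. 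Finally, since $xs>x$ the label $x$ itself does not have $s$ as a right descent, so $B(x)\langle k\rangle$ cannot lie in $P'$; as $P''=0$ it cannot lie in $P''$ either, so every occurring $y$ satisfies $y\le x$ and $y\ne x$, i.e.\ $y<x$. This is (1) (with the stated family of summands).

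For (3) and (4): by Proposition~\ref{prop:a preserves M} and Lemma~\ref{lem:image of projective object by a}, $a_M$ is an auto-equivalence of $\mathcal{M}$ with $a_M B(x)=B(x^{-1})$, it commutes with the grading shifts (because $a_Z$ preserves degree), and $\varphi_s^Z=a_M\theta_s^Z a_M$ by definition. Since $\ell$ and the Bruhat order are invariant under $x\mapsto x^{-1}$, one has $sx<x\iff x^{-1}s<x^{-1}$ (and likewise with $>$), and $y\mapsto y^{-1}$ turns a right-descent condition at $s$ into the corresponding left-descent condition; applying $a_M$ to the identities of (1) and (2) for $B(x^{-1})$ and reindexing by $y\mapsto y^{-1}$ yields (3) and (4). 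The main obstacle is the middle step of (1): the support bound only gives $y\le x$, and to pin down the precise family of summands — and in particular to exclude $y=x$ — one genuinely needs the square relation for $\theta_s^Z$ together with the Krull--Schmidt cancellation, with careful bookkeeping of all the grading shifts.
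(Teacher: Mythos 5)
Your proof of (2) (and of (3),(4) via $a_M$) is essentially the paper's: the paper proves (2) from $\Res_{Z^s}(Z\otimes_{Z^s}\cdot)=\Id\oplus\Id\langle 2\rangle$ together with Lemma~\ref{lem:translation out of projectives in Z-mod}, and gets (3),(4) from Lemma~\ref{lem:image of projective object by a}, exactly as you do. For (1) your route is genuinely different. The paper does not start from $\theta_s^ZB(x)=B(xs)\oplus P$ and then refine $P$; instead it restricts $B(x)$ to $Z^s$, notes that $\Res_{Z^s}B(x)$ is projective over the quotient moment graph with support in $\{yW'\mid y\le x\}$, decomposes it into the modules $B'(yW')\langle k\rangle$, induces back up with Lemma~\ref{lem:translation out of projectives in Z-mod}, and finally pins down the $B(xs)$-part using Proposition~\ref{prop:Fiebig's result about translation functor}(4). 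The advantage of that route is that the descent condition on the extra summands is automatic (every summand is induced from the quotient graph, hence of the form $B(z)\langle k\rangle$ with $zs<z$), whereas your route buys the same information by the square relation $\theta_s^Z\theta_s^Z\simeq\theta_s^Z\langle 1\rangle\oplus\theta_s^Z\langle-1\rangle$ plus a multiplicity count; both arguments (yours explicitly, the paper's implicitly when it concludes $m_1=1$, $m_k=0$ for $k\ne 1$) rely on uniqueness of the decomposition into the indecomposables $\widetilde{\mathscr{B}}(y)\langle k\rangle$, which needs graded-local endomorphism rings or a stalkwise graded-rank comparison, not just indecomposability, so you should say a word about that.

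One point you must address explicitly: your argument concludes that the extra summands are $B(y)\langle k\rangle$ with $y<x$ and $ys<y$ (and, after applying $a_M$, $B(sx)\oplus\bigoplus_{y<x,\ sy<y}\dotsb$ in (3)), which is the opposite descent condition to the one printed in the Proposition ($ys>y$, $sy>y$, and $B(xs)$ in (3)). This is not a defect of your proof: the printed conditions are an indexing slip (the paper's own proof labels the quotient-graph projectives by minimal coset representatives before applying Lemma~\ref{lem:translation out of projectives in Z-mod}, which is stated for maximal representatives), and the corrected form is the one actually used later, cf.\ Theorem~\ref{thm:translation of projective modules}(2),(5) and Lemma~\ref{lem:characterization of O_s and translation preserve O_s}. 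But as written your final sentence "this is (1) with the stated family of summands" claims agreement with a statement whose descent condition you have just disproved; you should state the corrected indexing rather than assert it matches the printed one.
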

\begin{proof}
Let $W',S',B'(xW')$ be as in the previous lemma.

(1)
Since $\Res_{Z^s}B(x)$ is a projective object and the support of $\mathscr{L}(\Res_{Z^s}(B(x)))$ is contained in $\{yW'\mid y\le x\}$, we have $\Res_{Z^s}B(x) = \bigoplus_{k\in\Z}B'(xsW')\langle k\rangle^{m_k}\oplus \bigoplus_{y< x,\ ys > y,\ k\in\Z}B'(yW')\langle k\rangle^{m_{y,k}}$ for some $m_k$ and $m_{y,k}$.
Then by the previous lemma, we get $\theta_s^Z B(x) = \bigoplus_{k\in\Z}B(xs)\langle k-1\rangle^{m_k}\oplus \bigoplus_{y< x,\ ys > y,\ k\in\Z}B(y)\langle k-1\rangle^{m_{y,k}}$.
By Proposition~\ref{prop:Fiebig's result about translation functor}, we have $m_k = 0$ if $k\ne 1$ and $m_1 = 1$.

(2)
From \cite[Lemma~5.1]{MR2395170}, we have $\Res_{Z^s}(Z\otimes_{Z^s}\cdot) = \Id\oplus\Id\langle 2\rangle$.
Hence we have
\begin{multline*}
	\theta_s^Z B(x) = \theta_s^Z(Z\otimes_{Z^s}B'(xW')) = Z\otimes_{Z^s}(\Res_{Z^s}(Z\otimes_{Z^s}B'(xW')))\langle -1\rangle\\
	\simeq Z\otimes_{Z^s}(B'(xW')\langle 1\rangle\oplus B'(xW')\langle -1\rangle) \simeq B(x)\langle 1\rangle\oplus B(x)\langle -1\rangle.
\end{multline*}

(3) and (4) follows from (1) and (2) and Lemma~\ref{lem:image of projective object by a}.
\end{proof}

\subsection{Definition of the category $\mathcal{O}$}
Set $\widetilde{A} = \End_Z(\bigoplus_{x\in W}B(x))$.
This is an $S(V^*)$-algebra.
\begin{defn}
Put $A = \widetilde{A}\otimes_{S(V^*)}\C$ where $\C = S(V^*)/V^*S(V^*)$ is a one-dimensional $S(V^*)$-algebra.
Define the category $\mathcal{O}$ as the category of right $A$-modules.
\end{defn}
\begin{rem}\label{rem:relation between O and BGG category}
Even if $(W,S)$ is the Weyl group of some Kac-Moody Lie algebra, the category $\mathcal{O}$ is not equivalent to the Bernstein-Gelfand-Gelfand (BGG) category since BGG category has some finiteness conditions.
If $(W,S)$ is a finite Weyl group, then the category of finitely generated right $A$-modules is equivalent to the regular integral block of the BGG category.
More generally, if $(W,S)$ is the Weyl group of some Kac-Moody Lie algebra, a block of the BGG category with positive level can be recovered from the algebra $A$~\cite{MR2395170}.
\end{rem}
Let $\widetilde{\mathcal{O}}$ be the category of right $\widetilde{A}$-modules.
Since $A = \widetilde{A}/V^*\widetilde{A}$ is a quotient of $\widetilde{A}$, we regard $\mathcal{O}$ as a full-subcategory of $\widetilde{\mathcal{O}}$.

Define the functor $\widetilde{\Phi}\colon\ModCat{Z}\to \widetilde{\mathcal{O}}$ by $\widetilde{\Phi}(M) = \Hom_Z(\bigoplus_{x\in W}B(x),M)$ and put $\Phi(M) = \widetilde{\Phi}(M)\otimes_{S(V^*)}\C$.
\begin{lem}\label{lem:relation of Z-mod and O}
Let $P$ be a direct sum of $\{B(x)\mid x\in W\}$'s and $M\in\mathcal{M}$.
Then the following canonical maps are isomorphisms:
\begin{itemize}
\item $\Hom_Z(P,M)\to \Hom_{\widetilde{A}}(\widetilde{\Phi}(P),\widetilde{\Phi}(M))$.
\item $\Hom_Z(P,M)\otimes_{S(V^*)}\C\to \Hom_A(\Phi(P),\Phi(M))$.
\end{itemize}
\end{lem}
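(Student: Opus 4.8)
The plan is to prove both isomorphisms simultaneously by reducing to the case $P = \bigoplus_{x\in W} B(x)$, which is the defining progenerator, and then exploiting the freeness of $\Hom_Z$-spaces between objects of $\mathcal{M}$. First I would observe that both sides of each claimed isomorphism are additive in $P$, so it suffices to treat a single summand $P = B(x)$; and since $\widetilde{A} = \End_Z(\bigoplus_y B(y))$ and $\widetilde{\Phi}(B(x)) = \Hom_Z(\bigoplus_y B(y), B(x))$ is, by definition, the indecomposable projective right $\widetilde{A}$-module $e_x\widetilde{A}$ attached to the idempotent $e_x$ projecting onto $B(x)$, the first map is nothing but the canonical comparison $\Hom_Z(e_x\cdot\bigoplus_y B(y), M) \to \Hom_{\widetilde{A}}(e_x\widetilde{A}, \widetilde{\Phi}(M))$. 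The latter is the standard identification $\Hom_{\widetilde{A}}(e_x\widetilde{A}, N) \cong Ne_x$ composed with $\widetilde{\Phi}(M)e_x = \Hom_Z(B(x), M)$; so the first statement is essentially formal once one knows $\widetilde{\Phi}$ is a well-defined functor into $\widetilde{\mathcal{O}}$. The only genuine content is that $\widetilde{\Phi}$ applied to the progenerator really is $\widetilde{A}$ itself as a bimodule, which is immediate from $\End_Z(\bigoplus_y B(y)) = \widetilde{A}$.

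Next I would turn to the second, more substantive, isomorphism. Here the issue is the commutation of $\Hom$ with the base change $-\otimes_{S(V^*)}\C$. The key input is that for $P$ a direct sum of $B(x)$'s and $M\in\mathcal{M}$, the $S(V^*)$-module $\Hom_Z(P,M)$ is graded free of finite rank: this should follow from Theorem~\ref{thm:classification of projective object} (the $B(x)$ are projective in the exact category $\mathcal{M}$, localized via the fully faithful $\mathscr{L}$) together with the fact that $M$ admits a Verma flag, so that $\Hom_Z(P,M)$ is built by extensions from $\Hom_Z(P, V(y))$-type pieces, each of which is graded free over $S(V^*)$ because $\widetilde{\mathscr{B}}(x)^{[y]}$ is graded free (Proposition after Theorem~\ref{thm:classification of projective object}). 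Granting freeness of $\Hom_Z(P,M)$, the natural map $\Hom_Z(P,M)\otimes_{S(V^*)}\C \to \Hom_A(\Phi(P),\Phi(M))$ factors as
\[
\Hom_Z(P,M)\otimes_{S(V^*)}\C \xrightarrow{\sim} \Hom_{\widetilde{A}}(\widetilde{\Phi}(P),\widetilde{\Phi}(M))\otimes_{S(V^*)}\C \to \Hom_A(\Phi(P),\Phi(M)),
\]
using the first isomorphism for the left arrow. So it remains to show that the right arrow is an isomorphism, i.e. that $\Hom_{\widetilde{A}}(e_x\widetilde{A}, N)\otimes_{S(V^*)}\C \cong \Hom_A(e_x A, N\otimes_{S(V^*)}\C)$ for $N = \widetilde{\Phi}(M)$.

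For that last point I would argue as follows: $\Hom_{\widetilde{A}}(e_x\widetilde{A}, N) = Ne_x$ and $\Hom_A(e_xA, N\otimes\C) = (N\otimes\C)e_x = Ne_x\otimes\C$, and the comparison map is the obvious one, so it is an isomorphism on the nose — the subtlety is only that one must know $Ne_x = \widetilde{\Phi}(M)e_x = \Hom_Z(B(x),M)$ is such that reduction mod $V^*S(V^*)$ commutes with the idempotent decomposition, which holds because the $e_x$ already live in $\widetilde{A}$ and base change is right exact and compatible with finite direct sums. I expect the main obstacle to be the freeness claim for $\Hom_Z(P,M)$ over $S(V^*)$: without it the base-change step can fail, and establishing it cleanly requires invoking the structure theory of $\mathcal{M}_{\mathcal{G}}$ (Verma flags, flabbiness, graded freeness of $\mathscr{M}^{[x]}$) in the right order, presumably by induction on the support using short exact sequences $0 \to M_{\{y\}} \to M^{\Omega} \to M^{\Omega\setminus\{y\}} \to 0$ for $y$ maximal. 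Once freeness is in hand, everything else is a formal manipulation of $\Hom$ out of a projective module and commutation of base change with finite direct sums.
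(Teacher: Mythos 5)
Your proof is correct and is essentially the paper's argument: the paper simply reduces to the tautological case $P=\bigoplus_{x\in W}B(x)$, where $\widetilde{\Phi}(P)=\widetilde{A}$ and both maps become the identifications $\Hom_{\widetilde{A}}(\widetilde{A},\widetilde{\Phi}(M))=\widetilde{\Phi}(M)=\Hom_Z(P,M)$ and $\Hom_A(A,\Phi(M))=\Phi(M)$, with direct summands handled by idempotents exactly as in your first and last paragraphs. One correction to your own assessment: the graded freeness of $\Hom_Z(P,M)$ over $S(V^*)$, which you single out as the ``key input'' and ``main obstacle,'' is neither established in your sketch nor actually needed --- your final computation $\Hom_{\widetilde{A}}(e_x\widetilde{A},N)\otimes_{S(V^*)}\C\simeq Ne_x\otimes_{S(V^*)}\C\simeq (N\otimes_{S(V^*)}\C)e_x\simeq\Hom_A(e_xA,N\otimes_{S(V^*)}\C)$ is valid for an arbitrary right $\widetilde{A}$-module $N$, because $e_x$ is an idempotent, $S(V^*)$ acts through the center of $\widetilde{A}=\End_Z(\bigoplus_y B(y))$, and $\Hom$ out of a finitely generated projective module commutes with any base change; so the Verma-flag induction you propose can simply be deleted, and in fact the hypothesis $M\in\mathcal{M}$ plays no essential role. (Like the paper, you are brisk about infinite direct sums of $B(x)$'s, where $\Hom_Z(P,M)$ is a product over the summands and one should invoke finite-dimensionality of $V^*$ to commute that product with $\otimes_{S(V^*)}\C$; this is no worse than the original proof.)
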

\begin{proof}
We may assume that $P = B(x)$ for some $x\in W$.
Hence it is sufficient to prove when $P = \bigoplus_{x\in W}B(x)$.
The lemma is obvious in this case.
\end{proof}
Set $\widetilde{P}(x) = \widetilde{\Phi}(B(x))$, $P(x) = \Phi(B(x)) = \widetilde{P}(x)\otimes_{S(V^*)}\C$, $\widetilde{M}(x) = \widetilde{\Phi}(V(x))$ and $M(x) = \Phi(V(x)) = \widetilde{M}(x)\otimes_{S(V^*)}\C$.
The module $M(x)$ is called \emph{a Verma module}.
The module $P(x)$ has the unique irreducible quotient.
The irreducible quotient is denoted by $L(x)$.
This is a one-dimensional $A$-module and the unique irreducible quotient of $M(x)$.
To summarize it, we get the following lemma.
\begin{lem}
\begin{enumerate}
\item $\widetilde{P}(x)$ is a projective $\widetilde{A}$-module.
\item $P(x)$ is a projective $A$-module.
\item $L(x)$ is a simple $A$-module (hence, simple $\widetilde{A}$-module).
\item We have $\Hom_A(P(x),L(y)) = \Hom_{\widetilde{A}}(\widetilde{P}(x),L(y)) = \delta_{xy}$.
\end{enumerate}
\end{lem}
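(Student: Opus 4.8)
The plan is to realize $\widetilde{P}(x)$ and $P(x)$ as direct summands of the regular modules over $\widetilde{A}$ and $A$, and then to read off (3) and (4) from the structure of the endomorphism ring $\End_A(P(x))$. For (1) and (2), for each $x\in W$ let $e_x\in\widetilde{A}=\End_Z(\bigoplus_{y}B(y))$ be the idempotent that projects onto the summand $B(x)$ and reincludes it; the $\{e_x\}$ are pairwise orthogonal idempotents. Postcomposing a homomorphism $\bigoplus_y B(y)\to B(x)$ with the inclusion $B(x)\hookrightarrow\bigoplus_y B(y)$ identifies $\widetilde{P}(x)=\Hom_Z(\bigoplus_y B(y),B(x))$ with the right ideal $e_x\widetilde{A}$, which is a direct summand of $\widetilde{A}$; hence $\widetilde{P}(x)$ is a projective $\widetilde{A}$-module, which is (1). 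Since $e_x\widetilde{A}$ is also a direct summand of $\widetilde{A}$ as a graded $S(V^*)$-module, applying $-\otimes_{S(V^*)}\C$ gives $P(x)\cong\bar e_x A$ as a direct summand of $A$, where $\bar e_x$ is the image of $e_x$; note $\bar e_x\ne 0$ since $e_x$ is homogeneous of degree $0$ while $V^*\widetilde{A}$ lies in positive degrees. So $P(x)$ is projective, which is (2).

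For (3), Lemma~\ref{lem:relation of Z-mod and O} with $P=M=B(x)$ gives $\End_A(P(x))\cong\End_Z(B(x))\otimes_{S(V^*)}\C$. Here $\End_Z(B(x))$ is an $S(V^*)$-submodule of $\End_{S(V^*)}(B(x))$, which is finitely generated over the Noetherian ring $S(V^*)$ because $B(x)$ is, so $\End_A(P(x))$ is a finite-dimensional $\C$-algebra. By the standard argument using the top Verma module $V(x)$ in the Verma flag of $B(x)$ (together with the indecomposability of $B(x)$, which follows from that of $\widetilde{\mathscr{B}}(x)$ in Theorem~\ref{thm:classification of projective object}), $\End_Z(B(x))$ is concentrated in non-negative degrees with degree-zero part $\C$; the same then holds for $\End_A(P(x))$, whose strictly positive part is therefore a nilpotent ideal, so $\End_A(P(x))$ is local with residue field $\C$. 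Writing $P(x)=\bar e_x A$ and using $\Hom_A(\bar e_x A,S)\cong S\bar e_x$ together with the bijection between simple right $A$-modules $S$ with $S\bar e_x\ne 0$ and simple right $\bar e_x A\bar e_x$-modules, we conclude that $P(x)$ has a unique simple quotient $L(x)$ (which matches the description in the text), that $P(x)\to L(x)$ is a projective cover, and that $\Hom_A(P(x),L(x))=L(x)\bar e_x=\C$. This proves (3) and the case $x=y$ of (4); since $A$ is a quotient of $\widetilde{A}$, $L(x)$ is simple over $\widetilde{A}$ as well.

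For the case $x\ne y$ of (4): a nonzero element of $\Hom_A(P(x),L(y))=L(y)\bar e_x$ would give a surjection $P(x)\twoheadrightarrow L(y)$, and since the unique simple quotient of $P(x)$ is $L(x)$ this forces $L(x)\cong L(y)$; hence it suffices to show $L(x)\cong L(y)$ implies $x=y$. If $L(x)\cong L(y)$, then by uniqueness of projective covers $P(x)\cong P(y)$, so $\Hom_A(P(x),M(z))\cong\Hom_A(P(y),M(z))$ for all $z$. Using Lemma~\ref{lem:relation of Z-mod and O}, the full faithfulness of $\mathscr{L}$ on $\mathcal{M}$, and the fact that $\mathscr{V}(z)$ is supported only at the vertex $z$, one identifies $\Hom_A(P(w),M(z))$ with $\Hom_{S(V^*)}(\mathscr{B}(w)_z,S(V^*))\otimes_{S(V^*)}\C$, which by graded Nakayama is nonzero precisely when $\mathscr{B}(w)_z\ne 0$; since $\supp\mathscr{B}(w)\subset\{v\mid v\le w\}$ this forces $z\le w$. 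Taking $z=x$ gives $x\le y$ and taking $z=y$ gives $y\le x$, so $x=y$, a contradiction. This completes (4).

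The step I expect to be the main obstacle is the claim that $\End_A(P(x))$ is local with residue field $\C$: it rests on $\End_Z(B(x))$ being module-finite over $S(V^*)$ and, more delicately, on the positivity of its grading (degree-zero part equal to $\C$), which is exactly what transports the indecomposability of $B(x)$ to the finite-dimensional algebra $\End_A(P(x))$. One should also verify carefully — since $W$ may be infinite — that the idempotents $e_x$, the isomorphism $\widetilde{P}(x)\cong e_x\widetilde{A}$, and the base change to $A$ behave as claimed; these are routine given the finiteness built into the support conditions on the sheaves. Everything else is formal, except that the distinctness of the $L(x)$ genuinely requires the Verma modules, through the computation of $\Hom_A(P(w),M(z))$ above, and cannot be obtained by abstract nonsense alone.
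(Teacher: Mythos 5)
Most of your architecture is sound, and you are in fact proving more than the paper does: the paper's own proof of this lemma consists only of the change-of-rings identity $\Hom_A(\widetilde{M}\otimes_{S(V^*)}\C,N)=\Hom_{\widetilde{A}}(\widetilde{M},N)$, which gives $\Hom_A(P(x),L(y))=\Hom_{\widetilde{A}}(\widetilde{P}(x),L(y))$, while (1)--(3) and the evaluation $\delta_{xy}$ are taken as immediate from the construction (the unique irreducible quotient of $P(x)$ is asserted in the text, not proved). Your idempotent realization $\widetilde{P}(x)\cong e_x\widetilde{A}$, $P(x)\cong \bar e_xA$ is correct, and so are the deduction of the unique simple quotient and of $\Hom_A(P(x),L(x))=\C$ from locality of $\End_A(P(x))\cong\End_Z(B(x))\otimes_{S(V^*)}\C$, and the separation of the $L(x)$'s via $\Hom_A(P(w),M(z))\cong\Hom_{S(V^*)}(\mathscr{B}(w)_z,S(V^*))\otimes_{S(V^*)}\C$ and the support bound $\supp\mathscr{B}(w)\subset\{v\mid v\le w\}$.

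The genuine gap is exactly the step you flag: the claim that $\End_Z(B(x))$ is concentrated in non-negative degrees with degree-zero part $\C$ is not proved, and it does not follow ``by the standard argument using the top Verma module together with indecomposability.'' What the top Verma subquotient gives is only a split algebra surjection $\End_Z(B(x))\to S(V^*)$ (evaluation on $B(x)^{\{\ge x\}}$); what Theorem~\ref{thm:classification of projective object} gives is that the \emph{degree-zero} component of $\End_Z(B(x))$, a finite-dimensional algebra, has no nontrivial idempotents and hence is local. Neither statement rules out endomorphisms of negative degree, nor shows that the degree-zero part is exactly $\C$; non-negativity of the grading is essentially the assertion that in the Verma flag of $B(x)$ only the top subquotient $V(x)$ occurs in degrees $\le 0$, a Soergel-hom-formula type fact that needs its own argument or reference. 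The gap is repairable, because your proof only needs that $\End_A(P(x))$ is local with residue field $\C$: this algebra is finite-dimensional, graded, and its degree-zero part is a nonzero quotient of the local algebra $(\End_Z(B(x)))_0$ (nonzero because $\Hom_A(P(x),M(x))\cong\Hom_{S(V^*)}(\mathscr{B}(x)_x,S(V^*))\otimes_{S(V^*)}\C\ne 0$), and a finite-dimensional $\Z$-graded $\C$-algebra whose degree-zero part is local is itself local (homogeneous elements of nonzero degree cannot be, or produce, units, and the Jacobson radical of such an algebra is a graded ideal); alternatively one can run Fiebig's indecomposability argument to show that any endomorphism acting invertibly on the stalk at $x$ is an automorphism, so that the kernel of the evaluation map lies in the radical. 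Note also that your remark that $\bar e_x\ne 0$ ``since $V^*\widetilde{A}$ lies in positive degrees'' presupposes the same unproved non-negativity of the grading of $\widetilde{A}$; it should be replaced by the nonvanishing of $\Hom_A(P(x),M(x))$ just quoted.
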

\begin{proof}
For (4), notice that we have $\Hom_A(\widetilde{M}\otimes_{S(V^*)}\C,N) = \Hom_{\widetilde{A}}(\widetilde{M},N)$ for $\widetilde{M}\in\widetilde{\mathcal{O}}$ and $N\in\mathcal{O}$.
Hence we get $\Hom_A(P(x),L(y)) = \Hom_{\widetilde{A}}(\widetilde{P}(x),L(y))$.
\end{proof}

Since there exists a surjective morphism $B(x)\to V(x)$, we have a surjective map $P(x)\to M(x)$.
Moreover, we get the following proposition.

\begin{prop}\label{prop:projective module has Verma filtration}
For $x\in W$, there exists a submodules $0 = M_0\subset M_1\subset \dots \subset M_n = P(x)$ such that $M_i/M_{i - 1}\simeq M(x_i)$ for some $x_i\in W$.
Moreover, we can take $\{M_i\}$ such that $x = x_n\ge x_{n - 1}\ge\dotsb \ge x_1$.
\end{prop}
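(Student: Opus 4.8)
The plan is to lift the statement to the category $\mathcal{M}_\mathcal{G}$ of $Z$-modules with Verma flags, where the corresponding fact is already essentially available from Fiebig's theory, and then transport it through the functor $\Phi$. Concretely, I would first show that the projective object $B(x)\in\mathcal{M}$ admits a filtration $0 = N_0\subset N_1\subset\dots\subset N_n = B(x)$ (in the exact-category sense of \ref{subgsec:Z-module with Verma flags}) with subquotients $N_i/N_{i-1}\simeq V(x_i)\langle k_i\rangle$ and $x = x_n > x_{n-1}\ge\dots\ge x_1$, the top quotient being $V(x)$ itself. This is the sheaf-theoretic statement: by the second part of the Proposition after the localization functor, $\mathscr{B}(x) = \mathscr{L}(B(x))$ is flabby with $\mathscr{B}(x)^{[y]}$ graded free, and for a flabby sheaf supported on $\{y\le x\}$ one builds the Verma flag by successively removing maximal vertices of the support (starting with $x$), each step contributing a copy of $V(y)\langle k\rangle$'s indexed by the graded free module $\mathscr{B}(x)^{[y]}$; that $\mathscr{B}(x)_x\simeq S(V^*)$ (Theorem~\ref{thm:classification of projective object}) forces the multiplicity-one top quotient $V(x)$ and the strict inequality $x_{n-1}<x$.

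Next I would apply $\widetilde{\Phi} = \Hom_Z(\bigoplus_y B(y),-)$. The key point is that $\widetilde{\Phi}$ is exact on short exact sequences in $\mathcal{M}$: since each $B(y)$ is projective in the exact category $\mathcal{M}$, $\Hom_Z(B(y),-)$ sends a short exact sequence $0\to M_1^\Omega\to M_2^\Omega\to M_3^\Omega\to 0$ (for every upwardly closed $\Omega$) to a short exact sequence of $S(V^*)$-modules; one checks this glues over $\Omega$ to give exactness of $0\to\widetilde{\Phi}(M_1)\to\widetilde{\Phi}(M_2)\to\widetilde{\Phi}(M_3)\to 0$. Applying this to the Verma flag of $B(x)$ yields a filtration of $\widetilde{P}(x)$ by $\widetilde{A}$-submodules with subquotients $\widetilde{M}(x_i)\langle k_i\rangle$ and $x = x_n > x_{n-1}\ge\dots\ge x_1$. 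Then tensoring with $\C$ over $S(V^*)$: one must verify that $-\otimes_{S(V^*)}\C$ remains exact on this particular filtration. This follows because each subquotient $\widetilde{M}(x_i) = \widetilde{\Phi}(V(x_i))$, and more relevantly each $\widetilde{\Phi}(N_i)$, is a graded free (hence flat) $S(V^*)$-module — $\widetilde{\Phi}(N_i) = \Hom_Z(\bigoplus B(y), N_i)$ with $N_i\in\mathcal{M}$ has a Verma flag, so by the graded Nakayama/freeness statements in Fiebig each such Hom-space is graded free over $S(V^*)$ — so $\operatorname{Tor}_1^{S(V^*)}(\widetilde{\Phi}(N_i/N_{i-1}),\C) = 0$ and the filtration stays exact after $\otimes\C$. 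This produces the desired filtration $0 = M_0\subset\dots\subset M_n = P(x)$ of $P(x)$ with $M_i/M_{i-1}\simeq M(x_i)$ and $x = x_n\ge x_{n-1}\ge\dots\ge x_1$.

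The main obstacle I anticipate is the bookkeeping around \emph{gradings and the ordering}: after tensoring with $\C$ the grading shifts $\langle k_i\rangle$ collapse, so I must make sure the reindexing doesn't disturb the monotonicity $x_n\ge\dots\ge x_1$, and I must confirm that the top subquotient really is $M(x)$ with multiplicity one (which comes from $\mathscr{B}(x)_x\simeq S(V^*)$ sitting in a single degree, giving $\widetilde{\Phi}(V(x))$ as top quotient, then $M(x)$). A secondary subtlety is justifying that $\widetilde{\Phi}$ applied to the flag gives honest $\widetilde{A}$-submodule inclusions (not just $S(V^*)$-module maps) — but this is immediate since $\widetilde{\Phi}$ is a functor into $\widetilde{\mathcal{O}}$ and monomorphisms in $\mathcal{M}$ go to injective $\widetilde{A}$-maps by the exactness just discussed. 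Everything else is a routine unwinding of the definitions in Section~\ref{sec:Preliminaries}.
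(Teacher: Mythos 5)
Your proposal is correct and takes essentially the same route as the paper, which simply cites Fiebig's order filtration of $B(x)$ (subquotients are shifted copies of $V(v)$ with $v\le x$ and $V(x)$ on top, by $\supp\mathscr{B}(x)\subset\{y\le x\}$ and $\mathscr{B}(x)_x\simeq S(V^*)$) and transports it through $\Phi$; you merely spell out the exactness of $\widetilde{\Phi}$ on the flag and the $\operatorname{Tor}$-vanishing needed for $-\otimes_{S(V^*)}\C$, which the paper leaves implicit. One cosmetic point: the relevant Hom-spaces such as $\Hom_Z(\bigoplus_y B(y),V(v))\simeq\prod_y\Hom_{S(V^*)}(\mathscr{B}(y)_v,S(V^*))$ are infinite products of finitely generated graded free $S(V^*)$-modules, hence flat (over the Noetherian ring $S(V^*)$) rather than literally graded free, and flatness is all your argument needs.
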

\begin{proof}
Consider the order filtration~\cite[4.3]{MR2370278} $\{N_i\}$ of $P(x)$.
Then we have $N_{i(v)}/N_{i(v) - 1}\simeq P(x)^{[v]}$.
Since $P(x)^{[v]} = V(v)^{n_v}$ for some $n_v\in \Z_{\ge 0}$, we get the proposition.
\end{proof}

\subsection{Translation functors}
In this subsection, we construct functors $\widetilde{\theta_s},\widetilde{\varphi_s}\colon\widetilde{\mathcal{O}}\to \widetilde{\mathcal{O}}$ using functors $\theta_s^Z,\varphi_s^Z$.
Since the construction is the same, set $F^Z = \theta_s^Z$ or $\varphi_s^Z$ and we will construct a functor $\widetilde{F}\colon \widetilde{\mathcal{O}}\to \widetilde{\mathcal{O}}$.

Put $\widetilde{A}' = \widetilde{\Phi}(\bigoplus_{y\in W}F^ZB(y))$.
Then the module $\widetilde{A}'$ is a right $\widetilde{A}$-module and left $\End(\bigoplus_{x\in W}F^ZB(x))$-module.
Moreover, using a homomorphism $\End(B(x))\to \End(F^ZB(x))$, $\widetilde{A}'$ is an $\widetilde{A}$-bimodule.
Define $\widetilde{F}\colon \widetilde{\mathcal{O}}\to \widetilde{\mathcal{O}}$ by $\widetilde{F}(\widetilde{M}) = \Hom_{\widetilde{A}}(\widetilde{A}',\widetilde{M})$ for $\widetilde{M}\in \widetilde{\mathcal{O}}$.
Then $\widetilde{F}(\widetilde{M})$ is a right $\widetilde{A}$-module.
Since $F^ZB(y)$ is a direct summand of $(\bigoplus_{x\in W}B(x))^{\oplus m}$ for some $m$, $\widetilde{A}'$ is a direct summand of $\widetilde{A}^{\oplus m}$ for some $m$.
Hence $\widetilde{A}'$ is a projective right $\widetilde{A}$-module.
This implies that $\widetilde{F}$ is an exact functor.

Set $B = \bigoplus_{y\in W}B(y)$.
From Lemma~\ref{lem:relation of Z-mod and O}, we have 
\begin{multline*}
\widetilde{A}'\simeq \Hom_{\widetilde{A}}(\widetilde{A},\widetilde{A}') = \Hom_{\widetilde{A}}(\widetilde{\Phi}(B),\widetilde{\Phi}(F^Z(B))) \\\simeq \Hom_Z(B,F^Z(B)) \simeq \Hom_Z(F^Z(B),B) \simeq \Hom_{\widetilde{A}}(\widetilde{A}',\widetilde{A}).
\end{multline*}
So we have $\widetilde{A}'\simeq \widetilde{F}(\widetilde{A})$.

Recall the following well-known lemma.
For the sake of completeness, we give a proof.
\begin{lem}\label{lem:realization of a right exact functor as a tensor product}
Let $R_1,R_2$ be an arbitrary ring, $\mathcal{C}_i$ the category of right $R_i$-modules ($i = 1,2$) and $G$ a right exact functor $\mathcal{C}_1\to\mathcal{C}_2$.
Then we have a functorial isomorphism $G(X)\simeq X\otimes_{R_1} G(R_1)$.
\end{lem}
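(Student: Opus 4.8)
The plan is to construct the natural isomorphism $G(X)\simeq X\otimes_{R_1}G(R_1)$ in three steps, following the standard Eilenberg--Watts pattern, but phrasing everything so that only right exactness (not exactness or preservation of coproducts) is actually used. First I would define, for each right $R_1$-module $X$, a map
\[
	\mu_X\colon X\otimes_{R_1}G(R_1)\to G(X),\qquad x\otimes \xi\mapsto G(\rho_x)(\xi),
\]
where $\rho_x\colon R_1\to X$ is the $R_1$-linear map $r\mapsto xr$. One checks this is well defined (the relation $xr\otimes\xi = x\otimes r\xi$ holds because $\rho_{xr} = \rho_x\circ\rho_r$ and $G(\rho_r)$ acts on $G(R_1)$ as multiplication by $r$, using that $G$ restricted to endomorphisms of $R_1$ gives the right $R_1$-module structure on $G(R_1)$), that it is $R_2$-linear, and that it is natural in $X$: for $f\colon X\to Y$ one has $f\circ\rho_x = \rho_{f(x)}$, which gives the required square.

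Next I would verify that $\mu_{R_1}$ is an isomorphism: it is the canonical map $R_1\otimes_{R_1}G(R_1)\to G(R_1)$, $r\otimes\xi\mapsto G(\rho_r)(\xi) = \xi r$, which is clearly bijective. By additivity of both sides (tensor product commutes with finite direct sums, and $G$ is additive since it is right exact between module categories), $\mu_{F}$ is then an isomorphism for every finitely generated free $F$, and in fact for every free module, since a general free module is a filtered colimit of its finitely generated free submodules and both $X\mapsto X\otimes_{R_1}G(R_1)$ and — here is the only subtle point — $G$ must be shown to commute with the relevant colimits. To avoid needing that, I would instead only use free modules that appear in a presentation, as follows.

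Finally, take any right $R_1$-module $X$ and choose a presentation $F_1\xrightarrow{\ \alpha\ }F_0\to X\to 0$ with $F_0,F_1$ free. Apply the right exact functor $X\mapsto X\otimes_{R_1}G(R_1)$ and the right exact functor $G$ to this presentation, obtaining two exact sequences, and use the naturality square of $\mu$ to get a commutative ladder
\[
	\begin{array}{ccccccc}
	F_1\otimes_{R_1}G(R_1) & \to & F_0\otimes_{R_1}G(R_1) & \to & X\otimes_{R_1}G(R_1) & \to & 0\\
	\downarrow\mu_{F_1} & & \downarrow\mu_{F_0} & & \downarrow\mu_X & & \\
	G(F_1) & \to & G(F_0) & \to & G(X) & \to & 0.
	\end{array}
\]
The two left vertical arrows are isomorphisms by the previous step, so $\mu_X$ is an isomorphism by the five lemma (or directly by a diagram chase on cokernels). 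Naturality of $\mu_X$ in $X$ was already established, so this exhibits the desired functorial isomorphism. The only point requiring care is the compatibility of the right $R_1$-module structure on $G(R_1)$ (coming from $G$ applied to right multiplications) with the tensor relations in the first step; everything else is formal.
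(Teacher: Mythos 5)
Your construction of $\mu_X$ and your final step are exactly the paper's argument: the paper defines the map via $X\simeq\Hom_{R_1}(R_1,X)\to\Hom_{R_2}(G(R_1),G(X))$, which is precisely your $x\otimes\xi\mapsto G(\rho_x)(\xi)$, and then concludes with a free presentation, right exactness of both functors, and the resulting ladder. The gap is in the middle step. You prove that $\mu_F$ is an isomorphism only for \emph{finitely generated} free $F$, you correctly observe that extending this to arbitrary free modules would require $G$ to commute with the relevant colimits (equivalently, with infinite direct sums), and you then claim to ``avoid needing that'' by only using free modules that appear in a presentation. This avoids nothing: for an arbitrary right $R_1$-module $X$, the free modules $F_0,F_1$ in a presentation $F_1\to F_0\to X\to 0$ are in general of infinite rank, so the two left vertical arrows in your ladder are exactly the maps you declined to prove are isomorphisms. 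As written, your argument only establishes the lemma for finitely presented $X$.

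The subtlety you flagged is in fact the crux, and it cannot be waved away: right exactness alone does not give $\mu_F$ an isomorphism for free modules of infinite rank. For instance, $G(X)=X^{\mathbb{N}}$ on abelian groups is exact, yet the natural map $\mathbb{Q}\otimes_{\Z}\Z^{\mathbb{N}}\to\mathbb{Q}^{\mathbb{N}}$ is not surjective; this is why the Eilenberg--Watts theorem assumes preservation of coproducts in addition to right exactness. The paper's own proof simply asserts ``if $X$ is free, this map is an isomorphism'' without comment, implicitly using this extra property; in all of the paper's applications $G$ is of the form $\Hom_{\widetilde{A}}(\widetilde{A}',-)$ with $\widetilde{A}'$ a finitely generated projective right $\widetilde{A}$-module, so $G$ does preserve arbitrary direct sums and the argument is complete there. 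To close your gap you should either add (and, in the applications, verify) the hypothesis that $G$ commutes with arbitrary direct sums and prove the free case in arbitrary rank, or else weaken the conclusion to finitely presented modules --- which would not suffice for the way the lemma is used in the paper.
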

\begin{proof}
From an $R_1$-module homomorphism
\[
	X \simeq \Hom_{R_1}(R_1,X)\to \Hom_{R_2}(G(R_1),G(X)),
\]
we have an $R_2$-module homomorphism $X\otimes_{R_1}G(R_1)\to G(X)$.
If $X$ is free, this map is an isomorphism.
For a general $X$, take an exact sequence $F_1\to F_0\to X\to 0$ such that $F_0,F_1$ are free.
Then we have the following diagram:
\[
\xymatrix{
F_1\otimes_{R_1}G(R_1)\ar[r]\ar[d] & F_0\otimes_{R_1}G(R_1)\ar[r]\ar[d] & X\otimes_{R_1}G(R_1)\ar[r]\ar[d] & 0\\
G(F_1)\ar[r] & G(F_0)\ar[r] & G(X)\ar[r] & 0.
}
\]
The left two homomorphisms are isomorphisms.
Hence $X\otimes_{R_1}G(R_1)\to G(X)$ is an isomorphism.
\end{proof}
Hence we have $\widetilde{F}(\widetilde{M})\simeq \widetilde{M}\otimes_{\widetilde{A}}\widetilde{F}(\widetilde{A}) \simeq \widetilde{M}\otimes_{\widetilde{A}}\widetilde{A}'$.
This implies
\[
	\Hom(\widetilde{F}\widetilde{M},\widetilde{N})\simeq \Hom(\widetilde{M}\otimes_{\widetilde{A}}\widetilde{A}',\widetilde{N})\simeq \Hom(\widetilde{M},\Hom_{\widetilde{A}}(\widetilde{A}',\widetilde{N})) = \Hom(\widetilde{M},\widetilde{F}\widetilde{N}).
\]
We get the following proposition.
\begin{prop}\label{prop:property of translation functors in widetilde O}
\begin{enumerate}
\item The functor $\widetilde{F}$ is self-adjoint. In particular, $\widetilde{F}$ is an exact functor.
\item We have $\widetilde{A}' \simeq \widetilde{F}(\widetilde{A})$.
\item We have $\widetilde{F}(\widetilde{M})\simeq \widetilde{M}\otimes_{\widetilde{A}} \widetilde{F}(\widetilde{A})$.
\item We have $\widetilde{\Phi}\circ F^Z \simeq \widetilde{F}\circ \widetilde{\Phi}$.
\end{enumerate}
\end{prop}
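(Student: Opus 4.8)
The plan is that almost everything in the statement has already been assembled in the discussion immediately preceding it, so the proof will largely consist of citing those computations and then supplying the one genuinely new point, part (4).

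First I would dispose of (1)--(3). Since each $F^Z B(y)$ is, by Proposition~\ref{prop:translation of projective Z-module}, a finite direct sum of grading shifts of the $B(x)$, applying $\widetilde{\Phi}$ shows that $\widetilde{A}' = \widetilde{\Phi}(\bigoplus_{y}F^Z B(y))$ is a direct summand of some $\widetilde{A}^{\oplus m}$ as a right $\widetilde{A}$-module, hence projective; therefore $\widetilde{F} = \Hom_{\widetilde{A}}(\widetilde{A}',-)$ is exact. The isomorphism $\widetilde{A}' \simeq \widetilde{F}(\widetilde{A})$ is exactly the displayed chain of isomorphisms above, whose two essential inputs are that $\widetilde{\Phi}$ identifies the relevant $\Hom$-spaces by Lemma~\ref{lem:relation of Z-mod and O} and that $F^Z$ is self-adjoint (Proposition~\ref{prop:Fiebig's result about translation functor}(2) and its $\varphi_s^Z$-analogue). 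Feeding $\widetilde{F}$ into Lemma~\ref{lem:realization of a right exact functor as a tensor product} then gives $\widetilde{F}(\widetilde{M}) \simeq \widetilde{M}\otimes_{\widetilde{A}}\widetilde{F}(\widetilde{A}) \simeq \widetilde{M}\otimes_{\widetilde{A}}\widetilde{A}'$, which is (3), and (1) follows by composing this with the tensor--hom adjunction $\Hom(\widetilde{M}\otimes_{\widetilde{A}}\widetilde{A}',\widetilde{N}) \simeq \Hom(\widetilde{M},\Hom_{\widetilde{A}}(\widetilde{A}',\widetilde{N}))$.

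For (4) I would, for $M\in\mathcal{M}$, run the chain
\[
\widetilde{F}(\widetilde{\Phi}(M)) = \Hom_{\widetilde{A}}(\widetilde{A}',\widetilde{\Phi}(M)) \simeq \Hom_{\widetilde{A}}(\widetilde{\Phi}(F^Z(B)),\widetilde{\Phi}(M)) \simeq \Hom_Z(F^Z(B),M) \simeq \Hom_Z(B,F^Z(M)) = \widetilde{\Phi}(F^Z(M)),
\]
in which the first isomorphism is $\widetilde{A}'\simeq\widetilde{\Phi}(F^Z(B))$ from part (2), the second is Lemma~\ref{lem:relation of Z-mod and O} applied to the projective $Z$-module $F^Z(B)$ (a direct sum of shifted $B(x)$'s by Proposition~\ref{prop:translation of projective Z-module}, so that $\widetilde{\Phi}$ is fully faithful out of it), and the third is the self-adjunction of $F^Z$. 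Each arrow is natural in $M$, so these assemble into an isomorphism of functors $\widetilde{\Phi}\circ F^Z \simeq \widetilde{F}\circ\widetilde{\Phi}$.

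The hard part is not any single computation but the bookkeeping: I must check that all the isomorphisms above are natural, and in particular that the right $\widetilde{A}$-module structure on $\widetilde{A}'$ --- the one built from $\End(B(x))\to\End(F^Z B(x))$ together with the identification $\widetilde{A}' = \widetilde{\Phi}(F^Z(B))$ --- is the one under which Lemma~\ref{lem:relation of Z-mod and O} and the tensor--hom adjunction are being applied. A minor technical wrinkle is that Lemma~\ref{lem:relation of Z-mod and O} is stated for (possibly infinite) direct sums of the $B(x)$ themselves, whereas $F^Z(B)$ carries grading shifts and multiplicities; this is harmless provided one reads that lemma as allowing shifted summands, or equivalently works throughout with graded $\Hom$-spaces and the graded endomorphism ring $\widetilde{A}$. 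Modulo these essentially formal checks, the proposition is a direct consequence of the material already in place.
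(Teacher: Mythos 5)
Your proposal is correct and follows essentially the same route as the paper: parts (1)--(3) are exactly the chain of observations the paper makes before stating the proposition (projectivity of $\widetilde{A}'$ as a summand of $\widetilde{A}^{\oplus m}$, the $\Hom$-identifications via Lemma~\ref{lem:relation of Z-mod and O} and self-adjointness of $F^Z$, then Lemma~\ref{lem:realization of a right exact functor as a tensor product} and tensor--hom adjunction), and your chain of isomorphisms for (4) is precisely the paper's proof. Your remarks on naturality and on grading shifts in Lemma~\ref{lem:relation of Z-mod and O} are points the paper leaves implicit, and they are handled exactly as you suggest.
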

\begin{proof}
We already proved (1--3).
We have
\begin{multline*}
\widetilde{F}\circ \widetilde{\Phi}(M) = \Hom_{\widetilde{A}}(\widetilde{A}',\widetilde{\Phi}(M)) = \Hom_{\widetilde{A}}(\widetilde{\Phi}(\bigoplus_{y\in W}F^ZB(y)),\widetilde{\Phi}(M))\\
\simeq \Hom_Z(\bigoplus_{y\in W}F^ZB(y),M)
\simeq \Hom_Z(\bigoplus_{y\in W}B(y),F^ZM)
= \widetilde{\Phi}(F^Z(M)).
\end{multline*}
Hence we get (4).
\end{proof}

Now we discuss the restriction of $\widetilde{F}$ to the full-subcategory $\mathcal{O}$.
Fist we consider $F^Z = \theta_s^Z$.
For $M\in\ModCat{Z}$, $p\in S(V^*)$ induces a homomorphism $p\colon M\to M$.
Hence we have a homomorphism $\theta^Z_s(p)\colon \theta^Z_s(M)\to \theta^Z_s(M)$.
From the construction of $\theta_s^Z$, this map is equal to the action of $p\colon \theta^Z_s(M)\to \theta^Z_s(M)$.
Since $\widetilde{A}'$ is an $\widetilde{A}$-bimodule and $\widetilde{A}$ is a $S(V^*)$-algebra, $\widetilde{A}'$ is an $S(V^*)$-bimodule.
From the above argument, the left and right $S(V^*)$-module structure of $\widetilde{A}'$ coincide.
Hence the action of $S(V^*)$ on $\widetilde{\theta_s}(\widetilde{M}) = \Hom_{\widetilde{A}}(\widetilde{A}',\widetilde{M})$ coincides with the $S(V^*)$-action induced from that of $\widetilde{M}$.
In particular, if $\widetilde{M}$ is annihilated by $V^*$ (i.e., $\widetilde{M}\in \mathcal{O}$), then $\widetilde{\theta_s}(\widetilde{M})$ is also annihilated by $V^*$.
Hence $\widetilde{\theta_s}$ gives a functor from $\mathcal{O}$ to $\mathcal{O}$ and satisfies the similar properties in Proposition~\ref{prop:property of translation functors in widetilde O}.
We denote this functor by $\theta_s$.

In the case of $\varphi^Z_s$, the situation is bad.
In this case, a homomorphism $\varphi^Z_s(p)$ is not equal to $p$ for $p\in S(V^*)$ in general.
Hence $\widetilde{\varphi}_s$ dose not give a functor from $\mathcal{O}$ to $\mathcal{O}$.
Let $\varphi_s$ be the restriction of the functor $\widetilde{\varphi}_s$ to $\mathcal{O}$.
This is a functor from $\mathcal{O}$ to $\widetilde{\mathcal{O}}$.

\begin{rem}\label{rem:commutativity of tensor and translation functor}
By the same reason, we have $\theta_s(\widetilde{M}\otimes_{S(V^*)}\C) \simeq (\widetilde{\theta_s}(\widetilde{M}))\otimes_{S(V^*)}\C$ for $\widetilde{M}\in\widetilde{\mathcal{O}}$.
The corresponding statement for $\varphi_s$ is false in general.
\end{rem}

\subsection{Natural transformations}
We use the notation in the previous subsection.
We start with the following lemma.
\begin{lem}\label{lem:compatibility of natural transformations and adjointness in M}
For $M\in\mathcal{M}$, the natural transformation $M\to F^ZM$ is given by the self-adjointness of $F^Z$ and the natural transformation $F^ZM\to M$.
\end{lem}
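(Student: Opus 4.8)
The plan is to reduce the statement to a computation at the level of $Z$-modules, where the natural transformations $\Id\langle 1\rangle \to F^Z$ and $F^Z \to \Id\langle -1\rangle$ are given explicitly (Remark~\ref{rem:explicit discription of the natural transformation of theta_s}, and its $\varphi_s^Z$-analogue via the description $\varphi_s^Z M = Z\otimes_{Z^{r_s}}M$). First I would make precise what ``the natural transformation $M\to F^ZM$ given by self-adjointness'' means: since $F^Z$ is self-adjoint (Proposition~\ref{prop:Fiebig's result about translation functor}(2), resp.\ its $\varphi_s^Z$ version), the counit $F^ZM \to M\langle -1\rangle$ (which is the transformation $F^Z\to \Id\langle -1\rangle$) has, under the adjunction isomorphism $\Hom(F^ZM, M\langle -1\rangle)\simeq \Hom(M\langle 1\rangle, F^ZM)$, a corresponding map $M\langle 1\rangle \to F^ZM$, and the claim is that this corresponding map is exactly the transformation $\Id\langle 1\rangle\to F^Z$ of Proposition~\ref{prop:Fiebig's result about translation functor}(5). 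Equivalently, the two structure maps are adjoint to one another; so it suffices to verify one compatibility between the explicit formulas and the adjunction unit/counit.

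Next I would recall the adjunction explicitly. Write $F^Z M = Z\otimes_{Z^s} M\langle -1\rangle$ (the case $F^Z=\theta_s^Z$; the case $\varphi_s^Z$ is identical with $Z^{r_s}$ in place of $Z^s$). Using $Z\simeq (Z^s)^{\oplus 2}$ as a $Z^s$-module, one has that $Z\otimes_{Z^s}-$ is biadjoint to $\Res_{Z^s}$ up to the grading shift, and the self-adjointness of $\theta_s^Z$ is implemented by a nondegenerate $Z^s$-bilinear trace form on $Z$. Concretely, the unit of the adjunction $\Id \to \theta_s^Z \circ(\text{something})$ and the counit can both be described via $c_s = (w(\alpha))_w$ and the decomposition $Z = Z^s\oplus c_s Z^s$. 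The key step is then a direct check: starting from the counit map $z\otimes m \mapsto zm$ (i.e.\ $F^ZM\to M\langle -1\rangle$), apply the adjunction isomorphism and compute that the resulting map $M\langle 1\rangle \to F^ZM$ sends $m\mapsto c_s\otimes m + 1\otimes c_s m$, which is precisely the formula in Remark~\ref{rem:explicit discription of the natural transformation of theta_s} for the transformation $\Id\langle 1\rangle\to\theta_s^Z$. This is the heart of the argument, and it is a short explicit manipulation once the trace form realizing the self-adjointness is written down. Since the self-adjointness of $\varphi_s^Z$ is obtained from that of $\theta_s^Z$ by conjugating with the equivalence $a_M$ (and $a_M$ intertwines the respective natural transformations, as recorded in the properties of $\varphi_s^Z$), the $\varphi_s^Z$-case follows formally from the $\theta_s^Z$-case.

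The main obstacle I anticipate is bookkeeping: one must pin down a single explicit model for the self-adjunction of $\theta_s^Z$ (the trace form on $Z/Z^s$, or equivalently the composite of unit and counit for the $(Z\otimes_{Z^s}-,\ \Res_{Z^s})$ pair together with the isomorphism $\Res_{Z^s}(Z\otimes_{Z^s}M)\simeq M\langle 1\rangle\oplus M\langle -1\rangle$), and then track grading shifts carefully through the adjunction isomorphism so that the degree-zero normalization matches. Once the correct normalization of the trace form is fixed — so that the counit $z\otimes m\mapsto zm$ and the unit are genuinely adjoint, not merely adjoint up to a scalar — the identification of $m\mapsto c_s\otimes m+1\otimes c_s m$ as the adjoint of $z\otimes m\mapsto zm$ is immediate from $Z = Z^s\oplus c_sZ^s$ and the fact that the trace of $c_s$ is a unit. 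No naturality issue arises beyond this, since both transformations are morphisms of functors by construction.
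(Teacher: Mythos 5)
Your proposal is correct and follows essentially the same route as the paper: reduce $\varphi_s^Z$ to $\theta_s^Z$ via $a_M$, then use the Frobenius biadjunction of $(Z\otimes_{Z^s}-,\ \Res_{Z^s})$ coming from $Z = Z^s\oplus c_sZ^s$ to see that the unit $m\mapsto c_s\otimes m + 1\otimes c_sm$ and the counit $z\otimes m\mapsto zm$ are adjoint to one another under the isomorphisms implementing the self-adjointness of $\theta_s^Z$. The paper merely shortcuts your explicit trace-form computation by observing that both canonical transformations correspond to $\Id_M$ under $\Hom_Z(M,\theta_s^ZM)\simeq\Hom_{Z^s}(M,M)\simeq\Hom_Z(\theta_s^ZM,M)$, whose composite is the self-adjointness isomorphism.
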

\begin{proof}
We consider the case of $F^Z = \theta^Z_s$.
Using the functor $a_M$, we get the lemma in the case of $F^Z = \varphi_s$.

In this case, $F^ZM = Z\otimes_{Z^s}M$.
Since $(\Res_{Z^s},Z\otimes_{Z^s}\cdot)$, $(Z\otimes_{Z^s}\cdot,\Res_{Z^s})$ are adjoint pairs, we have
\[
	\Hom_Z(M,F^ZM)\simeq \Hom_{Z^s}(M,M)\simeq \Hom(F^ZM,M).
\]
The natural transformations $M\to F^ZM$ (resp.~$F^ZM\to M$) corresponds to $\Id\colon M\to M$ by the left (resp.~right) isomorphism.
Since these isomorphisms give a self-adjointness of $F^Z$, we get the lemma.
\end{proof}

Since $\widetilde{A}' = \widetilde{\Phi}(\bigoplus_{y\in W}(F^ZB(y)))$, we get a homomorphism $\sigma\colon \widetilde{A}\to \widetilde{A}'$ and $\sigma'\colon \widetilde{A}'\to \widetilde{A}$ from the natural transformation between $F^Z\colon\mathcal{M}\to \mathcal{M}$ and $\Id$.
Then $\sigma_{\widetilde{M}} = \Hom(\sigma,\widetilde{M})$ (resp.~$\sigma'_{\widetilde{M}} = \Hom(\sigma',\widetilde{M})$) gives a natural transformation $\sigma\colon \widetilde{F}\to \Id$ (resp.~$\sigma'\colon \Id\to \widetilde{F}$).

Since we have an isomorphism $\widetilde{F}(\widetilde{M})\simeq \widetilde{M}\otimes_{\widetilde{A}} \widetilde{A}'$, we can define another natural transformations by $\id_{\widetilde{M}}\otimes \sigma$ and $\id_{\widetilde{M}}\otimes\sigma'$.
\begin{prop}\label{prop:compatibility of natural transformations and adjointness in O}
We have $\sigma_{\widetilde{M}} = \id_{\widetilde{M}}\otimes\sigma'$ and $\sigma'_{\widetilde{M}} = \id_{\widetilde{M}}\otimes\sigma$.
Moreover, we have the following commutative diagram for $\widetilde{M},\widetilde{N}\in \widetilde{\mathcal{O}}$:
\[
\xymatrix{
\Hom(\widetilde{M},\widetilde{N})\ar[d]^{\Hom(\sigma_{\widetilde{M}},\widetilde{N})} \ar@{=}[r] & \Hom(\widetilde{M},\widetilde{N})\ar[d]^{\Hom(\widetilde{M},\sigma'_{\widetilde{N}})}\\
\Hom(\widetilde{F}\widetilde{M},\widetilde{N})\ar[d]^{\Hom(\sigma'_{\widetilde{M}},\widetilde{N})}\ar@{-}[r]^{\sim} & \Hom(\widetilde{M},\widetilde{F}\widetilde{N})\ar[d]^{\Hom(\widetilde{M},\sigma_{\widetilde{N}})}\\
\Hom(\widetilde{M},\widetilde{N})\ar@{=}[r] & \Hom(\widetilde{M},\widetilde{N}).
}
\]
\end{prop}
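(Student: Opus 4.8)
The plan is to reduce everything to the corresponding statement about the natural transformations on $\mathcal{M}$, via the bimodule description of $\widetilde F$. First I would prove the identity $\sigma_{\widetilde M} = \id_{\widetilde M}\otimes\sigma'$. Recall that $\widetilde F(\widetilde M)\simeq \widetilde M\otimes_{\widetilde A}\widetilde A'$ by Proposition~\ref{prop:property of translation functors in widetilde O}(3), and under this identification the map $\sigma_{\widetilde M}\colon\widetilde F(\widetilde M)\to\widetilde M$ is induced (by the tensor-hom realization in Lemma~\ref{lem:realization of a right exact functor as a tensor product}) from the bimodule map $\sigma'\colon\widetilde A'\to\widetilde A$, since $\sigma$ as a natural transformation $\widetilde F\to\Id$ comes from $\Hom(\sigma',-)$ where $\sigma'\colon\widetilde\Phi(\bigoplus_y F^ZB(y))\to\widetilde\Phi(\bigoplus_y B(y))$ is induced by the natural transformation $F^Z\to\Id$ on $\mathcal{M}$. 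Here the only subtlety is the bookkeeping of which of the two natural transformations $F^Z\to\Id$ and $\Id\langle\pm\rangle\to F^Z$ produces $\sigma$ versus $\sigma'$ on the level of $\widetilde A'$; this is purely a matter of tracking the isomorphism $\widetilde A'\simeq\Hom_Z(B,F^ZB)\simeq\Hom_Z(F^ZB,B)\simeq\widetilde A'$ from the self-adjointness of $F^Z$, together with Lemma~\ref{lem:compatibility of natural transformations and adjointness in M}, which says precisely that on $\mathcal{M}$ the transformation $M\to F^ZM$ is the one obtained from $F^ZM\to M$ by self-adjointness. Chasing these identifications gives $\sigma_{\widetilde M} = \id_{\widetilde M}\otimes\sigma'$, and symmetrically $\sigma'_{\widetilde M} = \id_{\widetilde M}\otimes\sigma$.

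Next I would establish the commutative diagram. The middle horizontal isomorphism $\Hom(\widetilde F\widetilde M,\widetilde N)\xrightarrow{\sim}\Hom(\widetilde M,\widetilde F\widetilde N)$ is the adjunction isomorphism coming from the self-adjointness of $\widetilde F$ (Proposition~\ref{prop:property of translation functors in widetilde O}(1)); explicitly, under $\widetilde F(\widetilde M) = \Hom_{\widetilde A}(\widetilde A',\widetilde M)$ and $\widetilde F(\widetilde M)\simeq\widetilde M\otimes_{\widetilde A}\widetilde A'$ it is the tensor-hom adjunction $\Hom(\widetilde M\otimes_{\widetilde A}\widetilde A',\widetilde N)\simeq\Hom(\widetilde M,\Hom_{\widetilde A}(\widetilde A',\widetilde N))$ already written down before Proposition~\ref{prop:property of translation functors in widetilde O}. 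The top square then commutes because precomposing $\Hom(\widetilde M,\widetilde N)\to\Hom(\widetilde F\widetilde M,\widetilde N)$ with $\sigma_{\widetilde M}$ and postcomposing $\Hom(\widetilde M,\widetilde N)\to\Hom(\widetilde M,\widetilde F\widetilde N)$ with $\sigma'_{\widetilde N}$ both correspond, under the adjunction, to the map $\widetilde M\otimes_{\widetilde A}\widetilde A'\to\widetilde N$ built from $\sigma'$ (this is where the first identity $\sigma_{\widetilde M} = \id\otimes\sigma'$ is used, together with naturality of the adjunction isomorphism in both variables). The bottom square commutes for the same reason with the roles of $\sigma$ and $\sigma'$ interchanged. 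Finally the outer composite along either side is the identity on $\Hom(\widetilde M,\widetilde N)$ because the vertical composites $\widetilde F\xrightarrow{\sigma'}\widetilde F\widetilde F\text{-ish}\ldots$ — more precisely, the composite $\Id\xrightarrow{\sigma'}\widetilde F\xrightarrow{\sigma}\Id$ is the identity natural transformation. That last fact should be extracted from the analogous statement on $\mathcal{M}$: the composite $\Id\langle 1\rangle\to F^Z\to\Id\langle -1\rangle$ on $\mathcal{M}$, combined with degree considerations and Lemma~\ref{lem:compatibility of natural transformations and adjointness in M}, forces $\sigma\circ\sigma' = \id$ after applying $\widetilde\Phi$ and specializing.

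The main obstacle I anticipate is bookkeeping rather than conceptual difficulty: keeping straight the two copies of $\widetilde A'$ produced by the self-adjointness isomorphism $\Hom_Z(B,F^ZB)\simeq\Hom_Z(F^ZB,B)$, and verifying that the natural transformation $\sigma\colon\widetilde F\to\Id$ obtained as $\Hom(\sigma',-)$ really does agree, after the identification $\widetilde F\widetilde M\simeq\widetilde M\otimes_{\widetilde A}\widetilde A'$, with $\id_{\widetilde M}\otimes\sigma'$ and not with some twisted version. This hinges entirely on Lemma~\ref{lem:compatibility of natural transformations and adjointness in M}, so the real content is translating that lemma through the functor $\widetilde\Phi$ and through the tensor-hom formalism of Lemma~\ref{lem:realization of a right exact functor as a tensor product}; once the dictionary is set up carefully, each square is a formal diagram chase using only naturality of adjunctions and the two identities for $\sigma_{\widetilde M}$ and $\sigma'_{\widetilde M}$.
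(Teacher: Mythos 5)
Your treatment of the two identities and of the two squares is essentially the paper's own argument: one first proves $\sigma_{\widetilde{M}}=\id_{\widetilde{M}}\otimes\sigma'$ for $\widetilde{M}=\widetilde{A}$, where under the identification $\widetilde{A}'\simeq\Hom_Z(B,F^ZB)\simeq\Hom_Z(F^ZB,B)$ it is exactly Lemma~\ref{lem:compatibility of natural transformations and adjointness in M} read through $\widetilde{\Phi}$, and then verifies the squares by writing the adjunction as the tensor--hom isomorphism and computing, using the first claim. Two points should be tightened. First, the passage from $\widetilde{M}=\widetilde{A}$ to arbitrary $\widetilde{M}$ is not automatic from Lemma~\ref{lem:realization of a right exact functor as a tensor product} alone; do it as the paper does, by choosing a free presentation $\widetilde{N}_1\to\widetilde{N}_0\to\widetilde{M}\to 0$ and using the exactness of $\widetilde{F}$, so that both $\Hom(\sigma,\widetilde{M})$ and $\id_{\widetilde{M}}\otimes\sigma'$ are the maps induced on cokernels. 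Second, your sentence describing $\sigma\colon\widetilde{F}\to\Id$ as $\Hom(\sigma',-)$ interchanges the two bimodule maps relative to their definitions ($\sigma_{\widetilde{M}}=\Hom(\sigma,\widetilde{M})$ with $\sigma\colon\widetilde{A}\to\widetilde{A}'$); that interchange is precisely what the first claim asserts, so it cannot be used as a definition but must be deduced, as you in fact indicate, from Lemma~\ref{lem:compatibility of natural transformations and adjointness in M}.

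The genuine error is your final step. The diagram does not assert that the vertical composites $\Hom(\widetilde{M},\widetilde{N})\to\Hom(\widetilde{F}\widetilde{M},\widetilde{N})\to\Hom(\widetilde{M},\widetilde{N})$ are the identity; it only asserts that the two squares commute, which you have already established at that point. Moreover, the fact you invoke, namely that $\Id\xrightarrow{\sigma'}\widetilde{F}\xrightarrow{\sigma}\Id$ is the identity, is false. On $\mathcal{M}$ the composite $\Id\langle 1\rangle\to F^Z\to\Id\langle -1\rangle$ is multiplication by $2c_s$ (resp.\ $2\alpha_s$), a map of degree $2$ (Remark~\ref{rem:explicit discription of the natural transformation of theta_s}), so it can never be the identity; and after applying $\widetilde{\Phi}$ and tensoring with $\C$, the corresponding composite $M\to\varphi_s(M)\to M$ on $\mathcal{O}$ is in fact \emph{zero}, as proved in Proposition~\ref{prop:translation of M(e), etc...}(4) and again in Section~\ref{sec:THe functors T_s anc C_s}; this vanishing is exactly what makes $T_s$, $C_s$ and $\tau_s$ nontrivial. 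Correspondingly, the left vertical composite in the diagram is precomposition with $\sigma_{\widetilde{M}}\circ\sigma'_{\widetilde{M}}$, which for $\varphi_s$ on $\mathcal{O}$ is the zero map, not the identity. Delete that step; nothing in the proposition requires it, and keeping it would contradict later results of the paper.
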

\begin{proof}
In this proof, we omit the grading of objects of $\mathcal{M}$.

First we prove the first claim for $\widetilde{M} = \widetilde{A}$.
Put $B = \bigoplus_{y\in W}B(y)$.
Recall that an isomorphism $\Hom(\widetilde{A}',\widetilde{A})\simeq \widetilde{A}'$ is induced from $\Hom_Z(F^ZB,B)\simeq \Hom_Z(B,F^ZB)$ and $\sigma$ (resp.~$\sigma'$) is induced from the natural transformation $\Id\to F^Z$ (resp.~$F^Z\to \Id$) in $\mathcal{M}$.
Hence we get the first claim for $\widetilde{M} = \widetilde{A}$ from the corresponding statement in $\mathcal{M}$ (Lemma~\ref{lem:compatibility of natural transformations and adjointness in M}).

To prove for a general $\widetilde{M}$, take a free resolution $\widetilde{N_1}\to \widetilde{N_0}\to \widetilde{M}\to 0$.
Since $\widetilde{F}$ is exact, we have $\Hom(\sigma,\widetilde{M}) = \Cok(\Hom(\sigma,\widetilde{N_1})\to \Hom(\sigma,\widetilde{N_0}))$.
Since $\widetilde{N_i}$ ($i = 0.1$) is free, we have $\Hom(\sigma,\widetilde{N_i}) = \id_{\widetilde{N_i}}\otimes \sigma'$.
Hence we have $\Hom(\sigma,\widetilde{M}) = \id_{\widetilde{M}}\otimes \sigma'$.
The same argument implies $\Hom(\sigma',\widetilde{M}) = \id_{\widetilde{M}}\otimes \sigma$.

We prove the second claim.
We only prove the commutativity of the lower square.
The same argument implies the proposition.
An isomorphism $\Hom(\widetilde{F}\widetilde{M},\widetilde{N}) \simeq \Hom(\widetilde{M},\widetilde{F}\widetilde{N})$ is equal to
\[
	\Hom(\widetilde{F}\widetilde{M},\widetilde{N})\simeq \Hom(\widetilde{M}\otimes_{\widetilde{A}}\widetilde{A}',\widetilde{N})\simeq \Hom(\widetilde{M},\Hom_{\widetilde{A}}(\widetilde{A}',\widetilde{N})) = \Hom(\widetilde{M},\widetilde{F}\widetilde{N}).
\]
For $f\in \Hom(\widetilde{F}\widetilde{M},\widetilde{N}) = \Hom(\widetilde{M}\otimes_{\widetilde{A}}\widetilde{A}',\widetilde{N})$, an image  of $f$ under $\Hom(\widetilde{F}\widetilde{M},\widetilde{N})\simeq \Hom(\widetilde{M},\widetilde{F}\widetilde{N})\to \Hom(\widetilde{M},\widetilde{N})$ is given by $m\mapsto f(m\otimes \sigma(1))$, namely, an image of $f$ under the map $\Hom(\id_{\widetilde{M}}\otimes \sigma,\widetilde{N})$.
We get the proposition from the first claim.
\end{proof}

\begin{thm}\label{thm:theta and varphi commute, in O}
Let $s,t$ be simple reflections.
The functors $\widetilde{\theta_t}$ and $\widetilde{\varphi_s}$ from $\widetilde{\mathcal{O}}$ to $\widetilde{\mathcal{O}}$ commute with each other.
Moreover, the natural transformation $\widetilde{\theta_t} \to \widetilde{\varphi_s}\widetilde{\theta_t}$ (resp.~$\widetilde{\varphi_s} \to \widetilde{\theta_t}\widetilde{\varphi_s}$, $\widetilde{\varphi_s}\widetilde{\theta_t}\to \widetilde{\theta_t}$, $\widetilde{\theta_t}\widetilde{\varphi_s}\to \widetilde{\varphi_s}$) can be identified with $\widetilde{\theta_t}(\Id\to \widetilde{\varphi_s})$ (resp.~$\widetilde{\varphi_s} (\Id\to\widetilde{\theta_t})$, $\widetilde{\theta_t}(\widetilde{\varphi_s}\to \Id)$, $\widetilde{\varphi_s}(\widetilde{\theta_t}\to\Id)$).
\end{thm}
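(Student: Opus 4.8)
I would transport the corresponding statement from $\mathcal{M}$, namely Proposition~\ref{prop:theta and varphi commute}, along the functor $\widetilde{\Phi}$ and the identifications established in Proposition~\ref{prop:property of translation functors in widetilde O}. Recall that $\widetilde{F}$ is realized as $\widetilde{M}\mapsto \widetilde{M}\otimes_{\widetilde{A}}\widetilde{F}(\widetilde{A})$ and that $\widetilde{\Phi}\circ F^Z \simeq \widetilde{F}\circ\widetilde{\Phi}$. Hence for the bimodules $\widetilde{B}_t = \widetilde{\theta_t}(\widetilde{A})$ and $\widetilde{B}_s' = \widetilde{\varphi_s}(\widetilde{A})$ we have $\widetilde{B}_t \simeq \widetilde{\Phi}(\bigoplus_y\theta_t^Z B(y))$ and $\widetilde{B}_s' \simeq \widetilde{\Phi}(\bigoplus_y\varphi_s^Z B(y))$, and the composites $\widetilde{\varphi_s}\widetilde{\theta_t}$, $\widetilde{\theta_t}\widetilde{\varphi_s}$ are given by tensoring with $\widetilde{B}_t\otimes_{\widetilde{A}}\widetilde{B}_s'$ and $\widetilde{B}_s'\otimes_{\widetilde{A}}\widetilde{B}_t$ respectively. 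So the whole statement reduces to producing an isomorphism of $\widetilde{A}$-bimodules $\widetilde{B}_t\otimes_{\widetilde{A}}\widetilde{B}_s' \simeq \widetilde{B}_s'\otimes_{\widetilde{A}}\widetilde{B}_t$, compatible with the four structure maps $\sigma,\sigma'$.

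First I would identify $\widetilde{B}_t\otimes_{\widetilde{A}}\widetilde{B}_s'$ with $\widetilde{\Phi}\bigl(\bigoplus_y \varphi_s^Z\theta_t^Z B(y)\bigr)$. For this one uses that $\widetilde{B}_t = \widetilde{\theta_t}(\widetilde{A})$ is a projective right $\widetilde{A}$-module which is a direct summand of a free module, together with Lemma~\ref{lem:relation of Z-mod and O} (the comparison $\Hom_Z(P,M)\otimes\cdots \simeq \Hom_{\widetilde{A}}(\cdots)$ for $P$ a sum of $B(x)$'s), and the fact from Proposition~\ref{prop:Fiebig's result about translation functor}(3--4) that each $\theta_t^Z B(y)$, hence each $\varphi_s^Z\theta_t^Z B(y)$, again lies in $\mathscr{L}(\mathcal{M})$ and so is a sum of $B(x)\langle k\rangle$'s. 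Concretely, $\widetilde{B}_t\otimes_{\widetilde{A}}\widetilde{B}_s' \simeq \widetilde{\theta_t}(\widetilde{B}_s') = \widetilde{\theta_t}\widetilde{\varphi_s}(\widetilde{A}) \simeq \widetilde{\theta_t}\widetilde{\Phi}(\bigoplus_y\varphi_s^Z B(y)) \simeq \widetilde{\Phi}(\theta_t^Z\bigoplus_y\varphi_s^Z B(y))$ by Proposition~\ref{prop:property of translation functors in widetilde O}(4) applied to $F^Z = \theta_t^Z$, and symmetrically for the other order. Then Proposition~\ref{prop:theta and varphi commute} gives a natural isomorphism $\theta_t^Z\varphi_s^Z \simeq \varphi_s^Z\theta_t^Z$ on $\mathcal{M}$, and applying $\widetilde{\Phi}$ yields the desired bimodule isomorphism. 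One must check this isomorphism is one of $\widetilde{A}$-bimodules: the right $\widetilde{A}$-structure is automatic since everything is $\widetilde{\Phi}$ of a $Z$-module, and the left $\widetilde{A}$-structure comes from $\End(B(y))\to \End(F^Z B(y))$, which is respected because the isomorphism of Proposition~\ref{prop:theta and varphi commute} is a natural transformation of functors on $\mathcal{M}$.

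The compatibility with the natural transformations is where I would use Proposition~\ref{prop:theta and varphi commute} in its precise form: it states that under $\theta_t^Z\varphi_s^Z \simeq \varphi_s^Z\theta_t^Z$, the map $\theta_t^Z(\Id\langle 1\rangle\to\varphi_s^Z)$ is identified with $\theta_t^Z\langle 1\rangle\to\varphi_s^Z\theta_t^Z$, and the other three analogously. Passing through $\widetilde{\Phi}$ and using that $\sigma,\sigma'$ on $\widetilde{\mathcal{O}}$ are by construction (see the "Natural transformations" subsection) obtained by applying $\widetilde{\Phi}$ to the natural transformations $\Id\to F^Z$, $F^Z\to\Id$ on $\mathcal{M}$, one translates each of the four identities of Proposition~\ref{prop:theta and varphi commute} into the corresponding identity of natural transformations of endofunctors of $\widetilde{\mathcal{O}}$. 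Finally, since a general $\widetilde{M}\in\widetilde{\mathcal{O}}$ has a free resolution $\widetilde{N}_1\to\widetilde{N}_0\to\widetilde{M}\to 0$ and all functors involved are exact, the bimodule isomorphism for $\widetilde{A}$ propagates to all of $\widetilde{\mathcal{O}}$ by right-exactness, exactly as in the proof of Lemma~\ref{lem:realization of a right exact functor as a tensor product}.

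**Main obstacle.** The routine reductions above are straightforward; the delicate point is keeping the bookkeeping of the \emph{left} $\widetilde{A}$-module structures straight throughout, i.e. verifying that the Morita-type identifications $\widetilde{F}\widetilde{\Phi}\simeq\widetilde{\Phi}F^Z$ are not merely isomorphisms of right $\widetilde{A}$-modules but are genuinely functorial in a way that intertwines the left actions coming from $\End(\bigoplus_x F^Z B(x))\to\widetilde{A}$, and that the four natural transformations $\sigma,\sigma'$ on each side match up — including the grading shifts $\langle\pm 1\rangle$, which are invisible in $\mathcal{O}$ but present in $\mathcal{M}$ and must be tracked (or explicitly discarded, as in the proof of Proposition~\ref{prop:compatibility of natural transformations and adjointness in O}, which "omits the grading"). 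I expect no conceptual difficulty, but this is the step that requires care.
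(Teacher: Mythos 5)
Your proposal is correct and takes essentially the same route as the paper: reduce to $\widetilde{M}=\widetilde{A}$ via the identification $\widetilde{F}(\widetilde{M})\simeq\widetilde{M}\otimes_{\widetilde{A}}\widetilde{F}(\widetilde{A})$ and then transport Proposition~\ref{prop:theta and varphi commute} through $\widetilde{\Phi}$ using $\widetilde{\Phi}\circ F^Z\simeq\widetilde{F}\circ\widetilde{\Phi}$. The paper's own proof is a two-line version of exactly this; the bimodule and $\sigma,\sigma'$ bookkeeping you spell out is what it leaves implicit.
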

\begin{proof}
Since $\widetilde{\varphi_s}(\widetilde{M})\simeq \widetilde{M}\otimes_{\widetilde{A}}\widetilde{\varphi_s}(\widetilde{A})$ and $\widetilde{\theta_t}(\widetilde{M})\simeq \widetilde{M}\otimes_{\widetilde{A}}\widetilde{\theta_t}(\widetilde{A})$, we may assume that $\widetilde{M} = \widetilde{A}$.
In this case, the theorem follows from the corresponding statement in $\mathcal{M}$, namely, Proposition~\ref{prop:theta and varphi commute}.
\end{proof}

\subsection{Translation of projective modules and simple modules}
\begin{thm}\label{thm:translation of projective modules}
\begin{enumerate}
\item If $xs < x$, then $\widetilde{\theta_s} \widetilde{P}(x) = \widetilde{P}(x)^{\oplus 2}$ and $\theta_sP(x) = P(x)^{\oplus 2}$.\label{thm:translation of projective modules:theta_s simple}
\item If $xs > x$, then $\widetilde{\theta_s} \widetilde{P}(x) = \widetilde{P}(xs)\oplus\bigoplus_{y < x,\ ys < y}\widetilde{P}(y)^{m_y}$ and $\theta_s P(x) = P(xs)\oplus\bigoplus_{y < x,\ ys < y}P(y)^{m_y}$for some $m_y\in\Z_{\ge 0}$.\label{thm:translation of projective modules:theta_s complicated}
\item $\theta_s L(x) = 0$ if and only if $xs > x$.\label{thm:translation of projective modules:theta_s vanishing of L(x)}
\item If $sx < x$, then $\widetilde{\varphi_s} \widetilde{P}(x) = \widetilde{P}(x)^{\oplus 2}$.\label{thm:translation of projective modules:varphi_s simple}
\item If $sx > x$, then $\widetilde{\varphi_s} \widetilde{P}(x) = \widetilde{P}(sx)\oplus\bigoplus_{y < x,\ sy < y}\widetilde{P}(y)^{m_y}$ for some $m_y\in\Z_{\ge 0}$.\label{thm:translation of projective modules:varphi_s complicated}
\item $\varphi_s L(x) = 0$ if and only if $sx > x$.\label{thm:translation of projective modules:varphi_s vanishing of L(x)}
\end{enumerate}
\end{thm}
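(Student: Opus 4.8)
The plan is to obtain parts (1), (2), (4), (5) by pushing the decompositions of $\theta_s^Z B(x)$ and $\varphi_s^Z B(x)$ from Proposition~\ref{prop:translation of projective Z-module} through the functor $\widetilde{\Phi}$, and to deduce (3) and (6) from these by an adjunction argument. For (1) and (2): by Proposition~\ref{prop:property of translation functors in widetilde O}(4) we have $\widetilde{\theta_s}\widetilde{P}(x) = \widetilde{\theta_s}\widetilde{\Phi}(B(x)) \simeq \widetilde{\Phi}(\theta_s^Z B(x))$, and $\widetilde{\Phi}$ is additive with $\widetilde{\Phi}(B(y)\langle k\rangle) = \widetilde{P}(y)\langle k\rangle$; so Proposition~\ref{prop:translation of projective Z-module}(2) gives $\widetilde{\theta_s}\widetilde{P}(x) \simeq \widetilde{P}(x)^{\oplus 2}$ when $xs < x$, and Proposition~\ref{prop:translation of projective Z-module}(1) gives $\widetilde{\theta_s}\widetilde{P}(x) \simeq \widetilde{P}(xs) \oplus \bigoplus_{y<x,\ ys<y}\widetilde{P}(y)^{m_y}$ when $xs > x$ (grading shifts being immaterial in $\widetilde{\mathcal{O}}$), once one recalls that the lower summands of $\theta_s^Z B(x)$ are precisely the modules $B(z)$ with $zs < z$ and $z < x$ — this is how they arise in the proof of that proposition, via $Z\otimes_{Z^s}B'(\,\cdot\,)$ and the support bound of Proposition~\ref{prop:Fiebig's result about translation functor}(4), the case $z=x$ being impossible since $zs<z$ would force $xs<x$. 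The statements for $\theta_s$ on $\mathcal{O}$ then follow by applying $-\otimes_{S(V^*)}\C$, using Remark~\ref{rem:commutativity of tensor and translation functor} and $\widetilde{P}(z)\otimes_{S(V^*)}\C = P(z)$. Parts (4) and (5) are the identical argument with $\varphi_s^Z$ and Proposition~\ref{prop:translation of projective Z-module}(4), (3); there is no $\mathcal{O}$-statement, since $\varphi_s$ does not preserve $\mathcal{O}$.

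For (3) and (6) I would argue as follows. The functor $\theta_s$ on $\mathcal{O}$ is exact and self-adjoint, and since $\widetilde{A}'$ is a direct summand of a finite direct sum of copies of $\widetilde{A}$, the module $\theta_s N = \Hom_{\widetilde{A}}(\widetilde{A}',N)$ is a direct summand of $N^{\oplus m}$; in particular $\theta_s L(x)$ is finite-dimensional, so $\theta_s L(x) = 0$ if and only if $\Hom_A(P(y),\theta_s L(x)) = 0$ for every $y\in W$, which by self-adjointness equals $\Hom_A(\theta_s P(y),L(x))$. Substituting the decompositions from (1), (2) and using $\Hom_A(P(w),L(x)) = \delta_{wx}$: if $xs > x$ then, checking the cases $ys < y$ and $ys > y$ separately, $x$ is never among the indices $y$ with $ys<y$, never equal to $ys$ for a $y$ with $ys>y$, and never among the lower indices, so all these $\Hom$-spaces vanish and $\theta_s L(x) = 0$; if $xs < x$ then taking $y = x$ gives $\theta_s P(x) = P(x)^{\oplus 2}$, so $\Hom_A(\theta_s P(x),L(x)) = \C^2 \neq 0$ and $\theta_s L(x)\neq 0$. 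Part (6) is the same argument run with $\widetilde{\varphi_s}$ on $\widetilde{\mathcal{O}}$ — note $\varphi_s L(x) = \widetilde{\varphi_s}L(x)$ because $L(x)\in\mathcal{O}$ — using the self-adjointness of $\widetilde{\varphi_s}$, the decompositions from (4), (5), and $\Hom_{\widetilde{A}}(\widetilde{P}(w),L(x)) = \delta_{wx}$, with $ys,xs$ replaced by $sy,sx$ throughout.

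The steps through $\widetilde{\Phi}$ and through $-\otimes_{S(V^*)}\C$ are routine. The one delicate point in (1)--(5) is keeping track of which representative of a coset $yW'$ appears in Lemma~\ref{lem:translation out of projectives in Z-mod}, so that the index set $\{y : y<x,\ ys<y\}$ in the statement matches what comes out of Proposition~\ref{prop:translation of projective Z-module}; but this is already settled inside the proof of that proposition. For (3) and (6), the genuine input is that a nonzero finite-length object of $\mathcal{O}$ (resp.\ $\widetilde{\mathcal{O}}$) admits a nonzero map from some $P(y)$ (resp.\ $\widetilde{P}(y)$), so that the vanishing of $\theta_s L(x)$ is detected by the spaces $\Hom_A(\theta_s P(y),L(x))$; this is where the whole reduction rests, and is the part I would expect to need the most care.
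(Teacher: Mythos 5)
Parts (1), (2), (4), (5) of your argument are exactly the paper's: push Proposition~\ref{prop:translation of projective Z-module} through $\widetilde{\Phi}$ using $\widetilde{\Phi}\circ F^Z\simeq\widetilde{F}\circ\widetilde{\Phi}$ (Proposition~\ref{prop:property of translation functors in widetilde O}), then tensor with $\C$ via Remark~\ref{rem:commutativity of tensor and translation functor}. Your care about which coset representative comes out of Lemma~\ref{lem:translation out of projectives in Z-mod} is warranted and correct: the lower summands really are $B(z)$ with $zs<z$, $z<x$ (as in the statement of the theorem, and obtained exactly as you say), even though the displayed index set in Proposition~\ref{prop:translation of projective Z-module}(1) reads $ys>y$.

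For (3) and (6), however, your reduction has a genuine gap, and it is precisely the one you flag at the end. You want to conclude $\theta_sL(x)=0$ from $\Hom_A(P(y),\theta_sL(x))=\Hom_A(\theta_sP(y),L(x))=0$ for all $y$, and for that you need: a nonzero finite-dimensional (or finite-length) object of $\mathcal{O}$ admits a nonzero map from some $P(y)$. Nothing in the paper provides this, and it is not obvious for this algebra. Writing $e_y\in\widetilde{A}$ for the idempotent projecting $\bigoplus_xB(x)$ onto $B(y)$, one has $\widetilde{P}(y)=e_y\widetilde{A}$ and $\Hom_A(P(y),M)\simeq Me_y$; so your detection principle says that no nonzero finite-dimensional $A$-module is annihilated by every $e_y$. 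When $W$ is infinite the $e_y$ do not sum to $1$, the two-sided ideal they generate need not be all of $A$ (the author does not even know whether $A$ is Noetherian), and a module over the quotient by that ideal would be killed by every $P(y)$; ruling out finite-dimensional such modules is exactly the unproven content, so the forward implication in (3) (and likewise (6)) is not established as written. The repair is cheap and is what the paper does: test against the free module instead of the $P(y)$'s. Self-adjointness gives $\theta_sL(x)\simeq\Hom_{\widetilde{A}}(\widetilde{A},\theta_sL(x))\simeq\Hom_{\widetilde{A}}(\widetilde{\theta_s}\widetilde{A},L(x))$, and $\widetilde{\theta_s}\widetilde{A}\simeq\bigoplus_{ys<y}\widetilde{P}(y)^{n_y}$ with $n_y\ge 2$ (from (1) and (2)), so $\dim\theta_sL(x)=n_x$, which is $0$ exactly when $xs>x$; the same computation with $\widetilde{\varphi_s}$ gives (6). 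With that substitution your proof coincides with the paper's.
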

\begin{proof}
The first statement of (1) and (2) follows from Proposition~\ref{prop:translation of projective Z-module} and Proposition~\ref{prop:property of translation functors in widetilde O}.
We get the second statement of (1) (2) tensoring $\C$ to the first statement of (1) (2), respectively (see Remark~\ref{rem:commutativity of tensor and translation functor}).

From (1) and (2), we have $\theta_s A = \bigoplus_{ys < y}P(y)^{n_y}$ for some $n_y\ge 2$.
Put $n_y = 0$ for $ys > y$.
Then we have
\begin{multline*}
\dim\theta_s L(x) = \dim\Hom_{\widetilde{A}}(\widetilde{A},\theta_sL(x)) = \dim\Hom_{\widetilde{A}}(\widetilde{\theta_s} \widetilde{A},L(x))\\
=\dim\Hom_A\left(\bigoplus_y \widetilde{P}(y)^{n_y},L(x)\right) = n_y.
\end{multline*}
The proposition follows.

(4), (5) and (6) follow from the same argument.
\end{proof}

\section{Zuckerman functor}\label{sec:Zuckerman functor}
\subsection{Definition and commutativity with translation functors}
Fix a simple reflection $s$.
Let $\mathcal{O}_s$ be a full-subcategory of $\mathcal{O}$ consisting of a module $M$ such that $\Hom_A(P(x),M) = 0$ for all $sx < x$.
Let $\iota_s\colon \mathcal{O}_s\to \mathcal{O}$ be the inclusion functor.
Then $\iota_s$ has the left adjoint functor $\widetilde{\tau}_s$.
It is defined by 
\[
	\widetilde{\tau}_s(M) = M/M'
\]
where
\[
	M' = \bigcap_{\varphi\colon M\to M_1,\ M_1\in\mathcal{O}_s}\Ker\varphi.
\]
Since $\widetilde{\tau}_s$ has the right adjoint functor $\iota_s$, $\widetilde{\tau}_s$ is a right exact functor.
Put $\tau_s = \iota_s\widetilde{\tau}_s$.

\begin{lem}\label{lem:characterization of O_s and translation preserve O_s}
Let $s$ be a simple reflection.
For $M\in\mathcal{O}$, $M\in\mathcal{O}_s$ if and only if $\varphi_sM = 0$.
In particular, $\theta_t$ preserves the category $\mathcal{O}_s$ for a simple reflection $t$.
\end{lem}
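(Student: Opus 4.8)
The plan is to characterize membership in $\mathcal{O}_s$ via the functor $\varphi_s$, and the key is the formula $\widetilde{\varphi_s}\widetilde{A}\simeq\bigoplus_{sy<y}\widetilde{P}(y)^{\oplus n_y}$ with every $n_y\ge 2$, which is exactly what Theorem~\ref{thm:translation of projective modules}\eqref{thm:translation of projective modules:varphi_s simple}--\eqref{thm:translation of projective modules:varphi_s complicated} give (restricting to $\mathcal{O}$ we get $\varphi_s A$, though since $\varphi_s$ does not preserve $\mathcal{O}$ one should work with $\widetilde{\varphi_s}$ on the free module $\widetilde{A}$ and then specialize, as in the proof of Theorem~\ref{thm:translation of projective modules}\eqref{thm:translation of projective modules:theta_s vanishing of L(x)}). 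First I would compute, for $M\in\mathcal{O}$,
\[
	\Hom_{\widetilde{A}}(\widetilde{A},\varphi_s M)\simeq\Hom_{\widetilde{A}}(\widetilde{\varphi_s}\widetilde{A},M)\simeq\bigoplus_{sy<y}\Hom_A(P(y),M)^{\oplus n_y}
\]
using the self-adjointness of $\widetilde{\varphi_s}$ (Proposition~\ref{prop:property of translation functors in widetilde O}(1)) and the decomposition above. The left-hand side is the underlying space of $\varphi_s M$, so $\varphi_s M=0$ if and only if $\Hom_A(P(y),M)=0$ for every $y$ with $sy<y$, which is precisely the defining condition of $\mathcal{O}_s$ (recall $sy<y\iff$ what the paper writes; note the defining condition uses $sx<x$). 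This proves the first assertion.

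For the second assertion, let $t$ be a simple reflection and $M\in\mathcal{O}_s$; I must show $\theta_t M\in\mathcal{O}_s$, i.e.\ $\varphi_s(\theta_t M)=0$. Here I would invoke Theorem~\ref{thm:theta and varphi commute, in O}: $\widetilde{\varphi_s}$ and $\widetilde{\theta_t}$ commute on $\widetilde{\mathcal{O}}$, hence $\widetilde{\varphi_s}\widetilde{\theta_t}M\simeq\widetilde{\theta_t}\widetilde{\varphi_s}M$. Since $M\in\mathcal{O}_s\subset\mathcal{O}$, by the first part $\varphi_s M=0$; one needs to upgrade this to $\widetilde{\varphi_s}M=0$, which follows because $\widetilde{\varphi_s}M$ and $\varphi_s M$ have the same underlying vector space (the restriction of $\widetilde{\varphi_s}$ to $\mathcal{O}$ is $\varphi_s$ by definition). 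Therefore $\widetilde{\theta_t}\widetilde{\varphi_s}M=0$, so $\widetilde{\varphi_s}\widetilde{\theta_t}M=0$, and restricting again $\varphi_s(\theta_t M)=0$; since $\theta_t$ does preserve $\mathcal{O}$ (Remark~\ref{rem:commutativity of tensor and translation functor} / the discussion preceding it), $\theta_t M\in\mathcal{O}$ and the first part gives $\theta_t M\in\mathcal{O}_s$.

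The only genuinely delicate point is bookkeeping between the tilde-categories and $\mathcal{O}$: $\varphi_s$ is merely the restriction of $\widetilde{\varphi_s}$ and lands in $\widetilde{\mathcal{O}}$, not $\mathcal{O}$, so I must be careful that "$\varphi_s M=0$" and "$\widetilde{\varphi_s}M=0$" agree for $M\in\mathcal{O}$ — they do, since a module is zero iff its underlying space is, and the underlying spaces coincide. The commutativity input (Theorem~\ref{thm:theta and varphi commute, in O}) is on $\widetilde{\mathcal{O}}$, which is exactly where the argument is carried out, so no further compatibility is needed. I expect this interface to be the main (minor) obstacle; the homological content is entirely supplied by the cited results.
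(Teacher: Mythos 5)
Your proposal is correct and takes essentially the same route as the paper: the decomposition $\widetilde{\varphi_s}\widetilde{A}\simeq\bigoplus_{sy<y}\widetilde{P}(y)^{\oplus m_y}$ with $m_y\ge 2$ from Theorem~\ref{thm:translation of projective modules} plus self-adjointness for the equivalence, and Theorem~\ref{thm:theta and varphi commute, in O} for the preservation under $\theta_t$ (the paper merely splits the two directions, using $\widetilde{\varphi_s}\widetilde{P}(x)=\widetilde{P}(x)^{\oplus 2}$ for the "$M\notin\mathcal{O}_s$" case). One cosmetic slip: $\Hom_{\widetilde{A}}$ out of a possibly infinite direct sum is a \emph{product} $\prod_{sy<y}\Hom_A(P(y),M)^{m_y}$ rather than a direct sum, which does not affect the vanishing criterion.
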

\begin{proof}
From Theorem~\ref{thm:translation of projective modules}, we have $\widetilde{\varphi_s}\widetilde{A} = \bigoplus_{sy < y}\widetilde{P}(y)^{m_y}$ for some $m_y\ge 2$.
Hence, if $M\in\mathcal{O}_s$, then $\varphi_sM = \Hom_{\widetilde{A}}(\widetilde{A},\varphi_sM) = \Hom_{\widetilde{A}}(\widetilde{\varphi_s}\widetilde{A},M) = 0$.

If $M\not\in\mathcal{O}_s$, then $\Hom(\widetilde{P}(x),M) = \Hom(P(x),M) \ne 0$ for some $x\in W$ such that $sx < x$.
Hence $\Hom(\widetilde{P}(x),\varphi_sM) = \Hom(\widetilde{\varphi_s}\widetilde{P}(x),M) = \Hom(\widetilde{P}(x)^{\oplus 2},M) \ne 0$.
Therefore, $\varphi_sM \ne 0$.

Take $M\in \mathcal{O}_s$.
Then, by Theorem~\ref{thm:theta and varphi commute, in O}, $\varphi_s\theta_tM = \widetilde{\theta_t}\varphi_sM = 0$.
Hence $\theta_tM\in\mathcal{O}_s$.
\end{proof}

\begin{prop}\label{prop:translation and tau commute}
The functors $\tau_s$ and $\theta_t$ commute with each other for simple reflections $s,t$.
\end{prop}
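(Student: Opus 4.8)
The plan is to reduce the statement to the corresponding commutativity already available at the level of the category $\mathcal{M}$ of $Z$-modules, and more precisely to the fact that $\theta_s^Z$ and $\varphi_t^Z$ commute (Proposition~\ref{prop:theta and varphi commute}), together with the compatibilities of natural transformations recorded in Proposition~\ref{prop:compatibility of natural transformations and adjointness in O} and Theorem~\ref{thm:theta and varphi commute, in O}. The first step is to recall the characterization of $\mathcal{O}_s$ from Lemma~\ref{lem:characterization of O_s and translation preserve O_s}: an object $M\in\mathcal{O}$ lies in $\mathcal{O}_s$ if and only if $\varphi_s M = 0$, and $\theta_t$ preserves $\mathcal{O}_s$. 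Using this, $\widetilde{\tau}_s(M)$ can be described as the largest quotient of $M$ on which $\widetilde{\varphi_s}$ vanishes; equivalently, writing the natural transformation $\sigma'\colon \Id\to\widetilde{\varphi_s}$, one checks that $\widetilde{\tau}_s(M)$ is the cokernel of an appropriate map built from $\sigma'$, or dually that $M'$ (the kernel defining $\widetilde{\tau}_s$) is detected by $\widetilde{\varphi_s}$.

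The key step is then to produce, for each $M\in\mathcal{O}$, a natural isomorphism $\theta_t\tau_s(M)\simeq\tau_s\theta_t(M)$. Since $\theta_t$ preserves $\mathcal{O}_s$, the composite $\tau_s\theta_t$ is defined, and by adjunction $(\widetilde{\tau}_s,\iota_s)$ together with self-adjointness and exactness of $\theta_t$ (Proposition~\ref{prop:property of translation functors in widetilde O}, specialized to $\mathcal{O}$ as in the discussion following it), one has for any $N\in\mathcal{O}_s$
\[
	\Hom_A(\theta_t\tau_s M, N)\simeq\Hom_A(\tau_s M,\theta_t N)\simeq\Hom_A(M,\theta_t N)\simeq\Hom_A(\theta_t M,N).
\]
Here the middle isomorphism uses that $\theta_t N\in\mathcal{O}_s$. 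This shows $\theta_t\tau_s M$ represents the same functor on $\mathcal{O}_s$ as $\widetilde{\tau}_s(\theta_t M)$, hence $\theta_t\tau_s M\simeq\tau_s\theta_t M$ provided one also knows $\theta_t\tau_s M$ actually lies in $\mathcal{O}_s$ (so that the representability argument applies), which again follows from Lemma~\ref{lem:characterization of O_s and translation preserve O_s}. One must check that this isomorphism is natural in $M$, which is automatic from the Yoneda-style argument, and compatible with the projection maps $M\to\tau_s M$, i.e.\ that the diagram relating $\theta_t(M\to\tau_s M)$ and $\theta_t M\to\tau_s\theta_t M$ commutes; this uses that the unit of the adjunction is transported correctly under $\theta_t$, which is where Proposition~\ref{prop:compatibility of natural transformations and adjointness in O} enters.

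The main obstacle I expect is the bookkeeping around the functor $\widetilde{\varphi_s}$ versus its restriction $\varphi_s$ to $\mathcal{O}$: as emphasized in Remark~\ref{rem:commutativity of tensor and translation functor}, $\widetilde{\varphi_s}$ does not preserve $\mathcal{O}$, only $\theta_t$ and $\widetilde{\theta_t}$ behave well, so one must be careful to phrase everything via the vanishing criterion $\varphi_s M=0$ rather than via $\widetilde{\varphi_s}$-flags or tensor-product formulas. A clean way around this is to avoid $\widetilde{\varphi_s}$ entirely in the proof and argue purely through the adjunction $(\widetilde{\tau}_s,\iota_s)$ and the self-adjointness/exactness of $\theta_t$ on $\mathcal{O}$, as in the displayed chain of isomorphisms above; the commutativity at the level of natural transformations (Theorem~\ref{thm:theta and varphi commute, in O}) is then invoked only to identify the unit maps. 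A secondary point requiring care is that $\widetilde{\tau}_s$ is merely right exact, so the isomorphism should be established on objects first and then checked to be functorial, rather than by comparing derived functors at this stage.
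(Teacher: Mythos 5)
Your argument is correct and essentially the same as the paper's: the paper likewise uses Lemma~\ref{lem:characterization of O_s and translation preserve O_s} to get the induced self-adjoint functor $\theta'_t$ on $\mathcal{O}_s$ and then takes left adjoints of $\theta_t\iota_s\simeq\iota_s\theta'_t$, which is exactly your chain of Hom-isomorphisms together with uniqueness of adjoints (your Yoneda step). Your side remarks about $\varphi_s$ and Proposition~\ref{prop:compatibility of natural transformations and adjointness in O} are not needed for this proposition, but they do no harm.
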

\begin{proof}
From Lemma~\ref{lem:characterization of O_s and translation preserve O_s}, the functor $\theta_t$ induces a self-adjoint functor from $\mathcal{O}_s$ to $\mathcal{O}_s$.
We denote this functor by $\theta_t'$.
Obviously, we have $\theta_t \iota_s \simeq \iota_s \theta'_t$.
Taking the left adjoint functor of the both sides, we get $\widetilde{\tau}_s \theta_t\simeq \theta'_t \widetilde{\tau}_s$.
Hence we get $\theta_t \tau_s = \theta_t \iota_s \widetilde{\tau}_s\simeq \iota_s\theta'_t\widetilde{\tau}_s \simeq \iota_s\widetilde{\tau}_s\theta_t = \tau_s\theta_t$.
\end{proof}

\subsection{Translation of Verma modules}
We consider $\varphi_s M(x)$.
We start with two lemmas.
\begin{lem}
Let $\{M_\lambda\}$ be a family of $S(V^*)$-modules.
Then we have an isomorphism $(\prod_\lambda M_\lambda)\otimes_{S(V^*)}\C \simeq \prod_\lambda (M_\lambda\otimes_{S(V^*)}\C)$.
\end{lem}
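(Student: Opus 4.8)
The statement to prove is an isomorphism $(\prod_\lambda M_\lambda)\otimes_{S(V^*)}\C \simeq \prod_\lambda (M_\lambda\otimes_{S(V^*)}\C)$.

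The plan is as follows. There is always a canonical map $(\prod_\lambda M_\lambda)\otimes_{S(V^*)}\C \to \prod_\lambda (M_\lambda\otimes_{S(V^*)}\C)$, obtained from the family of projections $\prod_\lambda M_\lambda \to M_\mu$ by applying $-\otimes_{S(V^*)}\C$ and using the universal property of the product. So the real content is to show this map is bijective. First I would unwind what $-\otimes_{S(V^*)}\C$ does: since $\C = S(V^*)/V^*S(V^*)$, for any $S(V^*)$-module $N$ we have $N\otimes_{S(V^*)}\C = N/V^*N$ where $V^*N$ denotes the submodule generated by $\{v\cdot n \mid v\in V^*,\ n\in N\}$. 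Thus the claim becomes: the canonical map
\[
	\left(\prod_\lambda M_\lambda\right)\Big/ V^*\!\left(\prod_\lambda M_\lambda\right) \longrightarrow \prod_\lambda \left(M_\lambda/V^*M_\lambda\right)
\]
is an isomorphism. Surjectivity is the easy direction: given $(\overline{m_\lambda})_\lambda$ on the right, the element $(m_\lambda)_\lambda$ in $\prod_\lambda M_\lambda$ maps to it.

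The main point is injectivity, i.e. that $V^*(\prod_\lambda M_\lambda) = \prod_\lambda (V^*M_\lambda)$ inside $\prod_\lambda M_\lambda$. The inclusion $\subseteq$ is immediate. For $\supseteq$, the key observation is that $V^*$ is finite-dimensional: pick a basis $v_1,\dots,v_n$ of $V^*$. Then for a single module $N$, $V^*N = \sum_{i=1}^n v_i N$, and an element of $\prod_\lambda(V^*M_\lambda)$ has the form $(\sum_{i=1}^n v_i\, m_{\lambda,i})_\lambda$ for suitable $m_{\lambda,i}\in M_\lambda$; collecting, this equals $\sum_{i=1}^n v_i\cdot (m_{\lambda,i})_\lambda$, which lies in $V^*(\prod_\lambda M_\lambda)$. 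This finiteness of $V^*$ (equivalently, $V$ is finite-dimensional, which holds since $V$ is a reflection faithful representation) is exactly what makes the product commute with the tensor; the statement would fail for an infinite-dimensional ideal.

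I expect no real obstacle here — the only thing to be careful about is the bookkeeping in the $\supseteq$ inclusion, namely that one must choose, for each $\lambda$ simultaneously, the coefficients $m_{\lambda,i}$, which is unproblematic since one is working inside the direct product. Alternatively, and perhaps more cleanly, one can present the argument via the right-exact sequence $V^*\otimes_\C S(V^*)\langle -2\rangle^{\oplus ?}\!\to S(V^*)\to \C\to 0$, or simply note $\C$ has a finite free resolution over $S(V^*)$ (the Koszul complex), and that a finite direct product (indeed arbitrary product) of modules, tensored with a module admitting a finite presentation by finite free modules, commutes because finite products agree with finite direct sums and $-\otimes_{S(V^*)}\C$ commutes with finite direct sums — but since the index set is arbitrary, the cleanest route really is the direct computation $V^*(\prod M_\lambda) = \prod(V^*M_\lambda)$ using $\dim_\C V^* < \infty$. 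I would therefore write it up in that elementary form.
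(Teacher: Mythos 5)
Your proof is correct and follows essentially the same route as the paper: reduce the statement to the equality $V^*\bigl(\prod_\lambda M_\lambda\bigr) = \prod_\lambda (V^*M_\lambda)$ and establish it using a finite basis $v_1,\dots,v_r$ of $V^*$. The extra remarks about the canonical map and the Koszul-resolution alternative are fine but not needed beyond the paper's argument.
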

\begin{proof}
Since $M\otimes_{S(V^*)}\C = M/V^*M$ for an $S(V^*)$-module $M$, it is sufficient to prove that $V^*(\prod_\lambda M_\lambda) = \prod_\lambda (V^*M_\lambda)$.
Notice that $V^*$ is finite-dimensional.
Let $v_1,\dots,v_r$ be a basis of $V^*$.
Then $V^*(\prod_\lambda M_\lambda) = \sum_i v_i(\prod_\lambda M_\lambda) = \sum_i\prod_\lambda v_iM_\lambda = \prod_\lambda (V^*M_\lambda)$
\end{proof}

\begin{lem}
Let $M_1\to M_2\to M_3$ be a sequence in $\mathcal{M}$.
If $\Hom_Z(B(y),M_1)\otimes_{S(V^*)}\C\to \Hom_Z(B(y),M_2)\otimes_{S(V^*)}\C\to \Hom_Z(B(y),M_3)\otimes_{S(V^*)}\C$ is exact for all $y$, then $\Phi(M_1)\to \Phi(M_2)\to \Phi(M_3)$ is exact.
\end{lem}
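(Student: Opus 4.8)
The plan is to deduce the statement from the two preceding lemmas together with the exactness of arbitrary products. Write $B = \bigoplus_{x\in W}B(x)$, so that $\Phi(M) = \Hom_Z(B,M)\otimes_{S(V^*)}\C$. Since $\Hom$ out of a direct sum is a product, there is an isomorphism $\Hom_Z(B,M)\simeq \prod_{x\in W}\Hom_Z(B(x),M)$, and this isomorphism is natural in $M$. Applying the exact base change $-\otimes_{S(V^*)}\C$ is not visibly compatible with the infinite product, but the previous lemma says precisely that $\bigl(\prod_{x}\Hom_Z(B(x),M)\bigr)\otimes_{S(V^*)}\C\simeq \prod_{x}\bigl(\Hom_Z(B(x),M)\otimes_{S(V^*)}\C\bigr)$. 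Combining these, I obtain a natural isomorphism
\[
	\Phi(M)\simeq \prod_{x\in W}\bigl(\Hom_Z(B(x),M)\otimes_{S(V^*)}\C\bigr).
\]

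By naturality of this isomorphism in $M$, the sequence $\Phi(M_1)\to\Phi(M_2)\to\Phi(M_3)$ is identified with the sequence obtained by taking, for each $x\in W$, the sequence
\[
	\Hom_Z(B(x),M_1)\otimes_{S(V^*)}\C\to \Hom_Z(B(x),M_2)\otimes_{S(V^*)}\C\to \Hom_Z(B(x),M_3)\otimes_{S(V^*)}\C
\]
and then forming the product over $x$. By hypothesis each of these sequences is exact at its middle term, and products are exact in the category of abelian groups (equivalently, of $\C$-vector spaces), so the product sequence is exact at its middle term. Hence $\Phi(M_1)\to\Phi(M_2)\to\Phi(M_3)$ is exact, as required.

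The proof is essentially formal; the only points requiring care are the naturality (in $M$) of the decomposition $\Hom_Z(B,M)\simeq\prod_x\Hom_Z(B(x),M)$, so that the maps in the two sequences genuinely correspond, and the invocation of the previous lemma, which is what makes the argument work even when $W$ is infinite and the direct sum $B$ is a genuinely infinite one. I do not expect any real obstacle beyond this bookkeeping.
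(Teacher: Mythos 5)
Your proof is correct and follows the same route as the paper: identify $\Phi(M)$ with $\prod_{x\in W}\bigl(\Hom_Z(B(x),M)\otimes_{S(V^*)}\C\bigr)$ via the previous lemma and the natural decomposition $\Hom_Z(\bigoplus_x B(x),M)\simeq\prod_x\Hom_Z(B(x),M)$, then use exactness of products. You merely spell out the naturality and product-exactness steps that the paper leaves implicit.
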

\begin{proof}
From the previous lemma,
\begin{align*}
	\prod_{y\in W}(\Hom_Z(B(y),M)\otimes_{S(V^*)}\C) & \simeq 
	\left(\prod_{y\in W}\Hom_Z(B(y),M)\right)\otimes_{S(V^*)}\C\\
	& \simeq \Hom_Z\left(\bigoplus_{y\in W}B(y),M\right)\otimes_{S(V^*)}\C\\
	& = \Phi(M).
\end{align*}
We get the lemma.
\end{proof}

\begin{prop}\label{prop:translation of M(e), etc...}
Let $s$ be a simple reflection and $x\in W$ such that $sx > x$.
\begin{enumerate}
\item We have an exact sequence $0\to M(x)\to \Phi(\varphi_s^ZV(sx))\to M(sx)\to 0$, here the map $\Phi(\varphi_s^ZV(sx))\to M(sx)$ is the canonical map.
\item We have an exact sequence $0\to M(x)\to \varphi_s M(sx)\to M(sx)\to 0$, here the map $\varphi_s M(sx)\to M(sx)$ is the canonical map.
\item We have an isomorphism $\widetilde{\varphi_s} \widetilde{M}(sx)\simeq \widetilde{\varphi_s} \widetilde{M}(x)$ and the map $M(x)\to \varphi_sM(sx)$ in (1) and $M(x)\to \widetilde{\varphi_s}\widetilde{M}(sx)\otimes_{S(V^*)}\C$ is induced from the canonical map $\widetilde{M}(x)\to \widetilde{\varphi_s} \widetilde{M}(x)$.
\item For a $Z$-module $M$, the composition of the maps $\Phi(M)\to \varphi_s \Phi(M)\to \Phi(M)$ is equal to $0$.
\item We have an inclusion $M(sx)\to M(x)$.
\end{enumerate}
\end{prop}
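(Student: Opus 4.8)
The crux is part~(1); the remaining parts will be extracted from it using the explicit description $\varphi_s^Z(-)=Z\otimes_{Z^{r_s}}(-)$, the canonical natural transformations $\Id\langle 1\rangle\to\varphi_s^Z\to\Id\langle -1\rangle$, and Lemma~\ref{lem:relation of Z-mod and O}. My plan is to first analyze $N:=\varphi_s^Z V(sx)$ inside $\mathcal{M}$ and then transport the result to $\mathcal{O}$ by applying $\widetilde\Phi$ and $\otimes_{S(V^*)}\C$. Since $\varphi_s^Z$ preserves $\mathcal{M}$, we have $N\in\mathcal{M}$, and the support estimate for $\varphi_s^Z$ gives $\supp\mathscr{L}(N)\subseteq\{sx\}\cup s\{sx\}=\{x,sx\}$. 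Using $N=Z\otimes_{Z^{r_s}}V(sx)$ and the decomposition $Z=Z^{r_s}\oplus\alpha_s Z^{r_s}$ of $Z$ as a $Z^{r_s}$-module (from the proof of Proposition~\ref{prop:theta and varphi commute}), I would compute $\dim_Q(N\otimes_{S(V^*)}Q)=2$; combined with the support estimate and $N\in\mathcal{M}$, a localization computation parallel to the projective case (Proposition~\ref{prop:translation of projective Z-module}) shows $\mathscr{L}(N)_x$ and $\mathscr{L}(N)_{sx}$ are graded free of rank one, so the Verma flag of $N$ contains $V(x)$ and $V(sx)$ exactly once. Since $x<sx$, the subset $\{sx\}$ is upwardly closed in $\supp\mathscr{L}(N)$, so the exact structure of $\mathcal{M}$ (\cite[4.1]{MR2370278}) yields a short exact sequence $0\to N_{\{x\}}\to N\to N^{\{sx\}}\to 0$ in $\mathcal{M}$, which by the rank count is $0\to V(x)\to N\to V(sx)\to 0$. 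The quotient $N\to V(sx)$ is hit by the canonical morphism coming from $\varphi_s^Z\to\Id\langle -1\rangle$ (explicitly $z\otimes m\mapsto z_{sx}m$, which is surjective since $1\otimes m\mapsto m$); its kernel contains $N_{\{x\}}$ (a morphism $V(x)\to V(sx)$ of disjoint support vanishes) and, being rank one with torsion-free quotient $V(sx)$, equals $N_{\{x\}}\cong V(x)$. Finally, $\widetilde\Phi$ is exact on short exact sequences of $\mathcal{M}$ (each $B(y)$ is projective in $\mathcal{M}$) and sends modules with a Verma flag to graded free $S(V^*)$-modules, so applying $\widetilde\Phi$ and then $\otimes_{S(V^*)}\C$ preserves exactness and gives~(1), with $\Phi$ of the canonical quotient as the last map.

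For~(3): restriction to $Z^{r_s}$ turns $p\mapsto s(p)$ into a degree-preserving isomorphism $\Res_{Z^{r_s}}V(x)\xrightarrow{\sim}\Res_{Z^{r_s}}V(sx)$, because $z_{sx}=s(z_x)$ for $z\in Z^{r_s}$; inducing back up gives a $Z$-module isomorphism $\varphi_s^Z V(x)\simeq\varphi_s^Z V(sx)$, and $\widetilde\Phi$ then yields $\widetilde{\varphi_s}\widetilde M(x)\simeq\widetilde{\varphi_s}\widetilde M(sx)$ and, after $\otimes\C$, its specialization. To match the maps I would trace the formula $m\mapsto\alpha_s\otimes m+1\otimes\alpha_s m$ for the canonical $\Id\langle 1\rangle\to\varphi_s^Z$ through this isomorphism, check that the resulting morphism $V(x)\to\varphi_s^Z V(sx)$ is nonzero, and use that $\Hom_{\mathcal{M}}(V(x),\varphi_s^Z V(sx))$ is one-dimensional in degree zero (by self-adjointness, the isomorphism of~(3), and the flag of $\varphi_s^Z V(sx)$ together with $\Hom_{\mathcal{M}}(V(x),V(sx))=0$) to conclude it is a nonzero scalar multiple of the inclusion from~(1); applying $\Phi$ then gives~(3). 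For~(2): I would identify $\varphi_s M(sx)=\widetilde{\varphi_s}(\widetilde M(sx)\otimes_{S(V^*)}\C)$ with $\Phi(\varphi_s^Z V(sx))=\widetilde{\varphi_s}(\widetilde M(sx))\otimes_{S(V^*)}\C$. These differ a priori — passing $\otimes\C$ inside $\widetilde{\varphi_s}$ quotients $\widetilde A'$ by its left rather than right $V^*$-action (cf.\ Remark~\ref{rem:commutativity of tensor and translation functor}) — but right-exactness of $\widetilde{\varphi_s}$ applied to a presentation $\widetilde M(sx)^{\oplus\dim V^*}\to\widetilde M(sx)\to M(sx)\to 0$ shows $\varphi_s M(sx)$ is $\widetilde\Phi(\varphi_s^Z V(sx))$ modulo the image of the operators $\varphi_s^Z(v)$, $v\in V^*$, which on $Z\otimes_{Z^{r_s}}V(sx)$ act by $z\otimes m\mapsto z\otimes vm$, and after localization this is the same quotient as the one defining $\Phi(\varphi_s^Z V(sx))$; equivalently, one reruns the argument of~(1) with the left $S(V^*)$-structure. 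The canonical maps match by the same bookkeeping.

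For~(4): by the explicit formulas (Remark~\ref{rem:explicit discription of the natural transformation of theta_s} and the description of $\varphi_s^Z$), the composite $\Id\langle 1\rangle\to\varphi_s^Z\to\Id\langle -1\rangle$ is multiplication by $2\alpha_s\in S(V^*)$; hence the composite $\widetilde A\xrightarrow{\sigma}\widetilde A'\xrightarrow{\sigma'}\widetilde A$ is multiplication by the central element $2\alpha_s$, and evaluating $\sigma'_{\Phi(M)}\circ\sigma_{\Phi(M)}$ — i.e.\ applying $\Phi(M)\otimes_{\widetilde A}(-)$ to this composite — gives multiplication by $2\alpha_s$ on the $A$-module $\Phi(M)$, which is zero since $\alpha_s\in V^*$. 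For~(5): taking $M=V(sx)$ in~(4), the canonical morphism $M(sx)\to\varphi_s M(sx)$ composed with $\varphi_s M(sx)\to M(sx)$ vanishes, so by the short exact sequence of~(2) it factors through a morphism $M(sx)\to M(x)$; to see that this is injective I would verify that $M(sx)\to\varphi_s M(sx)$ is injective, reducing to the $Z$-level fact that $V(sx)\to\varphi_s^Z V(sx)$, $m\mapsto\alpha_s\otimes m+1\otimes\alpha_s m$, is split injective over $S(V^*)$ (its component $m\mapsto\alpha_s\otimes m$ already is) — a property preserved by $\widetilde\Phi$ (being $S(V^*)$-linear) and by $\otimes_{S(V^*)}\C$.

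I expect the real obstacle to be the $Z$-module analysis in~(1) — establishing that $\varphi_s^Z V(sx)$ has precisely the two-step Verma flag $0\to V(x)\to\varphi_s^Z V(sx)\to V(sx)\to 0$ and identifying the morphisms correctly — together with the delicate bookkeeping in~(2) and~(5) of the left versus right $S(V^*)$-actions on $\widetilde A'$, which for $\varphi_s$ (unlike $\theta_s$) do not coincide.
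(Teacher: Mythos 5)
Your treatment of (1)--(4) is essentially the paper's: you establish the two-step flag $0\to V(x)\to\varphi_s^ZV(sx)\to V(sx)\to 0$ at the $Z$-level (the paper gets the same sequence by quoting Fiebig's explicit description of $\mathscr{L}(\theta_s^ZM)$, transported by $a$), push it through $\widetilde{\Phi}$ using projectivity/freeness so that $\otimes_{S(V^*)}\C$ preserves exactness, rerun the argument with the other $S(V^*)$-structure for (2) (this is exactly the paper's twisted structure $C(\cdot)$), prove (3) via $\Res_{Z^{r_s}}V(x)\simeq\Res_{Z^{r_s}}V(sx)$, and prove (4) by the computation that unit followed by counit is multiplication by $2\alpha_s$. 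All of that is fine, modulo harmless imprecision (e.g.\ replace ``$\{sx\}$ upwardly closed'' by $\{w\ge sx\}$).

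The genuine gap is in your argument for (5), specifically the injectivity. First, the map $m\mapsto\alpha_s\otimes m$ is \emph{not} $S(V^*)$-linear: the diagonal $\alpha_s\in Z$ is not $r_s$-invariant (indeed $r_s(\alpha_s)=-\alpha_s$), so it cannot be moved across $\otimes_{Z^{r_s}}$, and $(1\otimes V(sx))\oplus(\alpha_s\otimes V(sx))$ is only a $Z^{r_s}$- (hence $S(V^*)^s$-) module decomposition; so ``its component $m\mapsto\alpha_s\otimes m$ already is split injective'' is not a valid justification. The unit $u\colon V(sx)\to\varphi_s^ZV(sx)$ \emph{is} in fact $S(V^*)$-split injective, but for a different reason: in coordinates $\varphi_s^ZV(sx)\simeq\{(a,b)\in S(V^*)^2\mid a\equiv b\ (\mathrm{mod}\ \alpha_s)\}$ one computes $u(m)=(0,2\alpha_s m)$, whose cokernel is free (a copy of $V(x)$). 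Second, and more seriously, split injectivity over $S(V^*)$ is \emph{not} a property you may transport through $\widetilde{\Phi}=\Hom_Z(\bigoplus_yB(y),-)$: the retraction is only $S(V^*)$-linear, not $Z$-linear, so $\widetilde{\Phi}$ cannot be applied to it; and the exact sequence $0\to V(sx)\xrightarrow{u}\varphi_s^ZV(sx)\to V(x)\to 0$ is not admissible in $\mathcal{M}$ (applying $(-)^{\{w\ge sx\}}$ gives $S(V^*)\xrightarrow{2\alpha_s}S(V^*)\to 0$, which is not exact), so projectivity of $B(y)$ does not give surjectivity of $\Hom_Z(B(y),\varphi_s^ZV(sx))\to\Hom_Z(B(y),V(x))$ either. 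Consequently the cokernel of $\widetilde{\Phi}(u)$ need not be free, and injectivity of $M(sx)\to\varphi_sM(sx)$ (equivalently of the factored map $M(sx)\to M(x)$) after $\otimes_{S(V^*)}\C$ is not established by your argument as written; this is precisely the point where an actual argument is needed (the paper instead extracts (5) from (1) and (4)), so you should either prove directly that $\Cok(\widetilde{\Phi}(u))$ has vanishing $\Tor_1^{S(V^*)}(-,\C)$, or obtain the inclusion by another route.
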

\begin{proof}
Set $\alpha = \alpha_s$.

(1)
Put $\mathscr{M} = \mathscr{L}(\varphi_s V(sx))$.
By \cite[Lemma~5.4]{MR2395170}, we have
\begin{align*}
\mathscr{M}_y & = 
\begin{cases}
S(V^*)\langle -1\rangle & (\text{$y = x$ or $sx$}),\\
0 & (\text{otherwise}),
\end{cases}\\
\mathscr{M}_E & = 
\begin{cases}
S(V^*)/\alpha S(V^*)\langle -1\rangle & (\text{$h_{\mathcal{G}}(E) = x$, $t_{\mathcal{G}}(E) = sx$}),\\
0 & (\text{otherwise}).
\end{cases}
\end{align*}
Hence we get an exact sequence $V(x)\langle -1\rangle\to \varphi_s V(sx)\langle 1\rangle\to V(sx)$ (cf.~\cite[3.4]{MR2395170}).
This implies an exact sequence
\[
	0\to \Hom_Z(B(y),V(x))\to\Hom_Z(B(y),\varphi_s^Z V(sx))\to \Hom_Z(B(y),V(sx))\to 0
\]
for all $y\in W$.
Since $\Hom_Z(B(y),V(sx))\simeq \Hom_{S(V^*)}(\mathscr{B}(y)_{sx},S(V^*))$ and $\mathscr{B}(y)_{sx}$ is free, we have that $\Hom_Z(B(y),V(sx))$ is free.
Hence we get an exact sequence,
\begin{multline*}
0\to \Hom_Z(B(y),V(x))\otimes_{S(V^*)}\C\to\Hom_Z(B(y),\varphi_s^Z V(sx))\otimes_{S(V^*)}\C\\
\to \Hom_Z(B(y),V(sx))\otimes_{S(V^*)}\C\to 0
\end{multline*}
for all $y\in W$.
From the previous lemma, we get (1).

(2)
For $\widetilde{M}\in\widetilde{\mathcal{O}}$, we define a new $S(V^*)$-module structure on $\widetilde{\varphi_s}(\widetilde{M})$ as follows.
The action of $p\in S(V^*)$ is given by $\varphi_s(p)$, here $p\colon \widetilde{M}\to \widetilde{M}$ is a $S(V^*)$-action on $\widetilde{M}$.
Then, in general, this action is different from the original $S(V^*)$-action (the action induced from the action of $\widetilde{A}$).
When we consider this $S(V^*)$-module structure, we denote $C(\widetilde{\varphi_s}(\widetilde{M}))$ instead of $\widetilde{\varphi_s}(\widetilde{M})$.
By the definition, we get $C(\widetilde{\varphi_s}(\widetilde{M}))\otimes_{S(V^*)}\C = C(\widetilde{\varphi_s}(\widetilde{M}\otimes_{S(V^*)}\C))$.
We define the $S(V^*)$-module structure on $\Hom_Z(B(y),\varphi_s^ZV(sx))$ by the same way, and denote the resulting $S(V^*)$-module by $C^Z(\Hom_Z(B(y),\varphi_s^ZV(sx)))$.
We have $C^Z(\Hom_Z(\bigoplus_{y\in W}B(y),\varphi_s^ZV(sx))) = C(\widetilde{\varphi_s}\widetilde{M}(sx))$.
Moreover, from the same argument in (1), we have an exact sequence
\[
	0\to \Hom_Z(B(y),V(x))\to C(\Hom_Z(B(y),\varphi_s^Z V(sx)))\to \Hom_Z(B(y),V(sx))\to 0
\]
for all $y\in W$.
Tensoring with $\C$, we get (2).

(3)
Both $V(x)$ and $V(sx)$ are isomorphic to $S(V^*)$ as an $S(V^*)$-module.
Let $z = (z_w)_w\in Z\subset \prod_{w\in W}S(V^*)$ and assume that $z\in Z^{r_s}$.
Then we have $z_x = s(z_{sx})$.
Hence the action of $z$ on $V(x)$ is given by the multiplication of $z_x$, while the action of $z$ on $V(sx)$ is given by the multiplication of $z_{sx} = s(z_x)$.
Hence $S(V^*) \simeq V(x)\to V(sx)\simeq S(V^*)$ given by $p\mapsto s(p)$ is an isomorphism as $Z^{r_s}$-modules.
Hence $\Res_{Z^{r_s}} V(x)\simeq \Res_{Z^{r_s}}V(sx)$.
Therefore, $\varphi_s^Z V(x)\simeq \varphi_s^Z V(sx)$.
Hence we get $\widetilde{\varphi_s} \widetilde{M}(x)\simeq \widetilde{\varphi_s} \widetilde{M}(sx)$.
It is easy to see that the canonical map $M(x)\to \varphi_sM(x)$ is equal to the map we give in (1) and (2).

(4)
The composition of the maps $M\to Z\otimes_{Z^{r_s}}M\to M$ is given by $m\mapsto 2\alpha m$.
So the map $\Hom_Z(B,M)\to \Hom_Z(B,\varphi_s^ZM)\to \Hom_Z(B,M)$ is given by $f\mapsto 2\alpha f$.
If we tensor $\C$ over $S(V^*)$, this map becomes $0$.

(5)
This is a consequence of (1) and (4).
\end{proof}

\subsection{Duality of Zuckerman functor}
\begin{lem}\label{lem:calculation of tau_s(M(e))}
Let $f\colon M(s)\to M(e)$ be an injective map.
Then we have $\tau_s(M(e)) = M(e)/f(M(s))$.
\end{lem}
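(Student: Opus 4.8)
The plan is to show that $M(e)/f(M(s))$ already lies in $\mathcal{O}_s$ and that it is the \emph{largest} such quotient, so that it coincides with $\widetilde{\tau}_s(M(e))$. First I would apply $\varphi_s$ to the inclusion $f\colon M(s)\to M(e)$. By Proposition~\ref{prop:translation of M(e), etc...}(3) (with $x = e$, so $sx = s > x$) we have $\widetilde{\varphi_s}\widetilde{M}(s)\simeq \widetilde{\varphi_s}\widetilde{M}(e)$, and the two composites $M(e)\to\varphi_sM(e)\to M(e)$ and $M(s)\to\varphi_sM(s)\to M(e)$ are compatible with $f$; combining this with part~(2), the cokernel of $f$ is killed by $\varphi_s$. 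Thus $M(e)/f(M(s))\in\mathcal{O}_s$ by Lemma~\ref{lem:characterization of O_s and translation preserve O_s}, which gives a surjection $\widetilde{\tau}_s(M(e)) = M(e)/M'\twoheadrightarrow M(e)/f(M(s))$, i.e. $M'\subseteq f(M(s))$.

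For the reverse inclusion $f(M(s))\subseteq M'$ I would argue that $f(M(s))$ maps to zero under every morphism $M(e)\to N$ with $N\in\mathcal{O}_s$. Equivalently, $\Hom(M(e)/f(M(s)), N)\to\Hom(M(e),N)$ is an isomorphism for all $N\in\mathcal{O}_s$; by left-exactness of $\Hom(-,N)$ applied to $M(s)\hookrightarrow M(e)\twoheadrightarrow M(e)/f(M(s))$ it suffices to show $\Hom_A(M(s),N) = 0$ for $N\in\mathcal{O}_s$. Since $M(s)$ is a quotient of $P(s)$ and $ss < s$, we have $\Hom_A(P(s),N) = 0$ by the very definition of $\mathcal{O}_s$, hence $\Hom_A(M(s),N) = 0$ as well. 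Therefore $f(M(s))\subseteq\Ker\varphi$ for every $\varphi\colon M(e)\to N$, $N\in\mathcal{O}_s$, so $f(M(s))\subseteq M'$.

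Combining the two inclusions gives $M' = f(M(s))$, hence $\widetilde{\tau}_s(M(e)) = M(e)/f(M(s))$, and applying $\iota_s$ yields the same formula for $\tau_s(M(e))$ as claimed. The one point that needs a little care — and which I expect to be the main obstacle — is the first step: verifying cleanly that the cokernel of $f$ is annihilated by $\varphi_s$, i.e. extracting from Proposition~\ref{prop:translation of M(e), etc...}(1)--(3) that the exact sequence $0\to M(s)\to\varphi_sM(e)\to M(e)\to 0$ identifies $\varphi_sM(e)\simeq\varphi_sM(s)$ in a way compatible with $f$ and with the canonical (co)unit maps, so that $\varphi_s$ of the surjection $M(e)\to M(e)/f(M(s))$ is the zero map. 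Once the bookkeeping of these natural transformations is in place, the rest is formal.
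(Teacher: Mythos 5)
Your second half is correct: since $M(s)$ is a quotient of $P(s)$ and $\Hom_A(P(s),N)=0$ for every $N\in\mathcal{O}_s$ (because $s\cdot s=e<s$), every morphism $M(e)\to N$ with $N\in\mathcal{O}_s$ kills $f(M(s))$, so $f(M(s))\subseteq M'$; this is a clean variant of the paper's argument, which instead uses that $f(M(s))\simeq M(s)$ has unique irreducible quotient $L(s)$. The genuine gap is in your first half, exactly at the point you flag yourself: you never prove that $\varphi_s$ annihilates $\Cok f$, i.e.\ that $\varphi_s(f)\colon\varphi_sM(s)\to\varphi_sM(e)$ is surjective. Proposition~\ref{prop:translation of M(e), etc...}(1)--(3) only concerns the \emph{canonical} maps attached to $M(e)$ and $M(s)$ (and its part (2) with $x=e$ reads $0\to M(e)\to\varphi_sM(s)\to M(s)\to 0$, not the sequence $0\to M(s)\to\varphi_sM(e)\to M(e)\to 0$ you quote); it gives no control over an \emph{arbitrary} injective $f$. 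Knowing abstractly that $\varphi_sM(s)$ and $\varphi_sM(e)$ are both extensions of $M(s)$ by $M(e)$ does not force an injection between them to be onto, and no global dimension count is available since these modules are infinite dimensional when $W$ is infinite; moreover, pinning $f$ down by $\Hom(M(s),M(e))=\C$ would be circular, because Theorem~\ref{thm:Homomorphisms between Verma modules} is proved later using this very lemma.

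What is needed, and what the paper's proof supplies, is a pointwise computation: for each $x$ with $sx<x$ one has $\mathscr{B}(x)_e\simeq\mathscr{B}(x)_s$ (Lemma~\ref{lem:translation out of projectives in Z-mod} combined with Fiebig's Lemma~5.4), hence $\dim\Hom_A(P(x),M(e))=\dim\Hom_A(P(x),M(s))<\infty$; since $P(x)$ is projective and $f$ is injective, $\Hom_A(P(x),f)$ is an injection between finite-dimensional spaces of equal dimension, hence bijective, so $\Hom_A(P(x),M(e)/f(M(s)))=0$ and $M(e)/f(M(s))\in\mathcal{O}_s$. Note that if you try to close your gap by testing surjectivity of $\varphi_s(f)$ against the projectives $\widetilde{P}(x)$, self-adjointness and Theorem~\ref{thm:translation of projective modules} reduce you to precisely this dimension count; so your detour through Lemma~\ref{lem:characterization of O_s and translation preserve O_s} is equivalent to the paper's criterion, but the key input that makes it work is missing from your sketch.
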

\begin{proof}
Put $M = \Ker(M(e)\to \tau_sM(e))$.
If $sx > x$, we have $\mathscr{B}(x)_e = \mathscr{B}(x)_s$ by Lemma~\ref{lem:translation out of projectives in Z-mod} and \cite[Lemma~5.4]{MR2395170}.
Hence 
\begin{multline*}
	\rank \Hom_Z(B(x),V(e)) = \rank \Hom_{S(V^*)}(\mathscr{B}(x)_e,S(V^*))\\
	= \rank \Hom_{S(V^*)}(\mathscr{B}(x)_s,S(V^*)) = \rank\Hom_Z(B(x),V(s)).
\end{multline*}
This implies $\dim\Hom_A(P(x),M(e)) = \dim\Hom_A(P(x),M(s))$.
Therefore, we get $\Hom_A(P(x),M(e)/f(M(s))) = 0$.
Hence $M\subset f(M(s))$.
Since $f(M(s))\simeq M(s)$ has the unique irreducible quotient $L(s)$, we have $M = f(M(s))$.
\end{proof}

The module $\tau_s(A)$ is, of course, a right $A$-module.
Using $A\simeq \End_A(A,A)\to \End_A(\tau_s(A),\tau_s(A))$, we also regard $\tau_s(A)$ as a left $A$-module.
By the same argument, $\varphi_s(A)$ is a left $A$-module and right $\widetilde{A}$-module.

\begin{thm}\label{thm:projective resolution of tau_s(A)}
We have the following exact sequences, here all maps are canonical maps.
\begin{enumerate}
\item $0\to A\to \varphi_s A\to A\to \tau_s A\to 0$ as left $A$- and right $\widetilde{A}$-modules.
\item $0\to A\to (\widetilde{\varphi_s}\widetilde{A})\otimes_{S(V^*)}\C\to A\to\tau_s A\to 0$ as left $\widetilde{A}$- and right $A$-modules. 
\end{enumerate}
\end{thm}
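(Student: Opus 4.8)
The plan is to reduce both exact sequences, term by term, to the indecomposable projectives $P(x)$ and then to the Verma modules, where everything has already been worked out in Proposition~\ref{prop:translation of M(e), etc...}, Proposition~\ref{prop:translation of projective Z-module} and Lemma~\ref{lem:calculation of tau_s(M(e))}.

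First I would peel off the indecomposables. As a right $A$-module $A=\bigoplus_{x\in W}P(x)$ with $P(x)=\Phi(B(x))$; the maps $\sigma',\sigma$ of the previous subsection and the canonical surjection onto $\tau_sA$ are all induced by natural transformations, hence additive and natural, and $\varphi_s$, $\widetilde{\Phi}\circ\varphi_s^Z$, $(-)\otimes_{S(V^*)}\C$ and $\tau_s$ all commute with direct sums; so the two four-term sequences split as $\bigoplus_{x\in W}$ of the four-term sequences built from $P(x)$ alone, and the left $A$- (resp.\ $\widetilde{A}$-) module structures — which are just the actions of $A\simeq\End_A(A)$, resp.\ $\widetilde{A}$, transported through the functor applied in the first slot — are carried along for free. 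By Proposition~\ref{prop:translation of M(e), etc...}(4), applied with $M=B(x)$, the composite of the two inner maps vanishes, so in each case we have a genuine complex $0\to P(x)\to E_x\to P(x)\to \tau_sP(x)\to 0$, with $E_x=\varphi_sP(x)$ in part (1) and $E_x=\Phi(\varphi_s^ZB(x))=(\widetilde{\varphi_s}\widetilde{P}(x))\otimes_{S(V^*)}\C$ in part (2); it remains to prove exactness at the four spots.

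Next I would split into the cases $sx<x$ and $sx>x$. If $sx<x$, then $\tau_sP(x)=0$ (the simple top $L(x)$ of $P(x)$ does not lie in the Serre subcategory $\mathcal{O}_s$, so $P(x)$ has no nonzero quotient there), and by Proposition~\ref{prop:translation of projective Z-module}(4) one has $\varphi_s^ZB(x)=B(x)\langle1\rangle\oplus B(x)\langle-1\rangle$; feeding this and the explicit description of the natural transformations $\Id\langle1\rangle\to\varphi_s^Z\to\Id\langle-1\rangle$ (whose composite is multiplication by $2\alpha_s$, cf.\ the proof of Proposition~\ref{prop:translation of M(e), etc...}(4)) into $\Phi$ — which kills $\alpha_s$ — turns the complex into the split short exact sequence $0\to P(x)\to P(x)^{\oplus2}\to P(x)\to 0$ (up to grading shifts), and for part (1) the same computation run in the $C$-twisted setting of the proof of Proposition~\ref{prop:translation of M(e), etc...}(2) gives the same conclusion. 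If $sx>x$, I would put a Verma flag $0=M_0\subset\dots\subset M_n=P(x)$ on $P(x)$ (Proposition~\ref{prop:projective module has Verma filtration}), apply the exact functor $\varphi_s$ (resp.\ $\widetilde{\Phi}\circ\varphi_s^Z$) to it, and identify the subquotients of $E_x$ using the three-term Verma sequences of Proposition~\ref{prop:translation of M(e), etc...}(1) (for part (2)) and (2) (for part (1)) together with the embeddings $M(sy)\hookrightarrow M(y)$ from Proposition~\ref{prop:translation of M(e), etc...}(5); the maps $\sigma',\sigma$ restrict on the subquotients to the canonical Verma maps (Lemma~\ref{lem:compatibility of natural transformations and adjointness in M}, transported as in Proposition~\ref{prop:compatibility of natural transformations and adjointness in O}), and the cokernel $\tau_sP(x)$ is controlled by the computation $\tau_sM(v)=M(v)/M(sv)$ for $sv>v$ — which is Lemma~\ref{lem:calculation of tau_s(M(e))} for $v=e$ and follows in general by the identical rank argument (via $\mathscr{B}(y)_v\simeq\mathscr{B}(y)_{sv}$, cf.\ Lemma~\ref{lem:translation out of projectives in Z-mod}). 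An induction on the flag length, with a snake-lemma chase at each step, then assembles the four-term exactness.

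The main obstacle I expect is the case $sx>x$: here $\varphi_s^ZB(x)=B(sx)\oplus P$ carries an extra indecomposable summand $P$ supported strictly below $x$ (Proposition~\ref{prop:translation of projective Z-module}(3)), and one must control how the canonical maps interact with it in order to see that the four-term sequence stays exact and that the cokernel is exactly $\tau_sP(x)$ and nothing bigger; this is the place where the rank computations are really needed. A secondary but ubiquitous nuisance is that $\varphi_s$ does not commute with $(-)\otimes_{S(V^*)}\C$ (Remark~\ref{rem:commutativity of tensor and translation functor}, Theorem~\ref{thm:translation of projective modules}), so the middle terms of (1) and (2) are genuinely different — the honest and the $C$-twisted specializations of $\widetilde{\varphi_s}\widetilde{P}(x)$ — and one must run the argument twice, keeping careful track of which three-term Verma sequence of Proposition~\ref{prop:translation of M(e), etc...} feeds which part; finally, since $\tau_s$ is only right exact, injectivity of the leftmost map and exactness at the second spot cannot be formal and must come out of the Verma-level computation.
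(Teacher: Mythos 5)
Your reduction to the summands $P(x)$ and the easy case $sx<x$ are reasonable in outline (though note that the observation ``$\Phi$ kills $\alpha_s$'' only re-proves that the composite of the two inner maps is zero, which was already known; to get the split short exact sequence $0\to P(x)\to P(x)^{\oplus 2}\to P(x)\to 0$ you would still have to identify the two canonical maps after specialization, which requires an extra argument). The genuine gap is in the case $sx>x$. First, Proposition~\ref{prop:translation of M(e), etc...} controls the canonical map $\varphi_s M(w)\to M(w)$ only when $sw<w$; for $sv>v$ the facts you need --- that its image is exactly $M(sv)$ and its kernel is exactly the image of $M(v)\to\varphi_s M(v)$ --- are essentially the Verma-level case of the theorem and are not available at this stage (deducing them from $T_sM(v)=M(sv)$ or from $\dim\Hom(M(sv),M(v))=1$ would be circular, since Proposition~\ref{prop:natural transformation of T_s, Verma modules case} and Theorem~\ref{thm:Homomorphisms between Verma modules} are proved later \emph{from} this theorem). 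Second, and more fundamentally, the four-term complex is \emph{not} exact on the individual Verma layers with $sv<v$: there $\tau_sM(v)=0$, while by Proposition~\ref{prop:translation of M(e), etc...} the kernel of $\varphi_sM(v)\to M(v)$ is a copy of $M(sv)$, so exactness at the first two spots would force $M(v)\simeq M(sv)$, which is false (their simple tops differ). Such layers do occur in a Verma flag of $P(x)$ even when $sx>x$ (for instance $M(s)$ in a flag of $P(ts)$), so a layer-by-layer identification followed by a snake-lemma induction cannot go through as described: the exactness for $P(x)$ emerges from cancellations between the failures on different layers, governed by nonzero connecting homomorphisms that your argument does not control, and the proposal offers no mechanism for this.

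For comparison, the paper sidesteps all of this: it proves exactness of $0\to P(x)\to\varphi_sP(x)\to P(x)\to\tau_sP(x)\to 0$ by induction on $\ell(x)$, with the single base case $P(e)=M(e)$ (where $se>e$, handled exactly by Proposition~\ref{prop:translation of M(e), etc...} together with Lemma~\ref{lem:calculation of tau_s(M(e))}), and in the inductive step applies the exact functor $\theta_t$ for $xt<x$, using that $\theta_t$ commutes with $\varphi_s$ and with $\tau_s$ (Theorem~\ref{thm:theta and varphi commute, in O}, Proposition~\ref{prop:translation and tau commute}) and that $P(x)$ is a direct summand of $\theta_tP(xt)$. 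In particular no Verma flag, no extension of Lemma~\ref{lem:calculation of tau_s(M(e))} beyond $v=e$, and no analysis of $\varphi_sM(v)\to M(v)$ for general $v$ is needed; if you want to salvage your approach you would have to supply precisely those missing Verma-level computations without circularity, at which point the paper's translation-functor induction is both shorter and safer.
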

\begin{proof}
We only prove (1).
The same argument implies (2).

We prove the exactness of $0\to P(x)\to \varphi_s P(x)\to P(x)\to \tau_s P(x)\to 0$ by induction on $\ell(x)$.

First assume that $x = e$.
Then $P(e) = M(e)$.
By Proposition~\ref{prop:translation of M(e), etc...} (1) and (3), $0\to M(e)\to \varphi_s M(e)$ is exact and its cokernel is isomorphic to $M(s)$.
From Lemma~\ref{lem:calculation of tau_s(M(e))}, we have an exact sequence $0\to M(s)\to M(e)\to \tau_sM(e)\to 0$.
Hence $0\to M(e)\to \varphi_s M(e)\to M(e)\to \tau_sM(e)\to 0$ is exact.

Assume that $x > e$ and take a simple reflection $t$ such that $xt < x$.
Then by inductive hypothesis, the sequence $0\to P(xt)\to \varphi_s P(xt)\to P(xt)\to \tau_s P(xt) \to 0$ is exact.
By Theorem~\ref{thm:theta and varphi commute, in O} and Proposition~\ref{prop:translation and tau commute}, we get the exact sequence $0\to \theta_t P(xt)\to \varphi_s\theta_t P(xt)\to \theta_t P(xt)\to \tau_s \theta_tP(xt)\to 0$.
Since $P(x)$ is a direct summand of $\theta_t P(xt)$, we get the theorem.
\end{proof}

\begin{lem}\label{lem:expression of varphi_s}
For $M\in\mathcal{O}$, we have the following.
\begin{enumerate}
\item We have $\varphi_s(M)\simeq M\otimes_A\varphi_s(A)$. Hence $\varphi_s(A)$ is a flat left $A$-module
\item We have $\Hom_A(\widetilde{\varphi_s}(\widetilde{A})\otimes_{S(V^*)}\C,M) \simeq \varphi_s(M)$.
Hence $\widetilde{\varphi_s}(\widetilde{A})\otimes_{S(V^*)}\C$ is a projective right $A$-module.
\end{enumerate}
\end{lem}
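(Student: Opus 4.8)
The plan is to deduce both parts from the bimodule description $\widetilde{\varphi_s}(\widetilde{M})\simeq\widetilde{M}\otimes_{\widetilde{A}}\widetilde{A}'$ of Proposition~\ref{prop:property of translation functors in widetilde O}, together with the fact, established in the construction of $\widetilde{A}'$, that $\widetilde{A}'$ is a direct summand of $\widetilde{A}^{\oplus m}$ as a right $\widetilde{A}$-module. Throughout one uses that $V^*$ lies in the centre of $\widetilde{A}$ (since $\widetilde{A}$ is an $S(V^*)$-algebra) and that $A=\widetilde{A}/V^*\widetilde{A}$, $\mathcal{O}$ being the category of all right $A$-modules.

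For (1), I would first apply Proposition~\ref{prop:property of translation functors in widetilde O}(2),(3) with $\widetilde{M}=A$ to get $\varphi_s(A)=\widetilde{\varphi_s}(A)\simeq A\otimes_{\widetilde{A}}\widetilde{A}'$, the left $A$-module structures matching up since the left $\widetilde{A}$-action on $\widetilde{A}'$ passes to $A$ on this quotient. Then for $M\in\mathcal{O}$, writing $M=M\otimes_AA$ and using associativity of the tensor product,
\[
\varphi_s(M)=\widetilde{\varphi_s}(M)\simeq M\otimes_{\widetilde{A}}\widetilde{A}'\simeq (M\otimes_AA)\otimes_{\widetilde{A}}\widetilde{A}'\simeq M\otimes_A(A\otimes_{\widetilde{A}}\widetilde{A}')\simeq M\otimes_A\varphi_s(A),
\]
all isomorphisms being functorial in $M$. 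Since $\varphi_s$ is obtained by restricting the exact functor $\widetilde{\varphi_s}$ along the exact inclusion $\mathcal{O}\hookrightarrow\widetilde{\mathcal{O}}$, the functor $M\mapsto M\otimes_A\varphi_s(A)$ is exact on right $A$-modules, i.e.\ $\varphi_s(A)$ is flat as a left $A$-module. (Equivalently, $-\otimes_A\varphi_s(A)\simeq(-\otimes_AA)\otimes_{\widetilde{A}}\widetilde{A}'$ is exact because $\widetilde{A}'$ is a projective, hence flat, right $\widetilde{A}$-module.)

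For (2), the key observation is that for $M\in\mathcal{O}$ every right $\widetilde{A}$-homomorphism $f\colon\widetilde{A}'\to M$ annihilates $V^*\widetilde{A}'=\widetilde{A}'V^*$: if $p\in V^*$ then $f(xp)=f(x)p=0$ since $V^*$ kills $M$. Hence
\[
\varphi_s(M)=\widetilde{\varphi_s}(M)=\Hom_{\widetilde{A}}(\widetilde{A}',M)=\Hom_{\widetilde{A}}(\widetilde{A}'/V^*\widetilde{A}',M)=\Hom_{\widetilde{A}}\bigl(\widetilde{\varphi_s}(\widetilde{A})\otimes_{S(V^*)}\C,M\bigr),
\]
and as both $\widetilde{\varphi_s}(\widetilde{A})\otimes_{S(V^*)}\C$ and $M$ are right $A$-modules, $\Hom_{\widetilde{A}}=\Hom_A$ here; this is the asserted isomorphism. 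For projectivity, tensoring a splitting $\widetilde{A}'\oplus X\simeq\widetilde{A}^{\oplus m}$ of right $\widetilde{A}$-modules with $\C$ over $S(V^*)$ yields $(\widetilde{A}'\otimes_{S(V^*)}\C)\oplus(X\otimes_{S(V^*)}\C)\simeq A^{\oplus m}$ as right $A$-modules, so $\widetilde{\varphi_s}(\widetilde{A})\otimes_{S(V^*)}\C$ is a direct summand of a free right $A$-module, hence projective; alternatively, $\Hom_A(\widetilde{\varphi_s}(\widetilde{A})\otimes_{S(V^*)}\C,-)\simeq\varphi_s$ is exact on $\mathcal{O}$.

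The only real care needed is in keeping the various left/right $S(V^*)$- and $\widetilde{A}$-module structures straight: in particular, that $\widetilde{\varphi_s}(\widetilde{A})\otimes_{S(V^*)}\C$ here is the quotient by the image of $V^*$ acting through the \emph{right} $\widetilde{A}$-action (as in Theorem~\ref{thm:projective resolution of tau_s(A)}), which is precisely the structure making both the factorization $\Hom_{\widetilde{A}}(\widetilde{A}',M)=\Hom_{\widetilde{A}}(\widetilde{A}'/V^*\widetilde{A}',M)$ and the direct-summand argument valid, and that centrality of $V^*$ in $\widetilde{A}$ is what identifies $V^*\widetilde{A}'$ with $\widetilde{A}'V^*$. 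There is no estimate or genuinely hard step; the whole content is in these identifications.
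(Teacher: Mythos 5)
Part (1) of your proposal is fine and is essentially the paper's own argument: the paper simply applies Lemma~\ref{lem:realization of a right exact functor as a tensor product} to the exact functor $\varphi_s\colon\mathcal{O}\to\widetilde{\mathcal{O}}$, whereas you route through Proposition~\ref{prop:property of translation functors in widetilde O}(3); the content is the same, and the flatness conclusion follows as you say.

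In (2), however, there is a genuine error, and it sits exactly on the point you single out as the ``only real care needed''. You assert $V^*\widetilde{A}'=\widetilde{A}'V^*$ ``by centrality of $V^*$ in $\widetilde{A}$''. For the bimodule $\widetilde{A}'$ attached to $\varphi_s$ this is false: the left action of $p\in V^*$ on $\widetilde{A}'=\Hom_Z(B,\varphi_s^ZB)$ is postcomposition with $\varphi_s^Z(p)$, not with $p$, and the paper emphasizes that $\varphi_s^Z(p)\neq p$ in general --- this is precisely why $\widetilde{\varphi_s}$ does \emph{not} preserve $\mathcal{O}$ and why the modified structure $C(\cdot)$ appears in the proof of Proposition~\ref{prop:translation of M(e), etc...}. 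Indeed, if one had $V^*\widetilde{A}'\subseteq\widetilde{A}'V^*$, then for $M\in\mathcal{O}$ every $f\in\Hom_{\widetilde{A}}(\widetilde{A}',M)=\widetilde{\varphi_s}(M)$ would be killed by the right $V^*$-action, i.e.\ $\widetilde{\varphi_s}$ would map $\mathcal{O}$ into $\mathcal{O}$, contradicting the paper (compare also Lemma~\ref{lem:T_s is well-defined}, where $\widetilde{T_s}(p)=s(p)$). Accordingly your displayed chain should use $\widetilde{A}'/\widetilde{A}'V^*$, not $\widetilde{A}'/V^*\widetilde{A}'$: your computation $f(xp)=f(x)p=0$ only kills the image of the \emph{right} action, and $\widetilde{A}'/V^*\widetilde{A}'\simeq A\otimes_{\widetilde{A}}\widetilde{A}'\simeq\varphi_s(A)$ is a genuinely different object (not in $\mathcal{O}$ in general) from $\widetilde{\varphi_s}(\widetilde{A})\otimes_{S(V^*)}\C\simeq\widetilde{A}'/\widetilde{A}'V^*$. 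Once the false identification is dropped and the right-action quotient is used consistently (together with the check that $\widetilde{A}'\simeq\widetilde{\varphi_s}(\widetilde{A})$ from Proposition~\ref{prop:property of translation functors in widetilde O}(2) matches the right structures), your argument does yield (2); the paper's proof sidesteps the bimodule entirely via $\Hom_A(\widetilde{\varphi_s}(\widetilde{A})\otimes_{S(V^*)}\C,M)=\Hom_{\widetilde{A}}(\widetilde{\varphi_s}(\widetilde{A}),M)\simeq\Hom_{\widetilde{A}}(\widetilde{A},\widetilde{\varphi_s}M)=\varphi_s(M)$, using only the adjunction for $-\otimes_{S(V^*)}\C$ and the self-adjointness of $\widetilde{\varphi_s}$. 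Your projectivity argument (direct summand of $A^{\oplus m}$, or exactness of $\Hom_A(\cdot,-)\simeq\varphi_s$) is fine.
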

\begin{proof}
(1) follows from Lemma~\ref{lem:realization of a right exact functor as a tensor product}.
(2) is proved by the following equation:
\begin{multline*}
\Hom_A(\widetilde{\varphi_s}(\widetilde{A})\otimes_{S(V^*)}\C,M) = \Hom_{\widetilde{A}}(\widetilde{\varphi_s}(\widetilde{A}),M)\\ \simeq \Hom_{\widetilde{A}}(\widetilde{A},\widetilde{\varphi_s}M) \simeq \widetilde{\varphi_s}(M) = \varphi_s(M)
\end{multline*}
\end{proof}

Define a functor $\tau'_s\colon \mathcal{O}\to \mathcal{O}$ by $\tau'_s(M) = \Hom_A(\tau_s(A),M)$.
Since $\tau_s(M)\simeq M\otimes_A\tau_s(A)$, this functor is the right adjoint functor of $\tau_s$.
Let $L\tau_s$ be the left derived functor of $\tau_s$, $R\tau'_s$ the right derived functor of $\tau'_s$, $D^b(\mathcal{O})$ the bounded derived category of $\mathcal{O}$.

\begin{lem}
We have $R\tau'_s(A)[2] \simeq \tau_s(A)$ as $A$-bimodules.
\end{lem}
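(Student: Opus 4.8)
The plan is to compute $R\tau'_s(A)=R\Hom_A(\tau_s(A),A)$ by means of the projective resolution of $\tau_s(A)$ provided by Theorem~\ref{thm:projective resolution of tau_s(A)}. First I would note that the four-term exact sequence of Theorem~\ref{thm:projective resolution of tau_s(A)}~(2),
\[
	0\to A\to (\widetilde{\varphi_s}\widetilde{A})\otimes_{S(V^*)}\C\to A\to\tau_s A\to 0,
\]
is a projective resolution of $\tau_s(A)$ in the category of right $A$-modules: $A$ is a projective right $A$-module, and $(\widetilde{\varphi_s}\widetilde{A})\otimes_{S(V^*)}\C$ is a projective right $A$-module by Lemma~\ref{lem:expression of varphi_s}~(2). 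As all the terms and maps are maps of $\widetilde{A}$-$A$-bimodules, the computation below is equivariant for the bimodule structures (the left $\widetilde{A}$-action that will appear on the answer factors through $A$ since $\tau_s(A)\in\mathcal{O}$).

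Next I would apply $\Hom_A(-,A)$ to this resolution. Using $\Hom_A(A,A)=A$ and, by Lemma~\ref{lem:expression of varphi_s}~(2) applied to $M=A$, $\Hom_A((\widetilde{\varphi_s}\widetilde{A})\otimes_{S(V^*)}\C,A)\simeq\varphi_s(A)$, one obtains a complex of bimodules
\[
	A\to\varphi_s(A)\to A
\]
placed in cohomological degrees $0,1,2$, whose cohomology is $R\tau'_s(A)$. The essential step is to identify the two differentials of this complex with the canonical maps: that $\Hom_A(-,A)$ carries the canonical map $(\widetilde{\varphi_s}\widetilde{A})\otimes_{S(V^*)}\C\to A$ to the canonical map $A\to\varphi_s(A)$, and the canonical map $A\to(\widetilde{\varphi_s}\widetilde{A})\otimes_{S(V^*)}\C$ to the canonical map $\varphi_s(A)\to A$, is precisely what Proposition~\ref{prop:compatibility of natural transformations and adjointness in O} (together with the isomorphism $\widetilde{\varphi_s}(\widetilde{M})\simeq\widetilde{M}\otimes_{\widetilde{A}}\widetilde{\varphi_s}(\widetilde{A})$) provides. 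With this identification, the complex $A\to\varphi_s(A)\to A$ above is exactly the sequence of Theorem~\ref{thm:projective resolution of tau_s(A)}~(1) with its final term $\tau_s A$ omitted.

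Finally, since that sequence is exact, the complex $A\to\varphi_s(A)\to A$ has vanishing cohomology in degrees $0$ and $1$ and cohomology $\Cok(\varphi_s(A)\to A)\simeq\tau_s A$ in degree $2$. Hence $R^i\tau'_s(A)=0$ for $i\ne 2$ and $R^2\tau'_s(A)\simeq\tau_s(A)$ as $A$-bimodules, i.e.\ $R\tau'_s(A)[2]\simeq\tau_s(A)$.

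I expect the main obstacle to be the middle step: chasing $\Hom_A(-,A)$ of the canonical maps through the adjunction isomorphisms carefully enough to land exactly on the canonical natural transformations of Theorem~\ref{thm:projective resolution of tau_s(A)}~(1)---with the correct bimodule actions and without an extra scalar or twist. Once that is in place, everything else is formal.
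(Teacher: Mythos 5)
Your proposal is correct and follows essentially the same route as the paper: resolve $\tau_s(A)$ by the sequence of Theorem~\ref{thm:projective resolution of tau_s(A)}~(2), apply $\Hom_A(-,A)$, use Lemma~\ref{lem:expression of varphi_s} to identify the terms, and recognize the resulting complex as the one in Theorem~\ref{thm:projective resolution of tau_s(A)}~(1), so that its only cohomology is $\tau_s(A)$ in degree $2$. Your explicit appeal to Proposition~\ref{prop:compatibility of natural transformations and adjointness in O} for matching the differentials, and your handling of the bimodule structure (the left action factoring through $A$), just spell out what the paper does via the functor $k\colon D(\mathcal{O})\to D(\widetilde{\mathcal{O}})$.
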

\begin{proof}
We prove that $R^i\tau'_s(A) = 0$ for $i\ne 2$ and $R^2\tau'_s(A) = \tau_s(A)$.
Let $k\colon D(\mathcal{O})\to D(\widetilde{\mathcal{O}})$ be the functor induced from the inclusion functor $\mathcal{O}\to\widetilde{\mathcal{O}}$.
It is sufficient to consider $k(R\tau'_s(A))$ since $k$ is an exact functor.
We calculate $R\Hom_A(\tau_s(A),M)$ using the projective resolution in Theorem~\ref{thm:projective resolution of tau_s(A)} (2).
(The reason why we calculate $k(R\tau'_s(A))$ is that a projective resolution in Theorem~\ref{thm:projective resolution of tau_s(A)} is an exact sequence not of $A$-bimodules but of left $\widetilde{A}$- and right $A$-modules.)

From Theorem~\ref{thm:projective resolution of tau_s(A)} (2), $R\Hom_A(\tau_s(A),A)$ is given by the complex 
\[
	\cdots\to \Hom_A(A,A)\to \Hom_A(\widetilde{\varphi_s}(\widetilde{A})\otimes_{S(V^*)}\C,A)\to \Hom_A(A,A)\to\cdots.
\]
By Lemma~\ref{lem:expression of varphi_s}, this complex is 
\[
	\cdots\to A\to \varphi_s(A)\to A\to \cdots.
\]
From Theorem~\ref{thm:projective resolution of tau_s(A)} (1), this complex is equal to $\tau_s(A)[-2]$.
\end{proof}

\begin{thm}\label{thm:duality of Zuckerman functor}
Let $s$ be a simple reflection.
\begin{enumerate}
\item We have $L^i\tau_s(M) = 0$ for $i > 2$ and $M\in\mathcal{O}$.
Hence $L\tau_s$ gives a functor from $D^b(\mathcal{O})$ to $D^b(\mathcal{O})$.\label{thm:duality of Zuckerman functor:Vanishing of higher cohomology}
\item The functor $L\tau_s[-1]$ is self-adjoint.
More generally, for $M,N\in D^b(\mathcal{O})$, we have $R\Hom(L\tau_s M[-1],N) = R\Hom(M,L\tau_s N[-1])$.\label{thm:duality of Zuckerman functor:duality}
\end{enumerate}
\end{thm}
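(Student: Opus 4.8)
The plan is to deduce everything from the four-term resolution of $\tau_s(A)$ established in Theorem~\ref{thm:projective resolution of tau_s(A)} together with the computation $R\tau'_s(A)[2]\simeq \tau_s(A)$ from the preceding lemma. First I would record that $\tau_s(M)\simeq M\otimes_A\tau_s(A)$, so $L\tau_s$ is computed by tensoring over $A$ with a projective resolution of the left $A$-module $\tau_s(A)$. Theorem~\ref{thm:projective resolution of tau_s(A)}(1) provides such a resolution of length two, namely
\[
	0\to A\to \varphi_s A\to A\to \tau_s A\to 0,
\]
where the first three terms are flat (indeed $\varphi_s A$ is flat over $A$ by Lemma~\ref{lem:expression of varphi_s}(1), and $A$ is free). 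Hence for any $M\in\mathcal{O}$ the complex $L\tau_s(M)$ is represented by $M\to M\otimes_A\varphi_s A\to M\to 0$ sitting in homological degrees $2,1,0$; in particular $L^i\tau_s(M)=0$ for $i>2$, which is part~\eqref{thm:duality of Zuckerman functor:Vanishing of higher cohomology}. Since $\mathcal{O}$ has enough projectives and $\tau_s$ has finite projective dimension on $A$, $L\tau_s$ descends to a functor $D^b(\mathcal{O})\to D^b(\mathcal{O})$.

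For part~\eqref{thm:duality of Zuckerman functor:duality}, the key point is that the same bounded complex of $A$-bimodules computes both $L\tau_s$ and a shift of $R\tau'_s$. Concretely, write $K^\bullet$ for the two-term complex $[A\to \varphi_s A\to A]$ of $(A,\widetilde{A})$-bimodules (the projective resolution of $\tau_s A$ from Theorem~\ref{thm:projective resolution of tau_s(A)}(1), truncated), so that $L\tau_s(M)\simeq M\otimes^{\mathbf L}_A K^\bullet$. Dually, using Theorem~\ref{thm:projective resolution of tau_s(A)}(2) and Lemma~\ref{lem:expression of varphi_s}(2), $R\tau'_s(M)=R\Hom_A(\tau_s A,M)$ is computed by $\Hom_A$ of the projective resolution $[A\to \widetilde{\varphi_s}(\widetilde A)\otimes_{S(V^*)}\C\to A]$ into $M$, which by Lemma~\ref{lem:expression of varphi_s}(2) is again the complex $[A\to\varphi_s A\to A]$ — i.e.\ $R\tau'_s(M)\simeq R\Hom_A(K'^\bullet,M)$ with $K'^\bullet$ the analogous resolution, and the lemma says $K'^\bullet\simeq K^\bullet[-2]$ as complexes of $A$-bimodules up to quasi-isomorphism. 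Therefore $R\tau'_s\simeq L\tau_s[-2]$ as functors on $D^b(\mathcal{O})$. Combined with the adjunction $(\tau_s,\tau'_s)$, which derives to $(L\tau_s,R\tau'_s)$ on the bounded derived categories, this gives $R\Hom(L\tau_s M,N)\simeq R\Hom(M,R\tau'_s N)\simeq R\Hom(M,L\tau_s N[-2])$, and shifting $M$ by $[-1]$ and $N$ by $[1]$ turns this into $R\Hom(L\tau_s M[-1],N)\simeq R\Hom(M,L\tau_s N[-1])$, which is self-adjointness of $L\tau_s[-1]$.

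The main obstacle I anticipate is the bookkeeping needed to identify the derived adjunction correctly and to check that the quasi-isomorphism $R\tau'_s(A)[2]\simeq\tau_s(A)$ is compatible with the $A$-bimodule (or rather left $\widetilde A$-, right $A$-module) structures in the way required to globalize from $A$ to arbitrary $M,N$. One must be careful that Theorem~\ref{thm:projective resolution of tau_s(A)}'s two resolutions live in slightly different bimodule categories — one is left $A$-, right $\widetilde A$-, the other left $\widetilde A$-, right $A$- — so the identification $R\tau'_s\simeq L\tau_s[-2]$ should be carried out after applying the forgetful functor $k\colon D(\mathcal{O})\to D(\widetilde{\mathcal{O}})$ as in the preceding lemma, and then one argues that the resulting natural isomorphism of functors $D^b(\mathcal{O})\to D^b(\widetilde{\mathcal{O}})$ already determines the duality statement, since $R\Hom$ in $\mathcal{O}$ and in $\widetilde{\mathcal{O}}$ agree on objects of $\mathcal{O}$. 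The derived adjunction $(L\tau_s,R\tau'_s)$ itself is automatic because $\tau_s$ sends projectives to objects with vanishing higher $\mathrm{Ext}$ against $\tau'_s$-acyclics, or more simply because $\tau_s=-\otimes_A\tau_s(A)$ and $\tau'_s=\Hom_A(\tau_s(A),-)$ form a standard adjoint pair of the form $(-\otimes_A U,\Hom_A(U,-))$, which always derives to an adjunction on the bounded derived categories when $U$ has finite flat dimension.
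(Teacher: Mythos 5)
Part (1) of your plan and your identification of the computing complexes are correct and essentially identical to the paper's argument. The genuine gap is in the last step of part (2), the descent from $\widetilde{\mathcal{O}}$ to $\mathcal{O}$. As you yourself observe, the resolution in Theorem~\ref{thm:projective resolution of tau_s(A)}(1) is a complex of left $A$-, right $\widetilde{A}$-modules while the one in (2) is a complex of left $\widetilde{A}$-, right $A$-modules, so your identification of the two complexes $[M\to\varphi_sM\to M]$ is only an isomorphism of complexes of right $\widetilde{A}$-modules: it yields $k(R\tau'_sN)\simeq k(L\tau_sN[-2])$ in $D(\widetilde{\mathcal{O}})$, not the isomorphism $R\tau'_sN\simeq L\tau_sN[-2]$ in $D(\mathcal{O})$ that the adjunction $R\Hom_A(L\tau_sM,N)\simeq R\Hom_A(M,R\tau'_sN)$ requires. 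Your proposed remedy, that ``$R\Hom$ in $\mathcal{O}$ and in $\widetilde{\mathcal{O}}$ agree on objects of $\mathcal{O}$,'' is false: restriction along the quotient $\widetilde{A}\to A=\widetilde{A}\otimes_{S(V^*)}\C$ does not preserve $\Ext$ (already $\Ext^\bullet_{\C[x]}(\C,\C)\ne\Ext^\bullet_{\C}(\C,\C)$; here $\Ext_{\widetilde{A}}$ acquires extra Koszul-type classes from the $S(V^*)$-direction). Nor can you transport the isomorphism back along $k$: the functor $k$ is exact and faithful but not full on derived categories, and the complexes in question are not concentrated in a single cohomological degree, so $k(X)\simeq k(Y)$ does not give $X\simeq Y$ in $D(\mathcal{O})$.

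This is exactly the difficulty the paper's proof is designed to circumvent. It uses the complex computation only where one-degree concentration saves the day, namely to prove the bimodule statement $R\tau'_s(A)\simeq\tau_s(A)[-2]$ (a right $\widetilde{A}$-linear isomorphism between single $A$-modules is automatically right $A$-linear), and then constructs a comparison morphism \emph{inside} $D(\mathcal{O})$ by an adjunction trick: $(L\tau_s)^2(M)\simeq M\otimes_A^{L}L\tau_s(\tau_s(A))\simeq M\otimes_A^{L}L\tau_s(R\tau'_s(A))[2]\to M[2]$ via the counit, which by adjunction gives a natural map $L\tau_s(M)\to R\tau'_s(M)[2]$ in $D(\mathcal{O})$; being a morphism of $D(\mathcal{O})$, its invertibility may then be checked on cohomology (equivalently after applying $k$), first for $M=A$ using the lemma and then for general $M$ by a projective resolution. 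Your explicit complexes are precisely the input for that verification, but without some such construction of the comparison map within $D(\mathcal{O})$ your argument does not close.
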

\begin{proof}
Let $k\colon D(\mathcal{O})\to D(\widetilde{\mathcal{O}})$ be the functor induced from the inclusion functor $\mathcal{O}\to\widetilde{\mathcal{O}}$.
We prove that $H^i(k(L\tau_s(M))) = 0$ for $i > 2$.
By Theorem~\ref{thm:projective resolution of tau_s(A)} and isomorphism $\tau_s(M)\simeq \tau_s(A)\otimes_AM$, $k(L\tau_s(M))$ is given by the complex $(0\to M\to M\otimes_A\varphi_s(A)\to M\to 0)$.
From this description, we get (1).

By the definition, $\tau'_s$ is the right adjoint functor of $\tau_s$.
Hence we have an isomorphism $R\Hom(L\tau_s M,N)\simeq R\Hom(M,R\tau'_sN)$.
To prove (2), it is sufficient to prove that $R\tau'_s[2] = L\tau_s$.
Since $L\tau_s(M) \simeq M\otimes_A^L\tau_s(A)$, we have 
\begin{multline*}
(L\tau_s)^2(M) \simeq M\otimes_A^L\tau_s(A)\otimes_A^L\tau_s(A) \simeq M\otimes_L^AL\tau_s(\tau_s(A))\\\simeq M\otimes_A^LL\tau_s(R\tau'_s(A))[2]\to M\otimes_A^LA[2] = M[2],
\end{multline*}
here the last map is induced from the adjointness of $L\tau_s$ and $R\tau'_s$.
Hence using the adjointness again, we get the map $L\tau_s(M)\to R\tau'_s(M)[2]$.
If $A = M$, then this homomorphism is an isomorphism.
For a general $M$, taking a projective resolution, we can prove that the homomorphism is an isomorphism.
\end{proof}

\section{The functors $T_s$ and $C_s$}\label{sec:THe functors T_s anc C_s}
\subsection{Definition and adjointness}
Let $s$ be a simple reflection.
Define a functor $\widetilde{T_s}\colon\widetilde{\mathcal{O}}\to \widetilde{\mathcal{O}}$ by $\widetilde{T_s}(\widetilde{M}) = \Cok(\widetilde{M}\to \widetilde{\varphi_s}(\widetilde{M}))$.
The exactness of $\widetilde{\varphi_s}$ implies that $\widetilde{T_s}$ is right exact.
\begin{lem}\label{lem:T_s is well-defined}
For $p\in S(V^*)$ and $\widetilde{M}\in\widetilde{\mathcal{O}}$, we have $s(p) = \widetilde{T_s}(p)\colon \widetilde{T_s}(\widetilde{M})\to \widetilde{T_s}(\widetilde{M})$.
In particular, we have $\widetilde{T_s}(\mathcal{O})\subset \mathcal{O}$.

\end{lem}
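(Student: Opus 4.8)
The key is to understand the $S(V^*)$-module structures involved. For $\widetilde{M} \in \widetilde{\mathcal{O}}$ and $p \in S(V^*)$, multiplication by $p$ gives an endomorphism $p \colon \widetilde{M} \to \widetilde{M}$ of the right $\widetilde{A}$-module, hence by functoriality an endomorphism $\widetilde{\varphi_s}(p)$ of $\widetilde{\varphi_s}(\widetilde{M})$. There are two $S(V^*)$-actions on $\widetilde{\varphi_s}(\widetilde{M})$: the \emph{original} one, induced from the $\widetilde{A}$-module structure, and the one given by $p \mapsto \widetilde{\varphi_s}(p)$ (what the proof of Proposition~\ref{prop:translation of M(e), etc...} calls $C(\widetilde{\varphi_s}(\widetilde{M}))$). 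Since $\widetilde{\varphi_s}(\widetilde{M}) = \widetilde{M} \otimes_{\widetilde{A}} \widetilde{A}'$ with $\widetilde{A}' = \widetilde{\Phi}(\bigoplus_y \varphi_s^Z B(y))$, and the original $S(V^*)$-action comes from the \emph{right} $\widetilde{A}$-module structure on $\widetilde{A}'$, while $\widetilde{\varphi_s}(p)$ acts through the \emph{left} action (on the $\widetilde{M}$ factor), the discrepancy between the two actions is governed entirely by the algebra homomorphism $a_Z$ (equivalently $r_s$) defining $\varphi_s^Z$. Concretely, on the level of $Z$-modules, $\varphi_s^Z M = Z \otimes_{Z^{r_s}} M$, and for $p \in S(V^*) \subset Z$ the element $p \cdot z \otimes m$ equals $z \otimes (\text{something})$; one checks that the original action of $p$ corresponds to the action of $s(p)$ under the $C$-modification, because $r_s$ twists $S(V^*)$ by $s$ on the relevant component. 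So the first step is: record the identity, valid for any $Z$-module $M$, relating the original $S(V^*)$-action on $\varphi_s^Z M$ to the action $\varphi_s^Z(p)$ via $s$.

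**Carrying it out.** First I would pass to the $Z$-module side, where everything is explicit: on $\varphi_s^Z M = Z \otimes_{Z^{r_s}} M$, the original left $S(V^*)$-action is by $p \cdot (z \otimes m) = pz \otimes m$, whereas $\varphi_s^Z(p)$ acts by $z \otimes pm$. Using $Z = Z^{r_s} \oplus \alpha Z^{r_s}$ (from the proof of Proposition~\ref{prop:theta and varphi commute}, with $\alpha = \alpha_s$) and the fact that $r_s$ acts on the diagonal entries by the twist $(z_w) \mapsto (s(z_{sw}))$, I would verify that modulo the submodule generated by $\alpha$ — which is exactly what passing to the cokernel $\widetilde{T_s}(\widetilde M) = \Cok(\widetilde M \to \widetilde\varphi_s(\widetilde M))$ does, since the map $\widetilde M \to \widetilde\varphi_s(\widetilde M)$ is $m \mapsto 1 \otimes \alpha m + \alpha \otimes m$ (Remark~\ref{rem:explicit discription of the natural transformation of theta_s}, via $a_Z(c_s) = \alpha_s$) — the two actions of $p$ agree up to replacing $p$ by $s(p)$. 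Then I would transport this through $\widetilde\Phi$ using Proposition~\ref{prop:property of translation functors in widetilde O}(4), $\widetilde\Phi \circ \varphi_s^Z \simeq \widetilde\varphi_s \circ \widetilde\Phi$, and the fact that for $\widetilde M = \widetilde A$ the two $S(V^*)$-structures on $\widetilde{A}'$ were analyzed above; the general case follows by taking a free presentation $\widetilde N_1 \to \widetilde N_0 \to \widetilde M \to 0$ and using right-exactness of $\widetilde{T_s}$, exactly as in the proof of Proposition~\ref{prop:compatibility of natural transformations and adjointness in O}. The conclusion $s(p) = \widetilde{T_s}(p)$ on $\widetilde{T_s}(\widetilde M)$ is then immediate.

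**The consequence.** For the last sentence: if $\widetilde M \in \mathcal{O}$, i.e. $\widetilde M$ is annihilated by $V^*$, then $p \colon \widetilde M \to \widetilde M$ is zero for every $p \in V^*$, so $\widetilde{T_s}(p) = 0$ on $\widetilde{T_s}(\widetilde M)$; but by the identity just established $\widetilde{T_s}(p)$ equals the original action of $s^{-1}(p) = s(p) \in V^*$, hence every element of $V^*$ acts as zero on $\widetilde{T_s}(\widetilde M)$, i.e. $\widetilde{T_s}(\widetilde M) \in \mathcal{O}$.

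**Main obstacle.** The delicate point is the bookkeeping of \emph{which} $\widetilde{A}$-action (left vs.\ right on $\widetilde{A}'$) induces \emph{which} $S(V^*)$-action, and the sign/twist it carries — the paper has already flagged (in the discussion preceding Remark~\ref{rem:commutativity of tensor and translation functor}) that $\varphi_s^Z(p) \ne p$ in general, so the whole content of the lemma is pinning down that the discrepancy is precisely the twist by $s$ and not something worse. Everything else — reducing to $\widetilde M = \widetilde A$, then to a free presentation — is routine given right-exactness of $\widetilde{T_s}$.
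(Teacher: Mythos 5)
Your proposal is correct and follows essentially the same route as the paper's proof: reduce to $\widetilde{M}=\widetilde{A}$ by right exactness, use $S(V^*)=S(V^*)^s\oplus\alpha_s S(V^*)^s$ so that $s$-invariant elements (lying in $Z^{r_s}$) already act identically under $\widetilde{\varphi_s}$, and observe that $\alpha_s+\widetilde{\varphi_s}(\alpha_s)$ dies in the cokernel because of the explicit unit $m\mapsto\alpha_s\otimes m+1\otimes\alpha_s m$ of Remark~\ref{rem:explicit discription of the natural transformation of theta_s}. The only difference is where the bookkeeping happens: you carry out the $\alpha_s$-computation on $\varphi_s^ZM=Z\otimes_{Z^{r_s}}M$ and transport through $\widetilde{\Phi}$, which tacitly uses the compatibility of the unit with the isomorphism $\widetilde{\Phi}\circ\varphi_s^Z\simeq\widetilde{\varphi_s}\circ\widetilde{\Phi}$ (justified by Lemma~\ref{lem:compatibility of natural transformations and adjointness in M} and Proposition~\ref{prop:compatibility of natural transformations and adjointness in O}), whereas the paper performs the same computation directly in the model $\varphi_s(A)=\Hom_{\widetilde{A}}(\Hom_Z(\varphi_s^Z B,B),A)$.
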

\begin{proof}
Since $\widetilde{T_s}$ is right exact, we have $\widetilde{T_s}(M) \simeq M\otimes_A\widetilde{T_s}(A)$.
Hence we may assume that $M = A$.
Set $B = \bigoplus_{y\in W}B(y)$.
Then we have 
\begin{align*}
\varphi_s(A) & = \Hom_{\widetilde{A}}(\Phi(\varphi_s^Z(B)),A)\\
& = \Hom_{\widetilde{A}}(\Hom_Z(B,\varphi_s^Z(B)),A)\\
& \simeq \Hom_{\widetilde{A}}(\Hom_Z(\varphi_s^Z(B),B),A)\\
& = \Hom_{\widetilde{A}}(\Hom_Z(Z\otimes_{Z^{r_s}}B,B),A).
\end{align*}
Take $f\in \Hom_Z(Z\otimes_{Z^{r_s}}B,B)$, $z\in Z$ and $b\in B$.
Then $p\in S(V^*)$ can acts on $f$ by two ways.
The first way is induced from the right $\widetilde{A}$-module structure, namely, $f\mapsto ((z\otimes b)\mapsto f(z\otimes pb))$, this induces a homomorphism $p\colon \varphi_s(A)\to \varphi_s(A)$.
The second way is induced from the left $\widetilde{A}$-module structure, namely, $f\mapsto ((z\otimes b)\mapsto pf(z\otimes b))$, this induces a homomorphism $\varphi_s(p)\colon \varphi_s(A)\to \varphi_s(A)$.
We denote the first action by $f\mapsto pf$ and section action by $f\mapsto p\cdot f$.
For $p\in S(V^*)\subset Z$, we have $r_s(p) = s(p)$.
Hence if $p\in S(V^*)^s$, then we have $p\in Z^{r_s}$.
So, in this case, we get $pf = p\cdot f$.
Hence $p = \widetilde{\varphi_s}(p)$.
This implies $p = \widetilde{T_s}(p)$.

Set $\alpha = \alpha_s$.
Since $S(V^*) = S(V^*)^s\oplus \alpha S(V^*)^s$, it is sufficient to prove that $T_s(\alpha) = -\alpha$.
The natural transformation $A\to \varphi_s(A)$ is induced from $B\to Z\otimes_{Z^{r_s}}B$ and it is given by $b\mapsto (\alpha\otimes b + 1\otimes \alpha b)$ (Remark~\ref{rem:explicit discription of the natural transformation of theta_s}).
Hence $A \simeq \Hom_{\widetilde{A}}(\widetilde{A},A) = \Hom_{\widetilde{A}}(\Hom_Z(B,B),A)\to \varphi_s(A) = \Hom_{\widetilde{A}}(\Hom_Z(Z\otimes_{Z^{r_s}}B,B),A)$ is given by 
\[
	a\mapsto (f\mapsto a(b\mapsto f(\alpha \otimes b + 1\otimes \alpha b))),
\]
where $a\in A \simeq \Hom_{\widetilde{A}}(\Hom_Z(B,B),A)$, $f\in \Hom(Z\otimes_{Z^{r_s}}B,B)$ and $b\in B$.

Take $a'\in \Hom_{\widetilde{A}}(\Hom_Z(Z\otimes_{Z^{r_s}}B,B),A)$ and define $a\in \Hom_{\widetilde{A}}(\Hom_Z(B,B),A)$ by
\[
	\Hom_Z(B,B)\ni g\mapsto (a'(z\otimes b\mapsto g(zb))).
\]
Since $B\to Z\otimes_{Z^{r_s}}B$; $b\mapsto (\alpha\otimes b + 1\otimes \alpha b)$ is a $Z$-module homomorphism, we have $(\alpha\otimes zb + 1\otimes \alpha zb) = (z\alpha\otimes b + z\otimes \alpha b)$.
Hence the image of $a$ in $\Hom_{\widetilde{A}}(\Hom_Z(Z\otimes_{Z^{r_s}}B,B),A)$ is 
\begin{multline*}
f\mapsto a'(z\otimes b\mapsto f(\alpha\otimes zb + 1\otimes \alpha zb) = f(\alpha z\otimes b + z\otimes \alpha b))\\
= a'(\alpha f + \alpha\cdot f)
= (\alpha a' + \widetilde{\varphi_s}(\alpha)a')(f).
\end{multline*}
Therefore, we get $\alpha + \widetilde{T_s}(\alpha) = 0$.
\end{proof}

We denote the restriction of $\widetilde{T_s}$ on $\mathcal{O}$ by $T_s$.
This gives a functor $T_s\colon \mathcal{O}\to \mathcal{O}$.
We define the functor $\widetilde{C_s}\colon\widetilde{\mathcal{O}}\to\widetilde{\mathcal{O}}$ by $\widetilde{C_s}(\widetilde{M}) = \Ker(\widetilde{\varphi_s}(\widetilde{M})\to \widetilde{M})$.
\begin{prop}\label{prop:adjointness of T_s and C_s, tilde}
The functor $\widetilde{C_s}$ is the right adjoint functor of $\widetilde{T_s}$.
\end{prop}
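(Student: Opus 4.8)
The plan is to exhibit the adjunction explicitly by passing everything through the self-adjoint functor $\widetilde{\varphi_s}$. Recall that $\widetilde{T_s}(\widetilde M) = \Cok(\widetilde M \to \widetilde{\varphi_s}\widetilde M)$ and, by the definition just given, $\widetilde{C_s}(\widetilde N) = \Ker(\widetilde{\varphi_s}\widetilde N \to \widetilde N)$, where the two natural transformations $\sigma'\colon \Id \to \widetilde{\varphi_s}$ and $\sigma\colon \widetilde{\varphi_s}\to \Id$ are the ones coming from Proposition~\ref{prop:property of translation functors in widetilde O} and the explicit maps in Remark~\ref{rem:explicit discription of the natural transformation of theta_s}. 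First I would fix $\widetilde M,\widetilde N\in\widetilde{\mathcal{O}}$ and try to build a natural isomorphism $\Hom_{\widetilde{\mathcal{O}}}(\widetilde{T_s}\widetilde M,\widetilde N)\simeq \Hom_{\widetilde{\mathcal{O}}}(\widetilde M,\widetilde{C_s}\widetilde N)$. By the universal property of cokernel, the left side is the set of $g\colon \widetilde{\varphi_s}\widetilde M\to \widetilde N$ with $g\circ\sigma'_{\widetilde M}=0$; by the universal property of kernel, the right side is the set of $f\colon \widetilde M\to\widetilde{\varphi_s}\widetilde N$ with $\sigma_{\widetilde N}\circ f = 0$. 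So it suffices to produce a bijection between these two subsets.

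The key step is to use the self-adjointness of $\widetilde{\varphi_s}$ to identify $\Hom(\widetilde{\varphi_s}\widetilde M,\widetilde N)\simeq \Hom(\widetilde M,\widetilde{\varphi_s}\widetilde N)$, and then to check that this isomorphism carries the condition $g\circ\sigma'_{\widetilde M}=0$ to the condition $\sigma_{\widetilde N}\circ f=0$. This compatibility is precisely the content of (the $\widetilde{\varphi_s}$-analogue of) Proposition~\ref{prop:compatibility of natural transformations and adjointness in O}: under the adjunction isomorphism, precomposition with $\sigma'_{\widetilde M}\colon \widetilde M\to\widetilde{\varphi_s}\widetilde M$ on the $\Hom(\widetilde{\varphi_s}\widetilde M,\widetilde N)$ side corresponds to postcomposition with $\sigma_{\widetilde N}\colon \widetilde{\varphi_s}\widetilde N\to \widetilde N$ on the $\Hom(\widetilde M,\widetilde{\varphi_s}\widetilde N)$ side — this is exactly the commutativity of the square there, once one notes (Lemma~\ref{lem:compatibility of natural transformations and adjointness in M} and its transport via $a_M$) that $\sigma$ and $\sigma'$ are the unit/counit-type maps of the self-adjoint pair. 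Hence $g\mapsto f$ restricts to a bijection between $\Ker(-\circ\sigma'_{\widetilde M})$ and $\Ker(\sigma_{\widetilde N}\circ -)$, and naturality in both variables is inherited from naturality of the $\widetilde{\varphi_s}$-adjunction.

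The main obstacle I anticipate is bookkeeping the two competing $S(V^*)$-structures on $\widetilde{\varphi_s}(\widetilde M)$ (the issue already flagged before Remark~\ref{rem:commutativity of tensor and translation functor}), and making sure that the natural transformations $\sigma,\sigma'$ used to define $\widetilde{T_s}$ and $\widetilde{C_s}$ are literally the ones appearing in Proposition~\ref{prop:compatibility of natural transformations and adjointness in O}, rather than some twist of them; this is where one must invoke the explicit description $\sigma'\colon b\mapsto c_s\otimes b + 1\otimes c_s b$ and $\sigma\colon z\otimes b\mapsto zb$ from Remark~\ref{rem:explicit discription of the natural transformation of theta_s} (transported through $a_M$), together with the identification $\widetilde{\varphi_s}(\widetilde M)\simeq \widetilde M\otimes_{\widetilde A}\widetilde{\varphi_s}(\widetilde A)$, to reduce the verification to the case $\widetilde M=\widetilde A$. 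Once that reduction is in place, the compatibility is a finite computation analogous to the proof of Lemma~\ref{lem:T_s is well-defined}, and the adjunction follows formally. I would also remark at the end that, since $\widetilde{C_s}$ is left exact and $\widetilde{T_s}$ right exact, the adjunction is consistent with exactness, and that restricting to $\mathcal{O}$ (using $\widetilde{T_s}(\mathcal{O})\subset\mathcal{O}$ from Lemma~\ref{lem:T_s is well-defined}) gives that $\widetilde{C_s}$ also preserves $\mathcal{O}$, so that $C_s$ is right adjoint to $T_s$ on $\mathcal{O}$ as well.
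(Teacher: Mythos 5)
Your proposal is correct and follows essentially the same route as the paper: the paper also identifies $\Hom(\widetilde{T_s}\widetilde{M},\widetilde{N})$ and $\Hom(\widetilde{M},\widetilde{C_s}\widetilde{N})$ as the kernels of the maps induced by $\sigma'_{\widetilde{M}}$ and $\sigma_{\widetilde{N}}$ (left exactness of $\Hom$), and then invokes the commutative diagram of Proposition~\ref{prop:compatibility of natural transformations and adjointness in O} for $\widetilde{F}=\widetilde{\varphi_s}$ to match these kernels under the self-adjointness isomorphism. The extra verifications you anticipate (tracking the two $S(V^*)$-structures and rechecking the explicit form of $\sigma,\sigma'$) are not needed, since that proposition is already stated and proved for $\widetilde{\varphi_s}$ with exactly these canonical natural transformations.
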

\begin{proof}
From Proposition~\ref{prop:compatibility of natural transformations and adjointness in O}, we get the following commutative diagram:
\[
\xymatrix{
0\ar[r] & \Hom(\widetilde{M},\widetilde{C_s}\widetilde{N})\ar[r] & \Hom(\widetilde{M},\widetilde{\varphi_s}\widetilde{N})\ar[r]\ar@{-}[d]^{\wr} & \Hom(\widetilde{M},\widetilde{N})\ar@{=}[d]\\
0\ar[r] & \Hom(\widetilde{T_s}\widetilde{M},\widetilde{N})\ar[r] & \Hom(\widetilde{\varphi_s}\widetilde{M},\widetilde{N})\ar[r] & \Hom(\widetilde{M},\widetilde{N}).
}
\]
We get the Proposition.
\end{proof}
In particular, for $M\in \mathcal{O}$, we have
\[
\widetilde{C_s}(M) \simeq \Hom_{\widetilde{A}}(\widetilde{A},\widetilde{C_s}(M)) \simeq \Hom_{\widetilde{A}}(T_s(\widetilde{A}),M)
\simeq \Hom_{\widetilde{A}}(T_s(\widetilde{A})/V^*T_s(\widetilde{A}),M).
\]
From Lemma~\ref{lem:T_s is well-defined}, we have $T_s(\widetilde{A})/V^*T_s(\widetilde{A})\simeq T_s(\widetilde{A}/V^*\widetilde{A}) = T_s(A)$.
Hence we get $\widetilde{C_s}(M) = \Hom_{\widetilde{A}}(T_s(A),M)$.
From this formula, we get $\widetilde{C_s}(M)\in\mathcal{O}$.
Hence $\widetilde{C_s}$ defines the functor $C_s\colon\mathcal{O}\to\mathcal{O}$.
From Proposition~\ref{prop:adjointness of T_s and C_s, tilde}, we get the following theorem.
\begin{thm}\label{thm:adjointness of T_s and C_s}
The functor $C_s$ is the right adjoint functor of $T_s$.
\end{thm}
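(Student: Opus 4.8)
The plan is to obtain the adjunction on $\mathcal{O}$ by restricting the adjunction on $\widetilde{\mathcal{O}}$ furnished by Proposition~\ref{prop:adjointness of T_s and C_s, tilde}, using that $\mathcal{O}$ is a full subcategory of $\widetilde{\mathcal{O}}$ and that $T_s$ and $C_s$ are precisely the restrictions of $\widetilde{T_s}$ and $\widetilde{C_s}$.

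First I would assemble the three facts already in hand. Since $A = \widetilde{A}/V^*\widetilde{A}$, restriction of scalars identifies $\mathcal{O}$ with a full subcategory of $\widetilde{\mathcal{O}}$, so $\Hom_A(X,Y) = \Hom_{\widetilde{A}}(X,Y)$ whenever $X,Y\in\mathcal{O}$. By Lemma~\ref{lem:T_s is well-defined}, $\widetilde{T_s}$ maps $\mathcal{O}$ into $\mathcal{O}$ and $T_s$ is by definition the resulting restriction, so $T_sM = \widetilde{T_s}M$ for $M\in\mathcal{O}$. Finally, the computation preceding the statement shows $\widetilde{C_s}(N) = \Hom_{\widetilde{A}}(T_s(A),N)$ lies in $\mathcal{O}$ for $N\in\mathcal{O}$, and $C_s$ is the induced functor, so $C_sN = \widetilde{C_s}N$ for $N\in\mathcal{O}$.

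Then, for $M,N\in\mathcal{O}$, I would simply chain these identities with the adjunction isomorphism of Proposition~\ref{prop:adjointness of T_s and C_s, tilde}:
\[
\Hom_A(T_sM,N) = \Hom_{\widetilde{A}}(\widetilde{T_s}M,N) \simeq \Hom_{\widetilde{A}}(M,\widetilde{C_s}N) = \Hom_{\widetilde{A}}(M,C_sN) = \Hom_A(M,C_sN).
\]
It then remains only to observe that the middle isomorphism, being natural in both arguments over all of $\widetilde{\mathcal{O}}$, stays natural upon restriction to the full subcategory $\mathcal{O}$; equivalently, the unit and counit of $\widetilde{T_s}\dashv\widetilde{C_s}$ restrict to a unit and counit exhibiting $T_s\dashv C_s$.

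I do not expect a genuine obstacle here: the substantive work was already carried out in Lemma~\ref{lem:T_s is well-defined} (pinning down the two $S(V^*)$-actions so that $\widetilde{T_s}$ preserves $\mathcal{O}$) and in Proposition~\ref{prop:adjointness of T_s and C_s, tilde}. The one point deserving a sentence of care is the compatibility of the relevant natural transformations with the inclusion $\mathcal{O}\hookrightarrow\widetilde{\mathcal{O}}$, which is immediate from fullness.
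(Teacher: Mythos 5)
Your proposal is correct and follows the paper's own route: the paper likewise defines $C_s$ as the restriction of $\widetilde{C_s}$ (after checking, via $\widetilde{C_s}(M)=\Hom_{\widetilde{A}}(T_s(A),M)$ and Lemma~\ref{lem:T_s is well-defined}, that $\widetilde{C_s}$ preserves $\mathcal{O}$) and then deduces the adjunction on $\mathcal{O}$ directly from Proposition~\ref{prop:adjointness of T_s and C_s, tilde}, using that $\mathcal{O}$ is a full subcategory of $\widetilde{\mathcal{O}}$ stable under both functors. No gap here.
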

Finally, we prove the following lemma.
This lemma assures the existence of a natural translation $T_s\to \Id$ and $\Id\to C_s$.
\begin{lem}
For $M\in\mathcal{O}$, the composition of the maps $M\to \varphi_s(M)\to M$ is zero.
\end{lem}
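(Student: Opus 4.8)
The plan is to reduce to the case $M=A$ and then quote Proposition~\ref{prop:translation of M(e), etc...}(4). Recall that $\varphi_s$ is exact and that, by Lemma~\ref{lem:expression of varphi_s}(1), there is a functorial isomorphism $\varphi_s(M)\simeq M\otimes_A\varphi_s(A)$; moreover, by Proposition~\ref{prop:compatibility of natural transformations and adjointness in O} together with the construction of the natural transformations $\Id\to\varphi_s$ and $\varphi_s\to\Id$, the two maps $M\to\varphi_s(M)$ and $\varphi_s(M)\to M$ are obtained from the bimodule maps $A\to\varphi_s(A)$ and $\varphi_s(A)\to A$ by applying $M\otimes_A(-)$. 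Hence the composition $M\to\varphi_s(M)\to M$ equals $\id_M\otimes_A(A\to\varphi_s(A)\to A)$, and it suffices to show that the composition $A\to\varphi_s(A)\to A$ is the zero map.

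Now I would observe that $A$ is itself of the form $\Phi(N)$: indeed $\widetilde{A}=\End_Z(\bigoplus_{x\in W}B(x))=\widetilde{\Phi}(\bigoplus_{x\in W}B(x))$, so $A=\widetilde{A}\otimes_{S(V^*)}\C=\Phi(\bigoplus_{x\in W}B(x))$. Thus Proposition~\ref{prop:translation of M(e), etc...}(4), applied to the $Z$-module $\bigoplus_{x\in W}B(x)$, states precisely that the composition $A\to\varphi_s(A)\to A$ vanishes, which finishes the proof. Under the hood this is the identity ``the composition $N\to Z\otimes_{Z^{r_s}}N\to N$ is multiplication by $2\alpha_s$'' established in the proof of that proposition, combined with the fact that $\alpha_s\in V^*$ acts as zero on every object of $\mathcal{O}$.

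The one place that needs care — the ``main obstacle'', though it is really only a bookkeeping point — is checking that the maps $M\to\varphi_s(M)\to M$ appearing in the statement are indeed the ones induced, through $\widetilde{\Phi}$ and the isomorphism $\varphi_s(M)\simeq M\otimes_A\varphi_s(A)$, from the natural transformations $\Id\langle 1\rangle\to\varphi_s^Z$ and $\varphi_s^Z\to\Id\langle -1\rangle$ on $\mathcal{M}$. This compatibility is exactly what Proposition~\ref{prop:compatibility of natural transformations and adjointness in O} records (the grading shifts cancel in the composition, so they play no role), so no additional argument is needed; the remaining steps — the reduction to $M=A$ and the evaluation of the composition on $A$ — are then purely formal.
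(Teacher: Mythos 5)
Your proof is correct and follows essentially the same route as the paper: reduce to $M=A$ via $\varphi_s(M)\simeq M\otimes_A\varphi_s(A)$ and then use the vanishing of the composition $\Phi(N)\to\varphi_s\Phi(N)\to\Phi(N)$, i.e.\ the ``multiplication by $2\alpha_s$ becomes zero after $\otimes_{S(V^*)}\C$'' computation of Proposition~\ref{prop:translation of M(e), etc...}(4). (The paper's own proof cites Lemma~\ref{lem:calculation of tau_s(M(e))} at this point, which appears to be a mis-citation for that proposition, so your reference is actually the cleaner one.)
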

\begin{proof}
From Proposition~\ref{prop:property of translation functors in widetilde O}, $\varphi_s(M) = M\otimes_A \varphi_s(A)$.
Hence we may assume that $M = A = \Phi(\bigoplus_{x\in W}B(x))$.
By Lemma~\ref{lem:calculation of tau_s(M(e))}, we get the lemma.
\end{proof}

\subsection{Homological properties}
\begin{prop}\label{prop:LT_s RC_s and Ltau_s}
Let $s$ be a simple reflection.
\begin{enumerate}
\item We have $L^iT_s = 0$ for $i > 1$.
Hence $LT_s$ gives a functor $D^b(\mathcal{O})\to D^b(\mathcal{O})$.\label{prop:LT_s RC_s and Ltau_s:finiteness of LT_s}
\item We have a distinguished triangle $LT_s\to \id \to L\tau_s\xrightarrow{+1}$.\label{prop:LT_s RC_s and Ltau_s:distinguished triangle of LT_s and Ltau_s}
\item We have $R^iC_s = 0$ for $i > 1$.
Hence $RC_s$ gives a functor $D^b(\mathcal{O})\to D^b(\mathcal{O})$.\label{prop:LT_s RC_s and Ltau_s:finiteness of RC_s}
\item We have a distinguished triangle $L\tau_s[-2]\to \id\to RC_s\xrightarrow{+1}$.\label{prop:LT_s RC_s and Ltau_s:distinguished triangle of RC_s and Ltau_s}
\item We have $L^1T_sM = \Ker(M\to \varphi_sM)$ and $R^1C_sM = \Cok(\varphi_sM \to M)$.\label{prop:LT_s RC_s and Ltau_s:cohomology of T_s and C_s}
\end{enumerate}
\end{prop}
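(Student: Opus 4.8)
The plan is to deduce all five assertions from the two four-term exact sequences of Theorem~\ref{thm:projective resolution of tau_s(A)} together with the identifications of Lemma~\ref{lem:expression of varphi_s}. Recall that $T_s$ is right exact and $C_s$ is left exact, with $T_s(M)\simeq M\otimes_A T_s(A)$ and $C_s(M)\simeq\Hom_A(T_s(A),M)$ for $M\in\mathcal{O}$ (Lemma~\ref{lem:realization of a right exact functor as a tensor product} and the discussion of $\widetilde{C_s}$), so that $LT_s(M)\simeq M\otimes^L_A T_s(A)$, $RC_s(M)\simeq R\Hom_A(T_s(A),M)$ and likewise $L\tau_s(M)\simeq M\otimes^L_A\tau_s(A)$. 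Splitting the sequence of Theorem~\ref{thm:projective resolution of tau_s(A)}(1) in the middle yields a short exact sequence $0\to T_s(A)\to A\to\tau_s(A)\to 0$ in which $T_s(A)=\Cok(A\to\varphi_s A)$ is identified with the submodule $\Ker(A\to\tau_s(A))\subset A$; splitting the sequence of Theorem~\ref{thm:projective resolution of tau_s(A)}(2) in the middle identifies this same kernel with $\Cok(A\to(\widetilde{\varphi_s}\widetilde{A})\otimes_{S(V^*)}\C)$. Since $\varphi_s A$ is flat as a left $A$-module (Lemma~\ref{lem:expression of varphi_s}(1)) and $A$ is free, $T_s(A)$ thus has flat dimension $\le 1$ as a left $A$-module; since $(\widetilde{\varphi_s}\widetilde{A})\otimes_{S(V^*)}\C$ is projective as a right $A$-module (Lemma~\ref{lem:expression of varphi_s}(2)), $T_s(A)$ has projective dimension $\le 1$ as a right $A$-module.

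From these dimension bounds, $L^iT_s(M)=\operatorname{Tor}^A_i(M,T_s(A))=0$ and $R^iC_s(M)=\Ext^i_A(T_s(A),M)=0$ for $i>1$, which is (1) and (3), so that $LT_s$ and $RC_s$ preserve $D^b(\mathcal{O})$. Moreover, applying $M\otimes^L_A(-)$ to the resolution $0\to A\to\varphi_s A\to T_s(A)\to 0$ and using $M\otimes_A\varphi_s A\simeq\varphi_s M$ (Lemma~\ref{lem:expression of varphi_s}(1)) represents $LT_s(M)$ by the two-term complex $[M\to\varphi_s M]$ placed in degrees $-1,0$, whence $L^1T_s(M)=\Ker(M\to\varphi_s M)$; dually, applying $\Hom_A(-,M)$ to $0\to A\to(\widetilde{\varphi_s}\widetilde{A})\otimes_{S(V^*)}\C\to T_s(A)\to 0$ and using $\Hom_A((\widetilde{\varphi_s}\widetilde{A})\otimes_{S(V^*)}\C,M)\simeq\varphi_s M$ (Lemma~\ref{lem:expression of varphi_s}(2)) represents $RC_s(M)$ by $[\varphi_s M\to M]$ in degrees $0,1$, whence $R^1C_s(M)=\Cok(\varphi_s M\to M)$. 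This is (5). (As in Section~\ref{sec:Zuckerman functor} one reads off the cohomology after applying the exact functor $k\colon D(\mathcal{O})\to D(\widetilde{\mathcal{O}})$, noting that $\Ker(M\to\varphi_s M)$ and $\Cok(\varphi_s M\to M)$, being a sub- and a quotient module of $M$, lie in $\mathcal{O}$.)

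For (2), apply the triangulated functor $M\otimes^L_A(-)$ to the distinguished triangle $T_s(A)\to A\to\tau_s(A)\xrightarrow{+1}$ attached to the short exact sequence of left $A$-modules above: since $M\otimes^L_A A\simeq M$, $M\otimes^L_A T_s(A)\simeq LT_s(M)$ and $M\otimes^L_A\tau_s(A)\simeq L\tau_s(M)$, this is precisely a distinguished triangle $LT_s\to\id\to L\tau_s\xrightarrow{+1}$, and one checks that the displayed morphisms are the canonical natural transformations. For (4), apply the contravariant functor $R\Hom_A(-,M)$ to the same triangle, now viewed in the category of right $A$-modules: using $R\Hom_A(A,M)\simeq M$, $R\Hom_A(T_s(A),M)\simeq RC_s(M)$ and $R\Hom_A(\tau_s(A),M)\simeq R\tau'_s(M)$ (the definition of $\tau'_s$) one obtains a distinguished triangle $R\tau'_s\to\id\to RC_s\xrightarrow{+1}$; substituting the isomorphism $R\tau'_s\simeq L\tau_s[-2]$ established in the proof of Theorem~\ref{thm:duality of Zuckerman functor}(2) gives $L\tau_s[-2]\to\id\to RC_s\xrightarrow{+1}$.

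The main point requiring care is on which side the intervening modules are flat or projective: $\varphi_s A$ is only known to be flat as a \emph{left} $A$-module, while the resolution needed to compute $R\Hom_A(-,M)$ must consist of \emph{right} $A$-projectives, which forces the passage to the a priori different module $(\widetilde{\varphi_s}\widetilde{A})\otimes_{S(V^*)}\C$ (cf.~Remark~\ref{rem:commutativity of tensor and translation functor}); the bridge between the two four-term sequences is the identification of $\Cok(A\to\varphi_s A)$ with $\Cok(A\to(\widetilde{\varphi_s}\widetilde{A})\otimes_{S(V^*)}\C)$ through their common image $\Ker(A\to\tau_s(A))\subset A$. One must also verify that the connecting maps of the two triangles are the canonical ones, which reduces to the compatibility of the splittings with the natural transformations $T_s\to\id$, $\id\to\tau_s$, $\id\to C_s$ introduced in Sections~\ref{sec:Zuckerman functor} and~\ref{sec:THe functors T_s anc C_s}.
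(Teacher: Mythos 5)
Your argument is correct, and for parts (2) and (4) it coincides with the paper's: both rest on the short exact sequence $0\to T_s(A)\to A\to\tau_s(A)\to 0$ extracted from Theorem~\ref{thm:projective resolution of tau_s(A)}, the realizations $LT_s(M)\simeq M\otimes_A^{L}T_s(A)$, $L\tau_s(M)\simeq M\otimes_A^{L}\tau_s(A)$, $RC_s(M)\simeq R\Hom_A(T_s(A),M)$, and, for (4), the identification $R\Hom(\tau_s(A),M)\simeq L\tau_s(M)[-2]$ coming from Theorem~\ref{thm:duality of Zuckerman functor}. Where you genuinely diverge is in (1), (3) and (5): the paper reads these off from the two triangles together with $L^i\tau_s=0$ for $i>2$, via the identifications $L^1T_s\simeq L^2\tau_s$ and $R^1C_s\simeq\tau_s$, whereas you prove them directly by splitting the two four-term sequences of Theorem~\ref{thm:projective resolution of tau_s(A)} into the length-one resolutions $0\to A\to\varphi_s(A)\to T_s(A)\to 0$ (flat as left $A$-modules, by Lemma~\ref{lem:expression of varphi_s}) and $0\to A\to\widetilde{\varphi_s}(\widetilde{A})\otimes_{S(V^*)}\C\to T_s(A)\to 0$ (projective as right $A$-modules), glued along the common submodule $\Ker(A\to\tau_s(A))\subset A$; this correctly bounds the flat dimension of $T_s(A)$ on the left and its projective dimension on the right by $1$, and exhibits $LT_s(M)$ and $RC_s(M)$ as the two-term complexes $[M\to\varphi_sM]$ and $[\varphi_sM\to M]$. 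What your route buys is that (1), (3) and (5) become self-contained homological statements about $T_s(A)$, independent of Theorem~\ref{thm:duality of Zuckerman functor}(\ref{thm:duality of Zuckerman functor:Vanishing of higher cohomology}) (only part (4) still needs the duality theorem), at the cost of having to set up both resolutions and check their compatibility; the paper's route is shorter once the Zuckerman machinery is available. In both treatments the only delicate point, which you flag explicitly, is that the differentials obtained after applying $M\otimes_A(-)$ or $\Hom_A(-,M)$ are the canonical natural transformations, and this is exactly what Proposition~\ref{prop:compatibility of natural transformations and adjointness in O} provides.
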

\begin{proof}
(\ref{prop:LT_s RC_s and Ltau_s:finiteness of LT_s}) follows from (\ref{prop:LT_s RC_s and Ltau_s:distinguished triangle of LT_s and Ltau_s}) and Theorem~\ref{thm:duality of Zuckerman functor} (\ref{thm:duality of Zuckerman functor:Vanishing of higher cohomology}).
By Theorem~\ref{thm:projective resolution of tau_s(A)}, we have $0\to T_s(A)\to A\to \tau_s(A)\to 0$.
Since $T_s$ and $\tau_s$ are right exact, we have $T_s(M) = M\otimes_A T_s(A)$ and $\tau_s(M) = M\otimes_A\tau_s(A)$.
Hence (\ref{prop:LT_s RC_s and Ltau_s:distinguished triangle of LT_s and Ltau_s}) follows.

(\ref{prop:LT_s RC_s and Ltau_s:finiteness of RC_s}) follows from (\ref{prop:LT_s RC_s and Ltau_s:distinguished triangle of RC_s and Ltau_s}) and Theorem~\ref{thm:duality of Zuckerman functor} (\ref{thm:duality of Zuckerman functor:Vanishing of higher cohomology}).
Since $C_s$ is the right adjoint functor of $T_s$, we have $C_s(M) = \Hom(A,C_s(M)) = \Hom(T_s(A),M)$.
Hence we have $RC_s(M) = R\Hom(T_s(A),M)$.
By the exact sequence $0\to T_s(A)\to A\to \tau_s(A)\to 0$, we have a distinguished triangle $R\Hom(\tau_s(A),M)\to M\to RC_s(M)\xrightarrow{+1}$.
We have $R\Hom(\tau_s(A),M) = R\Hom(L\tau_s(A),M) = R\Hom(A,L\tau_s(M)[-2]) = L\tau_s(M)[-2]$ by Theorem~\ref{thm:duality of Zuckerman functor}.
Hence (\ref{prop:LT_s RC_s and Ltau_s:distinguished triangle of RC_s and Ltau_s}) follows.
We prove (\ref{prop:LT_s RC_s and Ltau_s:cohomology of T_s and C_s}).
From (\ref{prop:LT_s RC_s and Ltau_s:distinguished triangle of LT_s and Ltau_s}) and (\ref{prop:LT_s RC_s and Ltau_s:distinguished triangle of RC_s and Ltau_s}), we have $L^1T_sM = L^2\tau_sM = \Ker(M\to C_sM) = \Ker(M \to \varphi_sM)$.
We also have $R^1C_sM = \tau_sM = \Cok(T_sM\to M) = \Cok(\varphi_sM\to M)$.
\end{proof}

\begin{cor}\label{cor:T_s is a twisting functor}
Assume that $(W,S)$ is the Weyl group of a semisimple Lie algebra $\mathfrak{g}$.
From a result of Soergel~\cite{MR1029692}, the regular integral block of the BGG category $\mathcal{O}^{\mathrm{BGG}}$ of $\mathfrak{g}$ is equivalent to the category of finitely generated $A$-modules (Remark~\ref{rem:relation between O and BGG category}).
We regard $\mathcal{O}^{\mathrm{BGG}}$ is a full-subcategory of $\mathcal{O}$.
Then $T_s$ coincides with the twisting functor~\cite{MR1474841} and $C_s$ coincides with the Joseph's Enright functor~\cite{MR664114} on $\mathcal{O}^{\mathrm{BGG}}$.
\end{cor}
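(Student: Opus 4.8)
The plan is to match the two pairs of functors at the level of right-exact (resp.\ left-exact) endofunctors of the block $\mathcal{O}^{\mathrm{BGG}}$, using for our $T_s,C_s$ the structural results already proved in this paper and for the twisting and Enright functors the classical results of~\cite{MR1474841,MR2032059,MR664114}. Since $(W,S)$ is finite, $\widetilde A$ is finitely generated over $S(V^*)$ and hence $A$ is finite dimensional, so Soergel's theorem~\cite{MR1029692} identifies $\mathcal{O}^{\mathrm{BGG}}$ with the category of finite dimensional right $A$-modules, which is the full subcategory of finite dimensional objects of $\mathcal{O}$; both $T_s$ and $C_s$ preserve it, because $T_sM\simeq M\otimes_AT_s(A)$ and $C_sM\simeq\Hom_A(T_s(A),M)$ with $T_s(A)$ a finitely generated $A$-bimodule. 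Under this equivalence the Verma modules $M(x)$ correspond to the BGG Verma modules (with $M(e)=P(e)$ the dominant, projective Verma), the $P(x)$ to the indecomposable projectives, and $\theta_s$ to the classical translation functor through the $s$-wall (Fiebig's comparison, implicit in Remark~\ref{rem:relation between O and BGG category}); these dictionaries let us transport the classical statements into the present language.

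\textbf{The functor $T_s$.} Both $T_s$ and the classical twisting functor $T_s^{\mathrm{BGG}}$ are right-exact endofunctors of $\mathcal{O}^{\mathrm{BGG}}$, hence each is tensoring over $A$ with its value on the projective generator, so it suffices to identify $T_s(A)$ with $T_s^{\mathrm{BGG}}(A)$ as $A$-bimodules. I would do this by a characterization: a right-exact endofunctor $F$ of $\mathcal{O}^{\mathrm{BGG}}$ that commutes with every $\theta_t$ (via fixed natural isomorphisms) and has $FP(e)\simeq M(s)$ is determined up to isomorphism — because $\End_{\mathcal{O}}(P(e))=\End(M(e))=\C$ by Theorem~\ref{thm:Homomorphisms between Verma modules} leaves no further datum to match on $P(e)$, because every $P(x)$ is a direct summand of $\theta_{s_\ell}\dotsm\theta_{s_1}P(e)$ for a reduced expression $x=s_1\dotsm s_\ell$ by Theorem~\ref{thm:translation of projective modules}(\ref{thm:translation of projective modules:theta_s complicated}), so $F$ is pinned down on the additive category of projectives, and right exactness extends this to all of $\mathcal{O}^{\mathrm{BGG}}$. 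Our $T_s$ has these features: it commutes with every $\theta_t$ by Theorem~\ref{thm:theta and varphi commute, in O} together with $\widetilde{T_s}=\Cok(\Id\to\widetilde{\varphi_s})$, and $T_sM(e)=M(s)$ by Proposition~\ref{prop:translation of M(e), etc...}(1),(3) with $x=e$. The classical $T_s^{\mathrm{BGG}}$ has the same features: right exactness and commutation with projective functors is~\cite{MR1474841,MR2032059}, and it sends the dominant Verma to $M(s)$. Hence $T_s\simeq T_s^{\mathrm{BGG}}$ on $\mathcal{O}^{\mathrm{BGG}}$.

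\textbf{The functor $C_s$.} By Theorem~\ref{thm:adjointness of T_s and C_s}, $C_s$ is the right adjoint of $T_s$, while Joseph's Enright functor is by construction the right adjoint of $T_s^{\mathrm{BGG}}$ (the point of view of~\cite{MR664114}; see also~\cite{MR1474841,MR2032059}). A right adjoint is unique up to isomorphism, so the identification $T_s\simeq T_s^{\mathrm{BGG}}$ yields $C_s\simeq$ Joseph's Enright functor. Alternatively one can argue from the distinguished triangle $L\tau_s[-2]\to\Id\to RC_s\xrightarrow{+1}$ of Proposition~\ref{prop:LT_s RC_s and Ltau_s}(\ref{prop:LT_s RC_s and Ltau_s:distinguished triangle of RC_s and Ltau_s}) and the analogous classical triangle relating the Enright functor to the derived Zuckerman functor, once $L\tau_s$ is identified on $D^b(\mathcal{O}^{\mathrm{BGG}})$ with the classical derived Zuckerman functor — which holds because $\tau_s$ is the reflection onto $\mathcal{O}_s$, and $\mathcal{O}_s\cap\mathcal{O}^{\mathrm{BGG}}$ is precisely the Serre subcategory generated by the $L(x)$ with $sx>x$.

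\textbf{Main obstacle.} The difficulty is not conceptual but lies in reconciling normalizations. One must match the grading/shift conventions $\langle\cdot\rangle$ against the ungraded classical category; the ``side'' of the twisting functor, i.e.\ the fact that it is $\varphi^Z_s$ rather than $\theta^Z_s$ that enters the definition of $T_s$ (the functor $a_M$ and the twist $r_s$ are present precisely to produce the \emph{left} twisting functor, whose effect on Verma modules is governed by $sx$ versus $x$); the choice of dominant Verma, so that ``$T_s^{\mathrm{BGG}}M(e)=M(s)$'' holds with the correct direction of the Bruhat inequality; and — to make the induction on $\ell(x)$ above genuinely determine the functor and not merely its values on objects — the compatibility of the commutation isomorphisms $T_s\theta_t\simeq\theta_tT_s$ with the adjunction data, which must be checked against the explicit formulas of Theorem~\ref{thm:theta and varphi commute, in O} and the corresponding classical compatibilities in~\cite{MR1474841,MR2032059}.
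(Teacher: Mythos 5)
Your $C_s$ step is essentially the paper's: the corollary for $C_s$ is reduced to the one for $T_s$ by uniqueness of right adjoints, the paper citing \cite[Theorem~3]{MR2115448} for the fact that Joseph's Enright functor is right adjoint to the twisting functor. The problem is the $T_s$ half. Your whole argument rests on the claim that a right-exact endofunctor $F$ of the block which commutes with every $\theta_t$ and satisfies $FP(e)\simeq M(s)$ is determined up to isomorphism, and this is exactly what you do not prove. The commutation isomorphisms together with the value on $P(e)$ determine the \emph{objects} $F(\theta_{s_\ell}\dotsm\theta_{s_1}P(e))\simeq\theta_{s_\ell}\dotsm\theta_{s_1}M(s)$, but not the action of $F$ on morphisms between such objects; in particular you cannot control $F$ on the idempotents of $\End(\theta_{s_\ell}\dotsm\theta_{s_1}P(e))$, so you cannot descend to the summands $P(x)$, and you have no naturality with which to run the projective-resolution step. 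This kind of rigidity is a genuine theorem for projective functors (Bernstein--Gelfand), not a formality, and neither $T_s$ nor the abstract class of functors you consider is a projective functor. You flag precisely this under ``main obstacle'' (compatibility of the commutation isomorphisms with the adjunction data on both sides), but deferring it is deferring the entire content: checking it for the classical twisting functor against Theorem~\ref{thm:theta and varphi commute, in O} is not something available from the cited literature, and without it the characterization argument does not close.

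The paper avoids this by exploiting the natural transformation $T_s\to\Id$ rather than the commutation with translations. By Proposition~\ref{prop:LT_s RC_s and Ltau_s}~(\ref{prop:LT_s RC_s and Ltau_s:distinguished triangle of LT_s and Ltau_s}), for every projective $P$ there is an exact sequence $0\to T_sP\to P\to\tau_sP\to 0$, and by \cite[Proposition~2.4~(1)]{MR2331754} the classical twisting functor $T'_s$ sits in the \emph{same} exact sequence with the same Zuckerman functor $\tau_s$. Hence on projectives both functors are canonically the subfunctor $P\mapsto\Ker(P\to\tau_sP)$ of the identity, which gives a natural isomorphism $T_s|_{\mathrm{proj}}\simeq T'_s|_{\mathrm{proj}}$ with no further compatibility to verify, and right exactness (projective resolutions) then yields $T_sM\simeq T'_sM$ for all $M$. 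If you want to keep a characterization-style argument, the correct rigidity statement is the Khomenko--Mazorchuk comparison lemma \cite[Lemma~1]{MR2115448} (used in the paper in the proof of Theorem~\ref{thm:T_s sonnano-kankei-nee}): it requires, besides $FP\simeq GP$, the natural maps $FP\to P$, $GP\to P$ to be injective with \emph{equal images} in $P$ --- data your proposed characterization omits and which is exactly what the exact-sequence comparison supplies.
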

\begin{proof}
Since $C_s$ is the right adjoint functor of $T_s$ (Theorem~\ref{thm:adjointness of T_s and C_s}) and the Joseph's Enright functor is the right adjoint functor of the twisting functor~\cite[Theorem~3]{MR2115448}, the statement for $C_s$ follows from that for $T_s$.

From Proposition~\ref{prop:LT_s RC_s and Ltau_s} (\ref{prop:LT_s RC_s and Ltau_s:distinguished triangle of LT_s and Ltau_s}), for a projective object $P$, we have the following exact sequence:
\[
	0\to T_sP \to P\to \tau_sP\to 0.
\]
The twisting functor $T'_s$ satisfies the same exact sequence~\cite[Proposition~2.4 (1)]{MR2331754}.
Hence $T_sP\simeq T'_sP$.
Taking a projective resolution, we have $T_sM\simeq T'_sM$ for $M\in\mathcal{O}'$.
\end{proof}

\begin{prop}\label{prop:natural transformation of T_s, Verma modules case}
Assume that $sx > x$.
Then we have $T_sM(x) = M(sx)$ and $L^1T_sM(x) = 0$.
Moreover, a natural transformation $M(sx)\to M(x)$ is injective.
\end{prop}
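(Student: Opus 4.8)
The plan is to compute $T_sM(x)$ directly from the definition $T_s = \widetilde{T_s}|_{\mathcal{O}}$, where $\widetilde{T_s}(\widetilde{M}) = \Cok(\widetilde{M}\to\widetilde{\varphi_s}(\widetilde{M}))$, and to match it with the exact sequences already established in Proposition~\ref{prop:translation of M(e), etc...}. Since $sx > x$, part (3) of that proposition gives $\widetilde{\varphi_s}\widetilde{M}(x)\simeq\widetilde{\varphi_s}\widetilde{M}(sx)$, and part (2), after tensoring with $\C$, produces the exact sequence $0\to M(x)\to\varphi_s M(sx)\to M(sx)\to 0$ in which the first map is the canonical one $M(x)\to\varphi_s M(x)$ (here I use that $\varphi_s M(x)$ and $\varphi_s M(sx)$ agree as $A$-modules under the identification of part (3), and that the canonical map $\widetilde{M}(x)\to\widetilde{\varphi_s}\widetilde{M}(x)$ reduces to it). Therefore $T_sM(x) = \Cok(M(x)\to\varphi_s M(x)) = \Cok(M(x)\to\varphi_s M(sx)) \simeq M(sx)$, which is the first assertion.

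For the vanishing $L^1T_sM(x) = 0$, I would invoke Proposition~\ref{prop:LT_s RC_s and Ltau_s}(\ref{prop:LT_s RC_s and Ltau_s:cohomology of T_s and C_s}), which identifies $L^1T_sM = \Ker(M\to\varphi_sM)$ for any $M\in\mathcal{O}$. Applying this with $M = M(x)$ and using that the canonical map $M(x)\to\varphi_s M(x)\simeq\varphi_s M(sx)$ is exactly the injection appearing in the short exact sequence of Proposition~\ref{prop:translation of M(e), etc...}(2), we conclude $\Ker(M(x)\to\varphi_s M(x)) = 0$, hence $L^1T_sM(x) = 0$.

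It remains to produce the injective natural transformation $M(sx)\to M(x)$. I would obtain it from the natural transformation $T_s\to\Id$ established in the lemma preceding this subsection (the composition $M\to\varphi_s M\to M$ being zero guarantees $T_s\to\Id$ is well-defined): evaluating $T_s\to\Id$ at $M(x)$ gives $M(sx)\simeq T_sM(x)\to M(x)$. To see this map is injective, I would trace it through the identification $T_sM(x) = \Cok(M(x)\to\varphi_s M(sx))$: under the description of $\varphi_s M(sx)$ from Proposition~\ref{prop:translation of M(e), etc...}(1)--(2), the map $\varphi_s M(sx)\to M(sx)$ together with the identification $M(sx)\hookrightarrow M(x)$ from part (5) of that proposition shows that the composite $T_sM(x)\to M(x)$ is the inclusion $M(sx)\hookrightarrow M(x)$ of part (5), which is injective by construction. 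Alternatively, one can argue that $M(sx)$ has simple socle (being a Verma module, it has the unique irreducible quotient $L(sx)$ and, more relevantly, embeds in $M(x)$ which has simple... ) — but the cleanest route is simply to identify the natural transformation with the inclusion of Proposition~\ref{prop:translation of M(e), etc...}(5).

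The main obstacle I anticipate is the bookkeeping in the last paragraph: one must check carefully that the natural transformation $T_s\to\Id$ evaluated at $M(x)$ is literally the map $M(sx)\to M(x)$ of Proposition~\ref{prop:translation of M(e), etc...}(5), rather than merely some nonzero map between these modules. This requires unwinding how the natural transformation $\varphi_s^Z\to\Id\langle-1\rangle$ on $\mathcal{M}$ (Remark~\ref{rem:explicit discription of the natural transformation of theta_s}, via $a_M$) induces the maps on $\Phi$-images, and comparing with the explicit cokernel presentation of $\varphi_s V(sx)$ in terms of Verma sheaves used in the proof of Proposition~\ref{prop:translation of M(e), etc...}(1). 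Once that compatibility is pinned down, injectivity is immediate.
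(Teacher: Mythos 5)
Your computation of the first two assertions is correct and is essentially the paper's argument: $T_sM(x)=\Cok(M(x)\to\varphi_sM(x))$ by definition, Proposition~\ref{prop:translation of M(e), etc...} (2),(3) identifies this map with the injection whose cokernel is $M(sx)$, and Proposition~\ref{prop:LT_s RC_s and Ltau_s} (\ref{prop:LT_s RC_s and Ltau_s:cohomology of T_s and C_s}) then gives $L^1T_sM(x)=\Ker(M(x)\to\varphi_sM(x))=0$.

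For the injectivity of the natural transformation $T_sM(x)\to M(x)$, however, there is a genuine gap. The whole content of that claim is the exactness of $M(x)\to\varphi_sM(x)\to M(x)$ in the middle: the kernel of the induced map $\overline{c}\colon T_sM(x)=\Cok(M(x)\to\varphi_sM(x))\to M(x)$ is exactly $\Ker(\varphi_sM(x)\to M(x))/\im(M(x)\to\varphi_sM(x))$ (i.e.\ $L^1\tau_sM(x)$), and Proposition~\ref{prop:translation of M(e), etc...} by itself says nothing about $\Ker(\varphi_sM(x)\to M(x))$ --- note that the ``dual'' short exact sequence $0\to M(sx)\to\varphi_sM(x)\to M(x)\to 0$ is simply false (already for $\mathfrak{sl}_2$ the canonical map $\varphi_sM(e)\to M(e)$ is not surjective; its image is only $M(s)$), so the desired statement cannot drop out of purely formal bookkeeping with the maps of that proposition. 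Your plan is to show that $\overline{c}$ ``is literally'' the inclusion of Proposition~\ref{prop:translation of M(e), etc...}(5), but that inclusion is built from the unit at $sx$ inside the other model $\Phi(\varphi_s^ZV(sx))$, while $\overline{c}$ is induced by the counit at $x$ on $\varphi_sM(x)$; identifying the two is precisely the hard point, you only name it as an obstacle, and the tempting shortcut (both are nonzero maps in a one-dimensional $\Hom$-space) is unavailable here because $\dim\Hom(M(sx),M(x))\le 1$ is Theorem~\ref{thm:Homomorphisms between Verma modules}, proved later using this very proposition. The paper's route to this point is different: it invokes Lemma~\ref{lem:calculation of tau_s(M(e))}, i.e.\ the computation $\tau_sM(x)=M(x)/(\text{an embedded copy of }M(sx))$ obtained from the dimension count $\dim\Hom(P(y),M(x))=\dim\Hom(P(y),M(sx))$ for $sy<y$; combined with $\tau_sM(x)=\Cok(T_sM(x)\to M(x))$ this pins down the image of $\overline{c}$ as that embedded copy of $M(sx)$, and injectivity then follows by tracking the cyclic generator (or, equivalently, by the splicing used in the base case of Theorem~\ref{thm:projective resolution of tau_s(A)}). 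Some argument of this kind --- relating $\overline{c}$ to the Zuckerman computation or tracking generators through the identifications --- is needed and is missing from your proposal.
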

\begin{proof}
This proposition follows from Lemma~\ref{lem:calculation of tau_s(M(e))} and Proposition~\ref{prop:LT_s RC_s and Ltau_s} (\ref{prop:LT_s RC_s and Ltau_s:cohomology of T_s and C_s}).
\end{proof}

\begin{prop}\label{prop:C_sM(x)}
We have
\[
	C_sM(x) = 
	\begin{cases}
	M(sx) & (sx < x),\\
	M(x) & (sx > x).
	\end{cases}
\]
\end{prop}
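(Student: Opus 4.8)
The plan is to split into the two cases $sx<x$ and $sx>x$, using throughout that $C_sM=\Ker(\varphi_sM\to M)$ for $M\in\mathcal{O}$, where $\varphi_sM\to M$ is the canonical map. The case $sx<x$ is immediate: then $s\cdot(sx)=x>sx$, so Proposition~\ref{prop:translation of M(e), etc...}~(2), applied with $sx$ in place of $x$, produces an exact sequence $0\to M(sx)\to\varphi_sM(x)\to M(x)\to 0$ whose right-hand map is the canonical one, and passing to kernels gives $C_sM(x)=M(sx)$.

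For $sx>x$ there is no such direct description of $\varphi_sM(x)$, so I would route the computation through the distinguished triangles of Proposition~\ref{prop:LT_s RC_s and Ltau_s}. From the triangle $LT_s\to\id\to L\tau_s\xrightarrow{+1}$ (Proposition~\ref{prop:LT_s RC_s and Ltau_s}~(\ref{prop:LT_s RC_s and Ltau_s:distinguished triangle of LT_s and Ltau_s})) and $L^iT_s=0$ for $i>1$ (Proposition~\ref{prop:LT_s RC_s and Ltau_s}~(\ref{prop:LT_s RC_s and Ltau_s:finiteness of LT_s})), the cohomology long exact sequence gives, for every $M\in\mathcal{O}$, an isomorphism $L^2\tau_sM\simeq L^1T_sM$ and an exact sequence $0\to L^1\tau_sM\to T_sM\to M\to\tau_sM\to 0$ whose map $T_sM\to M$ is canonical. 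Specializing $M=M(x)$ and invoking Proposition~\ref{prop:natural transformation of T_s, Verma modules case} (which gives $L^1T_sM(x)=0$, $T_sM(x)=M(sx)$, and injectivity of the canonical map $M(sx)\to M(x)$, since $sx>x$), I obtain $L^2\tau_sM(x)=0$ and $L^1\tau_sM(x)=\Ker(M(sx)\hookrightarrow M(x))=0$. Now feeding $M(x)$ into the triangle $L\tau_s[-2]\to\id\to RC_s\xrightarrow{+1}$ (Proposition~\ref{prop:LT_s RC_s and Ltau_s}~(\ref{prop:LT_s RC_s and Ltau_s:distinguished triangle of RC_s and Ltau_s})), the relevant stretch of its cohomology long exact sequence reads
\[
0\longrightarrow L^2\tau_sM(x)\longrightarrow M(x)\longrightarrow C_sM(x)\longrightarrow L^1\tau_sM(x)\longrightarrow 0,
\]
so the vanishing of the two outer terms forces $C_sM(x)\simeq M(x)$.

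The substantive content lies entirely in the case $sx>x$, where it is essentially inherited from Proposition~\ref{prop:natural transformation of T_s, Verma modules case}: the decisive inputs are $L^1T_sM(x)=0$ and the injectivity of the natural transformation $T_s\to\id$ on $M(x)$, which together say that $L\tau_sM(x)$ is the cone of the inclusion $M(sx)\hookrightarrow M(x)$, i.e.\ the module $M(x)/M(sx)$ concentrated in degree $0$. The part needing genuine care is the cohomological bookkeeping in the two triangles --- in particular the shift $[-2]$ in the second --- and verifying that the morphism $LT_sM(x)\to M(x)$ in the first triangle restricts on $H^0$ to the canonical, hence injective, map of Proposition~\ref{prop:natural transformation of T_s, Verma modules case}; once this is in place the conclusion drops out, and as a byproduct one also reads off $R^1C_sM(x)\simeq\tau_sM(x)=M(x)/M(sx)$ for $sx>x$. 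One can equivalently phrase the $sx>x$ case without derived categories, starting from the short exact sequence $0\to M(x)\to\varphi_sM(x)\to M(sx)\to 0$ furnished by Proposition~\ref{prop:LT_s RC_s and Ltau_s}~(\ref{prop:LT_s RC_s and Ltau_s:cohomology of T_s and C_s}) and Proposition~\ref{prop:natural transformation of T_s, Verma modules case} (and the fact that $M(x)\to\varphi_sM(x)\to M(x)$ vanishes, so that $\varphi_sM(x)\to M(x)$ factors through $M(sx)$), but then one still needs Theorem~\ref{thm:Homomorphisms between Verma modules section 1} together with $R^1C_sM(x)\simeq\tau_sM(x)$ to see that the induced map $M(sx)\to M(x)$ is injective rather than zero.
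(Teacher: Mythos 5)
Your proof is correct. The case $sx<x$ --- identifying $C_sM(x)=\Ker(\varphi_sM(x)\to M(x))$ with $M(sx)$ via the exact sequence of Proposition~\ref{prop:translation of M(e), etc...}~(2) applied to $sx$ --- is the direct argument that the paper's one-line citation of Lemma~\ref{lem:calculation of tau_s(M(e))} is standing in for. For $sx>x$ you take a genuinely different route: you push $M(x)$ through the two distinguished triangles of Proposition~\ref{prop:LT_s RC_s and Ltau_s}, use Proposition~\ref{prop:natural transformation of T_s, Verma modules case} to get $L^2\tau_sM(x)=L^1T_sM(x)=0$ and $L^1\tau_sM(x)=\Ker(M(sx)\hookrightarrow M(x))=0$, and then read off $C_sM(x)\simeq M(x)$ from the exact sequence $0\to L^2\tau_sM(x)\to M(x)\to C_sM(x)\to L^1\tau_sM(x)\to 0$; the cohomological bookkeeping here is accurate. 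The paper's (implicit) argument is the more elementary variant you sketch at the end: $0\to M(x)\to\varphi_sM(x)\to M(sx)\to 0$ is exact, the composite $M(x)\to\varphi_sM(x)\to M(x)$ vanishes by Proposition~\ref{prop:translation of M(e), etc...}~(4), so the canonical map $\varphi_sM(x)\to M(x)$ factors through the canonical $M(sx)\to M(x)$, whose injectivity (Proposition~\ref{prop:natural transformation of T_s, Verma modules case}) yields $\Ker(\varphi_sM(x)\to M(x))=M(x)$. Your derived detour costs more machinery but gives the byproducts $L^1\tau_sM(x)=L^2\tau_sM(x)=0$ and $R^1C_sM(x)\simeq M(x)/M(sx)$ for free. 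One correction to your closing remark: you say the non-derived variant would need Theorem~\ref{thm:Homomorphisms between Verma modules} to see that the induced map $M(sx)\to M(x)$ is injective rather than zero; that theorem is proved only in Section~\ref{sec:Homomorphisms between Verma modules}, and its proof uses the present proposition, so appealing to it here would be circular --- fortunately it is also unnecessary, since the injectivity is exactly the last assertion of Proposition~\ref{prop:natural transformation of T_s, Verma modules case}, which your main argument already invokes.
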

\begin{proof}
This proposition follows from Lemma~\ref{lem:calculation of tau_s(M(e))}.
\end{proof}

%

\section{Homomorphisms between Verma modules}\label{sec:Homomorphisms between Verma modules}
In this section, we prove the following theorem.
\begin{thm}\label{thm:Homomorphisms between Verma modules}
We have
\[
	\Hom(M(x),M(y)) = 
	\begin{cases}
	\C & (y \le x),\\
	0 & (y\not\le x).
	\end{cases}
\]
Moreover, any nonzero homomorphism $M(x)\to M(y)$ is injective.
\end{thm}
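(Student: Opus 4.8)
The plan is to prove three statements by induction on $\ell(x)$: that $\Hom(M(x),M(y))=0$ when $y\not\le x$, that $\dim_{\C}\Hom(M(x),M(y))=1$ when $y\le x$, and that every nonzero homomorphism $M(x)\to M(y)$ is injective. The basic tool is the identification $\Hom_A(P(x),M(y))\simeq\Hom_Z(B(x),V(y))\otimes_{S(V^*)}\C$ from Lemma~\ref{lem:relation of Z-mod and O}: since $\mathscr{V}(y)$ is concentrated at the vertex $y$ and $\supp\widetilde{\mathscr{B}}(x)\subseteq\{w\mid w\le x\}$, this group is nonzero only when $y\le x$, and then has $\C$-dimension $\rank_{S(V^*)}\mathscr{B}(x)_y$. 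In particular $M(e)=P(e)$ (as $\widetilde{\mathscr{B}}(e)=\mathscr{V}(e)$), and $\Hom(P(e),M(y))$ is $\C$ if $y=e$ and $0$ otherwise, which settles the base case of all three inductions simultaneously.

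For the vanishing part, observe that $M(x)$ is a nonzero quotient of the indecomposable projective $P(x)$ and so has simple top $L(x)$; hence every nonzero quotient of $M(x)$ has top $L(x)$. If $f\colon M(x)\to M(y)$ were nonzero, its image would be a subquotient of $M(y)$ with top $L(x)$, forcing $\Hom_A(P(x),M(y))\ne 0$, and therefore $y\le x$.

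For the dimension count, assume $y\le x$ with $\ell(x)\ge 1$ and pick a simple reflection $s$ with $sx<x$. Since $s(sx)=x>sx$, Proposition~\ref{prop:natural transformation of T_s, Verma modules case} gives $M(x)\simeq T_sM(sx)$, so by the adjunction of Theorem~\ref{thm:adjointness of T_s and C_s} we have $\Hom(M(x),M(y))\simeq\Hom(M(sx),C_sM(y))$. By Proposition~\ref{prop:C_sM(x)}, $C_sM(y)=M(y')$ with $y'=sy$ if $sy<y$ and $y'=y$ if $sy>y$; in either case the lifting property of the Bruhat order (using $sx<x$) gives $y'\le sx\iff y\le x$, so $y'\le sx$. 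As $\ell(sx)<\ell(x)$, the inductive hypothesis yields $\dim\Hom(M(sx),M(y'))=1$, hence $\dim\Hom(M(x),M(y))=1$.

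For injectivity I would again induct on $\ell(x)$, the case $x=e=y$ being clear since $\End(M(e))=\End(P(e))=\C$ ($P(e)$ is indecomposable projective with $[P(e):L(e)]=1$). For $\ell(x)\ge 1$ choose $s$ with $sx<x$; Proposition~\ref{prop:natural transformation of T_s, Verma modules case}, applied with $w=sx$ (so $sw=x>w$), provides an inclusion $\iota_x\colon M(x)\hookrightarrow M(sx)$. If $sy>y$, then $y\le sx$ by the lifting property, a nonzero $g\colon M(sx)\to M(y)$ is injective by induction, and $g\circ\iota_x$ is a nonzero injective element of the one-dimensional space $\Hom(M(x),M(y))$, so every nonzero homomorphism is injective. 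If $sy<y$, then $sy\le sx$ by the lifting property, a nonzero $g\colon M(sx)\to M(sy)$ is injective by induction, and there is also an inclusion $\iota_y\colon M(y)\hookrightarrow M(sy)$; composition with $\iota_y$ gives a map $\Hom(M(x),M(y))\to\Hom(M(x),M(sy))$ which is injective (as $\iota_y$ is a monomorphism) between one-dimensional spaces (by the previous paragraph, since $sy\le x$), hence an isomorphism, so the nonzero injective map $g\circ\iota_x$ factors as $\iota_y\circ f_0$ with $f_0\ne 0$, and then $f_0$ is injective because $\iota_y$ is mono. The main obstacle is exactly this last case: the composite $g\circ\iota_x$ need not take values in $M(y)\subseteq M(sy)$, so one is forced to use the one-dimensionality established above to correct it into a map to $M(y)$; the rest is routine bookkeeping with the two forms of the lifting property.
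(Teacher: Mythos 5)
Your proposal is correct, and it coincides with the paper's proof in two of its three steps while reorganizing the third. The vanishing for $y\not\le x$ is the same (pass to $\Hom(P(x),M(y))$ via Lemma~\ref{lem:relation of Z-mod and O} and use the support of $\mathscr{B}(x)$), and your induction computing the Hom-space is exactly the paper's uniqueness argument: $M(x)=T_sM(sx)$ for $sx<x$, the adjunction of Theorem~\ref{thm:adjointness of T_s and C_s}, and Proposition~\ref{prop:C_sM(x)}. The genuine difference is how non-vanishing and injectivity are obtained for $y\le x$. The paper proves existence of an injective map $M(x)\to M(y)$ by a separate upward induction in the Verma--Dixmier style, applying $T_s$ to a given embedding (with a commutative square in the case $sy<y$), and then deduces injectivity of every nonzero map from $\dim\le 1$. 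You instead sharpen the downward induction to the equality $\dim=1$ by tracking, through the lifting property of the Bruhat order, that $y'\le sx$ where $C_sM(y)=M(y')$; this yields non-vanishing with no extra construction, and you then get injectivity by a second downward induction that composes with the canonical embeddings $M(x)\hookrightarrow M(sx)$ and $M(y)\hookrightarrow M(sy)$ furnished by Proposition~\ref{prop:natural transformation of T_s, Verma modules case}, using one-dimensionality of $\Hom(M(x),M(sy))$ to pull the injective composite back into $\Hom(M(x),M(y))$. Both routes are sound and rest on the same ingredients; yours trades the paper's explicit construction of embeddings for an extra (standard) combinatorial input, the lifting property, and has the mild advantage of only ever invoking $T_sM(w)=M(sw)$ and the injectivity of $T_sM(w)\to M(w)$ in the unambiguous case $sw>w$, whereas the paper's existence lemma applies $T_s$ to a map into $M(y)$ with $sy<y$, where identifying $T_sM(y)$ with $M(sy)$ requires more care.
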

The surjective map $P(x)\to M(x)$ induces an injective map $\Hom(M(x),M(y))\to \Hom(P(x),M(y))$.
If $y\not\le x$, then
\begin{align*}
\Hom(P(x),M(y)) & = \Hom(\Phi(B(x)),\Phi(V(y)))\\
& = \Hom_Z(B(x),V(y))\otimes_{S(V^*)}\C\\
& = \Hom_{S(V^*)}(\mathscr{B}(x)_y,S(V^*))\otimes_{S(V^*)}\C = 0.
\end{align*}
Hence we get the theorem in the case of $y\not\le x$.

Next, we prove the `existence part' of the theorem.
Namely, we prove the following lemma.
\begin{lem}
If $y \le x$, then there exists an injective map $M(x)\to M(y)$.
\end{lem}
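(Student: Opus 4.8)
The plan is to argue by induction on $\ell(x)$, peeling a simple reflection off the left of $x$ and combining the lifting property of the Bruhat order with the injections between Verma modules produced by the twisting functor (Proposition~\ref{prop:natural transformation of T_s, Verma modules case}). If $x=e$ then $y=e$ and the identity works, so assume $\ell(x)\ge 1$ and $y<x$, and choose a simple reflection $s$ with $sx<x$. Since $s\cdot sx=x>sx$, Proposition~\ref{prop:natural transformation of T_s, Verma modules case} (applied with $sx$ in place of $x$) gives an injection $M(x)\hookrightarrow M(sx)$. By the lifting property of the Bruhat order, from $y<x$ and $sx<x$ exactly one of the following holds: either $sy>y$ and $y\le sx$, or $sy<y$ and $sy\le sx$.

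In the first case $\ell(sx)<\ell(x)$ and $y\le sx$, so induction provides an injection $M(sx)\hookrightarrow M(y)$, and composing with $M(x)\hookrightarrow M(sx)$ finishes this case. In the second case $\ell(sx)<\ell(x)$ and $sy\le sx$, so induction provides an injection $\phi\colon M(sx)\hookrightarrow M(sy)$; composing with $M(x)\hookrightarrow M(sx)$ yields an injection $\iota\colon M(x)\hookrightarrow M(sy)$. Since $s\cdot sy=y>sy$, Proposition~\ref{prop:natural transformation of T_s, Verma modules case} (applied with $sy$ in place of $x$) also gives an injection $j\colon M(y)\hookrightarrow M(sy)$. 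The crux is that $\iota$ factors through $j$. Put $R=\Cok j$; the surjection $P(x)\twoheadrightarrow M(x)$ induces an injection $\Hom(M(x),R)\hookrightarrow\Hom(P(x),R)$, and, $P(x)$ being projective, the exact sequence $0\to M(y)\xrightarrow{j}M(sy)\to R\to 0$ gives
\[
	\dim\Hom(P(x),R)=\dim\Hom(P(x),M(sy))-\dim\Hom(P(x),M(y))=\rank\mathscr{B}(x)_{sy}-\rank\mathscr{B}(x)_y,
\]
using $\Hom(P(x),M(w))\simeq\Hom_{S(V^*)}(\mathscr{B}(x)_w,S(V^*))\otimes_{S(V^*)}\C$ exactly as in the computation of $\Hom(P(x),M(y))$ for $y\not\le x$ above.

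It remains to see that this difference of ranks vanishes. We have $y\le x$, and by the lifting property $sy\le sx\le x$. Because $s$ was chosen with $sx<x$, the involution $a_M$ of Section~\ref{sec:The category O} together with Lemma~\ref{lem:image of projective object by a} gives $\rank\mathscr{B}(x)_y=\rank\mathscr{B}(x^{-1})_{y^{-1}}$ and $\rank\mathscr{B}(x)_{sy}=\rank\mathscr{B}(x^{-1})_{y^{-1}s}$, and here $x^{-1}s<x^{-1}$ (as $sx<x$) and $y^{-1}\le x^{-1}$, so the two right-hand sides agree by \cite[Lemma~5.4]{MR2395170} --- the same coset-invariance of Braden--MacPherson stalks already invoked for the pair $(e,s)$ in the proof of Lemma~\ref{lem:calculation of tau_s(M(e))}. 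Hence $\Hom(P(x),R)=0$, so $\Hom(M(x),R)=0$ and the composite $M(x)\xrightarrow{\iota}M(sy)\to R$ vanishes; therefore $\iota$ factors through $\Ker(M(sy)\to R)=j(M(y))$, and the induced map $M(x)\to j(M(y))\xrightarrow{\sim}M(y)$ is injective because $\iota$ is.

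The main difficulty is the second case: there $y$ need not lie below $sx$, so there is no direct chain of injections, and the compatibility of the two evident injections $M(x)\hookrightarrow M(sy)$ and $M(y)\hookrightarrow M(sy)$ has to be extracted from the dimension count above. The only input there that is not purely formal is the coset-invariance of Braden--MacPherson stalks; everything else is bookkeeping with the lifting property and Proposition~\ref{prop:natural transformation of T_s, Verma modules case}.
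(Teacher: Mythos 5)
Your proof is correct, and in the decisive case it takes a genuinely different route from the paper. Both arguments run the same induction on $\ell(x)$ with a simple reflection $s$ satisfying $sx<x$ and the lifting property of the Bruhat order, and both dispose of the easy case ($sy>y$, $y\le sx$) by composing the injections of Proposition~\ref{prop:natural transformation of T_s, Verma modules case} with the inductive one. The difference is in the case $sy<y$: the paper reduces (following Dixmier) to the auxiliary statement that an embedding $M(sx)\hookrightarrow M(sy)$ propagates to $M(x)\hookrightarrow M(y)$, and proves it by applying the twisting functor $T_s$ to the given embedding and invoking the naturality square for $T_s\to\Id$; you instead compose $M(x)\hookrightarrow M(sx)\hookrightarrow M(sy)$ and force the factorization through $j\colon M(y)\hookrightarrow M(sy)$ by the count $\dim\Hom(P(x),M(y))=\rank\mathscr{B}(x)_y=\rank\mathscr{B}(x)_{sy}=\dim\Hom(P(x),M(sy))$ for $sx<x$, obtained from the involution ($a_M$, Lemma~\ref{lem:image of projective object by a}) together with Lemma~\ref{lem:translation out of projectives in Z-mod} and \cite[Lemma~5.4]{MR2395170}; this is precisely the mechanism of Lemma~\ref{lem:calculation of tau_s(M(e))}, pushed from the pair $(e,s)$ to a general pair $(y,sy)$. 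What each approach buys: the paper's version stays inside the twisting-functor formalism and mirrors the classical proof, but at the step $sy<y$ it leans on identifying $T_sM(y)$ with $M(sy)$, which Proposition~\ref{prop:natural transformation of T_s, Verma modules case} only covers in the other chamber; your version needs nothing about $T_s$ beyond the injections $M(x)\to M(sx)$ and is therefore self-contained at that point, at the price of an extra Hom/stalk-rank computation. (Only cosmetic remark: the case $y=x$ with $\ell(x)\ge 1$ should be mentioned, where the identity map serves.)
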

If $x = sy$, this lemma follows from Proposition~\ref{prop:natural transformation of T_s, Verma modules case}.
Hence, to prove the lemma, it is sufficient to prove the following lemma (see the proof of \cite[7.6.11.\ Lemma]{MR1393197}).
\begin{lem}
Let $s$ be a simple reflection and $x,y\in W$.
Assume that there exists an injective map $f\colon M(x)\to M(y)$.
If $sx > x$ then there exists an injective map $M(sx)\to M(sy)$.
\end{lem}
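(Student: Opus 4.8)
The plan is to apply the functor $T_s$ (and, in one case, to exploit the canonical embeddings of Proposition~\ref{prop:natural transformation of T_s, Verma modules case}), treating separately the two cases $sy>y$ and $sy<y$; the case $sy=y$ does not occur because $s$ is a reflection. The case $sy>y$ will be the substantive one, and the case $sy<y$ will be reduced to it.

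First I would do the case $sy>y$. Since $sx>x$ and $sy>y$, Proposition~\ref{prop:natural transformation of T_s, Verma modules case} gives $T_sM(x)=M(sx)$ and $T_sM(y)=M(sy)$, and moreover the canonical natural transformation $T_s\to\Id$ induces \emph{injective} maps $\iota_x\colon M(sx)\to M(x)$ and $\iota_y\colon M(sy)\to M(y)$. Applying $T_s$ to $f$ and invoking naturality of $T_s\to\Id$ with respect to $f$, one gets a commutative square expressing the identity $\iota_y\circ T_s(f)=f\circ\iota_x$. The right-hand side is a composition of injective maps, hence injective, so $T_s(f)\colon M(sx)\to M(sy)$ is injective, which is exactly what we want in this case.

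Next I would reduce the case $sy<y$ to the previous one. Since $s\cdot(sy)=y>sy$, Proposition~\ref{prop:natural transformation of T_s, Verma modules case} (applied with $sy$ in place of $x$) provides an injection $j\colon M(y)=T_sM(sy)\to M(sy)$, so $j\circ f\colon M(x)\to M(sy)$ is injective. Now $sx>x$ and $s\cdot(sy)=y>sy$, so the case already proven, applied to the pair $(x,sy)$, yields an injection $M(sx)\to M(s\cdot(sy))=M(y)$; composing with $j$ gives the desired injection $M(sx)\to M(sy)$. (Alternatively the initial injection $M(x)\to M(sy)$ can be obtained by applying the left exact functor $C_s$ to $f$ and using $C_sM(x)=M(x)$, $C_sM(y)=M(sy)$ from Proposition~\ref{prop:C_sM(x)}.)

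The point one must be careful about is the case $sy>y$: the tempting approach of applying $T_s$ to the short exact sequence $0\to M(x)\to M(y)\to C\to 0$ and trying to show $L^1T_sC=0$ is circular, since the long exact sequence together with $L^1T_sM(x)=L^1T_sM(y)=0$ identifies $L^1T_sC$ with $\Ker\bigl(T_s(f)\colon M(sx)\to M(sy)\bigr)$. The way around this is precisely the naturality argument above, which extracts injectivity of $T_s(f)$ from injectivity of the canonical embeddings $M(sx)\hookrightarrow M(x)$ and $M(sy)\hookrightarrow M(y)$ of Proposition~\ref{prop:natural transformation of T_s, Verma modules case}, rather than from any acyclicity statement about $C$.
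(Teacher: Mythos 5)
Your proposal is correct and is essentially the paper's own argument: the substantive case $sy>y$ is handled exactly as in the paper, by applying $T_s$ (so that $T_sM(x)=M(sx)$, $T_sM(y)=M(sy)$ by Proposition~\ref{prop:natural transformation of T_s, Verma modules case}) and reading injectivity of $T_sf$ off the naturality square for $T_s\to\Id$, whose vertical maps are injective; note that your assignment of the cases is the correct one, the printed proof having the labels $sy>y$ and $sy<y$ interchanged. In the remaining case $sy<y$ your reduction works but is more roundabout than necessary: one can simply compose the injections $M(sx)\hookrightarrow M(x)\xrightarrow{f} M(y)\hookrightarrow M(sy)$, the last map coming from Proposition~\ref{prop:natural transformation of T_s, Verma modules case} applied to $sy$.
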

\begin{proof}
By Proposition~\ref{prop:natural transformation of T_s, Verma modules case}, there exists an injective map $M(sx)\to M(x)$.
If $sy > y$, then there exists an injective map $M(y)\to M(sy)$.
Hence the lemma follows.

We may assume that $sy < y$.
By Proposition~\ref{prop:natural transformation of T_s, Verma modules case}, we have $T_sM(x) = M(sx)$ and $T_sM(y) = M(sy)$.
Hence we get the following diagram:
\[
\xymatrix{
M(x)\ar[r]^f & M(y)\\
M(sx)\ar[u]\ar[r]^{T_sf} & M(sy).\ar[u]
}
\]
The vertical maps are the natural transformations and they are injective by Proposition~\ref{prop:natural transformation of T_s, Verma modules case}.
Hence $T_sf$ is injective.
\end{proof}
To prove Theorem~\ref{thm:Homomorphisms between Verma modules}, it is sufficient to prove the following lemma.
\begin{lem}
We have $\dim\Hom(M(x),M(y))\le 1$.
\end{lem}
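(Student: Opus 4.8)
The plan is to bound $\dim\Hom(M(x),M(y))$ by induction on $\ell(x)$, mirroring the inductive walk used in the existence part above but now moving \emph{down} in the first argument by means of the functors $T_s$, $C_s$ and their adjunction. The base case $x=e$ is immediate: since $\supp\widetilde{\mathscr B}(e)=\{e\}$ by Theorem~\ref{thm:classification of projective object}, the stalk $\mathscr B(e)_y$ is graded free over $S(V^*)$ of rank $\delta_{e,y}$, so using the surjection $P(e)\to M(e)$ (hence the injection $\Hom(M(e),M(y))\hookrightarrow\Hom(P(e),M(y))$) together with the identity for $\Hom(P(x),M(y))$ recorded just before the existence lemma,
\[
\dim\Hom(M(e),M(y))\le\dim\Hom(P(e),M(y))=\dim\bigl(\Hom_{S(V^*)}(\mathscr B(e)_y,S(V^*))\otimes_{S(V^*)}\C\bigr)=\delta_{e,y}\le 1 .
\]

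For the inductive step I would take $\ell(x)\ge 1$, pick a simple reflection $s$ with $sx<x$, and put $x'=sx$, so that $\ell(x')=\ell(x)-1$ and $sx'>x'$. By Proposition~\ref{prop:natural transformation of T_s, Verma modules case} we have $T_sM(x')=M(sx')=M(x)$ exactly, and $C_s$ is the right adjoint of $T_s$ by Theorem~\ref{thm:adjointness of T_s and C_s}, whence
\[
\Hom(M(x),M(y))=\Hom(T_sM(x'),M(y))\simeq\Hom(M(x'),C_sM(y)).
\]
By Proposition~\ref{prop:C_sM(x)}, $C_sM(y)$ is again a single Verma module, namely $M(sy)$ if $sy<y$ and $M(y)$ if $sy>y$; call it $M(y^\ast)$. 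Thus $\dim\Hom(M(x),M(y))=\dim\Hom(M(x'),M(y^\ast))$ with $\ell(x')<\ell(x)$, and the inductive hypothesis finishes the proof. Combined with the two lemmas already established, this yields Theorem~\ref{thm:Homomorphisms between Verma modules}; in particular the injectivity assertion follows, since for $y\le x$ the $\Hom$-space, being one-dimensional, is spanned by the injective map constructed above, so every nonzero homomorphism is a scalar multiple of it.

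I do not expect a serious obstacle here, provided Propositions~\ref{prop:natural transformation of T_s, Verma modules case} and~\ref{prop:C_sM(x)} are in hand; the one point that genuinely matters is that $T_s$ and $C_s$ carry Verma modules to \emph{single} Verma modules, with no additional constituents, so that the relation $\Hom(M(x),M(y))\simeq\Hom(M(x'),M(y^\ast))$ is an honest isomorphism rather than merely an inequality of dimensions. I would also verify carefully the elementary input to the base case, namely that $\Hom(P(e),M(y))$ is one-dimensional for $y=e$ and zero otherwise, which is just the $x=e$ instance of the $\Hom(P(x),M(y))$ computation preceding the existence lemma.
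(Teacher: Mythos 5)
Your argument is correct and coincides with the paper's own proof: induction on $\ell(x)$, base case via $\Hom(P(e),M(y))$ (the paper just uses $M(e)=P(e)$ directly), and the inductive step via $M(x)=T_sM(sx)$, the adjunction $(T_s,C_s)$, and the fact that $C_sM(y)$ is a single Verma module. No issues.
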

\begin{proof}
We prove by induction on $\ell(x)$.
If $x = e$, then $M(x) = M(e) = P(e) = \Phi(B(e))$.
Hence we have 
\begin{multline*}
\Hom(M(e),M(y)) = \Hom(\Phi(B(e)),\Phi(V(y)))\\
= \Hom_Z(B(e),V(y))\otimes_{S(V^*)}\C = \Hom_{S(V^*)}(\mathscr{B}(e)_y,V(y))\otimes_{S(V^*)}\C.
\end{multline*}
If $y \ne e$, then this space is zero.
If $y = e$, then this space is $\C$.

Assume that $x\ne e$.
Take a simple reflection $s$ such that $sx < x$.
Then we have $M(x) = T_sM(sx)$ (Proposition~\ref{prop:natural transformation of T_s, Verma modules case}).
Since $C_s$ is the right adjoint functor of $T_s$, we have
\[
	\Hom(M(x),M(y)) = \Hom(T_sM(sx),M(y)) = \Hom(M(sx),C_sM(y)).
\]
If $sy > y$, then $C_sM(y) = M(sy)$.
If $sy < y$, then $C_sM(y) = M(y)$ (Proposition~\ref{prop:C_sM(x)}).
In each case, the dimension of this space is less than or equal to $1$ by inductive hypothesis.
\end{proof}

\section{More about the functors $T_s$ and $C_s$}\label{sec:More about the functors T_s C_s}
\begin{lem}
Let $s$ be a simple reflection and $x\in W$.
\begin{enumerate}
\item We have $L^1T_sM(x) = 0$.
\item The natural transformation $M(x)\to RC_sLT_sM(x)$ is an isomorphism.
\end{enumerate}
\end{lem}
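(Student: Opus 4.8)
The plan is to work throughout with the two explicit two–term complexes computing $LT_s$ and $RC_s$: by Theorem~\ref{thm:projective resolution of tau_s(A)} and Lemma~\ref{lem:expression of varphi_s}, $T_s(A)$ has a projective resolution $0\to A\to\widetilde{\varphi_s}(\widetilde A)\otimes_{S(V^*)}\C\to T_s(A)\to 0$ of right $A$–modules and a flat resolution $0\to A\to\varphi_s(A)\to T_s(A)\to 0$ of left $A$–modules, so $LT_sM\simeq(M\xrightarrow{\sigma'}\varphi_sM)$ in degrees $-1,0$ and $RC_sN\simeq(\varphi_sN\xrightarrow{\sigma}N)$ in degrees $0,1$, with $\sigma,\sigma'$ the canonical transformations; in particular $L^1T_sM(x)=\Ker(\sigma'_{M(x)}\colon M(x)\to\varphi_sM(x))$ (Proposition~\ref{prop:LT_s RC_s and Ltau_s}~(\ref{prop:LT_s RC_s and Ltau_s:cohomology of T_s and C_s})). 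For (1) with $sx>x$ this kernel is $0$ by Proposition~\ref{prop:natural transformation of T_s, Verma modules case}. For $sx<x$, applying Proposition~\ref{prop:translation of M(e), etc...}~(2) to $sx$ (legitimate since $s(sx)=x>sx$) gives a short exact sequence $0\to M(sx)\xrightarrow{\iota}\varphi_sM(x)\xrightarrow{\sigma_{M(x)}}M(x)\to 0$; since $\sigma_{M(x)}\sigma'_{M(x)}=0$ (Proposition~\ref{prop:translation of M(e), etc...}~(4)), $\sigma'_{M(x)}$ factors as $M(x)\xrightarrow{g}M(sx)\xrightarrow{\iota}\varphi_sM(x)$, and since $sx\le x$, Theorem~\ref{thm:Homomorphisms between Verma modules} gives $\Hom(M(x),M(sx))=\C$ with every nonzero map injective; so it suffices to show $g\ne 0$.

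Suppose $g=0$. Then $L^2\tau_sM(x)=\Ker\sigma'_{M(x)}=M(x)$ while $\tau_sM(x)=\Cok\sigma_{M(x)}=0$, so the complex $[M(x)\to\varphi_sM(x)\to M(x)]$ computing $L\tau_sM(x)$ has cohomology $M(x),M(sx),0$ in degrees $-2,-1,0$, giving a distinguished triangle $M(x)[1]\to L\tau_sM(x)[-1]\to M(sx)\xrightarrow{+1}$. Separately, $M(x)=T_sM(sx)$ (Proposition~\ref{prop:natural transformation of T_s, Verma modules case}) and $M(sx)\notin\mathcal{O}_s$ — if it were, then $\varphi_sM(sx)=0$ by Lemma~\ref{lem:characterization of O_s and translation preserve O_s}, forcing $T_sM(sx)=0$, a contradiction — hence $\tau_sM(sx)\ne M(sx)$, so the canonical map $T_sM(sx)\to M(sx)$ is nonzero, hence injective, and $L\tau_sM(sx)$ is the module $M(sx)/M(x)$ concentrated in degree $0$. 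Now feed the pair $(M(x),M(sx))$ into the self-adjointness $R\Hom(L\tau_sM(x)[-1],M(sx))\cong R\Hom(M(x),L\tau_sM(sx)[-1])$ (Theorem~\ref{thm:duality of Zuckerman functor}~(\ref{thm:duality of Zuckerman functor:duality})) and take $H^0$: the left side equals $\Hom(M(sx),M(sx))=\C$ (the $M(x)[1]$ summand contributes nothing), while the right side is $R\Hom(M(x),M(sx)/M(x))[-1]$, with $H^0=0$. This contradiction gives $g\ne 0$, so $L^1T_sM(x)=0$, proving (1).

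For (2), by (1) the object $LT_sM(x)$ is the module $T_sM(x)$ in degree $0$, so the statement is equivalent to $R^1C_sT_sM(x)=0$ together with the unit $M(x)\to C_sT_sM(x)$ being an isomorphism (for a module $RC_s$ lives in degrees $\ge 0$, so the derived and underived units agree). If $sx>x$ then $T_sM(x)=M(sx)$ and Proposition~\ref{prop:translation of M(e), etc...}~(2) makes $\varphi_sM(sx)\to M(sx)$ surjective with kernel $\cong M(x)$, whence $R^1C_sM(sx)=0$, $C_sM(sx)\cong M(x)$, and since $T_sM(x)\ne 0$ the unit is a nonzero — hence invertible — element of $\Hom(M(x),M(x))=\C$. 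If $sx<x$ then, by (1), $T_sM(x)$ sits in $0\to M(sx)/M(x)\to T_sM(x)\xrightarrow{q}M(x)\to 0$. The crucial point is $M(sx)/M(x)\in\mathcal{O}_s$: for $sz<z$ one has $\rank\mathscr B(z)_x=\rank\mathscr B(z)_{sx}$ (Fiebig's theory; cf.\ Lemma~\ref{lem:translation out of projectives in Z-mod}, Lemma~\ref{lem:image of projective object by a} and Proposition~\ref{prop:translation of projective Z-module}~(4)), so $\Hom(P(z),M(x))\to\Hom(P(z),M(sx))$ is an isomorphism and $\Hom(P(z),M(sx)/M(x))=0$; hence $\varphi_s(M(sx)/M(x))=0$ by Lemma~\ref{lem:characterization of O_s and translation preserve O_s}, so $C_s(M(sx)/M(x))=0$ and $\varphi_s(q)\colon\varphi_sT_sM(x)\to\varphi_sM(x)$ is an isomorphism. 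Setting $\mu=\sigma_{T_sM(x)}\circ\varphi_s(q)^{-1}\colon\varphi_sM(x)\to T_sM(x)$, naturality of $\sigma$ yields $q\mu=\sigma_{M(x)}$, so from $0\to M(sx)\xrightarrow{\iota}\varphi_sM(x)\xrightarrow{\sigma_{M(x)}}M(x)\to 0$ we get $\Ker\mu\subseteq\iota(M(sx))$ and that $\mu\iota$ lands in $\Ker q\cong M(sx)/M(x)$. Were $\mu\iota=0$, then $\Ker\mu=\iota(M(sx))$ and the sequence would split, so $T_sM(x)\cong M(x)\oplus M(sx)/M(x)$; but then, using $C_s(M(sx)/M(x))=0$ and $C_sM(x)\cong M(sx)$ (Proposition~\ref{prop:C_sM(x)}), the adjunction gives $\End_A(T_sM(x))=\Hom_A(M(x),C_sT_sM(x))=\Hom(M(x),M(sx))=\C$, contradicting a splitting into two nonzero summands. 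Hence $\mu\iota\ne 0$; as $\Hom(M(sx),M(sx)/M(x))=\C$, $\mu\iota$ is a nonzero multiple of the canonical surjection, so $C_sT_sM(x)=\Ker\mu\cong M(x)$ and $R^1C_sT_sM(x)=\Cok\mu=0$, and the unit $M(x)\to M(x)$ is again invertible.

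The main obstacle is the case $sx<x$ of (1): unlike $sx>x$ there is no short exact sequence directly exhibiting $T_sM(x)$, and the non-vanishing of $g$ — equivalently $L^2\tau_sM(x)=0$ — appears to require the full strength of the duality of the Zuckerman functor (Theorem~\ref{thm:duality of Zuckerman functor}~(\ref{thm:duality of Zuckerman functor:duality})), applied against the test object $M(sx)$, together with the computation of $L\tau_sM(sx)$, which itself rests on $M(sx)\notin\mathcal{O}_s$. In (2) the one delicate point, handled by the endomorphism computation above, is excluding a splitting $T_sM(x)\cong M(x)\oplus M(sx)/M(x)$; once that is ruled out, the identification $C_sT_sM(x)\cong M(x)$ and the invertibility of the unit are formal.
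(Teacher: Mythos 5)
Your strategy is workable and genuinely different from the paper's: the paper disposes of (1) by quoting the injectivity of $M(x)\to\varphi_sM(x)$, and for (2) with $sx<x$ it first proves $R^1C_sT_sM(x)=\tau_sT_sM(x)=0$ by adjunction ($\Hom(T_sM(x),M)=\Hom(M(x),C_sM)=0$ for $M\in\mathcal{O}_s$) and then runs the $C_s$-long exact sequence along $0\to M(sx)/M(x)\to T_sM(x)\to M(x)\to 0$; you instead prove (1) by a contradiction argument against the duality of Theorem~\ref{thm:duality of Zuckerman functor} and prove (2) by explicit map chasing. However, as written your argument rests on two steps that are asserted but not justified. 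First, in the case $sx<x$ of (1) you claim that $L\tau_sM(sx)$ is concentrated in degree $0$. You verify $H^{0}=\tau_sM(sx)\simeq M(sx)/M(x)$, and $H^{-2}=L^1T_sM(sx)=0$ is Proposition~\ref{prop:natural transformation of T_s, Verma modules case}, but you never address $H^{-1}=L^1\tau_sM(sx)$. This is precisely what forces $H^0R\Hom(M(x),L\tau_sM(sx)[-1])=0$; if $L^1\tau_sM(sx)\neq 0$, that cohomology contains $\Hom(M(x),L^1\tau_sM(sx))$ and your contradiction evaporates. The gap is closable with results you already cite: $\Ker(\varphi_sM(sx)\to M(sx))=C_sM(sx)\simeq M(sx)$ by Proposition~\ref{prop:C_sM(x)}, the canonical map $M(sx)\to\varphi_sM(sx)$ is injective with image inside this kernel (Propositions~\ref{prop:natural transformation of T_s, Verma modules case} and \ref{prop:translation of M(e), etc...}~(4)), and since $\End(M(sx))=\C$ by Theorem~\ref{thm:Homomorphisms between Verma modules} the resulting injection $M(sx)\to C_sM(sx)$ is onto, so $L^1\tau_sM(sx)=0$; but this has to be said.

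Second, in (2) for $sx<x$ the identification of $\mu\iota$ uses $\Hom(M(sx),M(sx)/M(x))=\C$, which does not follow from Theorem~\ref{thm:Homomorphisms between Verma modules} (that theorem only concerns homomorphisms between Verma modules), and without it you cannot conclude $\Ker\mu\simeq M(x)$ and $\Cok\mu=0$. The claim is true and provable with the paper's tools: $M(sx)/M(x)$ is a quotient of $P(sx)$, and $\dim\Hom(P(sx),M(w))=\rank\mathscr{B}(sx)_w$, so $\dim\Hom(P(sx),M(sx)/M(x))=\rank\mathscr{B}(sx)_{sx}-\rank\mathscr{B}(sx)_x=1-0=1$ because $x\not\le sx$; since $P(sx)\twoheadrightarrow M(sx)$, any map $M(sx)\to M(sx)/M(x)$ is determined by its pullback to $P(sx)$, whence the Hom space is spanned by the projection. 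The same count gives $M(sx)/M(x)\neq 0$, which you use tacitly when you declare that a splitting $T_sM(x)\simeq M(x)\oplus M(sx)/M(x)$ has two nonzero summands. With these two points supplied your proof goes through; note, though, that the paper's route for (2) sidesteps the second issue entirely, since $\tau_sT_sM(x)=0$ and the long exact sequence $0\to C_sT_sM(x)\to C_sM(x)\to R^1C_s(M(sx)/M(x))\to 0$ yield $C_sT_sM(x)\simeq M(x)$ directly from Proposition~\ref{prop:C_sM(x)}.
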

\begin{proof}
By Proposition~\ref{prop:LT_s RC_s and Ltau_s} (\ref{prop:LT_s RC_s and Ltau_s:cohomology of T_s and C_s}), we have $L^1T_sM(x) = \Ker(M(x)\to \varphi_sM(x))$.
By Lemma~\ref{lem:calculation of tau_s(M(e))}, the last module is zero.

To prove (2), first we prove that $RC_sT_sM(x) \simeq M(x)$.
If $sx > x$, then $T_sM(x) = M(sx)$.
Hence $C_sT_sM(x) = C_sM(sx) = M(x)$ by Proposition~\ref{prop:C_sM(x)}.
By Proposition~\ref{prop:LT_s RC_s and Ltau_s} (\ref{prop:LT_s RC_s and Ltau_s:cohomology of T_s and C_s}) and Proposition~\ref{prop:translation of M(e), etc...}, we have $R^1C_sM(x) = \Cok(\varphi_sM(sx)\to M(sx)) = 0$.

Next, assume that $sx < x$.
First we prove that $R^1C_sT_sM(x) = 0$.
By Proposition~\ref{prop:LT_s RC_s and Ltau_s} (\ref{prop:LT_s RC_s and Ltau_s:distinguished triangle of RC_s and Ltau_s}), we have $R^1C_sT_sM(x) = \tau_sT_sM(x)$.
To prove $\tau_sT_sM(x) = 0$, it is sufficient to prove that $\Hom(T_sM(x),M) = 0$ for all $M\in\mathcal{O}_s$.
Since $C_s$ is the right adjoint functor of $T_s$, we have $\Hom(T_sM(x),M) = \Hom(M(x),C_sM)$.
By Lemma~\ref{lem:characterization of O_s and translation preserve O_s}, we have $\varphi_sM = 0$.
This implies $C_sM = 0$.
Hence $\Hom(T_sM(x),M) = 0$.

Using the natural transformation $M(x)\simeq T_sM(sx)\to M(sx)$, we regard $M(x)$ as a submodule of $M(sx)$.
By the definition of $T_s$ and Lemma~\ref{lem:calculation of tau_s(M(e))}, we have an exact sequence
\[
	0\to M(sx)/M(x)\to T_sM(x)\to M(x)\to 0.
\]
Since $M(sx)/M(x) \in \mathcal{O}_s$ (Lemma~\ref{lem:calculation of tau_s(M(e))}), $\varphi_s(M(sx)/M(x)) = 0$.
From the definition of $C_s$ and Proposition~\ref{prop:LT_s RC_s and Ltau_s} (\ref{prop:LT_s RC_s and Ltau_s:cohomology of T_s and C_s}), $C_s(M(sx)/M(x)) = 0$ and $R^1C_s(M(sx)/M(x)) = M(sx)/M(x)$.
Hence from the long exact sequence, we have
\[
	0  \to C_sT_sM(x)\to C_sM(x)\to M(sx)/M(x) \to  0.
\]
From Proposition~\ref{prop:C_sM(x)}, we have $C_sM(x) = M(sx)$.
Hence $C_sT_sM(x) \simeq M(x)$.

Since $\End(M(x)) = \C\id$ by Theorem~\ref{thm:Homomorphisms between Verma modules}, the natural transformation $M(x)\to RC_sLT_sM(x)$ is zero or an isomorphism.
Since this natural transformation comes from $\id\colon T_sM(x)\to T_sM(x)$ and the adjointness, this is not zero.
\end{proof}

\begin{thm}\label{thm:LT_s is an auto-equivalence}
The functor $LT_s$ gives an auto-equivalence of $D(\mathcal{O})$.
Its quasi-inverse functor is $RC_s$.
\end{thm}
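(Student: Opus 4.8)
The plan is to show that the adjoint pair $(LT_s,RC_s)$ of Theorem~\ref{thm:adjointness of T_s and C_s} has both unit and counit invertible. Since $T_s$ and $C_s$ have finite homological dimension (Proposition~\ref{prop:LT_s RC_s and Ltau_s}~(\ref{prop:LT_s RC_s and Ltau_s:finiteness of LT_s}),~(\ref{prop:LT_s RC_s and Ltau_s:finiteness of RC_s})), the functors $LT_s \simeq (-)\otimes^L_A T_s(A)$ and $RC_s \simeq R\Hom_A(T_s(A),-)$ are well-defined triangulated endofunctors of $D(\mathcal{O})$ forming an adjoint pair, so this suffices.

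The heart of the argument is the observation that on the full triangulated subcategory $D_{\mathcal{O}_s}\subset D(\mathcal{O})$ of complexes with all cohomology in $\mathcal{O}_s$, one has $LT_s\simeq[1]$ and $RC_s\simeq[-1]$. Here $D_{\mathcal{O}_s}$ is triangulated because $\mathcal{O}_s$ is a Serre subcategory (the $P(x)$ being projective), and $L\tau_s$ factors through it since $\tau_s$ takes values in $\mathcal{O}_s$. By Theorem~\ref{thm:projective resolution of tau_s(A)}, $T_s(A)$ has a two-term resolution by projective right $A$-modules involving a $\varphi_s$-type module, so $LT_s(M)$ is computed by the complex $[M\to\varphi_s(M)]$ and $RC_s(M)$ by $[\varphi_s(M)\to M]$, functorially in $M$ (using Lemma~\ref{lem:expression of varphi_s}); and by Lemma~\ref{lem:characterization of O_s and translation preserve O_s} the exact functor $\varphi_s$ annihilates $\mathcal{O}_s$, hence annihilates every object of $D_{\mathcal{O}_s}$, so these complexes collapse to $M[1]$ and $M[-1]$.

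Granting this, I would apply $RC_s$ to the distinguished triangle $LT_s\to\id\to L\tau_s\xrightarrow{+1}$ and apply $LT_s$ to the rotation $\id\to RC_s\to L\tau_s[-1]\xrightarrow{+1}$ of the triangle $L\tau_s[-2]\to\id\to RC_s\xrightarrow{+1}$ (Proposition~\ref{prop:LT_s RC_s and Ltau_s}). Because $RC_s\circ L\tau_s\simeq L\tau_s[-1]$ and $LT_s\circ L\tau_s[-1]\simeq L\tau_s$, the two resulting triangles have the same outer vertices and connecting maps as the triangles one started from; uniqueness of the remaining vertex yields natural isomorphisms $RC_sLT_s\simeq\id$ and $LT_sRC_s\simeq\id$. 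It then remains to check that these are the unit and counit of the adjunction — for which Proposition~\ref{prop:compatibility of natural transformations and adjointness in O}, Lemma~\ref{lem:compatibility of natural transformations and adjointness in M}, and the explicit form of the maps $\id\to\varphi_s$, $\varphi_s\to\id$ are the tools — and the theorem follows; the preceding lemma ($M(x)\xrightarrow{\sim}RC_sLT_sM(x)$) then serves as an independent check on Verma modules.

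The main obstacle is precisely this last compatibility step: one must trace, through two applications each of $LT_s$ and $RC_s$ and several rotations, that the connecting morphisms of the compared triangles genuinely agree and that the isomorphisms produced are the adjunction (co)unit rather than merely abstract isomorphisms. Each individual compatibility is governed by Proposition~\ref{prop:compatibility of natural transformations and adjointness in O}, but assembling them is delicate. It is worth noting that the more naive route — verifying invertibility of the unit and counit on the projectives $P(x)$ and invoking generation — is not available when $W$ is infinite, since then the $P(x)$ need not generate $D(\mathcal{O})$ as a localizing subcategory; this is why the argument is routed through $D_{\mathcal{O}_s}$ rather than through a generating set.
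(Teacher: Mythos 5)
Your preparatory reductions are essentially sound: $\mathcal{O}_s$ is a Serre subcategory, the two-term resolutions of $T_s(A)$ coming from Theorem~\ref{thm:projective resolution of tau_s(A)} and Lemma~\ref{lem:expression of varphi_s} do compute $LT_s(M)$ by $M\to\varphi_s(M)$ and $RC_s(M)$ by $\varphi_s(M)\to M$, the exact functor $\varphi_s$ kills $\mathcal{O}_s$ by Lemma~\ref{lem:characterization of O_s and translation preserve O_s}, $L\tau_s$ lands in complexes with cohomology in $\mathcal{O}_s$ (the terms of $\tau_s$ of a projective resolution lie in the Serre subcategory $\mathcal{O}_s$), and hence $LT_s\simeq[1]$, $RC_s\simeq[-1]$ there. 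The gap is in the decisive step. From two distinguished triangles sharing the morphism $RC_s(M)\to L\tau_s(M)[-1]$, the cone axiom only produces, for each $M$ separately, \emph{some} isomorphism between the remaining vertices; it is neither canonical nor natural in $M$, and it is not identified with the unit of the adjunction of Theorem~\ref{thm:adjointness of T_s and C_s}. Even before that, one must verify that the connecting morphisms really coincide (that $RC_s$ applied to $\id\to L\tau_s$ matches, under $RC_sL\tau_s\simeq L\tau_s[-1]$, the map obtained by rotating the triangle of Proposition~\ref{prop:LT_s RC_s and Ltau_s}, signs included). You explicitly defer both verifications, but they \emph{are} the theorem: without a natural isomorphism $RC_sLT_s\simeq\id$ (and $LT_sRC_s\simeq\id$) one cannot conclude that $LT_s$ is an equivalence with quasi-inverse $RC_s$. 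A workable completion would take the unit $\eta\colon\id\to RC_sLT_s$ as the comparison map, check it fits into a morphism of the two triangles, and invoke the five lemma for triangles — but that commutativity check is precisely the work left undone.

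The paper sidesteps all of this: in the lemma immediately preceding the theorem it proves that the unit $M(x)\to RC_sLT_sM(x)$ is an isomorphism on Verma modules, and then reduces a general object of $D(\mathcal{O})$ to projectives by projective resolutions and to Verma modules via the Verma flags of Proposition~\ref{prop:projective module has Verma filtration}; since one is testing a fixed natural transformation on a class of objects, no cone-uniqueness or naturality issue arises. You mention that lemma only as an ``independent check,'' i.e.\ you had the key input but routed around it. Also, your closing remark that checking the unit on the $P(x)$ ``is not available when $W$ is infinite'' misreads the situation: the paper's reduction goes through resolutions and Verma flags, not through generation of $D(\mathcal{O})$ by the $P(x)$ as a localizing subcategory.
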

\begin{proof}
We prove that the natural transformation $M\to RC_sLT_sM$ is an isomorphism for $M\in D(\mathcal{O})$.
Taking a projective resolution, we may assume that $M$ is a projective module.
Since a projective module has a filtration whose successive quotients are Verma modules, we may assume that $M$ is a Verma module.
This is proved in the previous lemma.
\end{proof}

\begin{thm}\label{thm:T_s sonnano-kankei-nee}
Let $w = s_1\dotsm s_l$ be a reduced expression of $w\in W$.
Then $T_{s_1}\dotsm T_{s_l}$ and $C_{s_1}\dotsm C_{s_l}$ is independent of the choice of a reduced expression.
\end{thm}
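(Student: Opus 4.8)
The plan is to reduce the independence statement to the braid relations among the functors $T_s$ (respectively $C_s$) and to establish those relations through the derived functors $LT_s$, which by Theorem~\ref{thm:LT_s is an auto-equivalence} are auto\-equivalences of $D(\mathcal O)$ with a completely explicit action on Verma modules. By the theorem of Matsumoto and Tits, any two reduced expressions of a fixed $w\in W$ are linked by a chain of braid moves, each replacing a subword $\underbrace{sts\cdots}_{m}$ by $\underbrace{tst\cdots}_{m}$ for a pair $s\neq t$ of simple reflections with $m=m(s,t)<\infty$; such a subword is itself a reduced expression of the longest element $w_{s,t}$ of the parabolic subgroup $\langle s,t\rangle$. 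Since the functors in question are compositions, it suffices to prove, for every such pair,
\[
	\underbrace{T_sT_tT_s\cdots}_{m}\simeq\underbrace{T_tT_sT_t\cdots}_{m}\qquad\text{and}\qquad\underbrace{C_sC_tC_s\cdots}_{m}\simeq\underbrace{C_tC_sC_t\cdots}_{m}.
\]
The element $w_{s,t}$ is an involution, and its two reduced expressions $\underbrace{sts\cdots}_{m}$ and $\underbrace{tst\cdots}_{m}$ are interchanged by reversal; hence the right adjoint of either side of the first relation is one of the two sides of the second. By Theorem~\ref{thm:adjointness of T_s and C_s} (the right adjoint of a composition being the composition of right adjoints in reverse order) and the uniqueness of adjoints, the relation for the $C$'s follows from the relation for the $T$'s, so it is enough to treat the $T_s$.

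I would prove first the corresponding relation $\underbrace{LT_sLT_t\cdots}_{m}\simeq\underbrace{LT_tLT_s\cdots}_{m}$ for endofunctors of $D(\mathcal O)$. Both sides are auto\-equivalences, and their action on Verma modules is controlled by Proposition~\ref{prop:natural transformation of T_s, Verma modules case}: $LT_sM(x)=M(sx)$, concentrated in degree $0$, whenever $sx>x$. Along a reduced expression $s_1\cdots s_m$ of $w_{s,t}$ every partial product $s_i\cdots s_m$ is reduced, so applying the functors to $M(e)$ one obtains, with no higher cohomology and no ambiguity at any step, $(\underbrace{LT_sLT_t\cdots}_{m})(M(e))\simeq M(w_{s,t})\simeq(\underbrace{LT_tLT_s\cdots}_{m})(M(e))$; the same computation applies to $M(x)$ for any $x$ with $sx>x$ and $tx>x$. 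To upgrade these objectwise identifications to a natural isomorphism I would imitate the proof of Theorem~\ref{thm:LT_s is an auto-equivalence}: construct a morphism between the two $m$\-fold compositions out of the units and counits of the adjunctions $(LT_s,RC_s)$, and then invoke that a morphism of (co)continuous triangulated endofunctors of $D(\mathcal O)$ which is invertible on a generating object is invertible --- here $A=\bigoplus_x P(x)$ is built from Verma modules by finitely many triangles (Proposition~\ref{prop:projective module has Verma filtration}), and every $M(w)$ is obtained from $M(e)$ by iterating the $LT_s$ along a reduced expression of $w$.

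It then remains to descend from the derived to the non\-derived statement. The composition $\underbrace{T_sT_t\cdots}_{m}$ is right exact and commutes with arbitrary direct sums, hence is determined by its values on projective modules. On a projective $P$, which carries a Verma flag, one has $L^iT_s(-)=0$ for $i>0$ on modules with a Verma flag (Proposition~\ref{prop:LT_s RC_s and Ltau_s} together with Lemma~\ref{lem:calculation of tau_s(M(e))}), and an analysis of the image of a Verma flag under a twisting functor shows that $\underbrace{LT_sLT_t\cdots}_{m}(P)$ is concentrated in degree $0$ and coincides there with $\underbrace{T_sT_t\cdots}_{m}(P)$. Combined with the derived braid relation this yields $\underbrace{T_sT_t\cdots}_{m}(P)\simeq\underbrace{T_tT_sT_t\cdots}_{m}(P)$ naturally in $P$, hence the theorem for the $T_s$, and then for the $C_s$ via Theorem~\ref{thm:adjointness of T_s and C_s}.

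The main obstacle is the dihedral braid relation for the $LT_s$; the remaining steps are formal. Within it there are two delicate points: making the objectwise isomorphisms on Verma modules functorial (producing the comparison morphism and checking it is invertible on enough objects), and handling Verma modules $M(x)$ with $x$ not of minimal length in its $\langle s,t\rangle$\-coset, where $LT_s$ no longer takes values among the Verma modules and genuine non\-Verma extensions appear; controlling the iterated functors on such modules --- equivalently, establishing the required acyclicity of the ``twisted Verma modules'' $T_{s_1}\cdots T_{s_j}M(x)$ --- is where the real work lies, and it is exactly here that the auto\-equivalence property of $LT_s$ is used.
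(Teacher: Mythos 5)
There is a real gap at the heart of your plan: you never actually produce the isomorphism of functors. Knowing that $\underbrace{LT_sLT_t\cdots}_{m}$ and $\underbrace{LT_tLT_s\cdots}_{m}$ are both auto-equivalences and that they agree object by object on the Verma modules $M(x)$ with $x$ minimal in its $\langle s,t\rangle$-coset does not give a natural isomorphism between them, and the device you invoke --- ``a morphism built out of the units and counits of the adjunctions $(LT_s,RC_s)$ which is invertible on a generating object'' --- does not exist in any canonical form here: unlike in Theorem~\ref{thm:LT_s is an auto-equivalence}, where the adjunction unit $\Id\to RC_sLT_s$ supplies the comparison map for free, there is no adjunction relating the two braid words to each other, so there is no candidate natural transformation to test for invertibility. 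You flag this as a ``delicate point'' but offer no construction, and the same applies to the second admitted difficulty (acyclicity and identification of the iterated images $T_{s_1}\cdots T_{s_j}M(x)$ for $x$ not minimal in its coset, which you yourself call ``where the real work lies''). As it stands the argument establishes objectwise coincidence on too small a class of objects and leaves the functorial statement unproved.

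The paper closes exactly this gap with a different mechanism, entirely at the underived level: it applies the (dual) comparison lemma of Khomenko--Mazorchuk to the two compositions $F=T_{s_1}\dotsm T_{s_l}$ and $G=T_{s_1'}\dotsm T_{s_l'}$, using the canonical natural transformations $FP\to P$ and $GP\to P$ (which do exist, coming from $T_s\to\Id$). One only has to check, for each projective $P(x)$, that these maps are injective, that $FP(x)\simeq GP(x)$, and that the two images in $P(x)$ coincide; for $P(e)=M(e)$ this follows from Proposition~\ref{prop:natural transformation of T_s, Verma modules case} and $\dim\Hom(M(w),M(e))=1$ (Theorem~\ref{thm:Homomorphisms between Verma modules}), and the general case follows by induction on $\ell(x)$ because $T_s$ commutes with the translation functors $\theta_t$ (Theorem~\ref{thm:theta and varphi commute, in O}) and $P(x)$ is a direct summand of $\theta_tP(xt)$. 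Note also that this route needs no reduction to dihedral braid relations and no derived-category input at all. If you want to salvage your outline, you should either import that comparison lemma (or an equivalent rigidity statement) to convert your objectwise computations into an isomorphism of functors, or find another source for a genuine natural transformation between the two compositions; without one of these the key step is missing.
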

\begin{proof}
The statement for $C_s$ follows from the statement for $T_s$ (Theorem~\ref{thm:adjointness of T_s and C_s}).

Put $F = T_{s_1}\dotsm T_{s_l}$.
Take an another reduced expression $w = s_1'\dotsm s_l'$ and put $G = T_{s_1'}\dotsm T_{s_l'}$.
We use (the dual of) the comparison lemma~\cite[Lemma~1]{MR2115448}.
Namely, for a projective module $P$, we prove the following statements.
\begin{enumerate}
\item The natural transformations $FP\to P$ and $GP\to P$ are injective.
\item $FP\simeq GP$.
\item $\im(FP\to P) = \im(GP\to P)$.
\end{enumerate}
We may assume $P = P(x)$ for some $x\in W$.
We prove by induction on $\ell(x)$.

If $x = e$, then $P(x) = M(e)$.
By Proposition~\ref{prop:natural transformation of T_s, Verma modules case}, we have $FM(e) = GM(e) = M(w)$.
Hence we get (2).
We prove (1) by induction on $l$.
Put $F' = T_{s_2}\dotsm T_{s_l}$.
The natural transformation $FP\to P$ is given by $FP = T_{s_1}F'P\to F'P\to P$.
The natural transformation $F'P\to P$ is injective by inductive hypothesis.
Since $F'P = M(s_2\dotsm s_l)$, $T_{s_1}F'P\to F'P$ is injective (Proposition~\ref{prop:natural transformation of T_s, Verma modules case}).
Hence $FP\to P$ is injective.
Since $\dim\Hom(FM(e),M(e)) = \dim\Hom(M(w),M(e)) = 1$ by Theorem~\ref{thm:Homomorphisms between Verma modules}, we get (3).

Assume that $x\ne e$ and take a simple reflection $t$ such that $xt < x$.
Then $P = P(xt)$ satisfies (1--3).
By Theorem~\ref{thm:theta and varphi commute, in O}, $T_s$ commutes with $\theta_t$.
Hence $P = \theta_tP(xt)$ satisfies (1--3).
Since $P(x)$ is a direct summand of $\theta_tP(xt)$, $P = P(x)$ satisfies (1--3).
\end{proof}

\def\cprime{$'$} \def\dbar{\leavevmode\hbox to 0pt{\hskip.2ex
  \accent"16\hss}d}\newcommand{\noop}[1]{}


\end{document}